\documentclass[a4paper,onecolumn,english]{article}

\usepackage[T1]{fontenc}
\usepackage[utf8]{inputenc}
\usepackage{babel}
\usepackage{pdflscape}
\usepackage{multirow}
\usepackage{enumitem}
\usepackage{amsmath}
\usepackage{amsfonts}
\usepackage{amssymb}
\usepackage{amsthm}
\usepackage{thmtools}
\usepackage{dsfont}
\usepackage{bm}
\usepackage{relsize}
\usepackage[left=2.5cm,right=2.5cm,top=2.5cm,bottom=2.5cm]{geometry}
\usepackage[colorlinks=true,
            linkcolor = blue,
            urlcolor  = blue,
            citecolor = red,
			breaklinks]{hyperref}


\linespread{1.15}
\allowdisplaybreaks

\numberwithin{equation}{section}


\newcommand{\eps}{\varepsilon}
\newcommand{\mb}[1]{\mathbf{#1}}
\newcommand*{\ccdot}{\kern-.12em\cdot\kern-.12em}
\newcommand{\supp}{\mathrm{supp}}
\newcommand{\warrow}{\overset{w}{\longrightarrow}}
\newcommand{\varrow}{\overset{v}{\longrightarrow}}
\newcommand{\darrow}{\overset{d}{\longrightarrow}}


\newtheoremstyle{slplain}
  {0.4cm}
  {0.4cm}
  {\upshape}
  {}
  {\bfseries}
  {.}
  { }
  {}
  
\newtheoremstyle{itplain}
    {0.4cm}
    {0.4cm}
    {\itshape}
    {}
    {\bfseries}
    {.}
    { }
    {}
  
\declaretheorem[style=slplain,numberwithin=section]{definition}
\declaretheorem[style=slplain,sibling=definition]{example}
\declaretheorem[style=slplain,sibling=definition]{remark}
\declaretheorem[style=slplain,sibling=definition]{assumption}

\declaretheorem[style=itplain,sibling=definition]{theorem}

\declaretheorem[style=itplain,sibling=definition]{proposition}
\declaretheorem[style=itplain,sibling=definition]{lemma}
\declaretheorem[style=itplain,sibling=definition]{corollary}


\AtBeginDocument{
	\addtolength{\abovedisplayskip}{0.125ex}
	\addtolength{\abovedisplayshortskip}{0.125ex}
	\addtolength{\belowdisplayskip}{0.125ex}
	\addtolength{\belowdisplayshortskip}{0.125ex}
}

\renewenvironment{abstract}{%
\noindent\hfill\begin{minipage}{0.92\textwidth}
\rule{\textwidth}{1pt}}
{\par\noindent\rule{\textwidth}{1pt}\end{minipage}\hfill}

\let\OLDthebibliography\thebibliography
\renewcommand\thebibliography[1]{
  \OLDthebibliography{#1}
  \setlength{\parskip}{0pt}
  \setlength{\itemsep}{3pt plus 0.3ex}
}


\title{\bfseries The hyperbolic maximum principle approach to the construction of generalized convolutions\\[0.3cm]}

\author{Rúben Sousa
\thanks{Corresponding author. CMUP, Departamento de Matemática, Faculdade de Ciências, Universidade do Porto, Rua do Campo Alegre 687, 4169-007 Porto, Portugal. Email: \texttt{ruben.sousa@outlook.com}}
\and
Manuel Guerra \thanks{CEMAPRE and ISEG (School of Economics and Management), Universidade de Lisboa, Rua do Quelhas 6, 1200-781 Lisbon, Portugal. Email: \texttt{mguerra@iseg.ulisboa.pt}}
\and
Semyon Yakubovich \thanks{CMUP, Departamento de Matemática, Faculdade de Ciências, Universidade do Porto, Rua do Campo Alegre 687, 4169-007 Porto, Portugal. Email: \texttt{syakubov@fc.up.pt}}
\\[0.3cm]
}
\date{\today\\}

\begin{document}

\maketitle

\begin{abstract}
	 \small
	 \parbox{\linewidth}{\vspace{-2pt}
\begin{center} \bfseries Abstract \vspace{-8pt} \end{center}
	 
	 \-\ \quad We introduce a unified framework for the construction of convolutions and product formulas associated with a general class of regular and singular Sturm-Liouville boundary value problems. Our approach is based on the application of the Sturm-Liouville spectral theory to the study of the associated hyperbolic equation. As a by-product, an existence and uniqueness theorem for degenerate hyperbolic Cauchy problems with initial data at a parabolic line is established.
	 
	 \-\ \quad The mapping properties of convolution operators generated by Sturm-Liouville operators are studied. Analogues of various notions and facts from probabilistic harmonic analysis are developed on the convolution measure algebra. Various examples are presented which show that many known convolution-type operators --- including those associated with the Hankel, Jacobi and index Whittaker integral transforms --- can be constructed using this general approach. \vspace{5pt}
	 
     \-\ \quad \textbf{Keywords:} Generalized convolution, product formula, hyperbolic Cauchy problem, parabolic degeneracy, Sturm-Liouville spectral theory, maximum principle. \vspace{6.5pt}
     }
\end{abstract}

\vspace{8pt}

\begingroup
\let\clearforchapter\relax
\section{Introduction}
\endgroup

Given a Sturm-Liouville operator on an interval of the real line, it is well-known that its eigenfunction expansion gives rise to an integral transform which shares many properties with the ordinary Fourier transform \cite{dunfordschwartz1963,titchmarsh1962}. Since various standard special functions are solutions of Sturm-Liouville equations, the class of integral transforms of Sturm-Liouville type includes, as particular cases, many common integral transforms (Hankel, Kontorovich-Lebedev, Mehler-Fock, Jacobi, Laguerre, etc.).

The Fourier transform lies at the heart of the classical theory of harmonic analysis. This naturally raises a question: \emph{is it possible to generalize the main facts of harmonic analysis to integral transforms of Sturm-Liouville type?}

Starting from the seminal works of Delsarte \cite{delsarte1938} and Levitan \cite{levitan1940} it was noticed that the key ingredient for developing of such a generalized harmonic analysis is the so-called product formula. We say that an indexed family of complex-valued functions $\{w_\lambda\}$ on an interval $I \subset \mathbb{R}$ has a \emph{product formula} if for each $x,y \in I$ there exists a complex Borel measure $\bm{\nu}_{x,y}$ (independent of $\lambda$) such that
\begin{equation} \label{eq:intro_prodform}
w_\lambda(x) \, w_\lambda(y) = \int_I w_\lambda \, d\bm{\nu}_{x,y} \qquad (\lambda \in \Lambda).
\end{equation}
Product formulas naturally lead to generalized convolution operators. To fix ideas, let $\ell(u) = {1 \over r}\bigl[-(pu')' + qu\bigr]$ be a usual Sturm-Liouville differential expression defined on the interval $I$, and let $(\mathcal{F}h)(\lambda) := \int_I h(x) \, w_\lambda(x) \, d\mathrm{m}(x)$ be a Sturm-Liouville type integral transform, where the $w_\lambda$ are solutions of $\ell(w) = \lambda w$ ($\lambda \in \mathbb{C}$). If $\{w_\lambda\}$ has a product formula, then we can define a generalized (Sturm-Liouville type) convolution operator $*$ by
\begin{equation} \label{eq:intro_convdef}
(f * g)(x) := \int_I \biggl( \int_I f \, d\bm{\nu}_{x,y}\biggr)  g(y) \,d\mathrm{m}(y).
\end{equation}
It is not difficult to show that, under reasonable assumptions, the property $\mathcal{F}(f * g) = (\mathcal{F}f) \ccdot (\mathcal{F}g)$ holds for this convolution operator; this means that the analogue of one of the basic identities in harmonic analysis --- the Fourier convolution theorem --- is satisfied by the generalized convolution.

Consider now the associated hyperbolic partial differential equation
\begin{equation} \label{eq:intro_hypPDE}
{1 \over r(x)} \Bigl\{-\partial_x\bigl[p(x) \, \partial_x f(x,y)\bigr] + q(x) f(x,y)\Bigr\} = {1 \over r(y)} \Bigl\{-\partial_y\bigl[p(y) \, \partial_y f(x,y)\bigr] + q(y) f(x,y)\Bigr\}.
\end{equation}
If the kernel of the Sturm-Liouville transform is defined via some initial condition $w_\lambda(a) = 1$, then the product $f(x,y) = w_\lambda(x) w_\lambda(y)$ is a solution of \eqref{eq:intro_hypPDE} satisfying the boundary condition $f(x,a) = w_\lambda(x)$. Studying the properties of the associated hyperbolic equation is therefore a natural strategy for proving the existence of a product formula and extracting information about the measure $\bm{\nu}_{x,y}$.

An especially interesting case is that where $\{\bm{\nu}_{x,y}\}$ turns out to be a family of probability measures (satisfying appropriate continuity assumptions). Indeed, in this case one can show that the convolution \eqref{eq:intro_convdef} gives rise to a Banach algebra structure in the space of finite complex Borel measures in which various probabilistic concepts and properties can be developed in analogy with the classical theory \cite{bloomheyer1994,urbanik1964}. Establishing explicit product formulas, or even proving their existence, has been recognized as a difficult problem \cite{connett1992,chebli1995}. Nevertheless, using the maximum principle for hyperbolic equations \cite{weinberger1956}, it was shown by Levitan \cite{levitan1960} (and, under weakened assumptions, by Chebli \cite{chebli1974} and Zeuner \cite{zeuner1992}) that this probabilistic property of the product formula holds for a general family of Sturm-Liouville differential expressions on $I = [0,\infty)$ of the form $\ell(u) = - {1 \over A} (Au')'$. This family of Sturm-Liouville operators includes, as important particular cases, the generators of the Hankel transform and the (Fourier-)Jacobi transform; these cases are noteworthy due to the fact that the explicit expression for the measure in the product formula can been determined using results from the theory of special functions (see Examples \ref{exam:hankelkingman}--\ref{exam:fourjacobi}).

Various examples show that the probabilistic property of the product formula holds only for a restricted class of Sturm-Liouville operators \cite{litvinov1987,rosler1995}; this is connected with the fact that the hyperbolic maximum principle requires rather strong assumptions on the coefficients. Notwithstanding, the recent work \cite{sousaetal2018a,sousaetal2018b} of the authors on the index Whittaker transform made it apparent that there is room for generalization of the results of \cite{chebli1974,levitan1960,zeuner1992}. In fact, the family of Sturm-Liouville operators considered in these works only includes operators for which the equation \eqref{eq:intro_hypPDE} is uniformly hyperbolic on $[0,\infty)^2$; a consequence of this is that, under their assumptions, the support $\supp(\bm{\nu}_{x,y})$ of the measures in the product formula is always compact. In contrast, the case of the index Whittaker transform provides an example of a product formula whose measures $\bm{\nu}_{x,y}$ have the probabilistic property and satisfy $\supp(\bm{\nu}_{x,y}) = [0,\infty)$ for $x,y > 0$; here the associated hyperbolic equation \eqref{eq:intro_hypPDE} is parabolically degenerate at the boundaries $x=0$ and $y=0$. (The index Whittaker transform is generated by the Sturm-Liouville expression $x^2 u'' + (1+2(1-\alpha) x)u'$ on $I = [0,\infty)$, and its product formula, which is known in closed form, is given in Example \ref{exam:whittaker}.)

The goal of this work is to introduce a unified framework for the construction of Sturm-Liouville type convolution operators associated with possibly degenerate hyperbolic equations. We will consider a Sturm-Liouville differential expression of the form 
\begin{equation} \label{eq:shypPDE_elldiffexpr}
\ell = -{1 \over r} {d \over dx} \Bigl( p \, {d \over dx}\Bigr), \qquad x \in (a,b)
\end{equation}
($-\infty \leq a < b \leq \infty$), where $p$ and $r$ are (real-valued) coefficients such that $p(x), r(x) > 0$ for all $x \in (a,b)$ and $p, p', r$ and $r'$ are locally absolutely continuous on $(a,b)$. Concerning the behavior of the coefficients at the boundaries $x=a$ and $x=b$, we will assume respectively that
\begin{gather}
\label{eq:shypPDE_Lop_leftBC}
\int_a^c \int_y^c {dx \over p(x)} \, r(y) dy < \infty \\
\label{eq:shypPDE_Lop_rightBC}
\int_c^b \int_y^b {dx \over p(x)} \, r(y) dy = \int_c^b \int_c^y {dx \over p(x)} \, r(y) dy = \infty
\end{gather}
where $c \in (a,b)$ is an arbitrary point. These conditions mean that $a$ is a regular or entrance boundary and $b$ is a natural boundary for the operator $\ell$. The notions of regular, entrance and natural boundary refer to the Feller classification of boundaries, which is recalled in Remark \ref{rmk:shypPDE_boundaryclassif}, where we also give some comments on the role of conditions \eqref{eq:shypPDE_Lop_leftBC}--\eqref{eq:shypPDE_Lop_rightBC}.

The point of departure is the study of the Cauchy problem for the possibly degenerate hyperbolic equation ${1 \over r(x)} \partial_x \bigl( p(x) \, \partial_x f(x,y)\bigr) = {1 \over r(y)} \partial_y \bigl( p(y) \, \partial_y f(x,y)\bigr)$. Under the assumption that the product $p(x)r(x)$ of the coefficients of \eqref{eq:shypPDE_elldiffexpr} is an increasing function, we prove an existence and uniqueness theorem for the Cauchy problem which is based on the spectral theory of Sturm-Liouville operators. We then give a sufficient condition for the maximum principle to hold for the hyperbolic equation is given and, as a corollary, the positivity preserving property of the solution of the Cauchy problem is obtained.

Our existence theorem (and the positivity result) covers many hyperbolic equations with initial data on the parabolic line which are outside the scope of the classical theory, and for which the problem of well-posedness of the Cauchy problem was, to the best of our knowledge, open. In fact, given that our results depend heavily on the assumption that the left boundary $a$ is of entrance type (cf.\ Remark \ref{rmk:shypPDE_boundaryclassif}), they indicate that the well-posedness of the degenerate problem with initial line $y=a$ depends on the Feller boundary classification of $\ell$ at the endpoint $a$.

If the maximum principle holds for the hyperbolic equation associated with $\ell$, then the solution of the hyperbolic Cauchy problem can be written as $f(x,y) = \int_{[a,b)} h \, d\bm{\nu}_{x,y}$, where $h(x) = f(x,a)$ is the initial condition and $\{\bm{\nu}_{x,y}\}$ is a family of finite positive Borel measures on $[a,b)$. Formally, this suggests that the product formula \eqref{eq:intro_prodform} should hold for the kernel $w_\lambda$ of the Sturm-Liouville transform. It turns out that \eqref{eq:intro_prodform} indeed holds and that the $\bm{\nu}_{x,y}$ are probability measures, but the proof requires some effort, especially when the Cauchy problem is parabolically degenerate \cite{sousaetalforth}. We then define the generalized convolution by \eqref{eq:intro_convdef}, so that the expected convolution theorem $\mathcal{F}(f * g) = (\mathcal{F}f) \ccdot (\mathcal{F}g)$ holds. Moreover, the Young inequality for the $L_p$-spaces with respect to the weighted measure $r(x)dx$ is valid for the convolution \eqref{eq:intro_convdef}, demonstrating that the mapping properties of the generalized convolution structure resemble those of the ordinary convolution.

A fundamental tool for studying the continuity and mapping properties of the generalized convolution is the extension of the Sturm-Liouville transform to complex measures, defined by $\widehat{\mu}(\lambda) = \int_{[a,b)} \! w_\lambda(x) \mu(dx)$. Actually, if we define the convolution of two Dirac measures by $\delta_x * \delta_y = \bm{\nu}_{x,y}$ and then define the convolution $\mu * \nu$ of two complex measures so that $(\mu,\nu) \mapsto \mu * \nu$ is weakly continuous, then the space $\mathcal{M}_{\mathbb{C}}[a,b)$ of finite complex measures on $[a,b)$ becomes a convolution measure algebra for which the Sturm-Liouville transform is a generalized characteristic function, in the sense that the property $\widehat{\mu * \nu} = \widehat{\mu} \cdot\widehat{\nu}$ holds. The algebra $(\mathcal{M}_{\mathbb{C}}[a,b),*)$ is therefore a natural environment for studying notions from probabilistic harmonic analysis, in particular infinite divisibility, Gaussian-type measures and Lévy-type (additive) stochastic processes. As anticipated above, the study of these concepts leads to analogues of chief results in probability theory such as the Lévy-Khintchine formula or the contraction property of convolution semigroups.

The class of Lévy-type processes with respect to the convolution measure algebra includes the diffusion process generated by the Sturm-Liouville expression $\ell$, as well as many other Markov processes with discontinuous paths. We hope that this work illuminates the role of product formulas and hyperbolic Cauchy problems on a purely probabilistic problem --- that of constructing a class of Lévy-type processes which accommodates a given diffusion process --- and stimulates further research on this topic.

The remaining sections are organized as follows. In Section \ref{sec:prelim}, after introducing the basic properties of the solution of the Sturm-Liouville equation $\ell(w) = \lambda w$, we summarize some key facts from the theory of eigenfunction expansions of Sturm-Liouville operators and from the theory of one-dimensional diffusion processes. Section \ref{sec:hypPDE} is devoted to the hyperbolic Cauchy problem associated with $\ell$: an existence and uniqueness theorem is proved and, under suitable assumptions, it is shown that the unique solution satisfies a weak maximum principle. In Section \ref{sec:transl_conv}, the solution of the hyperbolic Cauchy problem is used to define the generalized convolution of probability measures and the generalized translation of functions; moreover, the Sturm-Liouville transform of finite measures is introduced and an analogue of the Lévy continuity theorem is established, together with some other basic properties. The product formula for the solution of the Sturm-Liouville equation is discussed in Section \ref{sec:prodform}. In Section \ref{sec:Lp_harmonic} we establish the basic properties of the generalized convolution as an operator on weighted $L_p$-spaces. Section \ref{sec:probtheory} explores the probabilistic properties of the convolution, demonstrating that the main concepts and facts from the classical theory of infinitely divisible distributions and convolution semigroups can be developed, in a parallel fashion, in the framework of the generalized convolutions considered here. The concluding Section \ref{sec:examples} presents several examples and shows that various convolutions associated with standard integral transforms constitute particular cases of the general construction presented here. \vspace{10pt}

\section{Preliminaries} \label{sec:prelim}

We use the following standard notations. For a subset $E \subset \mathbb{R}^d$, $\mathrm{C}(E)$ is the space of continuous complex-valued functions on $E$; $\mathrm{C}_\mathrm{b}(E)$, $\mathrm{C}_0(E)$ and $\mathrm{C}_\mathrm{c}(E)$ are, respectively, its subspaces of bounded continuous functions, of continuous functions vanishing at infinity and of continuous functions with compact support; $\mathrm{C}^k(E)$ stands for the subspace of $k$ times continuously differentiable functions. $\mathrm{B}_\mathrm{b}(E)$ is the space of complex-valued bounded and Borel measurable functions. The corresponding spaces of real-valued functions are denoted by $\mathrm{C}(E,\mathbb{R})$, $\mathrm{C}_\mathrm{b}(E,\mathbb{R})$, etc.

$L_p(E;\mu)$ ($1 \leq p \leq \infty$) denotes the Lebesgue space of complex-valued $p$-integrable functions with respect to a given measure $\mu$ on $E$. The space of probability (respectively, finite positive, finite complex) Borel measures on $E$ will be denoted by $\mathcal{P}(E)$ (respectively, $\mathcal{M}_+(E)$, $\mathcal{M}_{\mathbb{C}}(E)$). The total variation of $\mu \in \mathcal{M}_{\mathbb{C}}(E)$ is denoted by $\|\mu\|$, and $\delta_x$ denotes the Dirac measure at a point $x$.

\subsection{Solutions of the Sturm-Liouville equation}

We begin by collecting some properties of the solutions of the Sturm-Liouville equation $\ell(u) = \lambda u$ ($\lambda \in \mathbb{C}$), where $\ell$ is of the form \eqref{eq:shypPDE_elldiffexpr} and satisfies the boundary condition \eqref{eq:shypPDE_Lop_leftBC}. We shall write $f^{[1]} = p f'$ and $\mathfrak{s}(x) = \int_c^x {d\xi \over p(\xi)}$ (this is the so-called \emph{scale function}, cf.\ \cite{borodinsalminen2002}).

If the Sturm-Liouville equation is regular at the left endpoint $a$, it is well-known that there is an entire solution $w_\lambda(x)$ of $\ell(u) = \lambda u$ satisfying the initial conditions $w_\lambda(a) = \cos\theta$, $w_\lambda^{[1]}(a) = \sin\theta$ ($0 \leq \theta < \pi$). When we only require that \eqref{eq:shypPDE_Lop_leftBC} holds (so that $a$ may be an entrance boundary), the following lemma ensures that the same continues to hold for the boundary condition with vanishing derivative ($\theta = 0$):

\begin{lemma} \label{lem:shypPDE_ode_wsol}
For each $\lambda \in \mathbb{C}$, there exists a unique solution $w_\lambda(\cdot)$ of the boundary value problem
\begin{equation} \label{eq:shypPDE_ode_wsol}
\ell(w) = \lambda w \quad (a < x < b), \qquad\;\; w(a) = 1, \qquad\;\; w^{[1]}(a) = 0.
\end{equation}
Moreover, $\lambda \mapsto w_\lambda(x)$ is, for each fixed $x$, an entire function of exponential type.
\end{lemma}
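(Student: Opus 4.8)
The plan is to recast the boundary value problem \eqref{eq:shypPDE_ode_wsol} as a Volterra-type integral equation and solve it by successive approximations, using condition \eqref{eq:shypPDE_Lop_leftBC} to control everything near the (possibly singular) endpoint $a$. Writing $\ell(w) = \lambda w$ as $(w^{[1]})' = -\lambda r w$, integrating twice from $a$ and imposing $w(a) = 1$, $w^{[1]}(a) = 0$, an application of Fubini's theorem (using $\int_y^x d\xi/p(\xi) = \mathfrak{s}(x) - \mathfrak{s}(y)$) leads to the equivalent equation
\begin{equation*}
w_\lambda(x) = 1 - \lambda \int_a^x \bigl(\mathfrak{s}(x) - \mathfrak{s}(y)\bigr) \, r(y) \, w_\lambda(y) \, dy .
\end{equation*}
I would seek the solution as the Neumann series $w_\lambda(x) = \sum_{n=0}^\infty (-\lambda)^n \phi_n(x)$, where $\phi_0 \equiv 1$ and $\phi_n(x) = \int_a^x (\mathfrak{s}(x)-\mathfrak{s}(y)) \, r(y) \, \phi_{n-1}(y)\,dy$ for $n \ge 1$.

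The crux is the estimate $0 \le \phi_n(x) \le B(x)^n/n!$, where $B(x) := \int_a^x (\mathfrak{s}(x)-\mathfrak{s}(y)) \, r(y)\,dy$. I would first check that \eqref{eq:shypPDE_Lop_leftBC} forces $\int_a^c r(y)\,dy < \infty$ and hence $B(x) < \infty$ for every $x \in (a,b)$, with $B$ continuous, nondecreasing and $B(a^+) = 0$. Nonnegativity of $\phi_n$ is immediate by induction. For the upper bound I would differentiate, obtaining $\phi_n'(x) = \mathfrak{s}'(x)\int_a^x r(y)\phi_{n-1}(y)\,dy$ and $B'(x) = \mathfrak{s}'(x) R(x)$ with $R(x) = \int_a^x r$; the inductive hypothesis $\phi_{n-1}(y)\le B(y)^{n-1}/(n-1)!$ together with $B(y)\le B(x)$ then yields $\phi_n'(x) \le \mathfrak{s}'(x) R(x) B(x)^{n-1}/(n-1)! = \tfrac{d}{dx}\bigl(B(x)^n/n!\bigr)$, and integrating from $a$ (where both sides vanish) gives the claim.

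With this estimate, $\sum_n |\lambda|^n \phi_n(x) \le e^{|\lambda| B(x)}$, so the series converges absolutely and uniformly on compact subsets of $(a,b)\times\mathbb{C}$; the same bound applied to $\phi_n^{[1]}(x) = \int_a^x r\,\phi_{n-1}$ shows the differentiated series converges uniformly too. Hence $w_\lambda$ is well defined, $w_\lambda^{[1]}(x) = -\lambda\int_a^x r\,w_\lambda$ is locally absolutely continuous, term-by-term differentiation gives $\ell(w_\lambda) = \lambda w_\lambda$, and letting $x\to a$ recovers the boundary conditions (the estimate forces $w_\lambda(x)\to 1$ and $w_\lambda^{[1]}(x)\to 0$). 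Since $w_\lambda(x) = \sum_n (-\lambda)^n \phi_n(x)$ is a power series in $\lambda$ with $|\phi_n(x)|\le B(x)^n/n!$, it is entire and $|w_\lambda(x)| \le e^{B(x)|\lambda|}$, i.e.\ of exponential type (at most $B(x)$). For uniqueness I would take the difference $v$ of two solutions, which satisfies the homogeneous equation $v(x) = -\lambda\int_a^x (\mathfrak{s}(x)-\mathfrak{s}(y)) \, r(y) \, v(y)\,dy$; bounding $|v|$ by a constant on a compact $[a,x_0]$ and iterating the nonnegative Volterra operator gives $|v(x)| \le C\,(|\lambda| B(x))^n/n! \to 0$, so $v\equiv 0$.

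The main obstacle, and the point of departure from the classical regular theory, is the behavior at the left endpoint: since $a$ may be an entrance boundary (where $p$ can degenerate and $\mathfrak{s}(a) = -\infty$), one cannot simply invoke Picard–Lindelöf with Cauchy data prescribed at $a$. Everything rests on showing that \eqref{eq:shypPDE_Lop_leftBC} makes the kernel $(\mathfrak{s}(x)-\mathfrak{s}(y)) \, r(y)$ integrable up to $a$ — equivalently $B(a^+)=0$ — so that the boundary conditions can be imposed as limits and the successive-approximation scheme remains valid. I expect the delicate bookkeeping of integrability near $a$ (in particular deducing $\int_a^c r < \infty$ and $B(a^+) = 0$ directly from \eqref{eq:shypPDE_Lop_leftBC}) to be the most technical part of the argument.
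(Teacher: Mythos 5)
Your proposal is correct and follows essentially the same route as the paper: both recast \eqref{eq:shypPDE_ode_wsol} as the Volterra equation $w_\lambda(x) = 1 - \lambda\int_a^x(\mathfrak{s}(x)-\mathfrak{s}(\xi))r(\xi)w_\lambda(\xi)\,d\xi$, solve it by a Neumann series with the factorial bound on the iterated kernels (the paper majorizes by $\mathcal{S}(x)=\int_a^x(\mathfrak{s}(\beta)-\mathfrak{s}(\xi))r(\xi)\,d\xi$ via direct induction, you use the slightly sharper $B(x)=\eta_1(x)$ via a differentiation argument --- an immaterial difference), and read off entirety and exponential type from the resulting estimate, with condition \eqref{eq:shypPDE_Lop_leftBC} guaranteeing finiteness of the majorant up to the possibly singular endpoint $a$. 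Your explicit uniqueness iteration is a small addition the paper leaves implicit in the equivalence with the integral equation.
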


\begin{proof}
The proof is similar to \cite[Lemma 3]{kac1967}, but for completeness we give a sketch here. Let
\begin{equation} \label{eq:shypPDE_wsol_powseries_eta}
\eta_0(x) = 1, \qquad \eta_j(x) = \int_a^x \bigl(\mathfrak{s}(x) - \mathfrak{s}(\xi)\bigr) \eta_{j-1}(\xi) r(\xi) d\xi \quad (j=1,2,\ldots).
\end{equation}
Pick an arbitrary $\beta \in (a,b)$ and define $\mathcal{S}(x) = \int_a^x \bigl(\mathfrak{s}(\beta) - \mathfrak{s}(\xi)\bigr) r(\xi) d\xi$. From the boundary assumption \eqref{eq:shypPDE_Lop_leftBC} it follows that $0 \leq \mathcal{S}(x) \leq \mathcal{S}(\beta) < \infty$ for $x \in (a,\beta]$. Furthermore, it is easy to show (using induction) that $|\eta_j(x)| \leq {1 \over j!} (\mathcal{S}(x))^j$ for all $j$. Therefore, the function
\[
w_\lambda(x) = \sum_{j=0}^\infty (-\lambda)^j \eta_j(x) \qquad (a < x \leq \beta, \; \lambda \in \mathbb{C})
\]
is well-defined as an absolutely convergent series. The estimate
\[
|w_\lambda(x)| \leq \sum_{j=0}^\infty |\lambda|^j {(\mathcal{S}(x))^j \over j!} = e^{|\lambda| \mathcal{S}(x)} \leq e^{|\lambda| \mathcal{S}(\beta)} \qquad (a < x \leq \beta)
\]
shows that $\lambda \mapsto w_\lambda(x)$ is entire and of exponential type. In addition, for $a < x \leq \beta$ we have
\begin{align*}
1 - \lambda \int_a^x {1 \over p(y)} \int_a^y w_\lambda(\xi) \, r(\xi) d\xi\, dy & = 1 - \lambda \int_a^x (\mathfrak{s}(x) - \mathfrak{s}(\xi)) w_\lambda(\xi)\, r(\xi) d\xi \\
& = 1-\lambda \int_a^x (\mathfrak{s}(x) - \mathfrak{s}(\xi)) \biggl( \sum_{j=0}^\infty (-\lambda)^j \eta_j(\xi) \biggr) r(\xi) d\xi \\
& = 1 + \sum_{j=0}^\infty (-\lambda)^{j+1} \int_a^x (\mathfrak{s}(x) - \mathfrak{s}(\xi)) \eta_j(\xi)\, r(\xi) d\xi \\
& = 1 + \sum_{j=0}^\infty (-\lambda)^{j+1} \eta_{j+1}(x) \, = \, w_\lambda(x),
\end{align*}
i.e., $w_\lambda(x)$ satisfies
\[
w_\lambda(x) = 1 - \lambda \int_a^x {1 \over p(y)} \int_a^y w_\lambda(\xi) \, r(\xi)d\xi\, dy
\]
This integral equation is equivalent to \eqref{eq:shypPDE_ode_wsol}, so the proof is complete.
\end{proof}

Throughout this work, $\{a_m\}_{m \in \mathbb{N}}$ will denote a sequence $b > a_1 > a_2 > \ldots$ with $\lim a_m = a$. Next we verify that the solution $w_\lambda$ for the Sturm-Liouville equation on the interval $(a,b)$ is approximated by the corresponding solutions on the intervals $(a_m,b)$:

\begin{lemma} \label{lem:shypPDE_ode_wepslimit}
For $m \in \mathbb{N}$, let $w_{\lambda,m}(x)$ be the unique solution of the boundary value problem
\begin{equation} \label{eq:shypPDE_ode_wsoleps}
\ell(w) = \lambda w \quad (a_m < x < b), \qquad\;\; w(a_m) = 1, \qquad\;\; w^{[1]}(a_m) = 0.
\end{equation}
Then
\[
\lim_{m \to \infty} w_{\lambda,m}(x) = w_\lambda(x) \quad  \text{pointwise for each } a < x < b \text{ and } \lambda \in \mathbb{C}.
\]
\end{lemma}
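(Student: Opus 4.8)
The plan is to reduce the statement to the power-series representation established in the proof of Lemma \ref{lem:shypPDE_ode_wsol} and to pass to the limit term by term. For each $m$, repeating that proof verbatim with $a$ replaced by $a_m$ shows that $w_{\lambda,m}$ admits the absolutely convergent expansion $w_{\lambda,m}(x) = \sum_{j=0}^\infty (-\lambda)^j \eta_{j,m}(x)$, where $\eta_{0,m} \equiv 1$ and $\eta_{j,m}(x) = \int_{a_m}^x \bigl(\mathfrak{s}(x) - \mathfrak{s}(\xi)\bigr) \eta_{j-1,m}(\xi)\, r(\xi)\, d\xi$, exactly as in \eqref{eq:shypPDE_wsol_powseries_eta}. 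I would fix $x \in (a,b)$, choose $\beta \in (x,b)$, and note that since $a_m \downarrow a$ we have $a_m < x$ for all large $m$, so $w_{\lambda,m}(x)$ is defined for such $m$. Because $a_m > a$, replacing the lower limit $a$ by $a_m$ only shrinks the domain of integration, so the same induction that gives the bound in the proof of Lemma \ref{lem:shypPDE_ode_wsol} yields $|\eta_{j,m}(x)| \leq \frac{1}{j!}\bigl(\mathcal{S}(x)\bigr)^j \leq \frac{1}{j!}\bigl(\mathcal{S}(\beta)\bigr)^j$ uniformly in $m$.

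Next I would establish the termwise convergence $\lim_{m\to\infty} \eta_{j,m}(x) = \eta_j(x)$ by induction on $j$, the base case $j=0$ being trivial. For the inductive step, write $\eta_{j-1,m} = \eta_{j-1} + D_{j-1,m}$, where by the inductive hypothesis $D_{j-1,m}(\xi) \to 0$ pointwise and, by the uniform bound above, $|D_{j-1,m}(\xi)| \leq \frac{2}{(j-1)!}\bigl(\mathcal{S}(\beta)\bigr)^{j-1}$ for all $m$. Splitting accordingly,
\[
\eta_{j,m}(x) = \int_{a_m}^x \bigl(\mathfrak{s}(x) - \mathfrak{s}(\xi)\bigr) \eta_{j-1}(\xi)\, r(\xi)\, d\xi + \int_{a_m}^x \bigl(\mathfrak{s}(x) - \mathfrak{s}(\xi)\bigr) D_{j-1,m}(\xi)\, r(\xi)\, d\xi.
\]
As $m \to \infty$ the first integral converges to $\eta_j(x)$, because $a_m \to a$ and, by the boundary condition \eqref{eq:shypPDE_Lop_leftBC}, the map $\xi \mapsto \bigl(\mathfrak{s}(x) - \mathfrak{s}(\xi)\bigr) \eta_{j-1}(\xi)\, r(\xi)$ is absolutely integrable on $(a,x]$; the second integral tends to $0$ by dominated convergence, using the pointwise vanishing and the uniform bound on $D_{j-1,m}$ together with the integrability of $\bigl(\mathfrak{s}(x) - \mathfrak{s}(\xi)\bigr) r(\xi)$ near $a$.

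Finally I would combine the two ingredients. The uniform estimate $\bigl|(-\lambda)^j \eta_{j,m}(x)\bigr| \leq |\lambda|^j \bigl(\mathcal{S}(\beta)\bigr)^j / j!$ furnishes a summable, $m$-independent majorant for the series defining $w_{\lambda,m}(x)$, so dominated convergence for series lets me interchange the limit in $m$ with the summation, giving
\[
\lim_{m\to\infty} w_{\lambda,m}(x) = \sum_{j=0}^\infty (-\lambda)^j \lim_{m\to\infty} \eta_{j,m}(x) = \sum_{j=0}^\infty (-\lambda)^j \eta_j(x) = w_\lambda(x),
\]
which is the desired pointwise convergence for each $a < x < b$ and $\lambda \in \mathbb{C}$.

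I expect the main obstacle to be the moving lower endpoint $a_m$, which approaches the possibly singular (entrance) boundary $a$: standard continuous-dependence theorems for ODEs do not apply here, since the initial points $a_m$ lie near where the coefficients of $\ell$ may degenerate. The crucial fact that makes the argument go through is precisely the integrability near $a$ encoded in \eqref{eq:shypPDE_Lop_leftBC}, which simultaneously controls the $\eta_j$ close to $a$ and guarantees that the thin strip $(a, a_m]$ contributes negligibly in the limit; working with the explicit series (rather than with the differential equation) is what allows this integrability to be exploited directly.
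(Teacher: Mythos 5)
Your proposal is correct and follows essentially the same route as the paper's own proof: both expand $w_{\lambda,m}$ in the series $\sum_j(-\lambda)^j\eta_{j,m}(x)$, use the $m$-uniform bound $|\eta_{j,m}(x)|\leq\frac{1}{j!}(\mathcal{S}(x))^j$, show $\eta_{j,m}\to\eta_j$ by induction on $j$, and pass to the limit under the sum by dominated convergence. The only difference is that you spell out the inductive step (via the splitting $\eta_{j-1,m}=\eta_{j-1}+D_{j-1,m}$ and the integrability from \eqref{eq:shypPDE_Lop_leftBC}) that the paper dismisses as ``easy to see.''
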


\begin{proof}
In the same way as in the proof of Lemma \ref{lem:shypPDE_ode_wsol} we can check that the solution of \eqref{eq:shypPDE_ode_wsoleps} is given by
\[
w_{\lambda,m}(x) = \sum_{j=0}^\infty (-\lambda)^j \eta_{j,m}(x) \qquad (a_m < x < b, \; \lambda \in \mathbb{C})
\]
where $\eta_{0,m}(x) = 1$ and $\eta_{j,m}(x) = \int_{a_m}^x \bigl(\mathfrak{s}(x) - \mathfrak{s}(\xi)\bigr) \eta_{j-1,m}(\xi) r(\xi) d\xi$. As before we have $|\eta_{j,m}(x)| \leq {1 \over j!} (\mathcal{S}(x))^j$ for $a_m < x \leq \beta$ (where $\mathcal{S}$ is the function from the proof of Lemma \ref{lem:shypPDE_ode_wsol}). Using this estimate and induction on $j$, it is easy to see that $\eta_{j,m}(x) \to \eta_j(x)$ as $m \to \infty$\, ($a < x \leq \beta$, $j=0,1,\ldots$). Noting that the estimate on $|\eta_{j,m}(x)|$ allows us to take the limit under the summation sign, we conclude that $w_{\lambda,m}(x) \to w_\lambda(x)$ as $m \to \infty$ ($a < x \leq \beta$).
\end{proof}

The following lemma provides a sufficient condition for the solution $w_\lambda(\cdot)$ to be uniformly bounded in the variables $x \in (a,b)$ and $\lambda \geq 0$:

\begin{lemma} \label{lem:shypPDE_wsolbound}
If $x \mapsto p(x)r(x)$ is an increasing function, then the solution of \eqref{eq:shypPDE_ode_wsol} is bounded:
\[
|w_\lambda(x)| \leq 1 \qquad \text{for all } \, a < x < b, \; \lambda \geq 0.
\]
\end{lemma}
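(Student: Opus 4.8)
The plan is to exhibit a Lyapunov-type energy functional for the first-order system equivalent to $\ell(w)=\lambda w$ and to show that monotonicity of $pr$ forces this energy to be non-increasing, with initial value $1$. Since the coefficients $p,r$, the parameter $\lambda\geq 0$ and the initial data are all real, the solution $w_\lambda$ is real-valued, so it suffices to prove $w_\lambda(x)^2\leq 1$. The case $\lambda=0$ is immediate: then $(pw')'=0$ together with $w^{[1]}(a)=0$ gives $w^{[1]}\equiv 0$, hence $w_\lambda\equiv 1$. For $\lambda>0$ I would not work directly on $(a,b)$, because at an entrance boundary the product $pr$ may vanish or blow up as $x\to a^+$ and the energy need not have a clean boundary value; instead I would argue on the regular truncated problems \eqref{eq:shypPDE_ode_wsoleps} and pass to the limit only at the end.

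Write $v=w_{\lambda,m}^{[1]}=p\,w_{\lambda,m}'$, so that $\ell(w_{\lambda,m})=\lambda w_{\lambda,m}$ becomes the system $w_{\lambda,m}'=v/p$ and $v'=(p w_{\lambda,m}')'=-\lambda\, r\, w_{\lambda,m}$ on $(a_m,b)$. For $\lambda>0$ define the energy
\[
Q_m(x)=w_{\lambda,m}(x)^2+\frac{v(x)^2}{\lambda\,p(x)\,r(x)}.
\]
A direct differentiation, valid a.e.\ since $p,r,p',r'$ are locally absolutely continuous and $pr\geq p(a_m)r(a_m)>0$ on $[a_m,b)$, gives
\[
Q_m'=\frac{2 w_{\lambda,m}\, v}{p}+\frac{2 v v'}{\lambda p r}-\frac{v^2 (pr)'}{\lambda (pr)^2}=-\frac{v^2\,(pr)'}{\lambda\,(pr)^2},
\]
where the first two terms cancel after substituting $v'=-\lambda r\,w_{\lambda,m}$. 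Since $pr$ is increasing we have $(pr)'\geq 0$ a.e., so $Q_m'\leq 0$ and $Q_m$ is non-increasing. At the regular endpoint $a_m$ the initial conditions $w_{\lambda,m}(a_m)=1$ and $v(a_m)=w_{\lambda,m}^{[1]}(a_m)=0$ give $Q_m(a_m)=1$ (here $p(a_m)r(a_m)$ is finite and positive, so the second term is genuinely $0$). Hence $w_{\lambda,m}(x)^2\leq Q_m(x)\leq Q_m(a_m)=1$ for all $a_m<x<b$, i.e.\ $|w_{\lambda,m}(x)|\leq 1$.

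Finally I would let $m\to\infty$: by Lemma \ref{lem:shypPDE_ode_wepslimit}, $w_{\lambda,m}(x)\to w_\lambda(x)$ pointwise for each $a<x<b$ and $\lambda\geq 0$, so the bound $|w_{\lambda,m}(x)|\leq 1$ passes to the limit and yields $|w_\lambda(x)|\leq 1$. The main obstacle is precisely the behavior of the energy at the left endpoint $a$: the cancellation producing $Q_m'=-v^2(pr)'/\lambda(pr)^2$ is the only genuine computation and is robust, but because $a$ may be a true entrance boundary one cannot simply evaluate $Q$ there. Approximating by the regular problems on $(a_m,b)$, where $a_m$ is an interior point and the weight $pr$ is a well-behaved positive constant at the endpoint, is what makes the energy estimate rigorous, and the convergence supplied by Lemma \ref{lem:shypPDE_ode_wepslimit} then transfers the bound back to $(a,b)$.
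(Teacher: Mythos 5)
Your proof is correct and is essentially the paper's argument: your monotone energy $Q_m=w_{\lambda,m}^2+(w_{\lambda,m}^{[1]})^2/(\lambda pr)$ is exactly the differential form of the identity the paper obtains by multiplying $\ell(w_\lambda)=\lambda w_\lambda$ by $2w_\lambda^{[1]}$ and integrating by parts, and the passage to the limit via Lemma \ref{lem:shypPDE_ode_wepslimit} is the same regularization the paper uses when $p(a)r(a)=0$ (the paper merely treats the case $p(a)r(a)>0$ directly on $(a,b)$ first, which your uniform truncation harmlessly subsumes).
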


\begin{proof}
Let us start by assuming that $p(a)r(a) > 0$. For $\lambda = 0$ the result is trivial because $w_0(x) \equiv 1$. Fix $\lambda > 0$. Multiplying both sides of the differential equation $\ell(w_\lambda) = \lambda w_\lambda$ by $2w_\lambda^{[1]}$, we obtain $-{1 \over pr} [(w_\lambda^{[1]})^2]' = \lambda (w_\lambda^2)'$. Integrating the differential equation and then using integration by parts, we get
\begin{align*}
\lambda\bigl(1-w_\lambda(x)^2\bigr) & = \int_a^x {1 \over p(\xi) r(\xi)} \bigl(w_\lambda^{[1]}(\xi)^2\bigr)' d\xi \\
& = {w_\lambda^{[1]}(x)^2 \over p(x) r(x)} + \int_a^x \bigl(p(\xi)r(\xi)\bigr)' \biggl({w_\lambda^{[1]}(\xi) \over p(\xi)r(\xi)}\biggr)^{\!2} d\xi, \qquad a < x < b
\end{align*}
where we also used the fact that $w_\lambda^{[1]}(a) = 0$ and the assumption that $p(a)r(a) > 0$. The right hand side is nonnegative, because $x \mapsto p(x) r(x)$ is increasing and therefore $(p(\xi)r(\xi))' \geq 0$. Given that $\lambda > 0$, it follows that $1 - w_\lambda(x)^2 \geq 0$, so that $|w_\lambda(x)| \leq 1$.

If $p(a)r(a) = 0$, the above proof can be used to show that the solution of \eqref{eq:shypPDE_ode_wsoleps} is such that $|w_{\lambda,m}(x)| \leq 1$ for all $a < x < b$, $\lambda \geq 0$ and $m \in \mathbb{N}$; then Lemma \ref{lem:shypPDE_ode_wepslimit} yields the desired result.
\end{proof}

\begin{remark} \label{rmk:shypPDE_tildeell}
We shall make extensive use of the fact that the differential expression \eqref{eq:shypPDE_elldiffexpr} can be transformed into the standard form
\[
\widetilde{\ell} = - {1 \over A} {d \over d\xi} \Bigl(A {d \over d\xi} \Bigr) = -{d^2 \over d\xi^2} - {A' \over A} {d \over d\xi}.
\]
This is achieved by setting 
\begin{equation} \label{eq:shypPDE_tildeell_A}
A(\xi) := \sqrt{p(\gamma^{-1}(\xi)) \, r(\gamma^{-1}(\xi))},
\end{equation}
where $\gamma^{-1}$ is the inverse of the increasing function 
\[
\gamma(x) = \int_c^x\! \smash{\sqrt{r(y) \over p(y)}} dy,
\]
$c \in (a,b)$ being a fixed point (if $\smash{\sqrt{r(y) \over p(y)}}$ is integrable near $a$, we may also take $c=a$). Indeed, it is straightforward to check that a given function $\omega_\lambda: (a,b) \to \mathbb{C}$ satisfies $\ell(\omega_\lambda) = \lambda \omega_\lambda$ if and only if $\widetilde{\omega}_\lambda(\xi) := \omega_\lambda(\gamma^{-1}(\xi))$ satisfies $\widetilde{\ell}(\widetilde{\omega}_\lambda) = \lambda \widetilde{\omega}_\lambda$.

It is interesting to note that the assumption of the previous lemma ($x \mapsto p(x) r(x)$ is increasing) is equivalent to requiring that the first-order coefficient $A' \over A$ of the transformed operator $\widetilde{\ell}$ is nonnegative. We also observe that if this assumption holds then we have $\gamma(b) = \infty$ (otherwise the left-hand side integral in \eqref{eq:shypPDE_Lop_rightBC} would be finite, contradicting that $b$ is a natural boundary). We have $\gamma(a) > -\infty$ if $a$ is a regular endpoint (Remark \ref{rmk:shypPDE_boundaryclassif}); if $a$ is entrance, $\gamma(a)$ can be either finite or infinite.
\end{remark}

\subsection{Sturm-Liouville type transforms}

For simplicity, we shall write $L_p(r) := L_p\bigl((a,b); r(x)dx\bigr)$\, ($1 \leq p < \infty$), and the norm of this space will be denoted by $\|\cdot\|_p$.

It follows from the boundary conditions \eqref{eq:shypPDE_Lop_leftBC}--\eqref{eq:shypPDE_Lop_rightBC} that one obtains a self-adjoint realization of $\ell$ in the Hilbert space $L_2(r)$ by imposing the Neumann boundary condition $\lim_{x \downarrow a}u^{[1]}(x) = 0$ at the left endpoint $a$. We state this well-known fact (cf.\ \cite{mckean1956,linetsky2004}) as a lemma:

\begin{lemma}
The operator
\[
\mathcal{L}: \mathcal{D}_\mathcal{L}^{(2)} \subset L_2(r) \longrightarrow L_2(r), \qquad\quad \mathcal{L} u = \ell(u)
\]
where
\begin{equation} \label{eq:shypPDE_Lop_L2domain}
\mathcal{D}_\mathcal{L}^{(2)} := \Bigl\{ u \in L_2(r) \Bigm| u \text{ and } u' \text{ locally abs.\ continuous on } (a,b), \; \ell(u) \in L_2(r), \; \lim_{x \downarrow a} u^{[1]}(x) = 0 \Bigr\}
\end{equation}
is self-adjoint.
\end{lemma}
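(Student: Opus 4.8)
The plan is to place the statement within the Weyl--Titchmarsh limit-point/limit-circle theory of singular Sturm--Liouville operators (as developed in \cite{titchmarsh1962,dunfordschwartz1963}); the only non-routine ingredient is the translation of the Feller-type boundary conditions \eqref{eq:shypPDE_Lop_leftBC}--\eqref{eq:shypPDE_Lop_rightBC} into statements on the $L_2(r)$-integrability of solutions near the endpoints. Throughout I would write $[u,v](x) := u^{[1]}(x)\,\overline{v(x)} - u(x)\,\overline{v^{[1]}(x)}$ for the boundary sesquilinear form of $\ell$, and use the two linearly independent solutions of $\ell(u)=0$, namely $w_0\equiv 1$ and $\mathfrak{s}$ (that $\ell(\mathfrak{s})=0$ is immediate since $\mathfrak{s}^{[1]}=p\,\mathfrak{s}'\equiv 1$). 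Since the limit-circle alternative is $\lambda$-independent, the classification of each endpoint may be read off from these two solutions at $\lambda=0$.

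I would first carry out the endpoint classification. At the right endpoint, I claim \eqref{eq:shypPDE_Lop_rightBC} forces at least one of $1,\mathfrak{s}$ to fail to lie in $L_2(r)$ near $b$: from the divergence of $\int_c^b(\mathfrak{s}(y)-\mathfrak{s}(c))\,r(y)\,dy$ one gets $\int_c^b r\,dy=\infty$ when $\mathfrak{s}(b)<\infty$ (so $1\notin L_2(r)$), and $\int_c^b\mathfrak{s}^2\,r\,dy=\infty$ when $\mathfrak{s}(b)=\infty$ (so $\mathfrak{s}\notin L_2(r)$). Hence $b$ is a natural boundary in the limit-point case, and no boundary condition is needed there. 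At the left endpoint, \eqref{eq:shypPDE_Lop_leftBC} (which says $a$ is regular or of entrance type) first yields $\int_a^c r\,dy<\infty$, so that $1\in L_2(r)$ near $a$, and then leaves two possibilities: $a$ is limit-circle when in addition $\mathfrak{s}\in L_2(r)$ near $a$ (e.g.\ in the regular case), and limit-point otherwise.

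Granting the classification, self-adjointness follows from the standard machinery. Two integrations by parts give the Lagrange identity
\[
\int_a^b (\ell u)\,\overline{v}\, r\,dx - \int_a^b u\,\overline{\ell v}\, r\,dx = [u,v](a) - [u,v](b),\qquad u,v\in\mathcal{D}_\mathcal{L}^{(2)},
\]
the one-sided limits existing because $\ell u,\ell v\in L_2(r)$. At $b$ the boundary term vanishes automatically, this being the standard consequence of the limit-point property for functions in the maximal domain \eqref{eq:shypPDE_Lop_L2domain}. When $a$ is limit-point the same holds, and moreover membership in $L_2(r)$ near $a$ already forces $\lim_{x\downarrow a}u^{[1]}(x)=0$ (a finite limit $L$ exists by the identity $u^{[1]}(x)=u^{[1]}(c)+\int_x^c r\,\ell u\,dy$, and $L\neq 0$ would make $u$ behave like $L\,\mathfrak{s}\notin L_2(r)$), so that the Neumann condition is redundant. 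When $a$ is limit-circle, exactly one separated boundary condition must be imposed, and $\lim_{x\downarrow a}u^{[1]}(x)=0$ is an admissible self-adjoint one, making $[u,v](a)=0$ on the resulting domain. In every case the number of imposed conditions matches the deficiency, so by the classical description of the self-adjoint extensions of the minimal operator in terms of vanishing boundary forms \cite{titchmarsh1962,dunfordschwartz1963,mckean1956,linetsky2004}, $\mathcal{L}$ is self-adjoint on $\mathcal{D}_\mathcal{L}^{(2)}$.

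I expect the main obstacle to be the passage from the integral conditions \eqref{eq:shypPDE_Lop_leftBC}--\eqref{eq:shypPDE_Lop_rightBC} to the $L_2(r)$ statements, and in particular the entrance case at $a$, where $\mathfrak{s}$ need not belong to $L_2(r)$: one must check that the Neumann condition is simultaneously the correct single limit-circle boundary condition (when $\mathfrak{s}\in L_2(r)$ near $a$) and the automatically satisfied one (when $a$ is limit-point), and that in each case $[u,v](a)=0$. A convenient way to organize these endpoint estimates is to pass to the standard-form operator $\widetilde{\ell}=-\frac{1}{A}\frac{d}{d\xi}\bigl(A\frac{d}{d\xi}\bigr)$ of Remark \ref{rmk:shypPDE_tildeell}, whose coefficient is explicit, and to use the integral representation of solutions established in the proof of Lemma \ref{lem:shypPDE_ode_wsol}.
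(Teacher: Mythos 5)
Your proposal is correct in substance, but it takes a genuinely different route from the paper for the simple reason that the paper offers no proof at all: the lemma is stated as a known fact with a pointer to \cite{mckean1956} and \cite{linetsky2004}. What you supply is the self-contained Weyl--Titchmarsh argument that those references encapsulate, and the key reductions are right: $\mathbf{1}$ and $\mathfrak{s}$ solve $\ell u=0$; condition \eqref{eq:shypPDE_Lop_rightBC} rules out $\mathbf{1},\mathfrak{s}\in L_2(r)$ near $b$ simultaneously (split on $\mathfrak{s}(b)<\infty$ versus $=\infty$), so $b$ is limit-point; condition \eqref{eq:shypPDE_Lop_leftBC} gives $\int_a^c r\,dy<\infty$ (since $|\mathfrak{s}(y)|$ is bounded below away from $a$), hence $\mathbf{1}\in L_2(r)$ near $a$, and the Neumann condition is exactly $[u,\mathbf{1}](a)=0$; in the limit-point case at $a$ the finite limit $L$ of $u^{[1]}$ exists by Cauchy--Schwarz and $L\neq 0$ would force $u\sim L\mathfrak{s}\notin L_2(r)$ (note that limit-point at $a$ together with $\int_a^c r<\infty$ automatically gives $\mathfrak{s}(a)=-\infty$, which your asymptotic argument needs). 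The one place where your sketch is thinner than what a full write-up requires is the vanishing of the boundary form $[u,v](a)$ in the limit-circle \emph{entrance} case, where $u$ and $v$ need not be bounded near $a$ even though $u^{[1]},v^{[1]}\to 0$; one must combine the decay estimate $|u^{[1]}(x)|\leq(\int_a^x r)^{1/2}\|\ell u\|_{L_2(r)}$ with the growth bound $u(x)=o(|\mathfrak{s}(x)|)$ and the integrability $\int_a^c|\mathfrak{s}|\,r\,dy<\infty$ from \eqref{eq:shypPDE_Lop_leftBC}, or else invoke the general fact that $[u,v](a)=0$ for any two elements of a separated self-adjoint domain at a limit-circle endpoint. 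That is precisely the content delegated to the cited references, so your proof is a legitimate expansion of the paper's citation rather than a divergence from it.
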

The self-adjoint realization $\mathcal{L}$ gives rise to an integral transform, which we will call the \emph{$\mathcal{L}$-transform}, given by
\begin{equation} \label{eq:shypPDE_Ltransfdef}
(\mathcal{F} h)(\lambda) := \int_a^b h(x) \, w_\lambda(x) \, r(x) dx \qquad (h \in L_1(r), \; \lambda \geq 0)
\end{equation}
(this is also known as the generalized Fourier transform or the Sturm-Liouville transform). The $\mathcal{L}$-transform is an isometry with an inverse which can be written as an integral with respect to the so-called \emph{spectral measure} $\bm{\rho}_\mathcal{L}$:

\begin{proposition} \label{prop:shypPDE_Ltransf}
There exists a unique locally finite positive Borel measure $\bm{\rho}_\mathcal{L}$ on $\mathbb{R}$ such that the map $h \mapsto \mathcal{F} h$ induces an isometric isomorphism $\mathcal{F}: L_2(r) \longrightarrow L_2(\mathbb{R}; \bm{\rho}_\mathcal{L})$ whose inverse is given by
\[
(\mathcal{F}^{-1} \varphi)(x) = \int_\mathbb{R} \varphi(\lambda) \, w_\lambda(x) \, \bm{\rho}_\mathcal{L}(d\lambda),
\]
the convergence of the latter integral being understood with respect to the norm of $L_2(r)$. The spectral measure $\bm{\rho}_\mathcal{L}$ is supported on $[0,\infty)$. Moreover, the differential operator $\mathcal{L}$ is connected with the transform \eqref{eq:shypPDE_Ltransfdef} via the identity
\begin{equation} \label{eq:shypPDE_Ltransfidentity}
[\mathcal{F} (\mathcal{L} h)] (\lambda) = \lambda \ccdot (\mathcal{F} h)(\lambda), \qquad h \in \mathcal{D}_\mathcal{L}^{(2)}
\end{equation}
and the domain $\mathcal{D}_\mathcal{L}^{(2)}$ defined by \eqref{eq:shypPDE_Lop_L2domain} can be written as
\begin{equation} \label{eq:shypPDE_LtransfidentD2}
\mathcal{D}_\mathcal{L}^{(2)} = \Bigl\{ u \in L_2(r) \Bigm| \lambda \ccdot (\mathcal{F} f)(\lambda) \in L_2\bigl([0,\infty); \bm{\rho}_\mathcal{L}\bigr) \Bigr\}.
\end{equation}
\end{proposition}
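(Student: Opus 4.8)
The plan is to obtain the proposition from the Weyl--Titchmarsh--Kodaira theory of singular Sturm--Liouville eigenfunction expansions, using the regularization scheme furnished by Lemmas \ref{lem:shypPDE_ode_wsol}--\ref{lem:shypPDE_wsolbound}. Since the preceding lemma shows $\mathcal{L}$ to be self-adjoint, and since \eqref{eq:shypPDE_Lop_rightBC} places $b$ in the limit-point case while \eqref{eq:shypPDE_Lop_leftBC} makes $a$ regular or entrance, the deficiency analysis reduces to a single scalar spectral measure; moreover, the solution $w_\lambda$ singled out by the boundary condition at $a$ in Lemma \ref{lem:shypPDE_ode_wsol} is the natural generalized eigenfunction for the expansion.

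First I would dispose of the regularized problems on $(a_m,b)$. Each left endpoint $a_m$ is regular, so the classical expansion theorem for an operator with one regular and one limit-point endpoint applies verbatim: there is a unique spectral measure $\bm{\rho}_m$ on $\mathbb{R}$ rendering $h \mapsto \int_{a_m}^b h\, w_{\lambda,m}\, r\,dx$ an isometric isomorphism onto $L_2(\mathbb{R};\bm{\rho}_m)$ with the stated inversion formula, $\bm{\rho}_m$ being recovered from the Weyl $m$-function through the Stieltjes--Perron inversion.

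Second --- and this is where the real work lies --- I would pass to the limit $m \to \infty$. Lemma \ref{lem:shypPDE_ode_wepslimit} gives $w_{\lambda,m} \to w_\lambda$ pointwise, and Lemma \ref{lem:shypPDE_wsolbound} supplies the uniform bound $|w_{\lambda,m}| \le 1$ for $\lambda \ge 0$; from these one extracts a uniform local bound on the distribution functions of $\bm{\rho}_m$, so Helly's selection theorem produces a subsequence converging vaguely to a locally finite measure $\bm{\rho}_\mathcal{L}$. Applying Parseval's identity for $\mathcal{F}_m$ to a fixed $h \in \mathrm{C}_\mathrm{c}(a,b)$ (which lies in every truncated domain once $a_m$ is small enough) and letting $m \to \infty$ yields $\|h\|_2 = \|\mathcal{F}h\|_{L_2(\bm{\rho}_\mathcal{L})}$ on a dense subspace, whence $\mathcal{F}$ extends to an isometry; an analogous limit in the inversion formula gives surjectivity and the representation of $\mathcal{F}^{-1}$. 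Uniqueness of $\bm{\rho}_\mathcal{L}$ follows since $\mathcal{F}$ is onto $L_2(\bm{\rho}_\mathcal{L})$ and the $w_\lambda$ separate measures on $[0,\infty)$.

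Third, for the support and the remaining identities I would argue by the functional calculus. Integration by parts shows $\langle \mathcal{L}u, u\rangle_{L_2(r)} = \int_a^b p\,|u'|^2\,dx \ge 0$ for $u \in \mathcal{D}_\mathcal{L}^{(2)}$, the boundary contribution $-u^{[1]}\bar u$ vanishing at $a$ by the Neumann condition in \eqref{eq:shypPDE_Lop_L2domain} and at $b$ by the limit-point behavior; hence $\mathcal{L} \ge 0$ and $\supp \bm{\rho}_\mathcal{L} \subset [0,\infty)$. The intertwining identity \eqref{eq:shypPDE_Ltransfidentity} then follows by inserting $\ell(w_\lambda) = \lambda w_\lambda$ and integrating by parts twice, the boundary terms again vanishing, while the domain description \eqref{eq:shypPDE_LtransfidentD2} is the standard statement that the unitary $\mathcal{F}$ carries $\mathcal{L}$ to multiplication by $\lambda$. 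The main obstacle is the second step: guaranteeing that no spectral mass escapes in the vague limit (tightness on compact $\lambda$-intervals) and that Parseval's identity survives the passage to the limit, precisely because when $a$ is merely entrance the limit operator is not covered by the one-regular-endpoint theory and must be reached through this approximation.
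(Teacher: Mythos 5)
Your route is genuinely different from the paper's. The paper does not regularize at all: its proof observes that, under \eqref{eq:shypPDE_Lop_leftBC}, the solution $w_\lambda$ is square-integrable near $a$ with respect to $r(x)dx$ and entire in $\lambda$ (Lemma \ref{lem:shypPDE_ode_wsol}), which places the left endpoint in the quasi-regular class treated by Eckhardt and Teschl; their results then give the scalar expansion in terms of $w_\lambda$ alone directly, with no $2\times 2$ matrix measure and no limiting procedure. Your approach --- truncate to $(a_m,b)$, apply the classical one-regular-endpoint theory there, and pass to the limit via Helly selection --- is the older Kac--Levitan strategy; it is more self-contained and meshes naturally with Lemmas \ref{lem:shypPDE_ode_wepslimit} and \ref{lem:shypPDE_wsolbound}, at the price of having to control the limit $m \to \infty$.

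That control is exactly where your write-up stops short: you name it as ``the main obstacle'' but do not supply it, and it is the entire nontrivial content of this route. Two concrete points. First, for the Helly step you should not lean on Lemma \ref{lem:shypPDE_wsolbound}, which requires $pr$ increasing --- a hypothesis Proposition \ref{prop:shypPDE_Ltransf} does not carry (it enters only from Section \ref{sec:hypPDE} onward); what you actually need is the bound $|w_{\lambda,m}(x)| \leq e^{|\lambda|\mathcal{S}(x)}$ and the fact that $w_{\lambda,m}(x)$ is close to $1$ for $x$ near $a$, uniformly in $m$ and in $\lambda$ on compacts (both come from the series representation in the proofs of Lemmas \ref{lem:shypPDE_ode_wsol}--\ref{lem:shypPDE_ode_wepslimit}); testing Parseval for the truncated transform $\mathcal{F}_m$ against an $h$ supported near $a$ then bounds $\bm{\rho}_m(K)$ uniformly on compacts $K$. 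Second, and more importantly, to conclude $\int |\mathcal{F}_m h|^2 d\bm{\rho}_m \to \int |\mathcal{F}h|^2 d\bm{\rho}_\mathcal{L}$ from vague convergence you must exclude spectral mass escaping to $\lambda = \infty$. The standard repair is to take $h \in \mathrm{C}_\mathrm{c}^\infty(a,b)$, for which integration by parts has no boundary terms and gives $\mathcal{F}_m(\ell h) = \lambda\,\mathcal{F}_m h$, whence $\int \lambda^2 |(\mathcal{F}_m h)(\lambda)|^2 \bm{\rho}_m(d\lambda) = \|\ell(h)\|_2^2$ uniformly in $m$ and therefore $\int_{|\lambda|>N} |\mathcal{F}_m h|^2 d\bm{\rho}_m \leq N^{-2}\|\ell(h)\|_2^2$. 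Without this tail estimate (or an equivalent) the Parseval identity for the limit measure, the surjectivity of $\mathcal{F}$, and the inversion formula are all unjustified, so as written the argument is incomplete at its central step; the remaining assertions (support in $[0,\infty)$ via $\mathcal{L}\geq 0$, the intertwining identity, and the domain description) are handled correctly.
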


\begin{proof}
The existence of a generalized Fourier transform associated with the operator $\mathcal{L}$ is a consequence of the standard Weyl-Titchmarsh-Kodaira theory of eigenfunction expansions of Sturm-Liouville operators (see \cite[Section 3.1]{sousayakubovich2018} and \cite[Section 8]{weidmann1987}).

In the general case the eigenfunction expansion is written in terms of two linearly independent eigenfunctions and a $2 \times 2$ matrix measure. However, from the regular/entrance boundary assumption \eqref{eq:shypPDE_Lop_leftBC} it follows that the function $w_\lambda(x)$ is square-integrable near $x = 0$ with respect to the measure $r(x)dx$; moreover, by Lemma \ref{lem:shypPDE_ode_wsol}, $w_\lambda(x)$ is (for fixed $x$) an entire function of $\lambda$. Therefore, the possibility of writing the expansion in terms only of the eigenfunction $w_\lambda(x)$ follows from the results of \cite[Sections 9 and 10]{eckhardt2013}.
\end{proof}

It is worth pointing out that the transformation of the Sturm-Liouville operator $\ell$ into its standard form $\widetilde{\ell}$ (Remark \ref{rmk:shypPDE_tildeell}) leaves the spectral measure unchanged: indeed, it is easily verified that the operator $\widetilde{\mathcal{L}}: \mathcal{D}_{\widetilde{\mathcal{L}}}^{(2)} \subset L_2(A) \longrightarrow L_2(A)$,\, $\widetilde{\mathcal{L}} u = \widetilde{\ell}(u)$ is unitarily equivalent to the operator $\mathcal{L}$ and, consequently, $\bm{\rho}_{\widetilde{\mathcal{L}}} = \bm{\rho}_\mathcal{L}$.

The following lemma gives a sufficient condition for the inversion integral of the $\mathcal{L}$-transform to be absolutely convergent.

\begin{lemma} \label{lem:shypPDE_Ltransf_D2prop}
\textbf{(a)} For each $\mu \in \mathbb{C} \setminus\mathbb{R}$, the integrals
\begin{equation} \label{eq:shypPDE_Lresolv_specunif}
\int_{[0,\infty)} {w_\lambda(x)\, w_\lambda(y) \over |\lambda - \mu|^2} \bm{\rho}_\mathcal{L}(d\lambda) \qquad\; \text{and} \qquad\; \int_{[0,\infty)} {w_\lambda^{[1]}(x)\, w_\lambda^{[1]}(y) \over |\lambda - \mu|^2} \bm{\rho}_\mathcal{L}(d\lambda)
\end{equation}
converge uniformly on compact squares in $(a,b)^2$. \\[-8pt]

\textbf{(b)} If $h \in \mathcal{D}_{\mathcal{L}}^{(2)}$, then 
\begin{align} 
\label{eq:shypPDE_Ltransf_D2prop}
h(x) & = \int_{[0,\infty)}\! (\mathcal{F}h)(\lambda) \, w_\lambda(x) \, \bm{\rho}_\mathcal{L}(d\lambda)\\
\label{eq:shypPDE_Ltransf_D2propderiv}
h^{[1]}(x) & = \int_{[0,\infty)}\! (\mathcal{F}h)(\lambda) \, w_\lambda^{[1]}(x) \, \bm{\rho}_\mathcal{L}(d\lambda)
\end{align}
where the right-hand side integrals converge absolutely and uniformly on compact subsets of $(a,b)$.
\end{lemma}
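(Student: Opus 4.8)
The plan is to identify both integrals in part (a) as squared $L_2(r)$-norms of the resolvent kernel of $\mathcal{L}$ and of its quasi-derivative, and then to deduce part (b) from a Cauchy--Schwarz estimate against the weight $|\lambda-\mu|^{-2}$ combined with the spectral characterization \eqref{eq:shypPDE_LtransfidentD2} of $\mathcal{D}_{\mathcal{L}}^{(2)}$.

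First I would recall, from the Weyl--Titchmarsh--Kodaira theory underlying Proposition \ref{prop:shypPDE_Ltransf}, the Green's function $G_\mu(x,y)$ of the resolvent $(\mathcal{L}-\mu)^{-1}$, expressible as $G_\mu(x,y)=W^{-1}\,w_\mu(x\wedge y)\,\psi_\mu(x\vee y)$, where $w_\mu$ is the left solution of Lemma \ref{lem:shypPDE_ode_wsol} (square-integrable near $a$ by \eqref{eq:shypPDE_Lop_leftBC}), $\psi_\mu$ is the Weyl solution square-integrable near the natural boundary $b$, and $W$ is their Wronskian. For fixed $x$ the map $y\mapsto G_\mu(x,y)$ then lies in $L_2(r)$ --- it behaves like $w_\mu$ near $a$ and like $\psi_\mu$ near $b$ --- and its $\mathcal{L}$-transform is $(\mathcal{F} G_\mu(x,\cdot))(\lambda)=\frac{w_\lambda(x)}{\lambda-\mu}$; by Parseval,
\[
\int_{[0,\infty)}\frac{w_\lambda(x)^2}{|\lambda-\mu|^2}\,\bm{\rho}_{\mathcal{L}}(d\lambda)=\|G_\mu(x,\cdot)\|_{L_2(r)}^2<\infty .
\]
Differentiating the Weyl formula in $x$ (justified by the local boundedness of the partial derivatives of $G_\mu$) gives, for the quasi-derivative $\partial_x^{[1]}G_\mu(x,\cdot):=p(x)\,\partial_x G_\mu(x,\cdot)$, the transform $(\mathcal{F}[\partial_x^{[1]}G_\mu(x,\cdot)])(\lambda)=\frac{w_\lambda^{[1]}(x)}{\lambda-\mu}$, whence $\int_{[0,\infty)}\frac{(w_\lambda^{[1]}(x))^2}{|\lambda-\mu|^2}\,\bm{\rho}_{\mathcal{L}}(d\lambda)=\|\partial_x^{[1]}G_\mu(x,\cdot)\|_{L_2(r)}^2<\infty$. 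Cauchy--Schwarz then bounds each cross integral in \eqref{eq:shypPDE_Lresolv_specunif} by the product of the corresponding diagonal norms. For uniform convergence on a compact square $K\times K$ I would observe that the diagonal integrands are nonnegative and continuous in $x$, that the truncations $\int_{[0,N]}$ are continuous in $x$, and that the full diagonal integrals $\|G_\mu(x,\cdot)\|_{L_2(r)}^2$ and $\|\partial_x^{[1]}G_\mu(x,\cdot)\|_{L_2(r)}^2$ are continuous in $x$; Dini's theorem then yields uniform convergence of the diagonal integrals on $K$, and a single Cauchy--Schwarz step transfers this to the off-diagonal integrals on $K\times K$.

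For part (b), the key input is that $h\in\mathcal{D}_{\mathcal{L}}^{(2)}$ forces $(\lambda-\mu)(\mathcal{F} h)(\lambda)\in L_2(\bm{\rho}_{\mathcal{L}})$, since $\mathcal{F}[(\mathcal{L}-\mu)h](\lambda)=(\lambda-\mu)(\mathcal{F} h)(\lambda)$ by \eqref{eq:shypPDE_Ltransfidentity} and \eqref{eq:shypPDE_LtransfidentD2}. Writing $(\mathcal{F} h)(\lambda)\,w_\lambda(x)=\big[(\lambda-\mu)(\mathcal{F} h)(\lambda)\big]\cdot\frac{w_\lambda(x)}{\lambda-\mu}$ and applying Cauchy--Schwarz, I obtain
\[
\int_{[0,\infty)}\big|(\mathcal{F} h)(\lambda)\,w_\lambda(x)\big|\,\bm{\rho}_{\mathcal{L}}(d\lambda)\le\|(\mathcal{L}-\mu)h\|_{L_2(r)}\Big(\int_{[0,\infty)}\tfrac{w_\lambda(x)^2}{|\lambda-\mu|^2}\,\bm{\rho}_{\mathcal{L}}(d\lambda)\Big)^{1/2},
\]
which is finite and, by part (a), uniformly bounded on compact subsets; the same estimate applied to the tail $\int_{\lambda>N}$ gives uniform convergence on compacts, the first factor being the tail of a convergent integral and the second bounded uniformly by part (a). Hence the right-hand side of \eqref{eq:shypPDE_Ltransf_D2prop} defines a continuous function which agrees, by Proposition \ref{prop:shypPDE_Ltransf}, with $h$ in $L_2(r)$ and therefore --- both being continuous --- everywhere. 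The identical estimate with $w_\lambda^{[1]}(x)$ in place of $w_\lambda(x)$ (now using the second integral of part (a)) shows that $\int_{[0,\infty)}(\mathcal{F} h)(\lambda)\,w_\lambda^{[1]}(x)\,\bm{\rho}_{\mathcal{L}}(d\lambda)$ converges absolutely and uniformly on compacts to a continuous function $g$. To identify $g$ with $h^{[1]}$, I would integrate: uniform convergence legitimizes interchanging $\int_{x_0}^x\frac{dt}{p(t)}$ with the spectral integral, and since $\int_{x_0}^x w_\lambda'(t)\,dt=w_\lambda(x)-w_\lambda(x_0)$ this yields $\int_{x_0}^x\frac{g(t)}{p(t)}\,dt=h(x)-h(x_0)$ via \eqref{eq:shypPDE_Ltransf_D2prop}; differentiating in $x$ gives $h^{[1]}=g$, which is \eqref{eq:shypPDE_Ltransf_D2propderiv}.

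I expect the main obstacle to be part (a): rigorously identifying $\frac{w_\lambda(x)}{\lambda-\mu}$ and $\frac{w_\lambda^{[1]}(x)}{\lambda-\mu}$ as the $\mathcal{L}$-transforms of $G_\mu(x,\cdot)$ and $\partial_x^{[1]}G_\mu(x,\cdot)$ (which requires the jump and boundary analysis of the Weyl Green's function and the justification of differentiating it under the transform), together with the continuity in $x$ of the two diagonal norms needed to invoke Dini. The estimates in part (b) are then routine consequences of Cauchy--Schwarz and the spectral characterization of $\mathcal{D}_{\mathcal{L}}^{(2)}$.
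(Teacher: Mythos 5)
Your proposal is correct and follows essentially the same route as the paper: part (a) is obtained by identifying the spectral integrals with $L_2(r)$-norms of the Weyl--Titchmarsh resolvent kernel and its quasi-derivative (the paper cites Eckhardt--Teschl and Naimark for exactly the Parseval identities and the Dini-type argument you spell out), and part (b) is the same Cauchy--Schwarz estimate against the weight $|\lambda-\mu|^{-2}$ combined with \eqref{eq:shypPDE_LtransfidentD2} (the paper uses the weight $\lambda+i$ in place of your $\lambda-\mu$, a purely cosmetic difference).
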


\begin{proof}
\textbf{(a)} By \cite[Lemma 10.6]{eckhardt2013} and \cite[p.\ 229]{teschl2014},
\[
\int_{[0,\infty)} {w_\lambda(x) w_\lambda(y) \over |\lambda - \mu|^2} \bm{\rho}_\mathcal{L}(d\lambda) =  \int_a^b G(x,\xi,\mu)G(y,\xi,\mu)\, r(\xi) d\xi = {1 \over \mathrm{Im}(\mu)}\, \mathrm{Im}\bigl(G(x,y,\mu)\bigr)
\]
where $G(x,y,\mu)$ is the resolvent kernel (or Green function) of the operator $(\mathcal{L}, \mathcal{D}_\mathcal{L}^{(2)})$. Moreover, according to \cite[Theorems 8.3 and 9.6]{eckhardt2013}, the resolvent kernel is given by
\[
G(x,y,\mu) = \begin{cases}
w_\mu(x) \vartheta_\mu(y), & x < y \\
w_\mu(y) \vartheta_\mu(x), & x \geq y
\end{cases}
\]
where $\vartheta_\lambda(\cdot)$ is a solution of $\ell(u) = \lambda u$ which is square-integrable near $\infty$ with respect to the measure $r(x)dx$ and verifies the identity $w_\lambda(x) \vartheta_\lambda^{[1]}(x) - w_\lambda^{[1]}(x) \vartheta_\lambda(x) \equiv 1$. It is easily seen (cf.\ \cite[p.\ 125]{naimark1968}) that the functions $\mathrm{Im}\bigl(G(x,y,\mu)\bigr)$ and $\partial_x^{[1]} \partial_y^{[1]} \mathrm{Im}\bigl(G(x,y,\mu)\bigr)$ are continuous in $0 < x,y < \infty$. Essentially the same proof as that of \cite[Corollary 3]{naimark1968} now yields that
\[
\int_{[0,\infty)} {w_\lambda^{[1]}(x) \, w_\lambda^{[1]}(y) \over |\lambda - \mu|^2} \bm{\rho}_\mathcal{L}(d\lambda) = {1 \over \mathrm{Im}(\mu)}\, \partial_x^{[1]} \partial_y^{[1]} \mathrm{Im}\bigl(G(x,y,\mu)\bigr)
\]
and that the integrals \eqref{eq:shypPDE_Lresolv_specunif} converge uniformly for $x,y$ in compacts.
\\[-8pt]

\textbf{(b)} By Proposition \ref{prop:shypPDE_Ltransf} and the classical theorem on differentiation under the integral sign for Riemann-Stieltjes integrals, to prove \eqref{eq:shypPDE_Ltransf_D2prop}--\eqref{eq:shypPDE_Ltransf_D2propderiv} it only remains to justify the absolute and uniform convergence of the integrals in the right-hand sides.

Recall from Proposition \ref{prop:shypPDE_Ltransf} that the condition $h \in \mathcal{D}_\mathcal{L}^{(2)}$ implies that $\mathcal{F}h \in L_2\bigl([0,\infty); \bm{\rho}_\mathcal{L}\bigr)$ and also $\lambda \,(\mathcal{F}h)(\lambda) \in L_2\bigl([0,\infty); \bm{\rho}_\mathcal{L}\bigr)$. As a consequence, we obtain
\begin{align*}
& \int_{[0,\infty)} \bigl|(\mathcal{F}h)(\lambda) w_\lambda(x)\bigr| \bm{\rho}_\mathcal{L}(d\lambda) \\
& \qquad\qquad \leq \!\int_{[0,\infty)} \! \lambda \, \bigl|(\mathcal{F}h)(\lambda)\bigr| \biggl|{w_\lambda(x) \over \lambda + i}\biggr| \bm{\rho}_\mathcal{L}(d\lambda) + \! \int_{[0,\infty)} \bigl|(\mathcal{F}h)(\lambda)\bigr| \biggl| {w_\lambda(x) \over \lambda + i} \biggr| \bm{\rho}_\mathcal{L}(d\lambda) \\
& \qquad\qquad \leq \bigl(\|\lambda \, (\mathcal{F}h)(\lambda)\|_\rho + \|(\mathcal{F}h)(\lambda)\|_\rho\bigr) \biggl\| {w_\lambda(x) \over \lambda + i} \biggr\|_\rho \\
& \qquad\qquad < \infty
\end{align*}
where $\| \cdot \|_\rho$ denotes the norm of the space $L_2\bigl(\mathbb{R}; \bm{\rho}_\mathcal{L}\bigr)$, and similarly
\[
\int_{[0,\infty)}\! \bigl|(\mathcal{F}h)(\lambda) \, w_\lambda^{[1]}(x) \bigr| \bm{\rho}_\mathcal{L}(d\lambda) \leq \bigl(\|\lambda \, (\mathcal{F}h)(\lambda)\|_\rho + \|(\mathcal{F}h)(\lambda)\|_\rho\bigr) \biggl\| {w_\lambda^{[1]}(x) \over \lambda + i} \biggr\|_\rho < \infty.
\]
We know from part (a) that the integrals which define $\bigl\| {w_\lambda(x) \over \lambda + i} \bigr\|_\rho$ and $\bigl\| {w_\lambda^{[1]}(x) \over \lambda + i} \bigr\|_\rho$ converge uniformly, hence the integrals in \eqref{eq:shypPDE_Ltransf_D2prop}--\eqref{eq:shypPDE_Ltransf_D2propderiv} converge absolutely and uniformly for $x$ in compact subsets.
\end{proof}

\subsection{Diffusion processes}

In what follows we write $P_{x_0}$ for the distribution of a given time-homogeneous Markov process started at the point $x_0$ and $\mathbb{E}_{x_0}$ for the associated expectation operator.

By an \emph{irreducible diffusion process} $X$ on an interval $I \subset \mathbb{R}$ we mean a continuous strong Markov process $\{X_t\}_{t \geq 0}$ with state space $I$ and such that
\[
P_x(\tau_y < \infty) > 0 \; \text{ for any } x \in \mathrm{int} \, I \text{ and } y \in I, \quad\;\; \text{ where } \tau_y = \inf\{t \geq 0 \mid X_t = y\}.
\]
The \emph{resolvent} $\{\mathcal{R}_\eta\}_{\eta > 0}$ of such a diffusion (or of a general Feller process) $X$ is defined by $\mathcal{R}_\eta u = \int_0^\infty e^{-\eta t} \mathcal{P}_t u \, dt$,\, $u \in \mathrm{C}_\mathrm{b}(I,\mathbb{R})$, where $(\mathcal{P}_t u)(x) = \mathbb{E}_x[u(X_t)]$ is the transition semigroup of the process $X$. The \emph{$\mathrm{C}_\mathrm{b}$-generator} $(\mathcal{G}, \mathcal{D}(\mathcal{G}))$ of $X$ is the operator with domain $\mathcal{D}(\mathcal{G}) = \mathcal{R}_\eta\bigl(\mathrm{C}_\mathrm{b}(I,\mathbb{R})\bigr)$ ($\eta > 0$) and defined by 
\[
(\mathcal{G} u)(x) = \eta u(x) - g(x) \qquad \text{ for } u = \mathcal{R}_\eta g, \; g \in \mathrm{C}_\mathrm{b}(I,\mathbb{R}), \; x \in I
\]
($\mathcal{G}$ is independent of $\eta$, cf.\ \cite[p.\ 295]{fukushima2014}). A \emph{Feller semigroup} is a family $\{T_t\}_{t \geq 0}$ of operators $T_t: \mathrm{C}_\mathrm{b}(I,\mathbb{R}) \longrightarrow \mathrm{C}_\mathrm{b}(I,\mathbb{R})$ satisfying
\begin{enumerate}[itemsep=0pt,topsep=4pt]
\item[\textbf{\itshape(i)}] $T_t T_s = T_{t+s}$ for all $t, s \geq 0$;
\item[\textbf{\itshape(ii)}] $T_t \bigl(\mathrm{C}_0(I,\mathbb{R})\bigr) \subset \mathrm{C}_0(I,\mathbb{R})$ for all $t \geq 0$;
\item[\textbf{\itshape(iii)}] If $h \in \mathrm{C}_\mathrm{b}(I,\mathbb{R})$ satisfies $0 \leq h \leq 1$, then $0 \leq T_t h \leq 1$;
\item[\textbf{\itshape(iv)}] $\lim_{t \downarrow 0} \|T_t h - h\|_\infty = 0$ for each $h \in \mathrm{C}_0(I,\mathbb{R})$.
\end{enumerate}
The Feller semigroup is said to be \emph{conservative} if $T_t \mathds{1} = \mathds{1}$ (here $\mathds{1}$ denotes the function identically equal to one). A \emph{Feller process} is a time-homogeneous Markov process $\{X_t\}_{t \geq 0}$ whose transition semigroup is a Feller semigroup. For further background on the theory of Markov diffusion processes and Feller semigroups, we refer to \cite{borodinsalminen2002} and references therein.

We now recall a known fact from the theory of (one-dimensional) diffusion processes, namely that the negative of the Sturm-Liouville differential operator \eqref{eq:shypPDE_elldiffexpr} generates a diffusion process which is conservative and has the Feller property. The proof can be found on \cite[Sections 4 and 6]{fukushima2014} (see also \cite[Section II.5]{mandl1968}).

\begin{lemma} \label{lem:shypPDE_Lb_fellergen}
The operator 
\[
\mathcal{L}^\mathrm{(b)}: \mathcal{D}_{\mathcal{L}}^{(\mathrm{b})} \subset \mathrm{C}_\mathrm{b}([a,b),\mathbb{R})  \longrightarrow \mathrm{C}_\mathrm{b}([a,b),\mathbb{R}), \qquad\quad \mathcal{L}^\mathrm{(b)} u = -\ell(u)
\]
with domain
\[
\mathcal{D}_\mathcal{L}^{(\mathrm{b})} = \bigl\{ u \in \mathrm{C}_\mathrm{b}([a,b),\mathbb{R}) \bigm| \ell(u) \in \mathrm{C}_\mathrm{b}([a,b),\mathbb{R}),\, \lim_{x \downarrow a} u^{[1]}(x) = 0 \bigr\} 
\]
is the $\mathrm{C}_\mathrm{b}$-generator of a one-dimensional irreducible diffusion process $X = \{X_t\}_{t \geq 0}$ with state space $[a,b)$ whose transition semigroup defines a conservative Feller semigroup on $\mathrm{C}_0([a,b),\mathbb{R})$.
\end{lemma}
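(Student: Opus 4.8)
The plan is to realize $-\ell$ as a Feller generalized second-order differential operator in canonical form and then to invoke the Hille–Yosida–Ray characterization of Feller generators. First I would rewrite $\mathcal{L}^{(\mathrm b)} = -\ell = \frac{d}{d\mathrm{m}}\frac{d}{d\mathfrak{s}}$, where $\mathfrak{s}(x) = \int_c^x \frac{d\xi}{p(\xi)}$ is the scale function and $\mathrm{m}(dx) = r(x)\,dx$ is the speed measure; indeed $\frac{d}{d\mathfrak{s}}u = p u' = u^{[1]}$ and $\frac{d}{d\mathrm{m}}u^{[1]} = \frac{1}{r}(pu')' = -\ell(u)$. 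In this language the classical theory of one-dimensional diffusions (as in \cite{borodinsalminen2002,mandl1968}) applies directly, and the task reduces to exhibiting $(\mathcal{L}^{(\mathrm b)}, \mathcal{D}_{\mathcal{L}}^{(\mathrm b)})$ as the generator of a conservative, irreducible Feller diffusion with state space $[a,b)$.

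Next I would classify the endpoints in the sense of Feller. Condition \eqref{eq:shypPDE_Lop_leftBC} is exactly $\int_a^c (\mathfrak{s}(c) - \mathfrak{s}(y))\, \mathrm{m}(dy) < \infty$, which places $a$ in the regular-or-entrance class, while \eqref{eq:shypPDE_Lop_rightBC} states that both Feller integrals diverge at $b$, so that $b$ is a natural boundary (cf.\ Remark \ref{rmk:shypPDE_boundaryclassif}). The probabilistic consequences are that $b$ is inaccessible (the diffusion reaches it in neither finite nor infinite time) and that at the finite end $a$ the admissible boundary condition for a conservative, non-absorbing diffusion is the reflecting one, which in canonical form reads $\frac{d}{d\mathfrak{s}}u(a^+) = \lim_{x\downarrow a} u^{[1]}(x) = 0$. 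This is precisely the Neumann condition built into $\mathcal{D}_{\mathcal{L}}^{(\mathrm b)}$.

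I would then construct the resolvent explicitly, paralleling the Green-function computation in Lemma \ref{lem:shypPDE_Ltransf_D2prop}. For $\eta > 0$, solving $(\eta - \mathcal{L}^{(\mathrm b)})u = g$ amounts to solving $\ell(u) = -\eta u$ away from the source, so I would take the two fundamental solutions: $w_{-\eta}$ from Lemma \ref{lem:shypPDE_ode_wsol}, which satisfies the reflecting condition $w_{-\eta}^{[1]}(a) = 0$, and the Weyl solution $\vartheta_{-\eta}$ that is square-integrable near the natural boundary $b$, normalized by $w_{-\eta}\vartheta_{-\eta}^{[1]} - w_{-\eta}^{[1]}\vartheta_{-\eta} \equiv 1$. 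The resolvent is then
\[
\mathcal{R}_\eta g(x) = \int_a^b G_\eta(x,y)\, g(y)\, r(y)\,dy, \qquad G_\eta(x,y) = w_{-\eta}(x\wedge y)\,\vartheta_{-\eta}(x\vee y).
\]
Since $w_{-\eta}$ and $\vartheta_{-\eta}$ are strictly positive, $G_\eta > 0$, so $\mathcal{R}_\eta$ is positivity-preserving; the resolvent identity $(\eta - \mathcal{L}^{(\mathrm b)})\mathcal{R}_\eta = \mathrm{id}$ and the relation $\eta\,\mathcal{R}_\eta\mathds{1} = \mathds{1}$ (valid because $\ell(\mathds{1}) = 0$ and $\mathds{1}$ satisfies the Neumann condition) give the contraction and conservativeness properties. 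Strict positivity of $G_\eta$ also furnishes $P_x(\tau_y < \infty) > 0$, i.e.\ irreducibility. After checking that $\mathcal{R}_\eta$ maps $\mathrm{C}_0([a,b),\mathbb{R})$ into itself and that $\mathcal{D}_{\mathcal{L}}^{(\mathrm b)}$ is dense, the Hille–Yosida–Ray theorem yields that $\mathcal{L}^{(\mathrm b)}$ generates a Feller semigroup, and the positivity and conservativeness verified above complete the identification.

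The main obstacle is the boundary analysis rather than the algebraic construction. The delicate points are (i) showing, when $a$ is a genuine entrance boundary and not merely regular, that $\lim_{x\downarrow a} u^{[1]}(x) = 0$ is the correct and well-posed description of the domain; here one must control $w_{-\eta}$ and its quasi-derivative near $a$ via the integral representation of Lemma \ref{lem:shypPDE_ode_wsol} together with the finiteness in \eqref{eq:shypPDE_Lop_leftBC}; and (ii) verifying that the natural boundary $b$ needs no boundary condition while still ensuring $\mathcal{R}_\eta g \in \mathrm{C}_0$ and non-explosion — this rests on the divergence of both integrals in \eqref{eq:shypPDE_Lop_rightBC}, which simultaneously guarantees inaccessibility of $b$ and the uniqueness up to scaling of the decaying solution $\vartheta_{-\eta}$. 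Since these are precisely the configurations covered by Feller's classification, the cleanest route is to verify that our hypotheses translate into the boundary conditions above and then to quote the corresponding results of \cite{fukushima2014,mandl1968}.
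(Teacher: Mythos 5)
Your proposal is correct and is essentially the paper's approach: the paper gives no independent argument here, simply deferring to the classical theory of one‑dimensional diffusions in canonical form $\frac{d}{d\mathrm{m}}\frac{d}{d\mathfrak{s}}$ with Feller's boundary classification, citing \cite[Sections 4 and 6]{fukushima2014} and \cite[Section II.5]{mandl1968} — the very results your sketch reconstructs (Green-function resolvent built from $w_{-\eta}$ and the decaying solution $\vartheta_{-\eta}$, reflecting/Neumann condition at the regular-or-entrance endpoint $a$, no condition at the natural endpoint $b$, Hille–Yosida–Ray) and that you also end up quoting. The delicate points you flag (the entrance case at $a$ and non-explosion at $b$) are exactly the content of those references, so nothing is missing.
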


The transition probabilities of the one-dimensional diffusion process from the previous lemma admits an explicit representation as the inverse $\mathcal{L}$-transform of the function $e^{-t\lambda} w_\lambda(x)$:

\begin{lemma} \label{lem:shypPDE_Lb_diffusion_tpdf}
The transition semigroup admits the representation
\[
(\mathcal{P}_t u)(x) = \int_a^b h(y)\, p(t,x,y)\, r(y)dy \qquad (h \in \mathrm{B}_\mathrm{b}\bigl([a,b),\mathbb{R}\bigr), \; t > 0, \; a < x < b)
\]
where $p(t,x,y)$ is a nonnegative function which is called the \emph{fundamental solution} for the parabolic equation ${\partial u \over \partial t} = -\ell_x u$ (the subscript indicates the variable in which the operator $\ell$ acts). The fundamental solution and its derivatives are explicitly given by
\begin{align*}
(\partial_t^n p)(t,x,y) & = \int_{[0,\infty)} \lambda^n e^{-t\lambda} \, w_\lambda(x) \, w_\lambda(y)\, \bm{\rho}_\mathcal{L}(d\lambda) \\
(\partial_x^{[1]} \partial_t^n p)(t,x,y) & = \int_{[0,\infty)} \lambda^n e^{-t\lambda} \, w_\lambda^{[1]}(x) \, w_\lambda(y)\, \bm{\rho}_\mathcal{L}(d\lambda)
\end{align*}
for $n \in \mathbb{N}_0$, where, for fixed $t > 0$, the integrals converge absolutely and uniformly on compact squares of $(a,b) \times (a,b)$.
\end{lemma}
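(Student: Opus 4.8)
The plan is to realise the transition semigroup spectrally via the $\mathcal{L}$-transform and then identify it with the probabilistic semigroup of the diffusion $X$. Applying the functional calculus to the self-adjoint operator $(\mathcal{L},\mathcal{D}_\mathcal{L}^{(2)})$ of Proposition~\ref{prop:shypPDE_Ltransf}, one obtains a strongly continuous contraction semigroup $T_t := \mathcal{F}^{-1}\, M_{e^{-t\lambda}}\, \mathcal{F}$ on $L_2(r)$ (where $M_{e^{-t\lambda}}$ is multiplication by $e^{-t\lambda}$ on the spectral side) with generator $-\mathcal{L}$; explicitly, for $h \in L_2(r)$,
\[
(T_t h)(x) = \int_{[0,\infty)} e^{-t\lambda}\,(\mathcal{F} h)(\lambda)\, w_\lambda(x)\, \bm{\rho}_\mathcal{L}(d\lambda).
\]
Substituting $(\mathcal{F}h)(\lambda) = \int_a^b h(y)\, w_\lambda(y)\, r(y) dy$ and interchanging the order of integration, I would arrive at $(T_t h)(x) = \int_a^b h(y)\, p(t,x,y)\, r(y) dy$, where $p(t,x,y) := \int_{[0,\infty)} e^{-t\lambda}\, w_\lambda(x)\, w_\lambda(y)\, \bm{\rho}_\mathcal{L}(d\lambda)$.

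To license this interchange and to obtain the absolute and uniform convergence claimed in the statement, I would estimate the spectral integrals by the Cauchy--Schwarz inequality in $\bm{\rho}_\mathcal{L}$ together with the resolvent bounds of Lemma~\ref{lem:shypPDE_Ltransf_D2prop}(a). Fixing $\mu \in \mathbb{C}\setminus\mathbb{R}$, the elementary inequality ${1 \over |\lambda-\mu|^2} \geq {c_\mu \over 1+\lambda^2}$ ($\lambda \geq 0$) converts the convergence of \eqref{eq:shypPDE_Lresolv_specunif} into $\int_{[0,\infty)} {w_\lambda(x)^2 \over 1+\lambda^2}\, \bm{\rho}_\mathcal{L}(d\lambda) < \infty$, uniformly for $x$ in compact subsets of $(a,b)$, and likewise with $w_\lambda^{[1]}$ in place of $w_\lambda$. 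Since $\sup_{\lambda \geq 0} \lambda^n e^{-t\lambda}(1+\lambda^2) < \infty$ for every $t > 0$, splitting off the factor $1+\lambda^2$ and applying Cauchy--Schwarz gives
\[
\int_{[0,\infty)} \lambda^n e^{-t\lambda}\, |w_\lambda(x)\, w_\lambda(y)|\, \bm{\rho}_\mathcal{L}(d\lambda) \leq C_{n,t} \biggl(\int_{[0,\infty)} {w_\lambda(x)^2 \over 1+\lambda^2}\, \bm{\rho}_\mathcal{L}(d\lambda)\biggr)^{\!1/2} \biggl(\int_{[0,\infty)} {w_\lambda(y)^2 \over 1+\lambda^2}\, \bm{\rho}_\mathcal{L}(d\lambda)\biggr)^{\!1/2},
\]
with the constant $C_{n,t}$ locally bounded in $t > 0$ and the right-hand side locally uniform in $(x,y)$; the same computation with $w_\lambda^{[1]}(x)$ replacing one factor controls the quasi-derivative in $x$. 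This route uses only Lemma~\ref{lem:shypPDE_Ltransf_D2prop}(a), so it needs neither the bound $|w_\lambda| \leq 1$ nor the monotonicity of $pr$. These estimates justify Fubini and yield both the continuity of $p$ and the uniform convergence on compact squares of $(a,b)^2$.

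The delicate step is to identify the analytic semigroup $T_t$ on $L_2(r)$ with the transition semigroup $\mathcal{P}_t$ of the Feller diffusion $X$ of Lemma~\ref{lem:shypPDE_Lb_fellergen}, since a priori the two act on different function spaces. I would argue through resolvents. The Laplace transform $(\eta + \mathcal{L})^{-1} = \int_0^\infty e^{-\eta t} T_t\, dt$ has integral kernel equal to the Green function $G(x,y,-\eta)$ --- precisely the resolvent kernel appearing in the proof of Lemma~\ref{lem:shypPDE_Ltransf_D2prop}(a), constructed from $w_{-\eta}$ (which is regular at $a$ and satisfies $w^{[1]}(a)=0$) and the solution $\vartheta_{-\eta}$ that is square-integrable at the natural boundary $b$. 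The Feller resolvent $\mathcal{R}_\eta = (\eta + \ell)^{-1}$ of $X$ has the same kernel, being the unique Green function of $\eta + \ell$ subject to the Neumann condition $\lim_{x \downarrow a} u^{[1]}(x) = 0$ and the natural behaviour at $b$ that define $\mathcal{D}_\mathcal{L}^{(\mathrm{b})}$. Hence $T_t$ and $\mathcal{P}_t$ share the same resolvent and therefore coincide on $L_2(r) \cap \mathrm{C}_0([a,b),\mathbb{R})$; since a Feller transition function $P_t(x,\cdot)$ is a Borel measure determined by its action on $\mathrm{C}_0$, I conclude $P_t(x,dy) = p(t,x,y)\, r(y) dy$. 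This extends the representation $(\mathcal{P}_t h)(x) = \int_a^b h(y)\, p(t,x,y)\, r(y) dy$ to all $h \in \mathrm{B}_\mathrm{b}([a,b),\mathbb{R})$, and the nonnegativity of $p$ follows from the positivity-preserving property (iii) of the Feller semigroup together with the continuity of $p(t,x,\cdot)$. I expect this resolvent-based identification to be the main obstacle.

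The differentiation formulas then follow by differentiating under the integral sign in the definition of $p$: each differentiation in $t$ extracts a power of $\lambda$ from $e^{-t\lambda}$, while the quasi-derivative $\partial_x^{[1]} = p(x)\, \partial_x$ replaces $w_\lambda(x)$ by $w_\lambda^{[1]}(x)$. The interchange of differentiation and integration is legitimate because the differentiated integrands converge absolutely and uniformly on compact squares --- this is exactly the content of the Cauchy--Schwarz estimates above, now carrying the extra factor $\lambda^n$ (absorbed into $C_{n,t}$) and, for the $x$-derivative, using the second integral in \eqref{eq:shypPDE_Lresolv_specunif} to control $w_\lambda^{[1]}(x)$.
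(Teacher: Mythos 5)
Your argument is correct, but it is worth noting that the paper does not actually prove this lemma: its ``proof'' consists of a citation to Linetsky (Sections 2--3) and McKean (Section 4), where the eigenfunction expansion of the transition density for a one-dimensional diffusion with a regular/entrance reflecting boundary at $a$ and a natural boundary at $b$ is established. What you have written is a self-contained reconstruction of essentially that argument: the spectral functional calculus gives the $L_2$ semigroup $\mathcal{F}^{-1}M_{e^{-t\lambda}}\mathcal{F}$, the kernel is identified by Fubini, and the analytic and probabilistic semigroups are matched through their common resolvent kernel. Two remarks. First, your convergence mechanism --- bounding $\lambda^n e^{-t\lambda}$ by $C_{n,t}/(1+\lambda^2)$ and invoking Cauchy--Schwarz against the integrals \eqref{eq:shypPDE_Lresolv_specunif} of Lemma \ref{lem:shypPDE_Ltransf_D2prop}(a) with $\mu=i$ (for which $|\lambda-\mu|^2=1+\lambda^2$ exactly) --- is clean and correctly avoids any appeal to $|w_\lambda|\leq 1$ or to the monotonicity of $pr$, which is appropriate since the lemma is stated before those hypotheses are imposed. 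Second, the step you flag as the main obstacle is indeed the one you assert rather than prove: that the Feller resolvent of the diffusion of Lemma \ref{lem:shypPDE_Lb_fellergen} has the \emph{same} Green kernel $w_{-\eta}(x\wedge y)\vartheta_{-\eta}(x\vee y)$ as the self-adjoint realization $(\mathcal{L},\mathcal{D}_\mathcal{L}^{(2)})$. This is true and is precisely the content of the references the paper cites (the Neumann condition at the regular/entrance boundary $a$ and square-integrability at the natural boundary $b$ pin down the same two solutions in both constructions), but a fully self-contained proof would have to verify that $\mathcal{R}_\eta g$ lands in $\mathcal{D}_\mathcal{L}^{(\mathrm{b})}$ and that the Green operator is the unique such inverse. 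One last small point: the Fubini interchange for general $h\in L_2(r)$ is not automatic from your pointwise estimates (the Green function need not be Hilbert--Schmidt globally); it suffices, as your identification step implicitly does, to carry it out for $h$ bounded with compact support and then extend, since the transition function $P_t(x,\cdot)$ is determined by its action on $\mathrm{C}_\mathrm{c}$.
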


\begin{proof}
These assertions are a consequence of the results of \cite[Sections 2--3]{linetsky2004} and \cite[Section 4]{mckean1956}.
\end{proof}

We mention also that another consequence of the results of \cite[Section 3]{linetsky2004} is that for $h \in L_2(r)$ the expectation of $h(X_t)$ can be written in terms of the $\mathcal{L}$-transform as
\[
\mathbb{E}_x[h(X_t)] = \int_{[0,\infty)} e^{-t\lambda} w_\lambda(x) \, (\mathcal{F} h)(\lambda)\, \bm{\rho}_\mathcal{L}(d\lambda) \qquad t > 0, \; a < x < b
\]
where the integral converges with respect to the norm of $L_2(r)$. 

\begin{remark} \label{rmk:shypPDE_boundaryclassif}
Let $X$ be a one-dimensional diffusion process on an interval with endpoints $a$ and $b$, whose $\mathrm{C}_\mathrm{b}$-generator is of the form \eqref{eq:shypPDE_elldiffexpr}. Let 
\[
I_a = \int_a^c \int_a^y {dx \over p(x)} \, r(y) dy, \qquad J_a = \int_a^c \int_y^c {dx \over p(x)} \, r(y) dy
\]
According to \emph{Feller's boundary classification} for the diffusion $X$, the left endpoint $a$ is called\\[4pt]
\begin{minipage}{\linewidth}
\centering
\begin{minipage}{0.4\linewidth}
\centering
\begin{tabular}{llll}
\emph{regular} & if & $I_a < \infty$, & $\!\!\!J_a < \infty$; \\
\emph{exit} & if & $I_a < \infty$, & $\!\!\!J_a = \infty$;
\end{tabular}
\end{minipage}
\begin{minipage}{0.4\linewidth}
\centering
\begin{tabular}{llll}
\emph{entrance} & if & $I_a = \infty$, & $\!\!\!J_a < \infty$; \\
\emph{natural} & if & $I_a = \infty$, & $\!\!\!J_a = \infty$.
\end{tabular}
\end{minipage} \vspace{4pt}
\end{minipage}
(the right endpoint is classified in a similar way).

The probabilistic meaning of this classification is the following \cite[Chapter II]{borodinsalminen2002}: an irreducible diffusion can be started from the boundary $a$ if and only if $a$ is regular or entrance; the boundary $a$ is reached from $x_0 \in (a,b)$ with positive probability by an irreducible diffusion if and only if $a$ is regular or exit.

Our standing assumption \eqref{eq:shypPDE_Lop_leftBC} on the coefficients of the Sturm-Liouville operator means that $a$ is a regular or an entrance boundary for the diffusion process $X$ generated by $\ell$. It is clear from the preceding remarks that Lemma \ref{lem:shypPDE_Lb_fellergen} relies crucially on this assumption. The same is true for some of the results of the previous subsections: in fact, Lemma \ref{lem:shypPDE_ode_wsol} fails if $a$ is exit or natural \cite[Sections 5.13--5.14]{ito2006}, and the boundary conditions defining $\mathcal{D}_\mathcal{L}^{(2)}$ differ from those in \eqref{eq:shypPDE_Lop_L2domain} when $a$ is exit or natural \cite{mckean1956}. In turn, the assumption \eqref{eq:shypPDE_Lop_rightBC} means that $b$ is a natural boundary for the diffusion $X$. Since one can show that \eqref{eq:shypPDE_Lop_rightBC} is automatically satisfied whenever Assumption \ref{asmp:shypPDE_SLhyperg} below holds \cite[Proposition 3.5]{sousaetalforth}, this boundary assumption at $b$ yields no loss of generality on our results concerning product formulas and generalized convolutions.
\end{remark}

\section{The hyperbolic equation $\ell_x f = \ell_y f$} \label{sec:hypPDE}

In this section we investigate the (possibly degenerate) hyperbolic Cauchy problem
\begin{equation} \label{eq:shypPDE_Lcauchy}
(\ell_x f)(x,y) = (\ell_y f)(x,y) \quad\; (x,y \in (a,b)), \qquad\quad
f(x,a) = h(x), \qquad\quad
(\partial_y^{[1]}\!f)(x,a) = 0
\end{equation}
where $\partial_{\,}^{[1]} u = pu'$, $\ell$ is the Sturm-Liouville operator \eqref{eq:shypPDE_elldiffexpr}, and the subscripts indicate the variable in which the operators act.

Since $\ell_y - \ell_x = {p(x) \over r(x)} {\partial^2 \over \partial x^2} - {p(y) \over r(y)} {\partial^2 \over \partial y^2} + \text{lower order terms}$, the equation $\ell_x f = \ell_y f$ is hyperbolic at the line $y=a$ if ${p(a) \over r(a)} > 0$; otherwise, the initial conditions of the Cauchy problem are given at a line of parabolic degeneracy. If $\gamma(a) = -\int_a^c\! \sqrt{r(y) \over p(y)} dy > -\infty$, then we can remove the degeneracy via the change of variables $x = \gamma(\xi)$, $y = \gamma(\zeta)$ (cf.\ Remark \ref{rmk:shypPDE_tildeell}), through which the partial differential equation is transformed to the standard form $\widetilde{\ell}_\xi u = \widetilde{\ell}_\zeta u$, with initial condition at the line $\zeta = \gamma(a)$. In the case $\gamma(a) = -\infty$, the standard form of the equation is also parabolically degenerate in the sense that its initial line is $\zeta = -\infty$.

\subsection{Existence and uniqueness of solution}

We start by proving a result which not only assures the existence of solution for  Cauchy problems with well-behaved initial conditions but also provides an explicit representation for the solution as an inverse $\mathcal{L}$-transform:

\begin{theorem}[Existence of solution] \label{thm:shypPDE_Lexistence}
Suppose that $x \mapsto p(x)r(x)$ is an increasing function. If $h \in \mathcal{D}_{\mathcal{L}}^{(2)\!}$ and\, $\ell(h) \in \mathcal{D}_{\mathcal{L}}^{(2)\!}$, then the function
\begin{equation} \label{eq:shypPDE_Lexistence}
f_h(x,y) := \int_{[0,\infty)\!} w_\lambda(x) \, w_\lambda(y) \, (\mathcal{F} h)(\lambda) \, \bm{\rho}_{\mathcal{L}}(d\lambda)
\end{equation}
solves the Cauchy problem \eqref{eq:shypPDE_Lcauchy}.
\end{theorem}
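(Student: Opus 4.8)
The plan is to verify directly that the candidate function $f_h(x,y)$ defined by the inverse $\mathcal{L}$-transform in \eqref{eq:shypPDE_Lexistence} satisfies each of the three requirements of the Cauchy problem \eqref{eq:shypPDE_Lcauchy}: the PDE $\ell_x f = \ell_y f$, the initial condition $f(x,a) = h(x)$, and the Neumann-type condition $(\partial_y^{[1]} f)(x,a) = 0$. The integral representation is built precisely from separated solutions $w_\lambda(x) w_\lambda(y)$, each of which satisfies the PDE trivially (since $\ell_x[w_\lambda(x)w_\lambda(y)] = \lambda w_\lambda(x)w_\lambda(y) = \ell_y[w_\lambda(x)w_\lambda(y)]$), so the entire strategy reduces to justifying that the operators $\ell_x$ and $\ell_y$ may be applied under the integral sign, and that the boundary values may be evaluated by passing the limits $y \downarrow a$ inside.

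First I would establish the absolute and uniform convergence needed to differentiate under the integral. Here the key input is Lemma~\ref{lem:shypPDE_Ltransf_D2prop}: the hypotheses $h \in \mathcal{D}_{\mathcal{L}}^{(2)}$ and $\ell(h) \in \mathcal{D}_{\mathcal{L}}^{(2)}$ guarantee that both $\mathcal{F}h$ and $\lambda\,(\mathcal{F}h)(\lambda) = \mathcal{F}(\ell h)(\lambda)$ lie in $L_2([0,\infty);\bm{\rho}_{\mathcal{L}})$, which is exactly what makes the inversion integrals for $h$, $h^{[1]}$, $\ell(h)$ and $(\ell h)^{[1]}$ converge absolutely and uniformly on compacts. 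Applying the identity \eqref{eq:shypPDE_Ltransfidentity} in the form $\mathcal{F}(\ell h)(\lambda) = \lambda\,(\mathcal{F}h)(\lambda)$, I would argue that differentiating \eqref{eq:shypPDE_Lexistence} in $x$ (via $\partial_x^{[1]}$ and then $\ell_x$) brings down a factor $\lambda$ on $w_\lambda(x)$ matched against $w_\lambda(y)$, and the resulting integrand is dominated by the Cauchy-Schwarz product of the two square-integrable factors times the uniformly convergent kernels from part~(a) of Lemma~\ref{lem:shypPDE_Ltransf_D2prop}. This legitimizes
\[
(\ell_x f_h)(x,y) = \int_{[0,\infty)} \lambda\, w_\lambda(x)\, w_\lambda(y)\, (\mathcal{F}h)(\lambda)\, \bm{\rho}_{\mathcal{L}}(d\lambda) = (\ell_y f_h)(x,y),
\]
the second equality following by symmetry of the integrand in $x$ and $y$. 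The boundedness $|w_\lambda(x)| \le 1$ from Lemma~\ref{lem:shypPDE_wsolbound} (valid since $pr$ is increasing) is what keeps the $y$-dependence under control uniformly.

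For the initial conditions, evaluating at $y = a$ gives $w_\lambda(a) = 1$ by \eqref{eq:shypPDE_ode_wsol}, so $f_h(x,a) = \int_{[0,\infty)} w_\lambda(x)\,(\mathcal{F}h)(\lambda)\,\bm{\rho}_{\mathcal{L}}(d\lambda) = h(x)$ by the inversion formula \eqref{eq:shypPDE_Ltransf_D2prop}; here I must justify taking $y \downarrow a$ inside the integral, which again follows from dominated convergence using $|w_\lambda(y)| \le 1$ and continuity of $y \mapsto w_\lambda(y)$ at $a$. Similarly, applying $\partial_y^{[1]}$ under the integral and using $w_\lambda^{[1]}(a) = 0$ yields $(\partial_y^{[1]} f_h)(x,a) = 0$. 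The main obstacle is the degenerate/boundary behavior at $y = a$: when $pr$ may vanish at $a$ (the parabolically degenerate case), one cannot simply invoke classical smoothness of the kernel up to the boundary, so the limits $y \downarrow a$ must be handled through the uniform bounds $|w_\lambda(y)| \le 1$, $w_\lambda(a)=1$, $w_\lambda^{[1]}(a)=0$ together with the uniform convergence of the inversion integrals established in part~(a)—rather than through any regularity of the coefficients at $a$. I would pay particular attention to interchanging $\partial_y^{[1]}$ with the limit $y \downarrow a$, invoking the uniform-on-compacts convergence of the derivative integral \eqref{eq:shypPDE_Ltransf_D2propderiv} so that $(\partial_y^{[1]} f_h)(x,\cdot)$ extends continuously to $a$ with the correct vanishing value.
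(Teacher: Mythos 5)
Your proposal is correct and follows essentially the same route as the paper: differentiation under the integral sign justified by Lemma \ref{lem:shypPDE_Ltransf_D2prop} together with $\mathcal{F}(\ell h)(\lambda)=\lambda\,(\mathcal{F}h)(\lambda)$, the eigenfunction relation $\ell w_\lambda=\lambda w_\lambda$ for the PDE, and $w_\lambda(a)=1$, $w_\lambda^{[1]}(a)=0$, $|w_\lambda|\le 1$ for the boundary conditions. The one place where the paper is more explicit than you are is the Neumann condition at the (possibly degenerate) line $y=a$: since the uniform convergence in Lemma \ref{lem:shypPDE_Ltransf_D2prop} is only asserted on compact subsets of $(a,b)$, the paper instead extracts from the integral form of \eqref{eq:shypPDE_ode_wsol} the bound $|w_\lambda^{[1]}(y)|\le\lambda\int_a^y r(\xi)\,d\xi$, which tends to $0$ as $y\downarrow a$ and makes the dominated-convergence step you gesture at airtight.
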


For ease of notation, unless necessary we drop the dependence in $h$ and denote \eqref{eq:shypPDE_Lexistence} by $f(x,y)$.

\begin{proof}
Let us begin by justifying that $\ell_x f$ can be computed via differentiation under the integral sign. It follows from \eqref{eq:shypPDE_ode_wsol} that $w_\lambda^{[1]}(x) = - \lambda \int_a^x w_\lambda(\xi) \,r(\xi) d\xi$ and therefore (by Lemma \ref{lem:shypPDE_wsolbound}) $|w_\lambda^{[1]}(x)| \leq \lambda \int_a^x r(\xi) d\xi$. Hence
\begin{equation} \label{eq:shypPDE_Lexistence_solineq}
\int_{[0,\infty)\!} \bigl| (\mathcal{F} h)(\lambda) \, w_\lambda^{[1]}(x) \, w_\lambda(y)\bigr| \bm{\rho}_{\mathcal{L}}(d\lambda) \leq \int_a^x r(\xi) d\xi \ccdot \int_{[0,\infty)\!} \lambda \, \bigl| (\mathcal{F} h)(\lambda)\, w_\lambda(y) \bigr| \bm{\rho}_{\mathcal{L}}(d\lambda) < \infty,
\end{equation}
where the convergence (which is uniform on compacts) follows from \eqref{eq:shypPDE_Ltransfidentity} and Lemma \ref{lem:shypPDE_Ltransf_D2prop}(b). From the convergence of the differentiated integral we conclude that $\partial_x^{[1]}\!f(x,y) = \int_{[0,\infty)\!} (\mathcal{F} h)(\lambda) \, w_\lambda^{[1]}(x) \, w_\lambda(y) \, \bm{\rho}_{\mathcal{L}}(d\lambda)$. Since $(\ell w_\lambda)(x) = \lambda w_\lambda(x)$, in the same way we check that $\int_{[0,\infty)} (\mathcal{F} h)(\lambda) \, (\ell w_\lambda)(x)\, w_\lambda(y)\, \bm{\rho}_{\mathcal{L}}(d\lambda)$ converges absolutely and uniformly on compacts and is therefore equal to $(\ell_x f)(x,y)$. Consequently,
\begin{equation} \label{eq:shypPDE_Lexistence_ellrepr}
(\ell_x f)(x,y) = (\ell_y f)(x,y) = \int_{[0,\infty)\!} \lambda\, (\mathcal{F} h)(\lambda) \, w_\lambda(x) \, w_\lambda(y) \, \bm{\rho}_{\mathcal{L}}(d\lambda).
\end{equation}
Concerning the boundary conditions, Lemma \ref{lem:shypPDE_Ltransf_D2prop}(b) together with the fact that $w_\lambda(a) = 1$ imply that $f(x,a) = h(x)$, and from \eqref{eq:shypPDE_Lexistence_solineq} we easily see that $\lim_{y \downarrow a} \partial_y^{[1]}\!f(x,y) = 0$. This shows that $f$ is a solution of the Cauchy problem \eqref{eq:shypPDE_Lcauchy}.
\end{proof}

Under the assumptions of the theorem, the solution \eqref{eq:shypPDE_Lexistence} of the hyperbolic Cauchy problem satisfies
\begin{align}
\label{eq:shypPDE_uniq_cond0} f(\cdot, y) \in \mathcal{D}_\mathcal{L}^{(2)} & \qquad \text{for all } \, a < y < b, \\[2pt]
\label{eq:shypPDE_uniq_cond1} \mathcal{F}[\ell_y f(\cdot,y)](\lambda) = \ell_y [\mathcal{F}f(\cdot,y)](\lambda) & \qquad \text{for all } \, a < y < b, \\[2pt]
\label{eq:shypPDE_uniq_cond2} \lim_{y \downarrow a} [\mathcal{F}f(\cdot,y)](\lambda) = (\mathcal{F}h)(\lambda), & \\
\label{eq:shypPDE_uniq_cond3} \lim_{y \downarrow a} \partial_y^{[1]\!} \mathcal{F}[f(\cdot,y)](\lambda) = 0. &
\end{align}
Indeed, by Proposition \ref{prop:shypPDE_Ltransf} we have $[\mathcal{F}f(\cdot,y)](\lambda) = (\mathcal{F}h)(\lambda) \, w_\lambda(y)$ for all $\lambda \in \supp(\bm{\rho}_\mathcal{L})$ and $a < y < b$. Since $h \in \mathcal{D}_{\mathcal{L}}^{(2)\!}$ and $|w_\lambda(\cdot)| \leq 1$ (Lemma \ref{lem:shypPDE_wsolbound}), it is clear from \eqref{eq:shypPDE_LtransfidentD2} that $f(x,y)$ satisfies \eqref{eq:shypPDE_uniq_cond0}. Moreover, it follows from \eqref{eq:shypPDE_Lexistence_ellrepr} that $\mathcal{F}[\ell_y f_j(\cdot,y)](\lambda) = \lambda \, (\mathcal{F}h)(\lambda) \, w_\lambda(y) = \ell_y [\mathcal{F}f_j(\cdot,y)](\lambda)$, hence \eqref{eq:shypPDE_uniq_cond1} holds. The properties \eqref{eq:shypPDE_uniq_cond2}--\eqref{eq:shypPDE_uniq_cond3} follow immediately from Lemma \ref{lem:shypPDE_ode_wsol}.

Next we show that the solution from the above existence theorem is the unique solution satisfying the conditions \eqref{eq:shypPDE_uniq_cond0}--\eqref{eq:shypPDE_uniq_cond3}:

\begin{theorem}[Uniqueness]
Let $h \in \mathcal{D}_{\mathcal{L}}^{(2)}$. Let $f_1, f_2 \in \mathrm{C}^2\bigl((a,b)^2\bigr)$ be two solutions of $(\ell_x f)(x,y) = (\ell_y f)(x,y)$. For $f \in \{f_1,f_2\}$, suppose that \eqref{eq:shypPDE_uniq_cond0} holds and that there exists a zero $\bm{\rho}_\mathcal{L}$-measure set $\Lambda_0 \subset [0,\infty)$ such that \eqref{eq:shypPDE_uniq_cond1}--\eqref{eq:shypPDE_uniq_cond3} hold for each $\lambda \in [0,\infty) \setminus \Lambda_0$. Then 
\begin{equation} \label{eq:shypPDE_uniq}
f_1(x,y) \equiv f_2(x,y) \qquad \text{ for all } \; x,y \in (a,b).
\end{equation}
\end{theorem}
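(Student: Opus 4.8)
The plan is to reduce the uniqueness of the hyperbolic problem to the uniqueness of the underlying Sturm--Liouville \emph{ordinary} differential equation, using the spectral transform $\mathcal{F}$ as the bridge. Fix $\lambda \in [0,\infty) \setminus \Lambda_0$ and, for $f \in \{f_1, f_2\}$, introduce the single-variable function $\varphi_f(y) := [\mathcal{F}f(\cdot,y)](\lambda)$, $y \in (a,b)$. Applying $\mathcal{F}$ in the $x$-variable to the identity $(\ell_x f)(x,y) = (\ell_y f)(x,y)$, I would use \eqref{eq:shypPDE_uniq_cond0} together with the spectral identity \eqref{eq:shypPDE_Ltransfidentity} to rewrite the left-hand side as $\mathcal{F}[\ell_x f(\cdot,y)](\lambda) = \lambda \ccdot \varphi_f(y)$, while \eqref{eq:shypPDE_uniq_cond1} turns the right-hand side into $\mathcal{F}[\ell_y f(\cdot,y)](\lambda) = \ell_y \varphi_f(y)$. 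Hence each $\varphi_f$ is a solution of the one-dimensional equation $\ell \varphi_f = \lambda \varphi_f$ on $(a,b)$, and by \eqref{eq:shypPDE_uniq_cond2}--\eqref{eq:shypPDE_uniq_cond3} it satisfies the boundary behaviour $\lim_{y \downarrow a}\varphi_f(y) = (\mathcal{F}h)(\lambda)$ and $\lim_{y \downarrow a}\varphi_f^{[1]}(y) = 0$.

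Next I would invoke the uniqueness guaranteed by Lemma \ref{lem:shypPDE_ode_wsol}. By linearity, the boundary value problem $\ell w = \lambda w$, $w(a) = (\mathcal{F}h)(\lambda)$, $w^{[1]}(a) = 0$ has $(\mathcal{F}h)(\lambda)\ccdot w_\lambda$ as a solution, and the Volterra integral-equation reformulation used in the proof of Lemma \ref{lem:shypPDE_ode_wsol} shows this solution is unique (the case $(\mathcal{F}h)(\lambda) = 0$ forcing $\varphi_f \equiv 0$). Consequently $\varphi_f(y) = (\mathcal{F}h)(\lambda)\ccdot w_\lambda(y)$ for every $a < y < b$, and this holds identically for both $f = f_1$ and $f = f_2$. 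In particular $[\mathcal{F}f_1(\cdot,y)](\lambda) = [\mathcal{F}f_2(\cdot,y)](\lambda)$ for all $\lambda \in [0,\infty)\setminus\Lambda_0$ and all $y$.

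To conclude, I would fix $y$ and note that $f_1(\cdot,y)$ and $f_2(\cdot,y)$ both lie in $\mathcal{D}_\mathcal{L}^{(2)} \subset L_2(r)$ by \eqref{eq:shypPDE_uniq_cond0}, while their $\mathcal{L}$-transforms agree $\bm{\rho}_\mathcal{L}$-almost everywhere (since $\Lambda_0$ is $\bm{\rho}_\mathcal{L}$-null). Because $\mathcal{F}$ is an isometric isomorphism of $L_2(r)$ onto $L_2(\mathbb{R};\bm{\rho}_\mathcal{L})$ by Proposition \ref{prop:shypPDE_Ltransf}, it is injective, so $f_1(\cdot,y) = f_2(\cdot,y)$ as elements of $L_2(r)$, i.e.\ for Lebesgue-almost every $x$. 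Since $f_1, f_2 \in \mathrm{C}^2\bigl((a,b)^2\bigr)$ are continuous, the equality extends to all $x$, which gives \eqref{eq:shypPDE_uniq}.

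The step I expect to be most delicate is the ODE uniqueness at the endpoint $a$: when $a$ is an entrance boundary the data are prescribed only as one-sided limits, and the solution space of $\ell w = \lambda w$ is a priori two-dimensional. The point is that imposing the Neumann-type condition $w^{[1]}(a) = 0$ collapses the problem to the integral equation $w(x) = w(a) - \lambda\int_a^x \frac{1}{p(y)}\int_a^y w\, r\, d\xi\, dy$, whose unique solvability is precisely what the regular/entrance assumption \eqref{eq:shypPDE_Lop_leftBC} secures in Lemma \ref{lem:shypPDE_ode_wsol}; checking that conditions \eqref{eq:shypPDE_uniq_cond1}--\eqref{eq:shypPDE_uniq_cond3} legitimately place each $\varphi_f$ within the hypotheses of that lemma is where the care is needed.
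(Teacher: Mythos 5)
Your proposal is correct and follows essentially the same route as the paper: transform in the $x$-variable, use conditions \eqref{eq:shypPDE_uniq_cond0}--\eqref{eq:shypPDE_uniq_cond3} with \eqref{eq:shypPDE_Ltransfidentity} to show that $[\mathcal{F}f_j(\cdot,y)](\lambda)$ solves the boundary value problem of Lemma \ref{lem:shypPDE_ode_wsol} scaled by $(\mathcal{F}h)(\lambda)$, and conclude via the isometry of $\mathcal{F}$ and continuity of the $f_j$. Your closing remark on the delicacy of the endpoint uniqueness is a fair observation, but it is exactly what Lemma \ref{lem:shypPDE_ode_wsol} already secures, so no additional argument is needed.
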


\begin{proof}
Fix $\lambda \in [0,\infty) \setminus \Lambda_0$ and let $\Psi_{\!j}(y,\lambda) := [\mathcal{F}f_j(\cdot,y)](\lambda)$. We have
\[
\ell_y \Psi_{\!j}(y,\lambda) = \mathcal{F}[\ell_y f_j(\cdot,y)](\lambda) = \mathcal{F}[\ell_x f_j(\cdot,y)](\lambda) = \lambda \Psi_{\!j}(y,\lambda), \qquad a < y < b
\]
where the first equality is due to \eqref{eq:shypPDE_uniq_cond1} and the last step follows from \eqref{eq:shypPDE_Ltransfidentity}. Moreover, 
\[
\lim_{y \downarrow a}\Psi_{\!j}(y,\lambda) = (\mathcal{F}h)(\lambda) \quad \text{ and } \quad \lim_{y \downarrow a} \partial_y^{[1]}\Psi_{\!j}(y,\lambda) = 0
\]
by \eqref{eq:shypPDE_uniq_cond2} and \eqref{eq:shypPDE_uniq_cond3}, respectively. It thus follows from Lemma \ref{lem:shypPDE_ode_wsol} that
\[
[\mathcal{F}f_j(\cdot,y)](\lambda) = \Psi_{\!j}(y,\lambda) = (\mathcal{F}h)(\lambda)\, w_\lambda(y), \qquad a < y < b.
\]
This equality holds for $\bm{\rho}_\mathcal{L}$-almost every $\lambda$, so the isometric property of $\mathcal{F}$ gives $f_1(\cdot,y) = f_2(\cdot,y)$ Lebesgue-almost everywhere; since the $f_j$ are continuous, we conclude that \eqref{eq:shypPDE_uniq} holds.
\end{proof}

We emphasize that the two previous propositions, in particular, ensure that there exists a unique solution for the Cauchy problem \eqref{eq:shypPDE_Lcauchy} with initial condition
\[
h \in \mathrm{C}_{\mathrm{c},0}^4 := \Bigl\{u \in \mathrm{C}_\mathrm{c}^4[a,b) \Bigm| \ell(u), \ell^2(u) \in \mathrm{C}_\mathrm{c}[a,b), \;\: \lim_{x \downarrow a} u^{[1]}(x) = \lim_{x \downarrow a} [\ell(u)]^{[1]}(x)= 0 \Bigr\}
\]
(clearly, if $h \in \mathrm{C}_{\mathrm{c},0}^4$ then $h, \ell(h) \in \mathcal{D}_{\mathcal{L}}^{(2)}$).

If the hyperbolic equation $\ell_x f = \ell_y f$ (or the transformed equation $\widetilde{\ell}_\xi u = \widetilde{\ell}_y u$) is uniformly hyperbolic, the existence and uniqueness of solution for this Cauchy problem is a standard result which follows from the classical theory of hyperbolic problems in two variables (see e.g.\ \cite[Chapter V]{couranthilbert1962}); in fact, the existence and uniqueness holds under much weaker restrictions on the initial condition. However, our existence and uniqueness result becomes nontrivial in the presence of a (non-removable) parabolic degeneracy at the initial line. 

Indeed, even though many authors have addressed Cauchy problems for degenerate hyperbolic equations in two variables, most studies are restricted to equations where the ${\partial^2 \over \partial x^2}$ term vanishes at an initial line $y=y_0$ (we refer to \cite[\S2.3]{bitsadze1964}, \cite[Section 5.4]{radkevic2009} and references therein). Much less is known for hyperbolic equations whose ${\partial^2 \over \partial y^2}$ term vanishes at the same initial line: it is known that the Cauchy problem is, in general, not well-posed, and the relevance of determining sufficient conditions for its well-posedness has long been pointed out \cite[\S2.4]{bitsadze1964}, but as far as we are aware little progress has been made on this problem (for related work see \cite{mamadaliev2000}). The application of spectral techniques to hyperbolic Cauchy problems associated with Sturm-Liouville operators is by no means new, see e.g.\ \cite{chebli1974,carroll1985} and references therein; however, it seems that such techniques had never been applied to degenerate cases.

It is helpful to know that an existence theorem analogous to Theorem \ref{thm:shypPDE_Lexistence} holds when the initial line is shifted away from the degeneracy, because this will allow us to justify that the solution of the degenerate Cauchy problem is the pointwise limit of solutions of nondegenerate problems.

\begin{proposition} \label{prop:shypPDE_Lexistence_eps}
Suppose that $x \mapsto p(x)r(x)$ is an increasing function, and let $m \in \mathbb{N}$. If $h \in \mathcal{D}_{\mathcal{L}}^{(2)}$ and\, $\ell(h) \in \mathcal{D}_{\mathcal{L}}^{(2)\!}$, then the function
\begin{equation} \label{eq:shypPDE_Lexistence_eps}
f_m(x,y) = \int_{[0,\infty)\!} w_\lambda(x) \, w_{\lambda,m}(y) \, (\mathcal{F} h)(\lambda) \, \bm{\rho}_{\mathcal{L}}(d\lambda)
\end{equation}
is a solution of the Cauchy problem
\begin{equation} \label{eq:shypPDE_Lcauchy_eps}
\begin{aligned}
(\ell_x f_m)(x,y) = (\ell_y f_m)(x,y), & \qquad a < x < b, \: a_m < y < b \\
f_m(x,a_m) = h(x), & \qquad a < x < b \\
(\partial_y^{[1]}\!f_m)(x,a_m) = 0, & \qquad a < x < b.
\end{aligned}
\end{equation}
\end{proposition}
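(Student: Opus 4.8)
The plan is to transcribe the proof of Theorem \ref{thm:shypPDE_Lexistence} almost verbatim, with $w_{\lambda,m}(y)$ taking over the role previously played by $w_\lambda(y)$ in the $y$-variable. The essential structural observation is that the spectral representation and its inversion (Lemma \ref{lem:shypPDE_Ltransf_D2prop}) are invoked only in the $x$-variable, where the eigenfunction $w_\lambda(x)$ attached to the measure $\bm{\rho}_\mathcal{L}$ appears. The factor $w_{\lambda,m}(y)$ enters merely as a uniformly bounded multiplier with a controllable derivative, so the fact that $w_{\lambda,m}$ is \emph{not} the spectral eigenfunction on $(a,b)$ causes no difficulty in the integrals defining $f_m$.

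First I would record the two a priori bounds on $w_{\lambda,m}$ that dominate the integrands. Since $p,r>0$ on $(a,b)$ we have $p(a_m)r(a_m)>0$, so the first part of the proof of Lemma \ref{lem:shypPDE_wsolbound} applies directly to the operator on $(a_m,b)$ and gives $|w_{\lambda,m}(y)| \leq 1$ for all $a_m<y<b$ and $\lambda \geq 0$ (this bound is, in fact, already derived there). Integrating $\ell(w_{\lambda,m}) = \lambda w_{\lambda,m}$ from $a_m$, using $w_{\lambda,m}^{[1]}(a_m)=0$, yields $w_{\lambda,m}^{[1]}(y) = -\lambda \int_{a_m}^y w_{\lambda,m}(\xi)\, r(\xi)\, d\xi$, whence $|w_{\lambda,m}^{[1]}(y)| \leq \lambda \int_{a_m}^y r(\xi)\, d\xi$.

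Next I would justify differentiation under the integral sign, exactly as in \eqref{eq:shypPDE_Lexistence_solineq}. Combining $|w_{\lambda,m}(y)| \leq 1$ and $|w_\lambda(x)| \leq 1$ (Lemma \ref{lem:shypPDE_wsolbound}) with the identity $\lambda\,(\mathcal{F}h)(\lambda) = \mathcal{F}(\ell h)(\lambda)$ from \eqref{eq:shypPDE_Ltransfidentity}, the hypothesis $\ell(h) \in \mathcal{D}_\mathcal{L}^{(2)}$ and Lemma \ref{lem:shypPDE_Ltransf_D2prop}(b) shows that $\int_{[0,\infty)} \lambda\, |(\mathcal{F}h)(\lambda)\, w_\lambda(x)|\, \bm{\rho}_\mathcal{L}(d\lambda)$ converges uniformly on compacts. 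The bound $|w_{\lambda,m}^{[1]}(y)| \leq \lambda \int_{a_m}^y r$ then dominates the $y$-differentiated integrand, and the corresponding estimate using $w_\lambda^{[1]}(x)$ dominates the $x$-differentiated one; this legitimizes passing $\partial_x^{[1]}$, $\partial_y^{[1]}$ and both $\ell_x$, $\ell_y$ under the integral. Using $\ell_x w_\lambda(x) = \lambda w_\lambda(x)$ and $\ell_y w_{\lambda,m}(y) = \lambda w_{\lambda,m}(y)$ gives $(\ell_x f_m)(x,y) = (\ell_y f_m)(x,y) = \int_{[0,\infty)} \lambda\,(\mathcal{F}h)(\lambda)\, w_\lambda(x)\, w_{\lambda,m}(y)\, \bm{\rho}_\mathcal{L}(d\lambda)$, which is the equation in \eqref{eq:shypPDE_Lcauchy_eps}. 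For the initial conditions at $y=a_m$, since $w_{\lambda,m}(a_m)=1$ the substitution $y=a_m$ reduces $f_m(x,a_m)$ to $\int_{[0,\infty)} (\mathcal{F}h)(\lambda)\, w_\lambda(x)\, \bm{\rho}_\mathcal{L}(d\lambda) = h(x)$ by the inversion formula \eqref{eq:shypPDE_Ltransf_D2prop}, while $|w_{\lambda,m}^{[1]}(y)| \leq \lambda \int_{a_m}^y r \to 0$ as $y \downarrow a_m$ forces $\lim_{y \downarrow a_m} \partial_y^{[1]} f_m(x,y) = 0$.

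I do not anticipate a genuine obstacle; the one conceptual point to get right is precisely that the inversion formula cannot be applied in the $y$-variable, because $w_{\lambda,m}$ is the eigenfunction of the shifted operator on $(a_m,b)$ rather than the spectral eigenfunction $w_\lambda$ associated with $\bm{\rho}_\mathcal{L}$ on $(a,b)$. The resolution is the asymmetry emphasized above: all uses of the spectral theory stay in $x$, and in $y$ one needs only the two elementary bounds on $w_{\lambda,m}$ and $w_{\lambda,m}^{[1]}$. Once this is recognized, the argument is a routine adaptation of the proof of Theorem \ref{thm:shypPDE_Lexistence} with $m$ held fixed throughout.
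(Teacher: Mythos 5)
Your proposal is correct and follows essentially the same route as the paper's proof: bound $|w_{\lambda,m}|\leq 1$ via Lemma \ref{lem:shypPDE_wsolbound} applied on $(a_m,b)$, bound $|w_{\lambda,m}^{[1]}(y)|\leq \lambda\int_{a_m}^y r$, and use these together with \eqref{eq:shypPDE_Ltransfidentity} and Lemma \ref{lem:shypPDE_Ltransf_D2prop}(b) to differentiate under the integral sign and verify the initial conditions at $y=a_m$. The structural point you highlight --- that the spectral inversion is invoked only in the $x$-variable while $w_{\lambda,m}$ enters merely as a bounded multiplier --- is exactly how the paper's argument works.
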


\begin{proof}
Let us begin by justifying that $\partial_x^{[1]}\!f_m(x,y)$ and $(\ell_x f_m)(x,y)$ can be computed via differentiation under the integral sign. The differentiated integrals are given by
\begin{gather}
\label{eq:shypPDE_Lexistence_epssol_pf1} \int_{[0,\infty)\!} w_\lambda^{[1]}(x) \, w_{\lambda,m}(y) \, (\mathcal{F} h)(\lambda) \, \bm{\rho}_{\mathcal{L}}(d\lambda) \\
\label{eq:shypPDE_Lexistence_epssol_pf2} \int_{[0,\infty)\!} w_\lambda(x) \, w_{\lambda,m}(y) \, [\mathcal{F} (\ell (h))](\lambda) \, \bm{\rho}_{\mathcal{L}}(d\lambda)
\end{gather}
(for the latter, we used the identities $(\ell w_\lambda)(x) = \lambda w_\lambda(x)$ and \eqref{eq:shypPDE_Ltransfidentity}), and their absolute and uniform convergence on compacts follows from the fact that $h, \ell (h) \in \mathcal{D}_\mathcal{L}^{(2)}$, together with Lemma \ref{lem:shypPDE_Ltransf_D2prop}(b) and the inequality $|w_{\lambda,m}(\cdot)|\leq 1$ (which follows from Lemma \ref{lem:shypPDE_wsolbound} if we replace $a$ by $a_m$). This justifies that $\partial_x^{[1]}\!f_m(x,y)$ and $(\ell_x f_m)(x,y)$ are given by \eqref{eq:shypPDE_Lexistence_epssol_pf1}, \eqref{eq:shypPDE_Lexistence_epssol_pf2} respectively.

We also need to ensure that $\partial_y^{[1]}\!f_m(x,y)$ and $(\ell_y f_m)(x,y)$ are given by the corresponding differentiated integrals, and to that end we must check that
\[
\int_{[0,\infty)\!} w_\lambda(x) \, w_{\lambda,m}^{[1]}(y) \, (\mathcal{F} h)(\lambda) \, \bm{\rho}_{\mathcal{L}}(d\lambda)
\]
converges absolutely and uniformly. Indeed, it follows from \eqref{eq:shypPDE_ode_wsoleps} that for $y \geq a_m$ we have $w_{\lambda,m}^{[1]}(y) = \lambda \int_{a_m}^y w_{\lambda,m}(\xi) \,r(\xi) d\xi$ and consequently $|w_{\lambda,m}^{[1]}(y)| \leq \lambda \int_{a_m}^y \,r(\xi) d\xi$; hence
\begin{equation} \label{eq:shypPDE_Lexistence_epssol_pf3}
\int_{[0,\infty)\!} \bigl| w_\lambda(x) \,  w_{\lambda,m}^{[1]}(y)\, (\mathcal{F} h)(\lambda)\bigr| \bm{\rho}_{\mathcal{L}}(d\lambda) \leq \int_{a_m}^y r(\xi) d\xi \ccdot\! \int_{[0,\infty)\!} \lambda \bigl| w_\lambda(x) (\mathcal{F} h)(\lambda)\bigr| \bm{\rho}_{\mathcal{L}}(d\lambda)
\end{equation}
and the uniform convergence in compacts follows from \eqref{eq:shypPDE_Ltransfidentity} and Lemma \ref{lem:shypPDE_Ltransf_D2prop}(b).

The verification of the boundary conditions is straightforward: Lemma \ref{lem:shypPDE_Ltransf_D2prop}(b) together with the fact that $w_{\lambda,m}(a_m) = 1$ imply that $f_m(x,a_m) = h(x)$, and from \eqref{eq:shypPDE_Lexistence_epssol_pf3} we easily see that $\partial_y^{[1]}\!f_m (x,a_m) = 0$. This shows that $f_m$ is a solution of the Cauchy problem \eqref{eq:shypPDE_Lcauchy_eps}.
\end{proof}

\begin{corollary} \label{cor:shypPDE_cauchy_ptapprox}
Suppose that $x \mapsto p(x)r(x)$ is an increasing function. Let $h \in \mathcal{D}_\mathcal{L}^{(2)}$ and consider the functions $f_m$, $f$ defined by \eqref{eq:shypPDE_Lexistence_eps}, \eqref{eq:shypPDE_Lexistence}. Then
\[
\lim_{m \to \infty} f_m(x,y) = f(x,y) \qquad \text{pointwise for each } x, y \in (a,b).
\]
\end{corollary}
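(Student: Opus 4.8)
The plan is to obtain the pointwise limit by a direct application of the dominated convergence theorem to the spectral integrals \eqref{eq:shypPDE_Lexistence_eps} and \eqref{eq:shypPDE_Lexistence}. Fix $x, y \in (a,b)$. Since the sequence $\{a_m\}$ decreases to $a$, we have $a_m < y$ for all sufficiently large $m$, so that $w_{\lambda,m}(y)$ is well-defined for such $m$; we restrict attention to these values of $m$ throughout. Writing the two integrands as functions of $\lambda$,
\[
g_m(\lambda) := w_\lambda(x)\, w_{\lambda,m}(y)\, (\mathcal{F} h)(\lambda), \qquad g(\lambda) := w_\lambda(x)\, w_\lambda(y)\, (\mathcal{F} h)(\lambda),
\]
we have $f_m(x,y) = \int_{[0,\infty)} g_m \, \bm{\rho}_{\mathcal{L}}(d\lambda)$ and $f(x,y) = \int_{[0,\infty)} g \, \bm{\rho}_{\mathcal{L}}(d\lambda)$.

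First I would establish the pointwise (in $\lambda$) convergence $g_m(\lambda) \to g(\lambda)$. This is immediate from Lemma \ref{lem:shypPDE_ode_wepslimit}, which gives $w_{\lambda,m}(y) \to w_\lambda(y)$ for each fixed $\lambda$; multiplying by the $m$-independent factor $w_\lambda(x)\,(\mathcal{F} h)(\lambda)$ shows that $g_m(\lambda) \to g(\lambda)$ for every $\lambda \geq 0$.

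Next I would exhibit a $\bm{\rho}_{\mathcal{L}}$-integrable dominating function independent of $m$. Since $x \mapsto p(x)r(x)$ is increasing, Lemma \ref{lem:shypPDE_wsolbound}, applied with the left endpoint $a_m$ in place of $a$ (the argument of that lemma is valid because $p(a_m)r(a_m) > 0$), yields $|w_{\lambda,m}(y)| \leq 1$ for all $\lambda \geq 0$ and all $m$ with $a_m < y$. Consequently
\[
|g_m(\lambda)| = |w_\lambda(x)|\, |w_{\lambda,m}(y)|\, |(\mathcal{F} h)(\lambda)| \leq \bigl| w_\lambda(x)\, (\mathcal{F} h)(\lambda) \bigr| \qquad (\lambda \geq 0).
\]
Because $h \in \mathcal{D}_{\mathcal{L}}^{(2)}$, the right-hand side is $\bm{\rho}_{\mathcal{L}}$-integrable: this is precisely the bound established in the proof of Lemma \ref{lem:shypPDE_Ltransf_D2prop}(b), which gives $\int_{[0,\infty)} \bigl| w_\lambda(x)\, (\mathcal{F} h)(\lambda) \bigr| \, \bm{\rho}_{\mathcal{L}}(d\lambda) < \infty$. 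Thus we have a fixed integrable majorant for the whole sequence $\{g_m\}$.

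With a pointwise-convergent sequence of integrands dominated by an $m$-independent integrable function, the dominated convergence theorem yields $\int_{[0,\infty)} g_m \, \bm{\rho}_{\mathcal{L}}(d\lambda) \to \int_{[0,\infty)} g \, \bm{\rho}_{\mathcal{L}}(d\lambda)$, that is, $f_m(x,y) \to f(x,y)$, which is the desired conclusion. The argument presents no genuine difficulty; the only point requiring care is the $\bm{\rho}_{\mathcal{L}}$-integrability of the dominating function, and this is supplied directly by Lemma \ref{lem:shypPDE_Ltransf_D2prop}(b), whose hypothesis $h \in \mathcal{D}_{\mathcal{L}}^{(2)}$ is exactly the assumption made in the corollary (note in particular that the stronger condition $\ell(h) \in \mathcal{D}_{\mathcal{L}}^{(2)}$ used in Proposition \ref{prop:shypPDE_Lexistence_eps} is not needed here).
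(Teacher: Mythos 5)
Your proof is correct and follows exactly the paper's own argument: pointwise convergence of $w_{\lambda,m}(y)$ via Lemma \ref{lem:shypPDE_ode_wepslimit}, the uniform bound $|w_{\lambda,m}(y)|\leq 1$ from Lemma \ref{lem:shypPDE_wsolbound} applied on $(a_m,b)$, the integrable majorant $|w_\lambda(x)(\mathcal{F}h)(\lambda)|$ from Lemma \ref{lem:shypPDE_Ltransf_D2prop}(b), and dominated convergence. You simply write out the details that the paper compresses into one sentence, and your observation that $\ell(h)\in\mathcal{D}_{\mathcal{L}}^{(2)}$ is not needed is consistent with the corollary's hypotheses.
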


\begin{proof}
Since $w_{\lambda,m}(y) \to w_\lambda(y)$ pointwise as $m \to \infty$ (Lemma \ref{lem:shypPDE_ode_wepslimit}), the conclusion follows from the dominated convergence theorem (which is applicable due to Lemmas \ref{lem:shypPDE_wsolbound} and \ref{lem:shypPDE_Ltransf_D2prop}(b)).
\end{proof}

\subsection{Maximum principle and positivity of solution}

After having shown that the Cauchy problem is well-posed whenever the function $x \mapsto p(x) r(x)$ is increasing, we introduce a stronger assumption on the coefficients which will be seen to be sufficient for a maximum principle to hold for the hyperbolic equation $\ell_x f = \ell_y f$ and, in consequence, for the solution of the Cauchy problem \eqref{eq:shypPDE_Lcauchy} to preserve positivity and boundedness of its initial condition. We shall rely on the transformation of $\ell$ into the standard form (Remark \ref{rmk:shypPDE_tildeell}); in the following assumption, $A$ is the function defined in \eqref{eq:shypPDE_tildeell_A}.

\renewcommand\theassumption{MP}
\begin{assumption} \label{asmp:shypPDE_SLhyperg}
There exists $\eta \in \mathrm{C}^1(\gamma(a),\infty)$ such that $\eta \geq 0$, $\bm{\phi}_\eta := {A' \over A} - \eta \geq 0$, and the functions $\bm{\phi}_\eta$ and $\bm{\psi}_\eta := {1 \over 2} \eta' - {1 \over 4} \eta^2 + {A' \over 2A} \ccdot \eta$ are both decreasing on $(\gamma(a),\infty)$.
\end{assumption}

Observe that Assumption \ref{asmp:shypPDE_SLhyperg} allows for $\gamma(a) = -\infty$ (this will enable us to treat the case of non-removable degeneracy), and it does not include the left endpoint in the interval where the conditions on $\eta$ are imposed. This assumption is therefore a generalization of an assumption introduced by Zeuner, cf.\ Example \ref{exam:SLhypergr} below.

The proof of the maximum principle presented in the sequel is based on \cite[Proposition 3.7]{zeuner1992} and on the maximum principles of \cite{weinberger1956}. The key tool is the integral identity which we now state:

\begin{lemma}
Let $\bm{\ell}^B$ be the differential expression $\bm{\ell}^B v := - v'' - \bm{\phi}_\eta v' + \bm{\psi}_\eta v$. For $\gamma(a) < c \leq y \leq x$, consider the triangle $\Delta_{c,x,y} := \{(\xi,\zeta) \in \mathbb{R}^2 \mid \zeta \geq c, \, \xi + \zeta \leq x+y, \, \xi - \zeta \geq x-y \}$, and let $v \in \mathrm{C}^2(\Delta_{c,x,y})$. Write $B(x):=\exp({1 \over 2} \int_\beta^x \eta(\xi)d\xi)$ (with $\beta > \gamma(a)$ arbitrary) and $A_{\mathsmaller{B}}(x) = {A(x) \over B(x)^2}$. Then the following integral equation holds:
\begin{equation} \label{eq:shypPDE_inteqtriangle}
A_{\mathsmaller{B}}(x) A_{\mathsmaller{B}}(y) \, v(x,y) = H + I_0 + I_1 + I_2 + I_3 - I_4
\end{equation}
where
\begin{align*}
H & := \tfrac{1}{2} A_{\mathsmaller{B}}(c) \bigl[A_{\mathsmaller{B}}(x-y+c) \, v(x-y+c,c) + A_{\mathsmaller{B}}(x+y-c) \, v(x+y-c,c)] \\
I_0 & := \tfrac{1}{2} A_{\mathsmaller{B}}(c) \int_{x-y+c}^{x+y-c} A_{\mathsmaller{B}}(s) (\partial_y v)(s,c)\, ds \\
I_1 & := \tfrac{1}{2} \int_c^y A_{\mathsmaller{B}}(s) A_{\mathsmaller{B}}(x-y+s) \bigl[ \bm{\phi}_\eta(s) + \bm{\phi}_\eta(x-y+s) \bigr] v(x-y+s,s)\, ds \\
I_2 & := \tfrac{1}{2} \int_c^y A_{\mathsmaller{B}}(s) A_{\mathsmaller{B}}(x+y-s) \bigl[ \bm{\phi}_\eta(s) - \bm{\phi}_\eta(x+y-s) \bigr] v(x+y-s,s)\, ds \\
I_3 & := \tfrac{1}{2} \int_{\Delta_{c,x,y}\!} A_{\mathsmaller{B}}(\xi) A_{\mathsmaller{B}}(\zeta) \bigl[\bm{\psi}_\eta(\zeta) - \bm{\psi}_\eta(\xi)\bigr] v(\xi,\zeta)\, d\xi d\zeta \\
I_4 & := \tfrac{1}{2} \int_{\Delta_{c,x,y}\!} A_{\mathsmaller{B}}(\xi) A_{\mathsmaller{B}}(\zeta) \, (\bm{\ell}_\zeta^B v - \bm{\ell}_\xi^B v)(\xi,\zeta)\, d\xi d\zeta.
\end{align*}
\end{lemma}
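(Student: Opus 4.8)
The plan is to recognize the integrand of $I_4$ as an exact divergence plus the zeroth-order term of $I_3$, and then apply Green's theorem on the characteristic triangle $\Delta_{c,x,y}$. The cornerstone is that the weight $A_{\mathsmaller{B}}$ is tailored to the first-order coefficient $\bm{\phi}_\eta$: from $B(x)=\exp(\tfrac12\int_\beta^x\eta)$ one has $B'/B=\tfrac12\eta$, and since $\bm{\phi}_\eta=\tfrac{A'}{A}-\eta$, it follows that $\tfrac{A_{\mathsmaller{B}}'}{A_{\mathsmaller{B}}}=\tfrac{A'}{A}-2\tfrac{B'}{B}=\tfrac{A'}{A}-\eta=\bm{\phi}_\eta$, i.e.\ $A_{\mathsmaller{B}}'=\bm{\phi}_\eta A_{\mathsmaller{B}}$. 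This makes the self-adjoint form $A_{\mathsmaller{B}}(\partial^2+\bm{\phi}_\eta\partial)=\partial(A_{\mathsmaller{B}}\partial\,\cdot\,)$ available, so that multiplying $\bm{\ell}^B_\zeta v-\bm{\ell}^B_\xi v$ by $A_{\mathsmaller{B}}(\xi)A_{\mathsmaller{B}}(\zeta)$ gives
\[
A_{\mathsmaller{B}}(\xi)A_{\mathsmaller{B}}(\zeta)\bigl(\bm{\ell}^B_\zeta v-\bm{\ell}^B_\xi v\bigr)=\partial_\xi P-\partial_\zeta Q+A_{\mathsmaller{B}}(\xi)A_{\mathsmaller{B}}(\zeta)\bigl[\bm{\psi}_\eta(\zeta)-\bm{\psi}_\eta(\xi)\bigr]v,
\]
where $P:=A_{\mathsmaller{B}}(\xi)A_{\mathsmaller{B}}(\zeta)\partial_\xi v$ and $Q:=A_{\mathsmaller{B}}(\xi)A_{\mathsmaller{B}}(\zeta)\partial_\zeta v$. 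Integrating over $\Delta_{c,x,y}$ identifies the last term with $2I_3$, so that $I_4=\tfrac12\int_{\Delta}(\partial_\xi P-\partial_\zeta Q)\,d\xi d\zeta+I_3$, and the whole claim reduces to evaluating this principal integral.

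Next I would apply Green's theorem, $\int_{\Delta}(\partial_\xi P-\partial_\zeta Q)\,d\xi d\zeta=\oint_{\partial\Delta}(Q\,d\xi+P\,d\zeta)$, orienting $\partial\Delta$ counterclockwise: the base $\zeta=c$ traversed left to right, then up the right characteristic $\xi+\zeta=x+y$ to the apex $(x,y)$, then down the left characteristic $\xi-\zeta=x-y$. On the base $d\zeta=0$, so the contribution collapses to $\int_{x-y+c}^{x+y-c}A_{\mathsmaller{B}}(\xi)A_{\mathsmaller{B}}(c)\,\partial_\zeta v(\xi,c)\,d\xi=2I_0$.

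The crux is the two characteristic edges. On the right edge $d\xi=-d\zeta$, so $Q\,d\xi+P\,d\zeta=(P-Q)\,d\zeta$; parametrizing by $\zeta$ with $\xi=x+y-\zeta$ and setting $g(\zeta)=v(x+y-\zeta,\zeta)$ gives $\partial_\xi v-\partial_\zeta v=-g'(\zeta)$, whence the edge integral is $-\int_c^y A_{\mathsmaller{B}}(x+y-\zeta)A_{\mathsmaller{B}}(\zeta)\,g'(\zeta)\,d\zeta$. Integration by parts produces the apex and corner boundary values $-A_{\mathsmaller{B}}(x)A_{\mathsmaller{B}}(y)v(x,y)+A_{\mathsmaller{B}}(c)A_{\mathsmaller{B}}(x+y-c)v(x+y-c,c)$, plus an integral whose weight derivative $\tfrac{d}{d\zeta}[A_{\mathsmaller{B}}(x+y-\zeta)A_{\mathsmaller{B}}(\zeta)]=A_{\mathsmaller{B}}(x+y-\zeta)A_{\mathsmaller{B}}(\zeta)[\bm{\phi}_\eta(\zeta)-\bm{\phi}_\eta(x+y-\zeta)]$ (again by $A_{\mathsmaller{B}}'=\bm{\phi}_\eta A_{\mathsmaller{B}}$) matches $2I_2$. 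The left edge, where $d\xi=d\zeta$, is handled symmetrically and yields $-A_{\mathsmaller{B}}(x)A_{\mathsmaller{B}}(y)v(x,y)+A_{\mathsmaller{B}}(c)A_{\mathsmaller{B}}(x-y+c)v(x-y+c,c)+2I_1$.

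Summing the three edges gives $\int_{\Delta}(\partial_\xi P-\partial_\zeta Q)=2H+2I_0+2I_1+2I_2-2A_{\mathsmaller{B}}(x)A_{\mathsmaller{B}}(y)v(x,y)$, the two apex terms combining into $-2A_{\mathsmaller{B}}(x)A_{\mathsmaller{B}}(y)v(x,y)$ and the two corner terms into $2H$. Dividing by $2$ and inserting into $I_4=\tfrac12\int_{\Delta}(\partial_\xi P-\partial_\zeta Q)+I_3$ yields $I_4-I_3=H+I_0+I_1+I_2-A_{\mathsmaller{B}}(x)A_{\mathsmaller{B}}(y)v(x,y)$, which rearranges to \eqref{eq:shypPDE_inteqtriangle}. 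I expect the main obstacle to be purely the bookkeeping of signs and orientations in the last two steps — correctly parametrizing each characteristic, tracking the direction of traversal through each integration by parts, and confirming that the boundary values assemble into exactly $H$ and $-A_{\mathsmaller{B}}(x)A_{\mathsmaller{B}}(y)v(x,y)$. I note that this identity is purely analytic and uses none of the sign or monotonicity conditions of Assumption \ref{asmp:shypPDE_SLhyperg} (these enter only afterwards, to give the terms definite sign); the hypothesis $v\in\mathrm{C}^2(\Delta_{c,x,y})$ together with the local absolute continuity of the coefficients is what legitimizes both Green's theorem and the integrations by parts.
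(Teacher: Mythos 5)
Your proposal is correct and follows essentially the same route as the paper: express $I_4 - I_3$ as the integral of the divergence $\partial_\xi\bigl[A_{\mathsmaller{B}}(\xi)A_{\mathsmaller{B}}(\zeta)\partial_\xi v\bigr]-\partial_\zeta\bigl[A_{\mathsmaller{B}}(\xi)A_{\mathsmaller{B}}(\zeta)\partial_\zeta v\bigr]$, apply Green's theorem over the characteristic triangle, and use $(A_{\mathsmaller{B}})'=\bm{\phi}_\eta A_{\mathsmaller{B}}$ to turn the edge integrals into total derivatives plus $I_1$ and $I_2$. Your write-up is in fact more careful with the orientations and boundary terms than the paper's compressed computation.
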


\begin{proof}
Just compute 
\begin{align*}
I_4 - I_3 & = \tfrac{1}{2} \int_{\Delta_{c,x,y}}\biggl({\partial \over \partial \xi} \bigl[ A_{\mathsmaller{B}}(\xi) A_{\mathsmaller{B}}(\zeta) \, (\partial_\xi v)(\xi,\zeta) \bigr] - {\partial \over \partial \zeta} \bigl[ A_{\mathsmaller{B}}(\xi) A_{\mathsmaller{B}}(\zeta) \, (\partial_\zeta v)(\xi,\zeta) \bigr] \biggr) d\xi d\zeta \\
& = I_0 - \tfrac{1}{2} \int_0^y A_{\mathsmaller{B}}(s) A_{\mathsmaller{B}}(x-y+s) \, (\partial_\zeta v + \partial_\xi v)(x-y+s,s) \, ds \\
& \quad\; - \tfrac{1}{2} \int_0^y A_{\mathsmaller{B}}(s) A_{\mathsmaller{B}}(x+y-s) \, (\partial_\zeta v - \partial_\xi v)(x+y-s,s) \, ds \\
& = I_0 + I_1 - \int_c^y {d \over ds} \bigl[ A_{\mathsmaller{B}}(s) A_{\mathsmaller{B}}(x-y+s) \, v(x-y+s,s) \bigr] ds \\
& \quad\; + I_2 - \int_c^y {d \over ds} \bigl[ A_{\mathsmaller{B}}(s) A_{\mathsmaller{B}}(x-y+s) \, v(x-y+s,s) \bigr] ds
\end{align*}
where in the second equality we used Green's theorem, and the third equality follows easily from the fact that $(A_{\mathsmaller{B}})' = \bm{\phi}_\eta A_{\mathsmaller{B}}$.
\end{proof}

\begin{theorem}[Weak maximum principle] \label{thm:shypPDE_tildeell_maxprinc}
Suppose Assumption \ref{asmp:shypPDE_SLhyperg} holds, and let $\gamma(a) < c \leq y_0 \leq x_0$. If $u \in \mathrm{C}^2(\Delta_{c,x_0,y_0})$ satisfies
\begin{equation} \label{eq:shypPDE_tildeell_maxprinc_ineq}
\begin{aligned}
(\widetilde{\ell}_x u - \widetilde{\ell}_y u)(x,y) \leq 0, & \qquad (x,y) \in \Delta_{c,x_0,y_0} \\
u(x,c) \geq 0, & \qquad x \in [x_0-y_0+c,x_0+y_0-c] \\
(\partial_y u)(x,c) + \tfrac{1}{2} \eta(c) u(x,c) \geq 0, & \qquad x \in [x_0-y_0+c,x_0+y_0-c]
\end{aligned}
\end{equation}
then $u \geq 0$ in $\Delta_{c,x_0,y_0}$.
\end{theorem}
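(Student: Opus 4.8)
The plan is to reduce everything to the integral identity \eqref{eq:shypPDE_inteqtriangle} and read off the sign of each term on its right-hand side. First I would eliminate the first-order part of $\widetilde{\ell}$ by the substitution $v(\xi,\zeta):=B(\xi)B(\zeta)\,u(\xi,\zeta)$, where $B$ is the weight appearing in the preceding lemma; a one-line computation gives $\bm{\ell}^B(Bw)=B\,\widetilde{\ell}(w)$ in each variable, so that $(\bm{\ell}^B_\zeta v-\bm{\ell}^B_\xi v)(\xi,\zeta)=B(\xi)B(\zeta)\,(\widetilde{\ell}_\zeta u-\widetilde{\ell}_\xi u)(\xi,\zeta)$, the differential inequality $(\widetilde{\ell}_\xi u-\widetilde{\ell}_\zeta u)\le 0$ translates into a definite sign for the integrand of $I_4$, and the desired conclusion $u\ge 0$ is equivalent to $v\ge 0$ (since $B>0$). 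Applying \eqref{eq:shypPDE_inteqtriangle} to this $v$ then expresses $A_{\mathsmaller{B}}(x)A_{\mathsmaller{B}}(y)\,v(x,y)$ as $H+I_0+I_1+I_2+I_3-I_4$.

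Next I would determine the sign of each piece. The boundary terms are nonnegative: $H\ge 0$ because $u(\cdot,c)\ge 0$ by the first boundary hypothesis, and $I_0\ge 0$ because, after the conjugation, $(\partial_\zeta v)(s,c)=B(s)B(c)\big[(\partial_\zeta u)(s,c)+\tfrac12\eta(c)u(s,c)\big]\ge 0$ by the Robin-type boundary hypothesis in \eqref{eq:shypPDE_tildeell_maxprinc_ineq}. The three interior terms carry nonnegative weights on $v$: in the triangle one has $\xi\ge\zeta$, and along the integration segments $s\le x-y+s$ and $s\le x+y-s$; combined with $\bm{\phi}_\eta\ge 0$ and the monotonicity of $\bm{\phi}_\eta$ and $\bm{\psi}_\eta$ from Assumption \ref{asmp:shypPDE_SLhyperg}, this makes the brackets $\bm{\phi}_\eta(s)+\bm{\phi}_\eta(x-y+s)$, $\bm{\phi}_\eta(s)-\bm{\phi}_\eta(x+y-s)$ and $\bm{\psi}_\eta(\zeta)-\bm{\psi}_\eta(\xi)$ all nonnegative. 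The differential inequality then fixes the sign of the remaining $I_4$ contribution.

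With the signs in hand I would argue by a minimum principle. Suppose $v$ attains a negative minimum $-\mu<0$ on the compact triangle at a point $(x_*,y_*)$; since $v\ge 0$ on the base, necessarily $y_*>c$. Applying the identity with apex $(x_*,y_*)$, I discard the nonnegative terms $H,I_0$, use the nonnegative weights to bound $I_1+I_2+I_3\ge -\mu\,(\hat K_1+\hat K_2+\hat K_3)$, where the $\hat K_j$ are the finite integrals of the weights over the subtriangle $\Delta_{c,x_*,y_*}$, and combine with the sign of $-I_4$ to obtain $A_{\mathsmaller{B}}(x_*)A_{\mathsmaller{B}}(y_*)\le \hat K_1+\hat K_2+\hat K_3$. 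Since the segment integrals are $O(y_*-c)$ and the area integral is $O\big((y_*-c)^2\big)$, while $A_{\mathsmaller{B}}$ is bounded below by a positive constant on the fixed triangle, this is impossible once the height $y_*-c$ is small enough. To reach a general (tall) triangle I would slice $[c,y_0]$ into thin horizontal layers and propagate the nonnegativity upward, each step being an instance of the short-triangle case.

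The main obstacle is precisely the self-referential nature of the identity: the terms $I_1,I_2,I_3$ involve $v$ at interior points of unknown sign, so \eqref{eq:shypPDE_inteqtriangle} is really an inequality for $v$ in terms of itself rather than an explicit representation. The nonnegativity and monotonicity encoded in Assumption \ref{asmp:shypPDE_SLhyperg} are exactly what produce nonnegative weights and thus allow the minimum to be cornered; the more delicate point is the upward propagation across layers, where one must check that the data available at an intermediate level — nonnegativity of $v$ there together with the sign information coming from the equation — suffices to restart the short-triangle argument, and it is here that the precise structure of the identity and the orientation $\xi\ge\zeta$ of $\Delta_{c,x_0,y_0}$ enter in an essential way.
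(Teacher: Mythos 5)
Your overall strategy is the paper's: conjugate by $B$ so that $v(\xi,\zeta)=B(\xi)B(\zeta)u(\xi,\zeta)$ feeds into the integral identity \eqref{eq:shypPDE_inteqtriangle}, then read off that $H$ and $I_0$ are nonnegative from the two boundary hypotheses and that the weights in $I_1,I_2,I_3$ are nonnegative thanks to $\bm{\phi}_\eta\ge 0$, the monotonicity of $\bm{\phi}_\eta,\bm{\psi}_\eta$, and the fact that $\xi\ge\zeta$ on $\Delta_{c,x_0,y_0}$. That part of your analysis is correct and coincides with the paper's. Where you diverge is the endgame. The paper does not run a quantitative minimum principle with layering; it adds a barrier $\delta\omega(y)$ with $\bm{\ell}^B\omega<0$, $\omega(c)>0$, $\omega'(c)\ge 0$, and argues by contradiction at a \emph{first} point where $v=B(x)B(y)u+\delta\omega(y)$ vanishes: there $v\ge 0$ on the whole subtriangle, so $I_1,I_2,I_3\ge 0$ outright (no smallness of the height is needed), while the barrier makes $I_4$ strictly negative and yields $0\ge -I_4>0$. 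This single device removes exactly the two difficulties your route must face.

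The genuine gap is in your propagation step. Your short-triangle estimate is sound, but ``each step being an instance of the short-triangle case'' means re-applying the identity with the base moved up to an intermediate level $c'=c+kh$, and to get $I_0\ge 0$ there you need the Robin-type datum $(\partial_\zeta v)(\cdot,c')\ge 0$, equivalently $(\partial_y u)(x,c')+\tfrac12\eta(c')u(x,c')\ge 0$, at the new base. The previous layer only hands you $v\ge 0$ at level $c'$; neither the hypotheses nor the equation give the sign of the normal derivative there, so the restarted argument cannot control $I_0$ and the induction breaks. You flag this as ``the more delicate point'' but do not resolve it. The repair is to never re-base: for a negative minimum $(x_*,y_*)$ in the $(k{+}1)$-st layer, apply the identity on $\Delta_{c,x_*,y_*}$ with the original base $c$ (where $H,I_0\ge0$ are known), note that the portions of $I_1,I_2,I_3$ over $\{\zeta\le c+kh\}$ are nonnegative by the induction hypothesis while the portions over the top layer are bounded below by $-\mu\cdot O(h)$, and recover your contradiction $A_{\mathsmaller{B}}(x_*)A_{\mathsmaller{B}}(y_*)\le O(h)$. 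With that correction (or by adopting the paper's barrier) your proof closes. One further caution: both arguments need $-I_4\ge 0$, yet with $\xi\leftrightarrow x$, $\zeta\leftrightarrow y$ the displayed hypothesis $(\widetilde{\ell}_x u-\widetilde{\ell}_y u)\le 0$ gives $\bm{\ell}^B_\zeta v-\bm{\ell}^B_\xi v\ge 0$, i.e.\ the unfavourable sign for $I_4$; the theorem is only applied with equality so this is harmless in practice, but your noncommittal ``a definite sign for the integrand of $I_4$'' papers over a point that has to be pinned down.
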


\begin{proof}
Pick a function $\omega \in \mathrm{C}^2[c,\infty)$ such that $\bm{\ell}^B \omega < 0$, $\omega(c) > 0$ and $\omega'(c) \geq 0$. Clearly, it is enough to show that for all $\delta > 0$ we have $v(x,y) := B(x) B(y) u(x,y) + \delta \omega(y) > 0$ for $(x, y) \in \Delta_{c,x_0,y_0}$.

Assume by contradiction that there exist $\delta > 0$, $(x, y) \in \Delta_{c,x_0,y_0}$ for which we have $v(x,y) = 0$ and $v(\xi,\zeta) \geq 0$ for all $(\xi,\zeta) \in \Delta_{c,x,y} \subset \Delta_{c,x_0,y_0}$. It is clear from the choice of $\omega$ that $v(\cdot,c) > 0$, thus we have $H \geq 0$ in the right hand side of \eqref{eq:shypPDE_inteqtriangle}. Similarly, $(\partial_y v)(\cdot,c) = B(x) B(y) \bigl[(\partial_y u)(\cdot,c) + \tfrac{1}{2} \eta(c) u(\cdot,c)\bigr] + \delta \omega'(c) \geq 0$, hence $I_0 \geq 0$. Since $\bm{\phi}_\eta$ is positive and decreasing and $\bm{\psi}_\eta$ is decreasing (cf.\ Assumption \ref{asmp:shypPDE_SLhyperg}) and we are assuming that $u \geq 0$ on $\Delta_{c,x,y}$, it follows that $I_1 \geq 0$, $I_2 \geq 0$ and $I_3 \geq 0$. In addition, $I_4 < 0$ because $(\bm{\ell}_\zeta^B v - \bm{\ell}_\xi^B v)(\xi,\zeta) = B(x) B(y) (\widetilde{\ell}_\zeta u - \widetilde{\ell}_\xi u)(\xi,\zeta) + (\bm{\ell}^B\omega)(\zeta) < 0$. Consequently, \eqref{eq:shypPDE_inteqtriangle} yields $0 = A_{\mathsmaller{B}}(x) A_{\mathsmaller{B}}(y) v(x,y) \geq -I_4 > 0$. This contradiction shows that $v(x,y) > 0$ for all $(x,y) \in \Delta_{c,x_0,y_0}$.
\end{proof}

Naturally, this weak maximum principle can be restated in terms of the operator $\ell = -{1 \over r} {d \over dx} ( p \, {d \over dx})$; this is left to the reader. As anticipated above, the positivity-preserving property of the Cauchy problem is a by-product of the maximum principle.

\begin{proposition}
Suppose Assumption \ref{asmp:shypPDE_SLhyperg} holds, and let $m \in \mathbb{N}$. If $h \in \mathcal{D}_\mathcal{L}^{(2)}$,\, $\ell(h) \in \mathcal{D}_{\mathcal{L}}^{(2)\!}$ and $h \geq 0$, then the function $f_m$ given by \eqref{eq:shypPDE_Lexistence_eps} is such that 
\begin{equation} \label{eq:shypPDE_positivity_eps}
f_m(x,y) \geq 0 \qquad \text{for } x \geq y > a_m.
\end{equation}
If, in addition, $h \leq C$ (where $C$ is a constant), then $f_m(x,y) \leq C$ for $x \geq y > a_m$.
\end{proposition}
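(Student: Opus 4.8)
The plan is to deduce the proposition from the weak maximum principle (Theorem \ref{thm:shypPDE_tildeell_maxprinc}) after passing to the standard form of the equation. Recall from Remark \ref{rmk:shypPDE_tildeell} that the substitution $\xi = \gamma(x)$, $\zeta = \gamma(y)$ converts $\ell_x f_m = \ell_y f_m$ into $\widetilde{\ell}_\xi u = \widetilde{\ell}_\zeta u$; accordingly I set $u(\xi,\zeta) := f_m(\gamma^{-1}(\xi),\gamma^{-1}(\zeta))$ and $c := \gamma(a_m)$. Since $a_m > a$ and $\gamma$ is strictly increasing, $c > \gamma(a)$, so the triangles $\Delta_{c,x_0,y_0}$ with base at $\zeta = c$ lie in the region where Assumption \ref{asmp:shypPDE_SLhyperg} is available. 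By Proposition \ref{prop:shypPDE_Lexistence_eps}, $f_m$ solves the Cauchy problem \eqref{eq:shypPDE_Lcauchy_eps}; the differentiation-under-the-integral arguments used there, together with the smoothness of $\gamma^{-1}$ (which follows from $\gamma' = \sqrt{r/p} > 0$), show that $u \in \mathrm{C}^2$ on the relevant region and that $\widetilde{\ell}_\xi u - \widetilde{\ell}_\zeta u = 0$.

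The next step is to translate the Cauchy data at $y = a_m$ into the boundary conditions demanded by \eqref{eq:shypPDE_tildeell_maxprinc_ineq} at $\zeta = c$. The initial condition $f_m(x,a_m) = h(x)$ gives $u(\xi,c) = h(\gamma^{-1}(\xi)) \geq 0$ since $h \geq 0$. For the normal derivative, a direct computation yields $(\partial_\zeta u)(\xi,\zeta) = (\partial_y f_m)(x,y)\sqrt{p(y)/r(y)}$, so that $\partial_y^{[1]} f_m = p(y)\,\partial_y f_m = \sqrt{p(y)r(y)}\,\partial_\zeta u$; because $p(a_m)r(a_m) > 0$, the vanishing Neumann datum $(\partial_y^{[1]} f_m)(x,a_m) = 0$ is equivalent to $(\partial_\zeta u)(\xi,c) = 0$. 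Hence the mixed boundary quantity in \eqref{eq:shypPDE_tildeell_maxprinc_ineq} becomes $(\partial_\zeta u)(\xi,c) + \tfrac{1}{2}\eta(c)\,u(\xi,c) = \tfrac{1}{2}\eta(c)\,h(\gamma^{-1}(\xi)) \geq 0$, using $\eta \geq 0$ (Assumption \ref{asmp:shypPDE_SLhyperg}) and $h \geq 0$. Thus all three hypotheses of Theorem \ref{thm:shypPDE_tildeell_maxprinc} hold for $u$ (with the theorem's variables identified with $\xi,\zeta$).

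To finish, fix $x \geq y > a_m$ and put $x_0 := \gamma(x)$, $y_0 := \gamma(y)$, so that $c < y_0 \leq x_0$. Since $(x_0,y_0)$ is the apex of $\Delta_{c,x_0,y_0}$, the maximum principle gives $u(x_0,y_0) \geq 0$, i.e.\ $f_m(x,y) \geq 0$, which is \eqref{eq:shypPDE_positivity_eps}. For the upper bound I would repeat the argument for $C - f_m$: since $\ell(C) \equiv 0$, and likewise $\widetilde{\ell}(C) \equiv 0$, the function $C - u$ again solves the transformed equation with zero right-hand side, its initial datum $C - h(\gamma^{-1}(\xi)) \geq 0$ and its boundary quantity $\tfrac{1}{2}\eta(c)\bigl(C - h(\gamma^{-1}(\xi))\bigr) \geq 0$ meet the hypotheses of Theorem \ref{thm:shypPDE_tildeell_maxprinc}; hence $C - u \geq 0$ on the triangles and $f_m(x,y) \leq C$ for $x \geq y > a_m$.

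The step requiring the most care is the translation of the Cauchy data across the change of variables: one must correctly derive the relation $\partial_y^{[1]} = \sqrt{pr}\,\partial_\zeta$ between the conormal derivatives and check that the vanishing Neumann datum, combined with $\eta \geq 0$ and the sign of $h$, produces precisely the nonnegative mixed boundary expression required by the maximum principle. The remaining points --- the $\mathrm{C}^2$ regularity of $u$ and the identification of $(x_0,y_0)$ as the vertex of $\Delta_{c,x_0,y_0}$ --- are routine given Proposition \ref{prop:shypPDE_Lexistence_eps} and the definition of the triangle.
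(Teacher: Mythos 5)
Your proof is correct and follows essentially the same route as the paper: transform to the standard form via $\gamma$, verify that the transformed solution satisfies the boundary inequalities \eqref{eq:shypPDE_tildeell_maxprinc_ineq} with $c=\gamma(a_m)$, apply Theorem \ref{thm:shypPDE_tildeell_maxprinc} at the apex of the triangle, and treat the upper bound by running the same argument on $C-u$. Your explicit verification of the relation $\partial_y^{[1]} = \sqrt{pr}\,\partial_\zeta$ for the Neumann datum is a detail the paper leaves implicit, but the argument is the same.
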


\begin{proof}
It follows from Proposition \ref{prop:shypPDE_Lexistence_eps} that the function $u_m(x,y) := f_m(\gamma^{-1}(x), \gamma^{-1}(y))$ is a solution of the Cauchy problem
\begin{align}
\label{eq:shypPDE_positivity_eps_pf1} (\widetilde{\ell}_x u_m)(x,y) = (\widetilde{\ell}_y u_m)(x,y), & \qquad x, y > \tilde{a}_m \\
\label{eq:shypPDE_positivity_eps_pf2} u_m(x,\tilde{a}_m) = h(\gamma^{-1}(x)), & \qquad x > \tilde{a}_m \\
\label{eq:shypPDE_positivity_eps_pf3} (\partial_y u_m)(x,\tilde{a}_m) = 0, & \qquad x > \tilde{a}_m
\end{align}
where $\tilde{a}_m = \gamma(a_m)$. Clearly, $u_m$ satisfies the inequalities \eqref{eq:shypPDE_tildeell_maxprinc_ineq} for arbitrary $x_0 \geq y_0 \geq \tilde{a}_m$ (here $c = \tilde{a}_m$). By Theorem \ref{thm:shypPDE_tildeell_maxprinc}, $u_m(x_0,y_0) \geq 0$ for all $x_0 \geq y_0 > \tilde{a}_m$; consequently, \eqref{eq:shypPDE_positivity_eps} holds.

The proof of the last statement is straightforward: if we have $h \leq C$, then $\widetilde{u}_m(x,y) = C - u_m(x,y)$ is a solution of \eqref{eq:shypPDE_positivity_eps_pf1} with initial conditions $\widetilde{u}_m(x,\tilde{a}_m) = C - h(\gamma^{-1}(x)) \geq 0$ and \eqref{eq:shypPDE_positivity_eps_pf3}, thus the reasoning of the previous paragraph yields that $C - u_m \geq 0$ for $x \geq y > \tilde{a}_m$.
\end{proof}

The previous result gives the positivity-preservingness for the solution of the nondegenerate Cauchy problem \eqref{eq:shypPDE_Lcauchy_eps}. The extension of this property to the possibly degenerate problem \eqref{eq:shypPDE_Lcauchy} is an immediate consequence of the pointwise convergence result of Corollary \ref{cor:shypPDE_cauchy_ptapprox}:

\begin{corollary} \label{cor:shypPDE_sol_positivity}
Suppose Assumption \ref{asmp:shypPDE_SLhyperg} holds. If $h \in \mathcal{D}_\mathcal{L}^{(2)}$,\, $\ell(h) \in \mathcal{D}_{\mathcal{L}}^{(2)\!}$ and $h \geq 0$, then the function $f$ given by \eqref{eq:shypPDE_Lexistence} is such that 
\[
f(x,y) \geq 0 \qquad \text{for } x, y \in (a,b).
\]
If, in addition, $h \leq C$, then $f(x,y) \leq C$ for $x, y \in (a,b)$.
\end{corollary}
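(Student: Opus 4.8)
The plan is to obtain the positivity (and the upper bound) for the degenerate solution $f$ as a limit of the corresponding properties for the nondegenerate approximations $f_m$, exactly as the sentence preceding the statement suggests. The two ingredients are the positivity Proposition immediately above (which provides the bounds for each $f_m$ on the half-plane $x \geq y > a_m$, as a consequence of the weak maximum principle) and the pointwise convergence $f_m \to f$ of Corollary \ref{cor:shypPDE_cauchy_ptapprox}.

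First I would record that the explicit representation \eqref{eq:shypPDE_Lexistence} is symmetric in its two arguments, since $w_\lambda(x)\,w_\lambda(y) = w_\lambda(y)\,w_\lambda(x)$; hence $f(x,y) = f(y,x)$, and it suffices to prove the asserted inequalities on the region $\{x \geq y\}$ and then extend them to all of $(a,b)^2$ by this symmetry. Note that the approximants $f_m$ are \emph{not} symmetric, since they pair $w_\lambda$ with $w_{\lambda,m}$; this is precisely why the symmetry is invoked only after passing to the limit.

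Next, fix $x, y \in (a,b)$ with $x \geq y$. Because $a_m \downarrow a$ and $y > a$, there is an index $M$ such that $a_m < y$ for all $m \geq M$. For every such $m$ the pair $(x,y)$ lies in the region $x \geq y > a_m$ to which the preceding Proposition applies, so $f_m(x,y) \geq 0$ (using $h \geq 0$), and $f_m(x,y) \leq C$ when in addition $h \leq C$. Letting $m \to \infty$ and invoking Corollary \ref{cor:shypPDE_cauchy_ptapprox} to identify the limit as $f(x,y)$, we obtain $f(x,y) \geq 0$, and $f(x,y) \leq C$ in the bounded case. Finally, the symmetry $f(x,y) = f(y,x)$ promotes both inequalities from $\{x \geq y\}$ to all of $(a,b)^2$, which is the claim.

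There is no genuine obstacle here: the substance lies entirely in the preceding Proposition (whose proof rests on Theorem \ref{thm:shypPDE_tildeell_maxprinc}) and in the convergence of Corollary \ref{cor:shypPDE_cauchy_ptapprox}. The only points requiring a moment's care are the choice of $M$ ensuring $a_m < y$, so that the half-plane hypothesis of the Proposition is met for all large $m$, and the use of the symmetry of the limit $f$ to cover the complementary region $\{x < y\}$, which the non-symmetric approximants $f_m$ do not directly reach.
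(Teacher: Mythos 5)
Your proof is correct and follows exactly the route the paper intends: apply the preceding Proposition to get $f_m(x,y)\geq 0$ (and $\leq C$) for $x\geq y>a_m$, pass to the limit via Corollary \ref{cor:shypPDE_cauchy_ptapprox}, and use the symmetry of $f$ from the representation \eqref{eq:shypPDE_Lexistence} to cover $\{x<y\}$. Your explicit remarks on choosing $M$ with $a_m<y$ and on why symmetry must be invoked only for the limit (the $f_m$ are not symmetric) are exactly the small details the paper leaves implicit.
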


Note that the conclusion holds for all $x,y \in (a,b)$ because the function $f(x,y)$ is symmetric.

\section{Sturm-Liouville translation and convolution} \label{sec:transl_conv}

Assumption \ref{asmp:shypPDE_SLhyperg} will always be in force throughout this and the subsequent sections.

\subsection{Definition and first properties}

In view of the comments made in the Introduction, it is natural to define the $\mathcal{L}$-convolution $\mu * \nu$ ($\mu,\nu \in \mathcal{M}_\mathrm{C}[a,b)$) in order that, for sufficiently well-behaved initial conditions, the integral $\int_{[a,b)} h(\xi) \, (\delta_x * \delta_y)(d\xi)$ coincides with the solution \eqref{eq:shypPDE_Lexistence} of the hyperbolic Cauchy problem. Having this in mind, let us first confirm that the solution of the hyperbolic Cauchy problem can be represented as an integral with respect to a family of positive measures:

\begin{proposition} \label{prop:shypPDE_sol_subprobrep}
Fix $x, y \in [a,b)$. There exists a subprobability measure $\bm{\nu}_{x,y} \in \mathcal{M}_+[a,b)$ such that, for all initial conditions $h \in \mathrm{C}_{\mathrm{c},0}^4$, the solution \eqref{eq:shypPDE_Lexistence} of the hyperbolic Cauchy problem \eqref{eq:shypPDE_Lcauchy} can be written as
\begin{equation} \label{eq:shypPDE_sol_subprobrep}
f_h(x,y) = \int_{[a,b)} h(\xi) \, \bm{\nu}_{x,y}(d\xi) \qquad (h \in \mathrm{C}_{\mathrm{c},0}^4).
\end{equation}
\end{proposition}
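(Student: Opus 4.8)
The plan is to construct the measure $\bm{\nu}_{x,y}$ by exhibiting the map $h \mapsto f_h(x,y)$ as a positive linear functional on a suitable function space and then invoking the Riesz representation theorem. First I would fix $x,y \in [a,b)$ and regard $\Phi_{x,y}(h) := f_h(x,y)$ as a linear functional on the space $\mathrm{C}_{\mathrm{c},0}^4$ of admissible initial conditions. Linearity in $h$ is immediate from the definition \eqref{eq:shypPDE_Lexistence}, since $\mathcal{F}$ is linear and the integral against $\bm{\rho}_\mathcal{L}$ is linear. The key structural inputs are the two conclusions of Corollary \ref{cor:shypPDE_sol_positivity}: if $h \geq 0$ then $f_h(x,y) \geq 0$, so $\Phi_{x,y}$ is a positive functional; and if $0 \leq h \leq C$ then $f_h(x,y) \leq C$, which will give the subprobability (mass $\leq 1$) bound once we know $\Phi_{x,y}$ is represented by a measure.

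Since $\mathrm{C}_{\mathrm{c},0}^4$ is not quite the space $\mathrm{C}_\mathrm{c}[a,b)$ to which the Riesz--Markov theorem directly applies, the plan is to first establish a uniform boundedness estimate and then extend by density. From the positivity and the boundedness property of Corollary \ref{cor:shypPDE_sol_positivity}, applied to the pair of inequalities $-\|h\|_\infty \leq h \leq \|h\|_\infty$, I would derive the operator bound $|\Phi_{x,y}(h)| = |f_h(x,y)| \leq \|h\|_\infty$ for every $h \in \mathrm{C}_{\mathrm{c},0}^4$. Indeed, taking $C = \|h\|_\infty$ and applying the corollary to $\|h\|_\infty - h \geq 0$ and to $\|h\|_\infty + h \geq 0$ yields $-\|h\|_\infty \leq f_h(x,y) \leq \|h\|_\infty$. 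This shows $\Phi_{x,y}$ is bounded with respect to the sup-norm, so it extends uniquely to a bounded positive linear functional on the closure of $\mathrm{C}_{\mathrm{c},0}^4$ in $\mathrm{C}_0([a,b),\mathbb{R})$; provided $\mathrm{C}_{\mathrm{c},0}^4$ is sup-norm dense in $\mathrm{C}_0([a,b),\mathbb{R})$, I obtain a positive linear functional on all of $\mathrm{C}_0([a,b),\mathbb{R})$. The Riesz representation theorem then furnishes a unique finite positive Borel measure $\bm{\nu}_{x,y} \in \mathcal{M}_+[a,b)$ with $\Phi_{x,y}(h) = \int_{[a,b)} h \, d\bm{\nu}_{x,y}$, recovering \eqref{eq:shypPDE_sol_subprobrep}. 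The subprobability bound $\bm{\nu}_{x,y}([a,b)) \leq 1$ follows by testing against functions $h \in \mathrm{C}_{\mathrm{c},0}^4$ with $0 \leq h \leq 1$ that increase to $\mathds{1}$ and using $f_h(x,y) \leq 1$ together with monotone convergence.

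The main obstacle I anticipate is the density argument: one must verify that $\mathrm{C}_{\mathrm{c},0}^4$, with its two vanishing-derivative conditions at $a$ and the requirement that $\ell(u),\ell^2(u)$ be compactly supported, is genuinely dense in $\mathrm{C}_0([a,b),\mathbb{R})$ in the uniform norm, so that the extension covers enough test functions to pin down a Borel measure. This requires care near the endpoint $a$ because of the boundary conditions $\lim_{x\downarrow a}u^{[1]}(x) = \lim_{x\downarrow a}[\ell(u)]^{[1]}(x) = 0$, and near the (possibly singular) smoothing needed to promote continuous functions to the $\mathrm{C}^4$-regularity with the prescribed decay of $\ell(u)$ and $\ell^2(u)$. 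A clean way around this is to check that $\mathrm{C}_{\mathrm{c},0}^4$ separates points and is a sufficiently rich subalgebra, or alternatively to recall that such cores for the Feller generator of Lemma \ref{lem:shypPDE_Lb_fellergen} are standard and dense; one would cite the diffusion-theoretic results already invoked (e.g.\ \cite{fukushima2014,mckean1956}) to assert that $\mathrm{C}_{\mathrm{c},0}^4$ is a core, hence dense. Everything else is routine: linearity, the two-sided bound from Corollary \ref{cor:shypPDE_sol_positivity}, and the standard Riesz--Markov representation.
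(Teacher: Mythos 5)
Your proposal is correct and follows the same skeleton as the paper's proof: both regard $h \mapsto f_h(x,y)$ as a linear functional on $\mathrm{C}_{\mathrm{c},0}^4$, both extract the two-sided bound $|f_h(x,y)| \leq \|h\|_\infty$ from Corollary \ref{cor:shypPDE_sol_positivity} exactly as you describe, and both finish with the Riesz representation of the dual of $\mathrm{C}_0[a,b)$ plus positivity on $\mathrm{C}_{\mathrm{c},0}^4$ to identify $\bm{\nu}_{x,y}$ as a positive measure of total mass at most $1$. The one genuine divergence is the extension step: the paper invokes the Hahn--Banach theorem to produce a norm-preserving (not necessarily unique) extension to all of $\mathrm{C}_0[a,b)$, which completely bypasses the density question you identify as your main obstacle, and only afterwards uses positivity of the functional on nonnegative elements of $\mathrm{C}_{\mathrm{c},0}^4$ to force the representing complex measure to be positive. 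Your density route is also viable --- functions that are locally constant near $a$ automatically satisfy the boundary conditions $\lim_{x \downarrow a} u^{[1]}(x) = \lim_{x \downarrow a}[\ell(u)]^{[1]}(x) = 0$, so uniform approximation in $\mathrm{C}_0[a,b)$ by (nonnegative, when the target is nonnegative) elements of $\mathrm{C}_{\mathrm{c},0}^4$ is a routine mollification-plus-cutoff construction --- but it leaves you with a real proof obligation that the Hahn--Banach argument avoids, and you would additionally need nonnegative approximants (not just approximants) to transfer positivity to the extension. Note, however, that the paper's route does not escape the richness of $\mathrm{C}_{\mathrm{c},0}^4$ entirely: concluding that a complex measure is positive from $\int h \, d\bm{\nu}_{x,y} \geq 0$ for nonnegative $h \in \mathrm{C}_{\mathrm{c},0}^4$ requires essentially the same approximation fact, so the two proofs differ mainly in where that burden is placed.
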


\begin{proof}
For each fixed $x, y \in [a,b)$, the right hand side of \eqref{eq:shypPDE_Lexistence} defines a linear functional $\mathrm{C}_{\mathrm{c},0}^4 \ni h \mapsto f_h(x,y) \in \mathbb{C}$. By Corollary \ref{cor:shypPDE_sol_positivity}, $|f_h(x,y)| \leq \|h\|_\infty$ for $h \in \mathrm{C}_{\mathrm{c},0}^4$. Thus it follows from the Hahn-Banach theorem that this functional admits a linear extension $\mathcal{T}_{x,y}: \mathrm{C}_0[a,b) \to \mathbb{C}$ such that $|\mathcal{T}_{x,y} h| \leq \|h\|_\infty$ for all $h \in \mathrm{C}_0[a,b)$. According to the Riesz representation theorem (cf.\ \cite[Theorem 7.3.6]{cohn2013}), $\mathcal{M}_\mathbb{C}[a,b)$ is the dual of $\mathrm{C}_0[a,b)$; we thus have $\mathcal{T}_{x,y} h = \int_{[a,b)} h(\xi) \bm{\nu}_{x,y}(d\xi)$, where $\bm{\nu}_{x,y}$ is a finite complex measure with $\|\bm{\nu}_{x,y}\| \leq 1$. Finally, the fact that $\int_{[a,b)} h(\xi) \bm{\nu}_{x,y}(d\xi) \equiv f_h(x,y) \geq 0$ for all $h \in \mathrm{C}_{\mathrm{c},0}^4$, $h \geq 0$ (Corollary \ref{cor:shypPDE_sol_positivity}) yields that $\bm{\nu}_{x,y} \in \mathcal{M}_+[a,b)$ is a subprobability measure.
\end{proof}

\begin{definition}
Let $\mu, \nu \in \mathcal{M}_{\mathbb{C}}[a,b)$. The measure
\[
(\mu * \nu)(\cdot) = \int_{[a,b)} \int_{[a,b)} \bm{\nu}_{x,y}(\cdot) \, \mu(dx) \, \nu(dy)
\]
is called the \emph{$\mathcal{L}$-convolution} of the measures $\mu$ and $\nu$. The \emph{$\mathcal{L}$-translation} of a function $h \in \mathrm{B}_\mathrm{b}[a,b)$ is defined as
\[
(\mathcal{T}^y h)(x) = \int_{[a,b)} h(\xi) \, \bm{\nu}_{x,y} (d\xi) \equiv \int_{[a,b)} h(\xi) \, (\delta_x * \delta_y)(d\xi), \qquad x,y \in [a,b).
\]
\end{definition}

It follows from this definition, together with \eqref{eq:shypPDE_Lexistence}, that the $\mathcal{L}$-convolution is such that (for $\mu_1,\mu_2,\nu,\pi \in \mathcal{M}_{\mathbb{C}}[a,b)$ and $p_1, p_2 \in \mathbb{C}$):
\begin{enumerate}[itemsep=0pt,topsep=4pt]
\item[\textbf{\emph{(i)}}] $\mu * \nu = \nu * \mu$\, (Commutativity);
\item[\textbf{\emph{(ii)}}] $(\mu * \nu) * \pi = \mu * (\nu * \pi)$\, (Associativity);
\item[\textbf{\emph{(iii)}}] $(p_1 \mu_1 + p_2 \mu_2) * \nu = p_1 (\mu_1 * \nu) + p_2 (\mu_2 * \nu)$\, (Bilinearity);
\item[\textbf{\emph{(iv)}}] $\|\mu * \nu\| \leq \|\mu\| \ccdot \|\nu\|$ (Submultiplicativity);
\item[\textbf{\emph{(v)}}] If $\mu, \nu \in \mathcal{M}_{+}[a,b)$, then $\mu * \nu \in \mathcal{M}_{+}[a,b)$ (Positivity).
\end{enumerate}
Summarizing this, we have:

\begin{proposition} \label{prop:shypPDE_conv_Mbanachalg}
The space $(\mathcal{M}_{\mathbb{C}}[a,b),*)$, equipped with the total variation norm, is a commutative Banach algebra over $\mathbb{C}$ whose identity element is the Dirac measure $\delta_a$.

Moreover, $\mathcal{M}_{+}[a,b)$ is an algebra cone (i.e.\ it is closed under $\mathcal{L}$-convolution, addition and multiplication by positive scalars, and it contains the identity element).
\end{proposition}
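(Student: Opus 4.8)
The plan is to read the proposition as a summary of the algebraic and metric properties (i)--(v) listed above, supplemented by the standard fact that $(\mathcal{M}_{\mathbb{C}}[a,b),\|\cdot\|)$ is complete, being the dual of $\mathrm{C}_0[a,b)$ (as already used in Proposition \ref{prop:shypPDE_sol_subprobrep}). Most of the properties are immediate once one recalls that the kernel $\bm{\nu}_{x,y}$ consists of positive subprobability measures ($\|\bm{\nu}_{x,y}\|\le 1$) that are symmetric in $(x,y)$. Bilinearity is just linearity of the double integral in each slot; submultiplicativity follows from $\|\mu*\nu\|\le\int_{[a,b)}\int_{[a,b)}\|\bm{\nu}_{x,y}\|\,|\mu|(dx)\,|\nu|(dy)\le\|\mu\|\,\|\nu\|$; positivity is clear because a double integral of positive measures against positive measures is positive; and commutativity follows from the symmetry $\bm{\nu}_{x,y}=\bm{\nu}_{y,x}$ together with Fubini's theorem. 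The symmetry itself is read off from \eqref{eq:shypPDE_Lexistence}: for $h\in\mathrm{C}_{\mathrm{c},0}^4$ the quantity $\int_{[a,b)}h\,d\bm{\nu}_{x,y}=f_h(x,y)=\int_{[0,\infty)}w_\lambda(x)w_\lambda(y)(\mathcal{F}h)(\lambda)\,\bm{\rho}_{\mathcal{L}}(d\lambda)$ is manifestly symmetric in $x,y$, and since $\mathrm{C}_{\mathrm{c},0}^4$ is a measure-determining class this forces $\bm{\nu}_{x,y}=\bm{\nu}_{y,x}$. The same representation identifies the unit: taking $y=a$ and using $w_\lambda(a)=1$ gives $\int h\,d\bm{\nu}_{x,a}=f_h(x,a)=h(x)$, whence $\bm{\nu}_{x,a}=\delta_x$ and therefore $\mu*\delta_a=\int_{[a,b)}\bm{\nu}_{x,a}\,\mu(dx)=\mu$.

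The only property demanding real work is associativity. First I would reduce it to the level of Dirac measures: expanding $(\mu*\nu)*\pi$ and $\mu*(\nu*\pi)$ by the definition and applying Fubini's theorem (legitimate because all the measures are finite and $(x,y)\mapsto\bm{\nu}_{x,y}(E)$ is Borel measurable, the latter being inherited from the continuity of $(x,y)\mapsto f_h(x,y)$), associativity for arbitrary $\mu,\nu,\pi$ is seen to follow from the single identity $\int_{[a,b)}\bm{\nu}_{s,z}(\cdot)\,\bm{\nu}_{x,y}(ds)=\int_{[a,b)}\bm{\nu}_{x,t}(\cdot)\,\bm{\nu}_{y,z}(dt)$ for all $x,y,z\in[a,b)$, that is, $(\delta_x*\delta_y)*\delta_z=\delta_x*(\delta_y*\delta_z)$.

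To prove this Dirac identity I would test both sides against $h\in\mathrm{C}_{\mathrm{c},0}^4$ and show that each side equals the fully symmetric triple integral $\int_{[0,\infty)}w_\lambda(x)w_\lambda(y)w_\lambda(z)(\mathcal{F}h)(\lambda)\,\bm{\rho}_{\mathcal{L}}(d\lambda)$. Testing the left-hand side gives $\int_{[a,b)}(\mathcal{T}^z h)(s)\,\bm{\nu}_{x,y}(ds)$, so the key point is that the translate $g:=\mathcal{T}^z h=f_h(\cdot,z)$ again lies in the class to which Theorem \ref{thm:shypPDE_Lexistence} and the representation \eqref{eq:shypPDE_sol_subprobrep} apply. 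This I would verify on the spectral side: since $\mathcal{F}[\mathcal{T}^z h](\lambda)=w_\lambda(z)(\mathcal{F}h)(\lambda)$ and $|w_\lambda(z)|\le 1$ by Lemma \ref{lem:shypPDE_wsolbound}, the functions $\mathcal{F}g$, $\lambda\,\mathcal{F}g$ and $\lambda^2\,\mathcal{F}g$ are all in $L_2([0,\infty);\bm{\rho}_{\mathcal{L}})$ whenever $h,\ell(h)\in\mathcal{D}_{\mathcal{L}}^{(2)}$, so by the characterization \eqref{eq:shypPDE_LtransfidentD2} one gets $g,\ell(g)\in\mathcal{D}_{\mathcal{L}}^{(2)}$. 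Granting that the representation $\int g\,d\bm{\nu}_{x,y}=f_g(x,y)$ extends to such $g$, Theorem \ref{thm:shypPDE_Lexistence} yields $\int(\mathcal{T}^z h)\,d\bm{\nu}_{x,y}=f_g(x,y)=\int_{[0,\infty)}w_\lambda(x)w_\lambda(y)w_\lambda(z)(\mathcal{F}h)(\lambda)\,\bm{\rho}_{\mathcal{L}}(d\lambda)$, which is symmetric in $x,y,z$; the same computation applied to the right-hand side gives the identical expression, and the measure-determining property of $\mathrm{C}_{\mathrm{c},0}^4$ closes the argument.

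The main obstacle is precisely the step granted above: extending the integral representation $\int g\,d\bm{\nu}_{x,y}=f_g(x,y)$ from $\mathrm{C}_{\mathrm{c},0}^4$ (where it is Proposition \ref{prop:shypPDE_sol_subprobrep}) to the larger class of $g$ with $g,\ell(g)\in\mathcal{D}_{\mathcal{L}}^{(2)}$. The two functionals $g\mapsto\int g\,d\bm{\nu}_{x,y}$ and $g\mapsto f_g(x,y)$ agree on $\mathrm{C}_{\mathrm{c},0}^4$; the first is continuous for uniform convergence (as $\bm{\nu}_{x,y}$ is finite), while the second is continuous for the graph norm $\|g\|_2+\|\ell(g)\|_2$ by the splitting and resolvent bounds of Lemma \ref{lem:shypPDE_Ltransf_D2prop} together with $|w_\lambda|\le 1$. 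Hence it suffices to approximate $g$ by elements of $\mathrm{C}_{\mathrm{c},0}^4$ simultaneously in the uniform and graph-norm topologies, which is a cut-off and mollification argument that must respect the Neumann-type boundary conditions at $a$. Once this approximation is in place everything assembles into the asserted commutative Banach algebra with unit $\delta_a$; and since $\mathcal{M}_+[a,b)$ is closed under convolution (property (v)), under addition and multiplication by positive scalars, and contains $\delta_a$, it is an algebra cone.
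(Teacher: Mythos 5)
Your handling of bilinearity, submultiplicativity, positivity, commutativity, the identity $\delta_a$, and completeness is correct, and it supplies details that the paper omits: the paper states properties (i)--(v) with no justification beyond the remark that they ``follow from the definition together with \eqref{eq:shypPDE_Lexistence}''. In particular, your derivation of the symmetry $\bm{\nu}_{x,y}=\bm{\nu}_{y,x}$ and of $\bm{\nu}_{x,a}=\delta_x$ by testing against $\mathrm{C}_{\mathrm{c},0}^4$ and using the spectral representation of $f_h$ is exactly the intended argument, and the dual-space identification of $\mathcal{M}_{\mathbb{C}}[a,b)$ gives completeness as you say.

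The gap is in associativity, and you have located it precisely but not closed it. Your argument needs the identity $\int_{[a,b)}g\,d\bm{\nu}_{x,y}=f_g(x,y)$ for $g=\mathcal{T}^z h$, whereas Proposition \ref{prop:shypPDE_sol_subprobrep} only provides it for $g\in\mathrm{C}_{\mathrm{c},0}^4$; since $\bm{\nu}_{x,y}$ was produced by a (non-unique) Hahn--Banach extension, nothing forces the two functionals to agree off that class. The ``cut-off and mollification'' you invoke is not routine: in the degenerate case $\supp(\bm{\nu}_{x,y})$ need not be compact, so $\mathcal{T}^z h$ need not be compactly supported, and truncating it near the natural boundary $b$ by a cutoff $\chi_n$ produces error terms $\tfrac{1}{r}\bigl[(p\chi_n')'g+2\chi_n' g^{[1]}\bigr]$ whose $L_2(r)$-norms are not controlled without decay estimates on $g$ and $g^{[1]}$ relative to the coefficients --- convergence in the graph norm is exactly the point at issue, not a formality. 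The paper's own machinery offers a cleaner route that bypasses this entirely: once the product formula $\int_{[a,b)} w_\lambda\,d(\delta_x*\delta_y)=w_\lambda(x)\,w_\lambda(y)$ (Theorem \ref{thm:shypPDE_prodform}) and the injectivity of $\mu\mapsto\widehat{\mu}$ (Proposition \ref{prop:shypPDE_Ltransfmeas_props}(b)) are available, one gets $\widehat{(\mu*\nu)*\pi}=\widehat{\mu}\,\widehat{\nu}\,\widehat{\pi}=\widehat{\mu*(\nu*\pi)}$ as in Corollary \ref{cor:shypPDEconv_basicprops}(a), and neither of those results relies on associativity, so there is no circularity despite their later placement in the text. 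Either complete the approximation argument or reorder the logic through the product formula; as written, associativity is not proved.
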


\begin{remark}
Given a measure $\mu \in \mathcal{M}_\mathbb{C}[a,b)$, it is natural to define the $\mathcal{L}$-translation by $\mu$ as
\[
(\mathcal{T}^\mu h)(x) := \int_{[a,b)} (\mathcal{T}^y h)(x)\, \mu(dy) \equiv \int_{[a,b)} h(\xi) \, (\delta_x * \mu)(d\xi) \qquad\quad (h \in \mathrm{B}_\mathrm{b}[a,b))
\]
(so that $\mathcal{T}^x \equiv \mathcal{T}^{\delta_x}$ for $a \leq x < b$). It is easy to see that $\|\mathcal{T}^\mu h\|_\infty \leq \| \mu \| \ccdot \|h\|_\infty$ for all $h \in \mathrm{B}_\mathrm{b}[a,b)$ and $\mu \in \mathcal{M}_\mathbb{C}[a,b)$. Observe also that for $h \in \mathrm{C}_{\mathrm{c},0}^4$ we can write (by \eqref{eq:shypPDE_Lexistence} and \eqref{eq:shypPDE_sol_subprobrep})
\begin{equation} \label{eq:shypPDE_gentranslmu_spectrep}
(\mathcal{T}^\mu h)(x) = \int_{[0,\infty)} (\mathcal{F} h)(\lambda) \, w_\lambda(x) \, \widehat{\mu}(\lambda) \, \bm{\rho}_{\mathcal{L}}(d\lambda) \qquad (h \in \mathrm{C}_{\mathrm{c},0}^4)
\end{equation}
or equivalently (cf.\ Proposition \ref{prop:shypPDE_Ltransf})
\begin{equation} \label{eq:shypPDE_gentranslmu_spectrep_F} \bigl(\mathcal{F}(\mathcal{T}^\mu h)\bigr)(\lambda) = \widehat{\mu}(\lambda) (\mathcal{F}h)(\lambda) \qquad (h \in \mathrm{C}_{\mathrm{c},0}^4).
\end{equation}
Due to Lemma \ref{lem:shypPDE_Ltransf_D2prop}, the integral \eqref{eq:shypPDE_gentranslmu_spectrep} converges absolutely and uniformly for $x$ on compact subsets of $(a,b)$.
\end{remark}

\subsection{Sturm-Liouville transform of measures}

An important tool for the subsequent analysis is the extension of the $\mathcal{L}$-transform \eqref{eq:shypPDE_Ltransfdef} to finite complex measures, defined as follows:

\begin{definition}
Let $\mu \in \mathcal{M}_{\mathbb{C}}[a,b)$. The \emph{$\mathcal{L}$-transform} of the measure $\mu$ is the function defined by the integral
\[
\widehat{\mu}(\lambda) = \int_{[a,b)} w_\lambda(x)\, \mu(dx), \qquad \lambda \geq 0.
\]
\end{definition}

The next proposition contains some basic properties of the $\mathcal{L}$-transform of measures which, as one would expect, resemble those of the ordinary Fourier transform (or characteristic function) of finite measures. We recall that, by definition, the complex measures $\mu_n$ converge weakly to $\mu \in \mathcal{M}_\mathbb{C}[a,b)$ if $\lim_n \int_{[a,b)} g(\xi) \mu_n(d\xi) = \int_{[a,b)} g(\xi) \mu(d\xi)$ for all $g \in \mathrm{C}_\mathrm{b}[a,b)$. We also recall that a family $\{\mu_j\} \subset \mathcal{M}_\mathbb{C}[a,b)$ is said to be uniformly bounded if $\sup_j\|\mu_j\| < \infty$, and $\{\mu_j\}$ is said to be tight if for each $\eps > 0$ there exists a compact $K_\eps \subset [a,b)$ such that $\sup_j \,|\mu_j|([a,b) \setminus K_\eps) < \eps$. (These definitions are taken from \cite{bogachev2007}.) In the sequel, the notation $\mu_n \warrow \mu$ denotes weak convergence of measures.

\begin{proposition} \label{prop:shypPDE_Ltransfmeas_props}
The $\mathcal{L}$-transform $\widehat{\mu}$ of $\mu \in \mathcal{M}_\mathbb{C}[a,b)$ has the following properties:
\begin{enumerate}[itemsep=0pt,topsep=4pt]
\item[\textbf{(a)}] $\widehat{\mu}$ is continuous on $[0,\infty)$. Moreover, if a family of measures $\{\mu_j\} \subset \mathcal{M}_\mathbb{C}[a,b)$ is tight and uniformly bounded, then $\{\widehat{\mu_j}\}$ is equicontinuous on $[0,\infty)$.

\item[\textbf{(b)}] Each measure $\mu \in \mathcal{M}_{\mathbb{C}}[a,b)$ is uniquely determined by $\widehat{\mu}$. In particular, each $f \in L_1(r)$ is uniquely determined by $\mathcal{F}f \equiv \widehat{\mu_f}$, where $\mu_f \in \mathcal{M}_{\mathbb{C}}[a,b)$ is defined by $\mu_f(dx) = f(x) r(x) dx$.

\item[\textbf{(c)}]
If $\{\mu_n\}$ is a sequence of measures belonging to $\mathcal{M}_+[a,b)$, $\mu \in \mathcal{M}_+[a,b)$, and $\mu_n \warrow \mu$, then
\[
\widehat{\mu_n} \xrightarrow[\,n \to \infty\,]{} \widehat{\mu} \qquad \text{uniformly for } \lambda \text{ in compact sets.}
\]

\item[\textbf{(d)}] Suppose that $\lim_{x \uparrow b} w_\lambda(x) = 0$ for all $\lambda > 0$. If $\{\mu_n\}$ is a sequence of measures belonging to $\mathcal{M}_+[a,b)$ whose $\mathcal{L}$-transforms are such that
\begin{equation} \label{eq:shypPDE_Ftransf_continuity_hyp}
\widehat{\mu_n}(\lambda) \xrightarrow[\,n \to \infty\,]{} f(\lambda) \qquad \text{pointwise in } \lambda \geq 0
\end{equation}
for some real-valued function $f$ which is continuous at a neighborhood of zero, then $\mu_n \warrow \mu$ for some measure $\mu \in \mathcal{M}_+[a,b)$ such that $\widehat{\mu} \equiv f$.
\end{enumerate}
\end{proposition}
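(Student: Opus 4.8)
The plan is to treat the four assertions in sequence, reducing the convergence statements (c) and (d) to the elementary estimates of (a)--(b) together with the two structural facts available here: the uniform bound $|w_\lambda(x)| \le 1$ for $\lambda \ge 0$ (Lemma~\ref{lem:shypPDE_wsolbound}, which also makes $w_\lambda$ real-valued) and the joint continuity of $(\lambda,x) \mapsto w_\lambda(x)$, clear from the locally uniformly convergent power series of Lemma~\ref{lem:shypPDE_ode_wsol}. For (a), continuity of a single $\widehat{\mu}$ at $\lambda_0$ is immediate from dominated convergence, since $w_\lambda(x) \to w_{\lambda_0}(x)$ pointwise while $|w_\lambda| \le 1$ and $|\mu|$ is finite. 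For the equicontinuity claim I would fix $\lambda_0$ and $\eps > 0$, use tightness to choose a compact $K$ with $\sup_j |\mu_j|([a,b) \setminus K) < \eps$, and bound
\[
|\widehat{\mu_j}(\lambda) - \widehat{\mu_j}(\lambda_0)| \le \sup_{x \in K} |w_\lambda(x) - w_{\lambda_0}(x)| \cdot \sup_j \|\mu_j\| + 2\eps,
\]
the first term tending to $0$ as $\lambda \to \lambda_0$ uniformly in $j$ by uniform continuity of $w$ on the compact set $[\lambda_0,\lambda_0+1] \times K$.

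For (b) it suffices to show $\widehat{\sigma} \equiv 0 \Rightarrow \sigma = 0$ for $\sigma \in \mathcal{M}_{\mathbb{C}}[a,b)$. I would test $\sigma$ against the heat-semigroup regularizations $\mathcal{P}_t g$ of functions $g \in \mathrm{C}_{\mathrm{c},0}^4$. Using the spectral representation $(\mathcal{P}_t g)(x) = \int_{[0,\infty)} e^{-t\lambda} w_\lambda(x) (\mathcal{F}g)(\lambda)\,\bm{\rho}_{\mathcal{L}}(d\lambda)$ --- valid pointwise because the factor $e^{-t\lambda}$ renders the integrand absolutely integrable (Cauchy--Schwarz plus finiteness of $\int_{[0,\infty)} e^{-2t\lambda}\bm{\rho}_{\mathcal{L}}(d\lambda)$, which is the value at $a$ of the fundamental solution and is finite since $a$ is a regular or entrance boundary) --- Fubini gives $\int \mathcal{P}_t g\, d\sigma = \int_{[0,\infty)} e^{-t\lambda}(\mathcal{F}g)(\lambda)\,\widehat{\sigma}(\lambda)\,\bm{\rho}_{\mathcal{L}}(d\lambda) = 0$. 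Letting $t \downarrow 0$ and invoking $\mathcal{P}_t g \to g$ uniformly (the Feller property of Lemma~\ref{lem:shypPDE_Lb_fellergen}) yields $\int g\,d\sigma = 0$ for every $g \in \mathrm{C}_{\mathrm{c},0}^4$; since this class is dense in $\mathrm{C}_0[a,b)$, the Riesz representation forces $\sigma = 0$.

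Part (c) is then quick: pointwise convergence $\widehat{\mu_n}(\lambda) \to \widehat{\mu}(\lambda)$ holds because each $w_\lambda \in \mathrm{C}_\mathrm{b}[a,b)$ is an admissible test function, and a weakly convergent sequence of positive measures is uniformly bounded (test against $\mathds{1}$) and tight, so (a) makes $\{\widehat{\mu_n}\}$ equicontinuous; pointwise convergence plus equicontinuity upgrade to uniform convergence on compacts. Part (d) is the genuine analogue of Lévy's continuity theorem and carries the real difficulty. Uniform boundedness is free, since $\widehat{\mu_n}(0) = \mu_n([a,b)) \to f(0)$ (here $w_0 \equiv 1$). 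The crux is tightness, and it is exactly here that the hypothesis $\lim_{x \uparrow b} w_\lambda(x) = 0$ is needed. I would introduce the averages $g_\delta(x) := \frac{1}{\delta}\int_0^\delta (1 - w_\lambda(x))\,d\lambda$, which satisfy $g_\delta \ge 0$ (as $w_\lambda \le 1$) and, for each fixed $\delta$, $g_\delta(x) \to 1$ as $x \uparrow b$ (dominated convergence using the vanishing hypothesis). Tonelli gives
\[
\int_{[a,b)} g_\delta\,d\mu_n = \frac{1}{\delta}\int_0^\delta \bigl(\widehat{\mu_n}(0) - \widehat{\mu_n}(\lambda)\bigr)\,d\lambda \xrightarrow[\,n \to \infty\,]{} \frac{1}{\delta}\int_0^\delta \bigl(f(0) - f(\lambda)\bigr)\,d\lambda =: F(\delta),
\]
and $F(\delta) \to 0$ as $\delta \downarrow 0$ by continuity of $f$ at $0$.

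Fixing $\delta$ and choosing $x_\delta < b$ with $g_\delta \ge \tfrac{1}{2}$ on $[x_\delta,b)$, nonnegativity of $g_\delta$ yields $\mu_n([x_\delta,b)) \le 2\int g_\delta\,d\mu_n$, whence $\limsup_n \mu_n([x_\delta,b)) \le 2F(\delta)$; letting $\delta \downarrow 0$ and absorbing the finitely many remaining indices (each $\mu_n$ being individually tight on the Polish space $[a,b)$) establishes tightness of $\{\mu_n\}$. With tightness and uniform boundedness in hand I would invoke Prokhorov: every subsequence has a weakly convergent sub-subsequence with limit $\nu \in \mathcal{M}_+[a,b)$, which by (c) satisfies $\widehat{\nu} = f$, and by the uniqueness in (b) all such limits coincide with a single $\mu$; hence $\mu_n \warrow \mu$ with $\widehat{\mu} \equiv f$. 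I expect the tightness step to be the main obstacle: it is the sole place where the boundary hypothesis $\lim_{x\uparrow b} w_\lambda = 0$ enters, and the averaging device $g_\delta$ has to play the role that the explicit lower bound $1 - \tfrac{\sin(ux)}{ux} \ge \tfrac{1}{2}$ plays in the classical Fourier proof.
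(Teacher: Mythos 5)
Parts (a), (c) and (d) of your proposal follow the paper's argument essentially step for step: the same tightness-plus-equicontinuity estimate for (a), the same Prokhorov/equicontinuity upgrade for (c), and for (d) the same averaging device $\frac{1}{\delta}\int_0^{\delta}(1-w_\lambda(x))\,d\lambda$ (the paper uses $\int_0^{2\delta}$ and the lower bound $\delta$ rather than your normalized $\geq \tfrac12$, but this is cosmetic), followed by the identical subsequence argument. Those parts are correct.

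Part (b) is where you genuinely diverge, and it is also where there is a gap. You regularize with the heat semigroup $\mathcal{P}_t$ and need the pointwise spectral representation of $\mathcal{P}_t g$, together with a Fubini interchange against $\sigma$, to hold \emph{up to and including the boundary point} $a$ (the measure $\sigma$ may charge $\{a\}$, and your Fubini bound via $|w_\lambda|\leq 1$ requires $\int_{[0,\infty)} e^{-t\lambda}|(\mathcal{F}g)(\lambda)|\,\bm{\rho}_\mathcal{L}(d\lambda)<\infty$, which after Cauchy--Schwarz comes down to $\int_{[0,\infty)} e^{-2t\lambda}\bm{\rho}_\mathcal{L}(d\lambda)<\infty$). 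You assert this finiteness by identifying the integral with ``the value at $a$ of the fundamental solution,'' but the results available in the paper (Lemma \ref{lem:shypPDE_Ltransf_D2prop}(a) and Lemma \ref{lem:shypPDE_Lb_diffusion_tpdf}) only give absolute convergence of such spectral integrals uniformly on compact subsets of the \emph{open} interval $(a,b)$; the extension of $p(t,x,x)$ (or of $\int w_\lambda(x)^2|\lambda-\mu|^{-2}\bm{\rho}_\mathcal{L}(d\lambda)$) to $x=a$ at an entrance boundary is a nontrivial boundary-regularity fact that you would have to prove. The paper avoids this entirely: it tests $\widehat{\mu}\equiv 0$ against the generalized translation, using \eqref{eq:shypPDE_gentranslmu_spectrep} to get $(\mathcal{T}^\mu h)(x)=0$ for $x$ in the interior, and then passes to the limit $x\downarrow a$ by dominated convergence, the domination coming from the maximum-principle bound $\|\mathcal{T}^y h\|_\infty\leq\|h\|_\infty$ rather than from any spectral estimate at the boundary. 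Your route can likely be repaired in the same spirit (evaluate at interior $x$, use $\|\mathcal{P}_t g\|_\infty\leq\|g\|_\infty$, and let $x\downarrow a$ before integrating against $\sigma$ --- but then you must still justify interchanging $\int\cdot\,d\sigma$ with the limit and with $t\downarrow 0$, and the atom of $\sigma$ at $a$ forces you to confront the boundary behaviour somewhere). As written, the finiteness claim is the missing piece.
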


\begin{proof}
\textbf{\emph{(a)}} Let us prove the second statement, which implies the first. Set $C = \sup_j \|\mu_j\|$. Fix $\lambda_0 \geq 0$ and $\eps > 0$. By the tightness assumption, we can choose $\beta \in (a,b)$ such that $|\mu_j|(\beta,b) < \eps$ for all $j$. Since the family $\{w_{(\cdot)}(x)\}_{x \in (a, \beta]}$ is equicontinuous on $[0,\infty)$ (this follows easily from the power series representation of $w_{(\cdot)}(x)$, cf.\ proof of Lemma \ref{lem:shypPDE_ode_wsol}), we can choose $\delta > 0$ such that
\[
|\lambda - \lambda_0| < \delta \quad \implies \quad |w_\lambda(x) - w_{\lambda_0}(x)| < \eps \; \text{ for all } a < x \leq \beta.
\]
Consequently,
\begin{align*}
& \bigl|\widehat{\mu_j}(\lambda) - \widehat{\mu_j}(\lambda_0)\bigr| = \biggl| \int_{(a,b)} \bigl(w_\lambda(x) - w_{\lambda_0}(x)\bigr) \mu_j(dx) \biggr| \\
& \quad \leq\int_{(\beta,b)\!} \bigl|w_\lambda(x) - w_{\lambda_0}(x)\bigr| |\mu_j|(dx) + \int_{(a,\beta]\!} \bigl|w_\lambda(x) - w_{\lambda_0}(x)\bigr| |\mu_j|(dx) \\
& \quad \leq 2\eps + C\eps = (C+2)\eps
\end{align*}
for all $j$, provided that $|\lambda - \lambda_0| < \delta$, which means that $\{\widehat{\mu_j}\}$ is equicontinuous at $\lambda_0$. \\[-8pt]

\textbf{\emph{(b)}} Let $\mu \in \mathcal{M}_\mathbb{C}[a,b)$ be such that $\widehat{\mu}(\lambda) = 0$ for all $\lambda \geq 0$. We need to show that $\mu$ is the zero measure. For each $h \in \mathrm{C}_{\mathrm{c},0}^{4}$, by \eqref{eq:shypPDE_gentranslmu_spectrep} we have
\[
(\mathcal{T}^\mu h)(x) = \int_{[0,\infty)} (\mathcal{F} h)(\lambda) \, w_\lambda(x) \, \widehat{\mu}(\lambda) \, \bm{\rho}_{\mathcal{L}}(d\lambda) \, = \, 0.
\]
Since $h \in \mathrm{C}_{\mathrm{c},0}^{4}$, Theorem \ref{thm:shypPDE_Lexistence} assures that $\lim_{x \downarrow a} (\mathcal{T}^y h)(x) = h(y)$ for $y \geq 0$; therefore, by dominated convergence (which is applicable because $\|\mathcal{T}^y h\|_\infty \leq \|h\|_\infty < \infty$),
\[
0 = \lim_{x \downarrow a} (\mathcal{T}^\mu h)(x) = \lim_{x \downarrow a} \int_{[a,b)\!} (\mathcal{T}^y h)(x)\, \mu(dy) \\
= \int_{[a,b)\!} h(y)\, \mu(dy)
\]
This shows that $\int_{[a,b)\!} h(y)\, \mu(dy) = 0$ for all $h \in \mathrm{C}_{\mathrm{c},0}^{4}$ and, consequently, $\mu$ is the zero measure. \\[-8pt]

\textbf{\emph{(c)}} Since $w_\lambda(\cdot)$ is continuous and bounded, the pointwise convergence $\widehat{\mu_n}(\lambda) \to \widehat{\mu}(\lambda)$ follows from the definition of weak convergence of measures. By Prokhorov's theorem \cite[Theorem 8.6.2]{bogachev2007}, $\{\mu_n\}$ is tight and uniformly bounded, thus (by part (i)) $\{\widehat{\mu_n}\}$ is equicontinuous on $[0,\infty)$. Invoking \cite[Lemma 15.22]{klenke2014}, we conclude that the convergence $\widehat{\mu_n} \to \widehat{\mu}$ is uniform for $\lambda$ in compact sets. \\[-8pt]

\textbf{\emph{(d)}} We only need to show that the sequence $\{\mu_n\}$ is tight and uniformly bounded. Indeed, if $\{\mu_n\}$ is tight and uniformly bounded, then Prokhorov's theorem yields that for any subsequence $\{\mu_{n_k}\}$ there exists a further subsequence $\{\mu_{n_{k_j}}\!\}$ and a measure $\mu \in \mathcal{M}_+[a,b)$ such that $\mu_{n_{k_j}}\!\! \warrow \mu$. Then, due to part (iii) and to \eqref{eq:shypPDE_Ftransf_continuity_hyp}, we have $\widehat{\mu}(\lambda) = f(\lambda)$ for all $\lambda \geq 0$, which implies (by part (ii)) that all such subsequences have the same weak limit; consequently, the sequence $\mu_n$ itself converges weakly to $\mu$.

The uniform boundedness of $\{\mu_n\}$ follows immediately from the fact that $\widehat{\mu_n}(0) = \mu_n[a,b)$ converges. To prove the tightness, take $\eps > 0$. Since $f$ is continuous at a neighborhood of zero, we have ${1 \over \delta} \int_0^{2\delta} \bigl(f(0) - f(\lambda)\bigr)d\lambda \longrightarrow 0$ as $\delta \downarrow 0$; therefore, we can choose $\delta > 0$ such that
\[
\biggl| {1 \over \delta} \int_0^{2\delta} \bigl(f(0) - f(\lambda)\bigr)d\lambda \biggr| < \eps.
\] 
Next we observe that, due to the assumption that $\lim_{x \uparrow b} w_\lambda(x) = 0$ for all $\lambda > 0$, we have $\int_0^{2\delta} \bigl( 1-w_\lambda(x) \bigr) d\lambda \longrightarrow 2\delta$ as $x \uparrow b$, meaning that we can pick $\beta \in (a,b)$ such that
\[
\int_0^{2\delta} \bigl( 1-w_\lambda(x) \bigr) d\lambda \geq \delta \qquad \text{for all } \beta < x < b.
\]
By our choice of $\beta$ and Fubini's theorem,
\begin{align*}
\mu_n\bigl[\beta,b) & = {1 \over \delta} \int_{[\beta,b)} \delta\, \mu_n(dx) \\
& \leq {1 \over \delta} \int_{[\beta,b)} \int_0^{2\delta} \bigl( 1-w_\lambda(x) \bigr) d\lambda\, \mu_n(dx) \\
& \leq {1 \over \delta} \int_{[a,b)} \int_0^{2\delta} \bigl( 1-w_\lambda(x) \bigr) d\lambda\, \mu_n(dx) \\
& = {1 \over \delta} \int_0^{2\delta} \bigl(\widehat{\mu_n}(0) - \widehat{\mu_n}(\lambda)\bigr) d\lambda.
\end{align*}
Hence, using the dominated convergence theorem,
\begin{align*}
\limsup_{n \to \infty} \mu_n[\beta,b) & \leq {1 \over \delta} \limsup_{n \to \infty}\! \int_0^{2\delta} \bigl(\widehat{\mu_n}(0) - \widehat{\mu_n}(\lambda)\bigr) d\lambda \\
& = {1 \over \delta} \int_0^{2\delta}\!\! \lim_{n \to \infty} \bigl(\widehat{\mu_n}(0) - \widehat{\mu_n}(\lambda)\bigr) d\lambda = {1 \over \delta} \int_0^{2\delta} \bigl(f(0) -
f(\lambda)\bigr) d\lambda < \eps
\end{align*}
due to the choice of $\delta$. Since $\eps$ is arbitrary, we conclude that $\{\mu_n\}$ is tight, as desired.
\end{proof}

\begin{remark} \label{rmk:shypPDE_Ltransfmeas_propsrmk}
\textbf{I.} Parts (c) and (d) of the proposition above show that (whenever $\lim_{x \uparrow b} w_\lambda(x) = 0$ for all $\lambda > 0$) the $\mathcal{L}$-transform possesses the following important property: \emph{the $\mathcal{L}$-transform is a topological homeomorphism between $\mathcal{P}[a,b)$ with the weak topology and the set $\widehat{\mathcal{P}}$ of $\mathcal{L}$-transforms of probability measures with the topology of uniform convergence in compact sets.} \\[-8pt]

\noindent \textbf{II.} Recall that, by definition \cite[\S30]{bauer2001}, the measures $\mu_n$ converge vaguely to $\mu$ if $\lim_n \int_{[a,b)} g(\xi) \mu_n(d\xi) = \int_{[a,b)} g(\xi) \mu(d\xi)$ for all $g \in \mathrm{C}_0[a,b)$. Much like weak convergence, vague convergence of measures can be formulated via the $\mathcal{L}$-transform, provided that $\lim_{x \uparrow b} w_\lambda(x) = 0$ for all $\lambda > 0$. Indeed, using $\varrow$ to denote vague convergence of measures, we have:
\begin{enumerate}[itemsep=0pt,leftmargin=1.25cm]
\item[\emph{II.1}] \emph{If $\{\mu_n\} \subset \mathcal{M}_+[a,b)$, $\mu \in \mathcal{M}_+[a,b)$, and $\mu_n \varrow \mu$, then $\lim \widehat{\mu_n}(\lambda) = \widehat{\mu}(\lambda)$ pointwise for each $\lambda > 0$;}

\item[\emph{II.2}] \emph{If $\{\mu_n\} \subset \mathcal{M}_+[a,b)$, $\{\mu_n\}$ is uniformly bounded and $\lim \widehat{\mu_n}(\lambda) = f(\lambda)$ pointwise in $\lambda > 0$ for some function $f \in \mathrm{B}_\mathrm{b}(0,\infty)$, then $\mu_n \varrow \mu$ for some measure $\mu \in \mathcal{M}_+[a,b)$ such that $\widehat{\mu} \equiv f$.}
\end{enumerate}
(The first part is trivial; the second follows from the reasoning in the first paragraph of the proof of (d) in the proposition above, together with the fact that any uniformly bounded sequence of positive measures contains a vaguely convergent subsequence \cite[p.\ 213]{bauer2001}.) \\[-8pt]

\noindent \textbf{III.} Concerning the additional assumption in the above remarks, one can state: \emph{a necessary and sufficient condition for the condition $\lim_{x \uparrow b} w_\lambda(x) = 0$ ($\lambda > 0$) to hold is that $\lim_{x \uparrow b} p(x)r(x) = \infty$.} This fact can be proved using the transformation into the standard form (Remark \ref{rmk:shypPDE_tildeell}) and known results on the asymptotic behavior of solutions of the Sturm-Liouville equation $-u'' - {A' \over A} u' = \lambda u$ (see \cite[proof of Lemma 3.7]{fruchtl2018}).
\end{remark}

\section{The product formula} \label{sec:prodform}

We saw in the previous section that the hyperbolic maximum principle allows us to introduce a convolution measure algebra associated with the Sturm-Liouville operator. The next aims are to develop harmonic analysis on $L_p$ spaces and to study notions such as the continuity of the convolution or the divisibility of measures. However, this requires a fundamental tool, namely the trivialization property $\widehat{\delta_x * \delta_y} = \widehat{\delta_x} \ccdot \widehat{\delta_y}$ for the $\mathcal{L}$-transform or, which is the same, the product formula for its kernel.

\begin{theorem}[Product formula for $w_\lambda$] \label{thm:shypPDE_prodform}
The product $w_\lambda(x) \, w_\lambda(y)$ admits the integral representation
\begin{equation} \label{eq:shypPDE_prodform}
w_\lambda(x) \, w_\lambda(y) = \int_{[a,b)} w_\lambda(\xi)\, (\delta_x * \delta_y)(d\xi), \qquad x, y \in [a,b), \; \lambda \in \mathbb{C}.
\end{equation}
\end{theorem}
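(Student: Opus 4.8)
The statement is equivalent to the trivialization identity $\widehat{\delta_x * \delta_y}(\lambda) = \widehat{\delta_x}(\lambda)\,\widehat{\delta_y}(\lambda) = w_\lambda(x) w_\lambda(y)$, since by definition $\int_{[a,b)} w_\lambda \, d(\delta_x * \delta_y) = \widehat{\bm\nu_{x,y}}(\lambda)$. The plan is to prove it for $\lambda \ge 0$ and then pass to $\lambda \in \mathbb{C}$ by analytic continuation. The point of departure is the identity obtained by equating the two representations of the solution of the hyperbolic Cauchy problem: by Proposition~\ref{prop:shypPDE_sol_subprobrep} and \eqref{eq:shypPDE_Lexistence}, for every $h \in \mathrm{C}_{\mathrm{c},0}^4$ one has
\[
\int_{[a,b)} h \, d\bm\nu_{x,y} \;=\; f_h(x,y) \;=\; \int_{[0,\infty)} (\mathcal{F} h)(\mu)\, w_\mu(x)\, w_\mu(y)\, \bm\rho_{\mathcal{L}}(d\mu).
\]
Formally the product formula is this identity ``evaluated at $h = w_\lambda$'', whose $\mathcal{L}$-transform is a unit mass at $\lambda$; the difficulty is entirely that $w_\lambda \notin \mathrm{C}_{\mathrm{c},0}^4$, so a limiting procedure is needed.

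First I would establish the extended identity
\[
\int_{[a,b)} (\mathcal{F}^{-1} g)\, d\bm\nu_{x,y} \;=\; \int_{[0,\infty)} g(\mu)\, w_\mu(x)\, w_\mu(y)\, \bm\rho_{\mathcal{L}}(d\mu) \qquad \text{for all } g \in \mathrm{C}_\mathrm{c}[0,\infty),
\]
where $\mathcal{F}^{-1} g = \int g(\mu) w_\mu(\cdot)\, \bm\rho_{\mathcal{L}}(d\mu)$. Since $\mathcal{F}^{-1}g$ need not be compactly supported, I would approximate it by $\chi_k \cdot (\mathcal{F}^{-1}g)$ with smooth cutoffs $\chi_k \uparrow 1$ near the endpoint $b$; each such truncation lies in $\mathrm{C}_{\mathrm{c},0}^4$ (the boundary conditions at $a$ are automatic because $w_\mu^{[1]}(a) = 0$), so the previous identity applies to it. Letting $k \to \infty$, the left-hand side converges by dominated convergence, using $|\mathcal{F}^{-1}g| \le \|g\|_{L_1(\bm\rho_{\mathcal{L}})}$ (from Lemma~\ref{lem:shypPDE_wsolbound}) and the finiteness of $\bm\nu_{x,y}$. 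For the right-hand side I would split the spectral integral at a large $M$: on $[0,M]$ use $\chi_k(\mathcal{F}^{-1}g) \to \mathcal{F}^{-1}g$ in $L_2(r)$ together with the isometry of $\mathcal{F}$ (Proposition~\ref{prop:shypPDE_Ltransf}) and $\bm\rho_{\mathcal{L}}([0,M]) < \infty$, while on $(M,\infty)$ I would control the tail through $\mathcal{F}(\ell^N u)(\mu) = \mu^N (\mathcal{F} u)(\mu)$ and Cauchy--Schwarz, obtaining $\int_{(M,\infty)} |\mathcal{F}(\chi_k \mathcal{F}^{-1}g)|\, \bm\rho_{\mathcal{L}} \le \|\ell^N(\chi_k \mathcal{F}^{-1}g)\|_{L_2(r)} \bigl(\int_{(M,\infty)} \mu^{-2N} \bm\rho_{\mathcal{L}}\bigr)^{1/2}$, the last factor being finite for $N$ large by the polynomial growth of $\bm\rho_{\mathcal{L}}$.

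With the extended identity in hand, I would fix $\lambda \ge 0$ and take $g = g_n \ge 0$ with $\int g_n \, d\bm\rho_{\mathcal{L}} = 1$ concentrating at $\lambda$. Then $\mathcal{F}^{-1}g_n \to w_\lambda$ pointwise with $|\mathcal{F}^{-1}g_n| \le 1$, so the left-hand side tends to $\int_{[a,b)} w_\lambda \, d\bm\nu_{x,y} = \widehat{\bm\nu_{x,y}}(\lambda)$, while the right-hand side tends to $w_\lambda(x) w_\lambda(y)$ by continuity of $\mu \mapsto w_\mu(x) w_\mu(y)$; this gives the formula for $\lambda \ge 0$ (the values $\lambda \notin \supp \bm\rho_{\mathcal{L}}$ being recovered at the continuation step). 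I would then extend it to $\lambda \in \mathbb{C}$ using that both members are entire in $\lambda$ (Lemma~\ref{lem:shypPDE_ode_wsol}); for the right-hand side this requires checking that $\lambda \mapsto \int w_\lambda \, d\bm\nu_{x,y}$ is entire, which is immediate when $\bm\nu_{x,y}$ has compact support and is otherwise controlled via the exponential-type bound $|w_\lambda(\xi)| \le e^{|\lambda| \mathcal{S}(\xi)}$ against the decay of $\bm\nu_{x,y}$ near $b$.

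The main obstacle is the uniform-in-$k$ estimate $\sup_k \|\ell^N(\chi_k \mathcal{F}^{-1}g)\|_{L_2(r)} < \infty$ needed for the tail bound in the second step: because $b$ is a natural boundary, the commutator terms $[\ell^N, \chi_k](\mathcal{F}^{-1}g)$ are supported near $b$, and showing that they stay bounded (indeed vanish) as the cutoff recedes requires that the derivatives of $\mathcal{F}^{-1}g$ lie in the appropriate weighted $L_2$ spaces. Conceptually, this is exactly the statement that the representation $f_h = \int h \, d\bm\nu_{x,y}$ extends from compactly supported $h$ to the bounded, non-compactly-supported kernel $w_\lambda$ — the step that is genuinely delicate in the parabolically degenerate regime, where the hyperbolic equation has infinite propagation speed and $\supp \bm\nu_{x,y}$ may be all of $[a,b)$.
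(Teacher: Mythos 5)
Your proposal takes a genuinely different route from the paper, and as written it has unresolved gaps. The paper proves the theorem only in the case $\gamma(a)>-\infty$, by a finite-propagation-speed argument: it truncates $w_\lambda$ to an initial datum $\mb{w}_\lambda^{\langle n\rangle}\in\mathrm{C}_{\mathrm{c},0}^4$ agreeing with $w_\lambda$ on $[0,n]$, notes that $\mb{w}_\lambda^{\langle n\rangle}(x)\,\mb{w}_\lambda^{\langle n\rangle}(y)$ solves the hyperbolic equation on the square $[0,n]^2$, and uses the fact that the solution of the Cauchy problem depends only on the data on $[|x-y|,x+y]$ (the characteristics being $x\pm y=\mathrm{const.}$) to conclude that $f^{\langle n\rangle}(x,y)=w_\lambda(x)w_\lambda(y)$ for $x,y\in[0,\tfrac n2]$; Proposition \ref{prop:shypPDE_sol_subprobrep} then yields \eqref{eq:shypPDE_prodform} on arbitrarily large squares, for every $\lambda\in\mathbb{C}$ at once, with no spectral tail estimates and no analytic continuation. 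The degenerate case is explicitly deferred to \cite{sousaetalforth} precisely because this argument breaks down there. Your spectral/approximate-identity scheme is a reasonable attempt at a unified argument, but two of its load-bearing steps are not carried out.

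First, the uniform-in-$k$ bound on $\|\ell^N(\chi_k\,\mathcal{F}^{-1}g)\|_{L_2(r)}$: you yourself call this ``the main obstacle'' and leave it unresolved. The commutator $\ell(\chi_k u)-\chi_k\ell(u)=-\tfrac{1}{r}\bigl[(p\chi_k')'u+2p\chi_k'u'\bigr]$ is supported near the natural boundary $b$, where nothing established in the paper controls the weighted derivatives $p u'/r$ or $(p\chi_k')'/r$ of $u=\mathcal{F}^{-1}g$; in the regime of interest $p(x)r(x)\to\infty$, so these weights can blow up, and an acknowledged-but-unproved key estimate is a gap, not a proof. Second, the analytic continuation to $\lambda\in\mathbb{C}$: the exponential-type bound $|w_\lambda(\xi)|\leq e^{|\lambda|\mathcal{S}(\xi)}$ of Lemma \ref{lem:shypPDE_ode_wsol} is established only for $\xi\leq\beta$, and the corresponding majorant for $\xi$ near $b$ diverges because of \eqref{eq:shypPDE_Lop_rightBC}; hence entirety of $\lambda\mapsto\int_{[a,b)}w_\lambda\,d\bm{\nu}_{x,y}$ requires a quantitative decay of $\bm{\nu}_{x,y}$ near $b$ that is nowhere established --- and which is exactly the delicate point when $\supp(\bm{\nu}_{x,y})=[a,b)$. (A lesser issue: membership of $\chi_k\,\mathcal{F}^{-1}g$ in $\mathrm{C}_{\mathrm{c},0}^4$ requires $\mathrm{C}^4$ regularity, which does not follow from the stated hypotheses that only $p,p',r,r'$ are locally absolutely continuous.)
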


Here we present the proof only in the special (nondegenerate) case $\gamma(a) > - \infty$. The proof of the general case is longer and relies on a different regularization argument; the details are given in \cite{sousaetalforth}.

\begin{proof}[Proof of Theorem \ref{thm:shypPDE_prodform} for the case $\gamma(a) > -\infty$]
Assume first that $\ell = -{1 \over A} {d \over dx} (A {d \over dx})$,\, $0 < x < \infty$, and that Assumption \ref{asmp:shypPDE_SLhyperg} holds with $a = \gamma(a) = 0$. Fix $\lambda \in \mathbb{C}$, and let $\{\mb{w}_{\lambda}^{\langle n\rangle \kern-.1em}\}_{n \in \mathbb{N}} \subset \mathrm{C}_{\mathrm{c},0}^4$ be a sequence of functions such that 
\[
\mb{w}_{\lambda}^{\langle n\rangle \kern-.1em}(x) = w_\lambda(x) \quad \text{for } x \in [0,n], \qquad\qquad \mb{w}_{\lambda}^{\langle n\rangle \kern-.1em}(x) = 0 \quad \text{for } x \geq n + 1.
\] 
Let $f^{\langle n\rangle \kern-.1em}(x,y)$ be the unique solution of the hyperbolic Cauchy problem \eqref{eq:shypPDE_Lcauchy} with initial condition $h(x) = \mb{w}_{\lambda}^{\langle n\rangle \kern-.1em}(x)$. Since the family of characteristics for the hyperbolic equation $(\ell_x u)(x,y) = (\ell_y u)(x,y)$ is $x \pm y = \mathrm{const.}$, the solution $f^{\langle n\rangle \kern-.1em}(x,y)$ depends only on the values of the initial condition on the interval $[|x-y|,x+y]$. Observing that the function $\mb{w}_{\lambda}^{\langle n\rangle \kern-.1em}(x) \, \mb{w}_{\lambda}^{\langle n\rangle \kern-.1em}(y)$ is a solution of the hyperbolic equation $(\ell_x u)(x,y) = (\ell_y u)(x,y)$ on the square $(x,y) \in [0,n]^2$, we deduce that
\[
f^{\langle n\rangle \kern-.1em}(x,y) = \mb{w}_{\lambda}^{\langle n\rangle \kern-.1em}(x) \, \mb{w}_{\lambda}^{\langle n\rangle \kern-.1em}(y) = w_\lambda(x) \, w_\lambda(y), \qquad x,y \in [0,\tfrac{n}{2}].
\]
It thus follows from Proposition \ref{prop:shypPDE_sol_subprobrep} that
\[
w_\lambda(x) \, w_\lambda(y) = \int_{[0,\infty)\!} w_\lambda(\xi) \, \bm{\nu}_{x,y}(d\xi), \qquad x,y \in [0,\tfrac{n}{2}]
\]
(note that $\supp(\bm{\nu}_{x,y}) = [|x-y|,x+y]$ because of the domain of dependence of the hyperbolic equation). Since $n$ is arbitrary, the identity holds for all $x,y \in [0,\infty)$, proving that the theorem holds for operators of the form $\ell = -{1 \over A} {d \over dx} (A {d \over dx})$,\, $0 < x < \infty$.

Now, in the general case of an operator $\ell$ of the form \eqref{eq:shypPDE_elldiffexpr}, note that $\gamma(a) > -\infty$ means that $\smash{\sqrt{r(y) \over p(y)}}$ is integrable near $a$, so that we may assume that $\gamma(a) = 0$ (otherwise, replace the interior point $c$ by the endpoint $a$ in the definition of the function $\gamma$). Applying the first part of the proof to the transformed operator $\widetilde{\ell} = -{1 \over A} {d \over d\xi}(A {d \over d\xi})$ defined via \eqref{eq:shypPDE_tildeell_A}, we find that $\widetilde{w}_\lambda(x) \, \widetilde{w}_\lambda(y) = \int_{[0,\infty)\!} \widetilde{w}_\lambda(\xi)\, (\delta_x \kern.1em \widetilde{*} \kern.12em \delta_y)(d\xi)$ for $x, y \in [0,\infty)$, where $\widetilde{w}_\lambda(\xi) := w_\lambda(\gamma^{-1}(\xi))$ and $\widetilde{*}$ is the convolution associated with $\widetilde{\ell}$. We can rewrite this as 
\[
w_\lambda(x) \, w_\lambda(y) = \int_{[a,b)} w_\lambda(\xi) \bigl[\gamma^{-1}(\delta_{\gamma(x)} \kern.1em \widetilde{*} \kern.12em \delta_{\gamma(y)})\bigr] \! (d\xi), \qquad x, y \in [a,b), \; \lambda \in \mathbb{C}
\]
where the measure in the right hand side is the pushforward of the measure $\delta_{\gamma(x)} \kern.1em \widetilde{*} \kern.12em \delta_{\gamma(y)}$ under the map $\xi \mapsto \gamma^{-1}(\xi)$. But one can easily check that the convolutions $*$ and $\widetilde{*}$ are connected by $\delta_x * \delta_y = \gamma^{-1}(\delta_{\gamma(x)} \kern.1em \widetilde{*} \kern.12em \delta_{\gamma(y)})$ (this is a simple consequence of the definition of the convolution and the relation between the operators $\ell$ and $\widetilde{\ell}$), so we are done.
\end{proof}

\begin{corollary} \label{cor:shypPDEconv_basicprops}
Let $\mu, \nu, \pi \in \mathcal{M}_\mathbb{C}[a,b)$.
\begin{enumerate}[itemsep=0pt,topsep=4pt]
\item[\textbf{(a)}] We have $\pi = \mu * \nu$ if and only if
\[
\widehat{\pi}(\lambda) = \widehat{\mu}(\lambda)\, \widehat{\nu}(\lambda) \qquad \text{for all } \lambda \geq 0.
\]

\item[\textbf{(b)}] Probability measures are closed under $\mathcal{L}$-convolution: if $\mu, \nu \in \mathcal{P}[a,b)$, then $\mu * \nu \in \mathcal{P}[a,b)$.
\end{enumerate}
If $\lim_{x \uparrow b} p(x)r(x) = \infty$ holds (cf.\ Remark \ref{rmk:shypPDE_Ltransfmeas_propsrmk}.III), then the following properties also hold:

\begin{enumerate}[itemsep=0pt,topsep=4pt]
\item[\textbf{(c)}]  The mapping $(\mu,\nu) \mapsto \mu * \nu$ is continuous in the weak topology.

\item[\textbf{(d)}] If $h \in \mathrm{C}_\mathrm{b}[a,b)$, then $\mathcal{T}^\mu h \in \mathrm{C}_\mathrm{b}[a,b)$ for all $\mu  \in \mathcal{M}_\mathbb{C}[a,b)$.

\item[\textbf{(e)}] If $h \in \mathrm{C}_0[a,b)$, then $\mathcal{T}^\mu h \in \mathrm{C}_0[a,b)$ for all $\mu \in \mathcal{M}_\mathbb{C}[a,b)$.
\end{enumerate}
\end{corollary}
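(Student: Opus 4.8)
The plan is to derive everything from the product-formula identity of Theorem~\ref{thm:shypPDE_prodform}, which upon integration becomes the trivialization $\widehat{\mu * \nu} = \widehat{\mu}\,\widehat{\nu}$. For part \textbf{(a)} I would first compute, for $\lambda \geq 0$ and $\mu,\nu \in \mathcal{M}_\mathbb{C}[a,b)$,
\[
\widehat{\mu * \nu}(\lambda) = \int_{[a,b)} w_\lambda \, d(\mu*\nu) = \int_{[a,b)}\int_{[a,b)} \Bigl( \int_{[a,b)} w_\lambda(\xi)\,(\delta_x*\delta_y)(d\xi)\Bigr) \mu(dx)\,\nu(dy),
\]
where interchanging the order of integration is legitimate because $|w_\lambda| \leq 1$ for $\lambda\geq 0$ (Lemma~\ref{lem:shypPDE_wsolbound}) and all measures are finite. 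Applying the product formula $\int w_\lambda \, d(\delta_x*\delta_y) = w_\lambda(x)w_\lambda(y)$ to the inner integral and factoring gives $\widehat{\mu*\nu}(\lambda) = \widehat{\mu}(\lambda)\widehat{\nu}(\lambda)$, the ``only if'' direction. The converse is then immediate from the injectivity of the $\mathcal{L}$-transform (Proposition~\ref{prop:shypPDE_Ltransfmeas_props}(b)): if $\widehat{\pi} = \widehat{\mu}\,\widehat{\nu} = \widehat{\mu*\nu}$, then $\pi = \mu*\nu$.

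Part \textbf{(b)} would follow by evaluating (a) at $\lambda = 0$. Since $w_0 \equiv 1$, one has $\widehat{\mu}(0) = \mu[a,b)$, so for probability measures $\widehat{\mu*\nu}(0) = \widehat{\mu}(0)\widehat{\nu}(0) = 1$, i.e.\ $(\mu*\nu)[a,b) = 1$; combined with the positivity already recorded in Proposition~\ref{prop:shypPDE_conv_Mbanachalg}, this shows $\mu*\nu \in \mathcal{P}[a,b)$. For the weak continuity in part \textbf{(c)}, the strategy is to pass through the transform: if $\mu_n \warrow \mu$ and $\nu_n \warrow \nu$, then testing the definition of weak convergence against the bounded continuous function $w_\lambda$ ($\lambda\geq 0$) yields $\widehat{\mu_n}(\lambda) \to \widehat{\mu}(\lambda)$ and $\widehat{\nu_n}(\lambda)\to\widehat{\nu}(\lambda)$ for each $\lambda$; hence $\widehat{\mu_n * \nu_n} = \widehat{\mu_n}\widehat{\nu_n} \to \widehat{\mu}\widehat{\nu} = \widehat{\mu*\nu}$ pointwise. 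As the limit $\widehat{\mu*\nu}$ is continuous near $0$ (Proposition~\ref{prop:shypPDE_Ltransfmeas_props}(a)), the Lévy-type continuity theorem of Proposition~\ref{prop:shypPDE_Ltransfmeas_props}(d) (available here because the hypothesis $\lim_{x\uparrow b} p(x)r(x)=\infty$ guarantees $w_\lambda(x)\to 0$, cf.\ Remark~\ref{rmk:shypPDE_Ltransfmeas_propsrmk}.III) converts this back into $\mu_n*\nu_n \warrow \mu*\nu$, at least for positive (in particular probability) measures, which is the case relevant to the sequel.

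For part \textbf{(d)} the point is that $x \mapsto \bm{\nu}_{x,y} = \delta_x * \delta_y$ is weakly continuous: as $x_n \to x$ we have $\delta_{x_n} \warrow \delta_x$, so part (c) gives $\delta_{x_n}*\delta_y \warrow \delta_x * \delta_y$, whence $(\mathcal{T}^y h)(x_n) = \int h\,d\bm{\nu}_{x_n,y} \to (\mathcal{T}^y h)(x)$ for every $h \in \mathrm{C}_\mathrm{b}[a,b)$. Writing $(\mathcal{T}^\mu h)(x) = \int_{[a,b)} (\mathcal{T}^y h)(x)\,\mu(dy)$ and using $|(\mathcal{T}^y h)(x)| \leq \|h\|_\infty$, dominated convergence in $y$ yields continuity of $\mathcal{T}^\mu h$, while boundedness is the estimate $\|\mathcal{T}^\mu h\|_\infty \leq \|\mu\|\,\|h\|_\infty$ already noted. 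Part \textbf{(e)} requires in addition the vanishing at $b$. I would fix $y$ and an arbitrary sequence $x_n \uparrow b$; by the product formula $\widehat{\bm{\nu}_{x_n,y}}(\lambda) = w_\lambda(x_n)w_\lambda(y) \to 0$ for every $\lambda > 0$, so the vague-convergence criterion of Remark~\ref{rmk:shypPDE_Ltransfmeas_propsrmk}.II.2 forces $\bm{\nu}_{x_n,y} \varrow 0$ (the zero measure, its transform being identically $0$ on $(0,\infty)$ and hence, by continuity, at $0$). Testing vague convergence against $h \in \mathrm{C}_0[a,b)$ gives $(\mathcal{T}^y h)(x_n)\to 0$, and as the sequence was arbitrary, $\lim_{x\uparrow b}(\mathcal{T}^y h)(x)=0$; a final dominated convergence in $y$ promotes this to $\lim_{x\uparrow b}(\mathcal{T}^\mu h)(x)=0$, so $\mathcal{T}^\mu h \in \mathrm{C}_0[a,b)$ by part (d).

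The main obstacle I anticipate is part \textbf{(e)} together with the honest treatment of genuinely complex measures in \textbf{(c)}. Unlike (a)--(b), which are algebra plus injectivity, and (d), which is (c) plus dominated convergence, statement (e) genuinely exploits the ``escape of mass'' encoded in $w_\lambda(x)\to 0$ as $x\uparrow b$. The delicate point is that the limiting transform $\lambda\mapsto \mathds{1}_{\{\lambda=0\}}$ is discontinuous at the origin, so weak convergence is unavailable; the argument must be routed through \emph{vague} convergence (Remark~\ref{rmk:shypPDE_Ltransfmeas_propsrmk}.II.2), where no tightness is demanded, and one must still justify identifying the vague limit as the zero measure via injectivity of the transform. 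For (c) beyond positive measures, the reverse passage from pointwise transform convergence to weak convergence is not directly covered by Proposition~\ref{prop:shypPDE_Ltransfmeas_props}(d) and would need the uniform boundedness furnished by Banach--Steinhaus together with a decomposition argument, so I would either confine the statement to $\mathcal{P}[a,b)$ or supply this extra step separately.
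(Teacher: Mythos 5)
Your parts (a), (b), (d) and (e) match the paper's argument in substance: (a) is Fubini plus the product formula plus injectivity of the $\mathcal{L}$-transform; (b) is evaluation at $\lambda=0$; (e) goes through the vague-convergence criterion of Remark \ref{rmk:shypPDE_Ltransfmeas_propsrmk}.II.2 and identification of the vague limit as the zero measure (your variant, which applies the criterion to the positive measures $\delta_x*\delta_y$ and then integrates in $y$ by dominated convergence, is if anything slightly cleaner than the paper's, which applies it to $\delta_x*\mu$ directly).

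The one genuine issue is part (c), and you have correctly diagnosed it yourself: applying the Lévy-type continuity theorem (Proposition \ref{prop:shypPDE_Ltransfmeas_props}(d)) directly to $\mu_n*\nu_n$ only works for positive measures, whereas the corollary asserts weak continuity on all of $\mathcal{M}_\mathbb{C}[a,b)$. The paper avoids this by reversing the order of your steps (c) and (d): it \emph{first} uses Proposition \ref{prop:shypPDE_Ltransfmeas_props}(d) to establish that $(x,y)\mapsto\delta_x*\delta_y$ is weakly continuous --- legitimate because the $\delta_x*\delta_y$ are positive subprobability measures --- and \emph{then} deduces (c) for arbitrary complex $\mu_n\warrow\mu$, $\nu_n\warrow\nu$ by writing
\[
\int_{[a,b)} h\,d(\mu_n*\nu_n)=\int_{[a,b)}\int_{[a,b)}\Bigl(\int_{[a,b)} h\,d(\delta_x*\delta_y)\Bigr)\mu_n(dx)\,\nu_n(dy)
\]
and passing to the limit using the boundedness and continuity of the inner function. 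No Banach--Steinhaus or Hahn--Jordan decomposition is needed. Note that you already prove exactly the required special case (weak continuity of $x\mapsto\delta_x*\delta_y$) inside your argument for (d); promoting that to the first step and then integrating closes the gap and recovers the full statement of (c).
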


\begin{proof}
\textbf{\emph{(a)}} Using \eqref{eq:shypPDE_prodform}, we compute
\begin{align*}
\widehat{\mu * \nu}(\lambda) & = \int_{[a,b)\!\!} w_\lambda(x) \, (\mu * \nu)(dx) \\
& = \int_{[a,b)\!} \int_{[a,b)\!} \int_{[a,b)\!} w_\lambda(\xi)\, (\delta_x * \delta_y)(d\xi) \, \mu(dx) \nu(dy)\\
& = \int_{[a,b)\!} \int_{[a,b)\!\!} w_\lambda(x) \, w_\lambda(y) \, \mu(dx) \nu(dy) \: = \: \widehat{\mu}(\lambda) \, \widehat{\nu}(\lambda), \qquad\;\; \lambda \geq 0.
\end{align*}
This proves the ``only if" part, and the converse follows from the uniqueness property in Proposition \ref{prop:shypPDE_Ltransfmeas_props}(b). \\[-8pt]

\textbf{\emph{(b)}} Due to Proposition \ref{prop:shypPDE_conv_Mbanachalg}, it only remains to prove that $(\mu * \nu)[a,b) = 1$\, ($\mu, \nu \in \mathcal{P}[a,b)$). But this follows at once from part (a):
\[
(\mu * \nu)[a,b) = \widehat{\mu * \nu}(0) = \widehat{\mu}(0) \ccdot \widehat{\nu}(0) = \mu[a,b) \ccdot \nu[a,b) = 1.
\]

\textbf{\emph{(c)}} Since $\widehat{\delta_x * \delta_y}(\lambda) = w_\lambda(x) w_\lambda(y)$, Proposition \ref{prop:shypPDE_Ltransfmeas_props}(d) yields that $(x,y) \mapsto \delta_x * \delta_y$ is continuous in the weak topology. Therefore, for $h \in \mathrm{C}_\mathrm{b}[a,b)$ and $\mu_n, \nu_n \in \mathcal{M}_\mathbb{C}[a,b)$ with $\mu_n \warrow \mu$ and $\nu_n \warrow \nu$ we have
\begin{align*}
\lim_n \int_{[a,b)} h(\xi) (\mu_n * \nu_n)(d\xi) & = \lim_n \int_{[a,b)\!} \int_{[a,b)\!} \biggl( \int_{[a,b)\!} h(\xi)\, (\delta_x * \delta_y)(d\xi) \biggr) \mu_n(dx) \nu_n(dy) \\
& = \int_{[a,b)\!} \int_{[a,b)\!} \biggl( \int_{[a,b)\!} h(\xi)\, (\delta_x * \delta_y)(d\xi) \biggr) \mu(dx) \nu(dy) \\
& = \int_{[a,b)} h(\xi) (\mu * \nu)(d\xi)
\end{align*}
due to the continuity of the function in parenthesis. \\[-8pt]

\textbf{\emph{(d)}} Since $(\mathcal{T}^\mu h)(x) = \int_{[a,b)} h(\xi) \, (\delta_x * \mu)(d\xi)$, this follows immediately from part (c) \\[-8pt]

\textbf{\emph{(e)}} It remains to show that $(\mathcal{T}^\mu h)(x) \to 0$ as $x \uparrow b$. Since $w_\lambda(x) \widehat{\mu}(\lambda) \to 0$ as $x \uparrow b$ ($\lambda > 0$), it follows from Remark \ref{rmk:shypPDE_Ltransfmeas_propsrmk}.II that $\delta_x * \mu \varrow \bm{0}$ as $x \uparrow b$, where $\bm{0}$ denotes the zero measure; this means that for each $h \in \mathrm{C}_\mathrm{0}[a,b)$ we have
\[
(\mathcal{T}^\mu h)(x) = \int_{[a,b)\!} h(\xi) (\delta_x * \mu)(d\xi) \longrightarrow \int_{[a,b)\!} h(\xi)\, \bm{0}(d\xi) = 0 \qquad \text{as } x \uparrow b
\]
showing that $\mathcal{T}^\mu h \in \mathrm{C}_0[a,b)$.
\end{proof}

\section{Harmonic analysis on $L_p$ spaces} \label{sec:Lp_harmonic}

For the remainder of this work, the coefficients of $\ell$ will be assumed to satisfy $\lim_{x \uparrow b} p(x)r(x) = \infty$ (cf.\ Remark \ref{rmk:shypPDE_Ltransfmeas_propsrmk}.III), and Assumption \ref{asmp:shypPDE_SLhyperg} continues to be in place.

In this section, we turn our attention to the basic mapping properties of the $\mathcal{L}$-translation and convolution on the Lebesgue spaces $L_p(r)$ ($1 \leq p \leq \infty$). The first result, whose proof depends on the continuity of the mapping $(\mu,\nu) \mapsto \mu * \nu$, ensures that the $\mathcal{L}$-translation defines a linear contraction on $L_p(r)$:

\begin{proposition} \label{prop:shypPDEwl_gentransl_Lpcont}
Let $1 \leq p \leq \infty$ and $\mu \in \mathcal{M}_+[a,b)$. The $\mathcal{L}$-translation $(\mathcal{T}^\mu h)(x) = \int_{[a,b)} h(\xi) \, (\delta_x * \mu)(d\xi),$ is, for each $h \in L_p(r)$, a Borel measurable function of $x$, and we have
\begin{equation} \label{eq:shypPDEwl_gentransl_Lpcont}
\|\mathcal{T}^\mu h\|_{p} \leq \|\mu\| \ccdot \|h\|_{p} \qquad \text{ for all } h \in L_p(r)
\end{equation}
(consequently, $\mathcal{T}^\mu\bigl(L_p(r)\bigr) \subset L_p(r)$).
\end{proposition}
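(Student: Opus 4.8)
The plan is to reduce the statement to the case of a single Dirac translation $\mathcal{T}^y = \mathcal{T}^{\delta_y}$ and then obtain the general bound from three ingredients: the fact that the $\bm{\nu}_{x,y}$ are probability measures (so Jensen's inequality is available), an $L_1$ measure-preservation identity for $\mathcal{T}^y$, and the Minkowski integral inequality. Since $(\mathcal{T}^\mu h)(x) = \int_{[a,b)} (\mathcal{T}^y h)(x)\, \mu(dy)$ and $\|\mu\| = \mu[a,b)$ for $\mu \in \mathcal{M}_+[a,b)$, once I establish $\|\mathcal{T}^y h\|_p \leq \|h\|_p$ uniformly in $y$, Minkowski's integral inequality gives $\|\mathcal{T}^\mu h\|_p \leq \int \|\mathcal{T}^y h\|_p\, \mu(dy) \leq \|\mu\|\,\|h\|_p$, which is \eqref{eq:shypPDEwl_gentransl_Lpcont}. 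The case $p = \infty$ is immediate from $|(\mathcal{T}^\mu h)(x)| \leq \int |h|\, d(\delta_x * \mu) \leq \|h\|_\infty\,\|\delta_x * \mu\| \leq \|\mu\|\,\|h\|_\infty$, so I concentrate on $1 \leq p < \infty$.

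The crux is the measure-preservation identity
\[
\int_{[a,b)} (\mathcal{T}^y h)(x)\, r(x)\, dx = \int_{[a,b)} h(x)\, r(x)\, dx \qquad (h \geq 0 \text{ Borel}).
\]
To prove it I first note that, for $h \in \mathrm{C}_{\mathrm{c},0}^4$, the spectral representation \eqref{eq:shypPDE_gentranslmu_spectrep} with $\widehat{\delta_y}(\lambda) = w_\lambda(y)$ shows that $\mathcal{T}^y h \in L_2(r)$ with $\mathcal{F}(\mathcal{T}^y h)(\lambda) = w_\lambda(y)\,(\mathcal{F}h)(\lambda)$; since $w_\lambda(y) \in \mathbb{R}$ for $\lambda \in \supp(\bm{\rho}_\mathcal{L}) \subset [0,\infty)$, Parseval's identity (Proposition \ref{prop:shypPDE_Ltransf}) yields the self-adjointness relation $\int (\mathcal{T}^y h)\, g\, r\, dx = \int h\, (\mathcal{T}^y g)\, r\, dx$ for all $h, g \in \mathrm{C}_{\mathrm{c},0}^4$. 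I then choose $g = g_n \in \mathrm{C}_{\mathrm{c},0}^4$ with $0 \leq g_n \uparrow \mathds{1}$ (taking $g_n \equiv 1$ on a neighbourhood of $a$ so that the left-endpoint conditions hold automatically): as $\bm{\nu}_{x,y}$ is a probability measure (Corollary \ref{cor:shypPDEconv_basicprops}(b)) one has $\mathcal{T}^y g_n \uparrow \mathcal{T}^y \mathds{1} = \mathds{1}$, and monotone convergence applied to both sides of the self-adjointness relation gives the identity for nonnegative $h \in \mathrm{C}_{\mathrm{c},0}^4$. Testing against this measure-determining class of compactly supported functions identifies the (locally finite) measure $\int_{[a,b)} \bm{\nu}_{x,y}(\cdot)\, r(x)dx$ with $r(\xi)\,d\xi$, whence the displayed identity for arbitrary nonnegative Borel $h$ by Tonelli's theorem. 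I expect this passage to the constant function $\mathds{1}$ to be the main obstacle, precisely because $\mathds{1} \notin L_2(r)$; the monotone truncation $g_n$ is what makes the Parseval computation legitimate.

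With measure-preservation in hand, the $L_p$-contraction follows from Jensen's inequality. For $h \geq 0$ and $1 \leq p < \infty$, convexity of $t \mapsto t^p$ together with the fact that $\bm{\nu}_{x,y}$ is a probability measure give $(\mathcal{T}^y h)(x)^p \leq (\mathcal{T}^y(h^p))(x)$ pointwise; integrating against $r(x)\,dx$ and applying the measure-preservation identity to $h^p \in L_1(r)$ yields $\|\mathcal{T}^y h\|_p^p \leq \int (\mathcal{T}^y(h^p))\, r\, dx = \int h^p\, r\, dx = \|h\|_p^p$. For complex $h \in L_p(r)$ the pointwise estimate $|\mathcal{T}^y h| \leq \mathcal{T}^y|h|$ (positivity of $\bm{\nu}_{x,y}$) reduces matters to the nonnegative case, so $\|\mathcal{T}^y h\|_p \leq \|h\|_p$, and the Minkowski step described above completes \eqref{eq:shypPDEwl_gentransl_Lpcont}.

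It remains to address measurability and a.e.\ well-definedness of $x \mapsto (\mathcal{T}^\mu h)(x)$ for $h \in L_p(r)$. For bounded Borel $h$ this follows from the weak continuity of $x \mapsto \delta_x * \mu$ (Corollary \ref{cor:shypPDEconv_basicprops}(c)) by a standard functional/monotone-class argument, starting from $h \in \mathrm{C}_\mathrm{b}[a,b)$, where $\mathcal{T}^\mu h$ is even continuous (Corollary \ref{cor:shypPDEconv_basicprops}(d)). For general $h \in L_p(r)$ I approximate by bounded, compactly supported $h_n \to h$ in $L_p(r)$; the bound already proved on this dense class makes $\{\mathcal{T}^\mu h_n\}$ Cauchy in $L_p(r)$, and passing to an a.e.\ convergent subsequence exhibits $\mathcal{T}^\mu h$ as an a.e.\ limit of measurable functions. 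The measure-preservation identity guarantees that this limit coincides a.e.\ with $\int h\, d(\delta_x*\mu)$ and is independent of the representative of $h$, and the inequality \eqref{eq:shypPDEwl_gentransl_Lpcont} passes to the limit, completing the proof.
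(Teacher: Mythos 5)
Your argument is correct in substance but follows a genuinely different route from the paper's. You first prove the exact translation-invariance of the measure $r(x)dx$ (the Haar-measure property $\int_a^b (\mathcal{T}^y h)\, r\, dx = \int_a^b h\, r\, dx$), then get the $L_p$-contraction of a single translation $\mathcal{T}^y$ from Jensen's inequality applied to the probability measures $\bm{\nu}_{x,y}$, and finally pass to $\mathcal{T}^\mu$ by Minkowski's integral inequality. The paper instead works with $\mathcal{T}^\mu$ directly: it bounds the measure $(\mu*r)[c_1,c_2)$ by a duality computation (the same Parseval/self-adjointness step \eqref{eq:shypPDE_gentranslmu_spectrep_F} that you use, but run as a $\sup$ over test functions $g$ with $0\le g\le 1$ rather than a monotone limit $g_n\uparrow\mathds{1}$), obtaining only the inequality $\|\mathcal{T}^\mu h\|_1\le\|\mu\|\,\|h\|_1$, and then interpolates to $1<p<\infty$ via H\"older applied to $\mathcal{T}^\mu|h|^p$ instead of Jensen. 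Both proofs rest on the same two inputs --- the spectral identity for $\mathcal{F}(\mathcal{T}^\mu h)$ and the Borel measurability of $x\mapsto(\mathcal{T}^\mu h)(x)$ coming from the weak continuity of the convolution (Corollary \ref{cor:shypPDEconv_basicprops}) --- so neither is circular; your version buys the stronger and independently useful exact invariance of $r\,dx$, at the cost of the extra bookkeeping needed to justify the monotone passage to $\mathds{1}\notin L_2(r)$, which you handle correctly.

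One small point deserves repair: your $p=\infty$ case is not quite ``immediate.'' The measure $\delta_x*\mu$ need not be absolutely continuous with respect to $r(x)dx$ (in the cosine example it is purely atomic), so for $h\in L_\infty(r)$ the bound $\int|h|\,d(\delta_x*\mu)\le\|h\|_\infty\|\delta_x*\mu\|$ fails for a bad representative of $h$, since $\|h\|_\infty$ is only an essential supremum. You need the decomposition the paper uses: write $h=h_\mb{b}+h_\mb{0}$ with $0\le|h_\mb{b}|\le\|h\|_\infty$ everywhere and $h_\mb{0}=0$ Lebesgue-a.e., and invoke your $L_1$ bound (equivalently, the measure-preservation identity) to conclude that $\mathcal{T}^\mu h_\mb{0}=0$ a.e.
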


\begin{proof}
It suffices to prove the result for nonnegative $h \in L_p(r)$, $1 \leq p \leq \infty$.

The map $\nu \mapsto \mu * \nu$ is weakly continuous (Corollary \ref{cor:shypPDEconv_basicprops}(c)) and takes $\mathcal{M}_+[a,b)$ into itself. According to \cite[Section 2.3]{jewett1975}, this implies that, for each Borel measurable $h \geq 0$, the function $x \mapsto (\mathcal{T}^\mu h)(x)$ is Borel measurable. It follows that $\int_{[a,b)} g(x) (\mu * r)(dx) := \int_a^b (\mathcal{T}^\mu g)(x) r(x) dx$ ($g \in \mathrm{C}_\mathrm{c}[a,b)$) defines a positive Borel measure. For $a \leq c_1 < c_2 < b$, let $\mathds{1}_{[c_1,c_2)}$ be the indicator function of $[c_1,c_2)$, let $h_n \in \mathrm{C}_{\mathrm{c},0}^4$ be a sequence of nonnegative functions such that $h_n \to \mathds{1}_{[c_1,c_2)}$ pointwise, and write $\mathfrak{C} = \{g \in \mathrm{C}_\mathrm{c}^\infty(a,b) \mid 0 \leq g \leq 1\}$. We compute \vspace{-5.5pt}
\begin{align*}
(\mu * r)[c_1,c_2) & = \lim_n \int_{[a,b)} h_n(x) (\mu * r)(dx) \\
& = \lim_n \sup_{g \in \mathfrak{C}} \int_a^b (\mathcal{T}^\mu h_n)(x) \, g(x) \, r(x) dx \\
& = \lim_n \sup_{g \in \mathfrak{C}} \int_{[0,\infty)} \! (\mathcal{F}h_n)(\lambda) \, (\mathcal{F}g)(\lambda) \, \widehat{\mu}(\lambda) \, \bm{\rho}_{\mathcal{L}}(d\lambda) \\
& = \lim_n \sup_{g \in \mathfrak{C}} \int_a^b h_n(x) \, (\mathcal{T}^\mu g)(x) \, r(x) dx \\
& \leq \|\mu\| \ccdot \lim_n \int_{[a,b)\!} h_n(x) \, r(x) dx \, = \, \|\mu\| \ccdot \int_{[c_1,c_2)\!} r(x)dx \vspace{-2pt}
\end{align*}
where the third and fourth equalities follow from \eqref{eq:shypPDE_gentranslmu_spectrep} and a change of order of integration, and the inequality holds because $\|\mathcal{T}^\mu g\|_\infty \leq \|\mu\| \ccdot \| g \|_\infty \leq \|\mu\|$. Therefore, $\|\mathcal{T}^\mu h\|_1 = \|h\|_{L_1([a,b),\mu * r)} \leq \|\mu\| \ccdot \|h\|_1$ for each Borel measurable $h \geq 0$. Since $\delta_x * \mu \in \mathcal{M}_+[a,b)$, H\"older's inequality yields that $\|\mathcal{T}^\mu h\|_p \leq \|\mu\|^{1/q} \ccdot \|\mathcal{T}^\mu |h|^p\|_1^{1/p} \leq \|\mu\| \ccdot \| h \|_p$ for $1 < p < \infty$.

Finally, if $h \in L_\infty(r)$, $h \geq 0$ then $h = h_\mb{b} + h_\mb{0}$, where $0 \leq h_\mb{b} \leq \|h\|_\infty$ and $h_\mb{0} = 0$ Lebesgue-almost everywhere. Since $\|\mathcal{T}^\mu h_\mb{0}\|_1 \leq \|\mu\| \ccdot \|h_\mb{0}\|_1 = 0$, we have $\mathcal{T}^y h_\mb{0} = 0$ Lebesgue-almost everywhere, and therefore $\|\mathcal{T}^y h\|_\infty = \|\mathcal{T}^y h_\mb{b}\|_\infty \leq \|\mu\| \ccdot \|h\|_\infty$.
\end{proof}

It is natural to define the $\mathcal{L}$-convolution of functions so that the fundamental identity $\mathcal{F}(h*g) = (\mathcal{F}h) \ccdot (\mathcal{F}g)$ holds (where $\mathcal{F}$ denotes the $\mathcal{L}$-transform \eqref{eq:shypPDE_Ltransfdef}):

\begin{definition}
Let $h, g:[a,b) \longrightarrow \mathbb{C}$. If the integral \vspace{-1pt}
\[
(h * g)(x) = \int_a^b (\mathcal{T}^y h)(x)\, g(y)\, r(y) dy = \int_a^b \int_{[a,b)} \! h(\xi) \, (\delta_x * \delta_y)(d\xi)\, g(y)\, r(y) dy \vspace{-1pt}
\]
exists for almost every $x \in [a,b)$, then we call it the \emph{$\mathcal{L}$-convolution} of the functions $h$ and $g$.
\end{definition}

\begin{proposition} \label{prop:shypPDE_conv_Ltransfident}
If $h \in \mathrm{C}_{\mathrm{c},0}^4$ and $g \in L_1(r)$, then $\bigl(\mathcal{F}(h * g)\bigr)(\lambda) = (\mathcal{F}h)(\lambda) (\mathcal{F}g)(\lambda)$ for all $\lambda \geq 0$.
\end{proposition}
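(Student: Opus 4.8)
The plan is to compute $\bigl(\mathcal{F}(h * g)\bigr)(\lambda)$ straight from the definitions and to reduce the resulting double integral to a product by interchanging the order of integration; the crucial simplification comes from the already-established transform identity \eqref{eq:shypPDE_gentranslmu_spectrep_F} for the $\mathcal{L}$-translation, applied with $\mu = \delta_y$.

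First I would confirm that $h * g$ is a well-defined element of $L_1(r)$, so that $\mathcal{F}(h * g)$ makes sense. Since $\bm{\nu}_{x,y}$ is a positive measure, $|(\mathcal{T}^y h)(x)| \leq (\mathcal{T}^y |h|)(x)$, and the contraction estimate \eqref{eq:shypPDEwl_gentransl_Lpcont} (with $\mu = \delta_y$, so $\|\mu\| = 1$) gives $\int_a^b (\mathcal{T}^y |h|)(x)\, r(x)\,dx = \|\mathcal{T}^y |h|\|_1 \leq \|h\|_1$. Tonelli's theorem then yields $\|h * g\|_1 \leq \int_a^b |g(y)| \, \|\mathcal{T}^y |h|\|_1 \, r(y)\,dy \leq \|h\|_1 \ccdot \|g\|_1 < \infty$, using also that $x \mapsto (\mathcal{T}^y h)(x)$ is Borel measurable by Proposition \ref{prop:shypPDEwl_gentransl_Lpcont}.

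Next I would write $\bigl(\mathcal{F}(h * g)\bigr)(\lambda) = \int_a^b (h * g)(x)\, w_\lambda(x)\, r(x)\,dx$ and interchange the integrations in $x$ and $y$. This step is licensed by Fubini's theorem: bounding $|w_\lambda(x)| \leq 1$ (Lemma \ref{lem:shypPDE_wsolbound}, valid because $\lambda \geq 0$) and repeating the estimate of the previous paragraph shows that the double integral of the absolute value is at most $\|h\|_1 \ccdot \|g\|_1$. After the interchange the inner integral becomes $\int_a^b (\mathcal{T}^y h)(x)\, w_\lambda(x)\, r(x)\,dx = \bigl(\mathcal{F}(\mathcal{T}^y h)\bigr)(\lambda)$, which by \eqref{eq:shypPDE_gentranslmu_spectrep_F} with $\mu = \delta_y$ (so that $\widehat{\delta_y}(\lambda) = w_\lambda(y)$) equals $w_\lambda(y)\, (\mathcal{F}h)(\lambda)$. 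Factoring out the $y$-independent quantity $(\mathcal{F}h)(\lambda)$ then leaves $(\mathcal{F}h)(\lambda) \int_a^b g(y)\, w_\lambda(y)\, r(y)\,dy = (\mathcal{F}h)(\lambda)\, (\mathcal{F}g)(\lambda)$, which is the claim.

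The only point that genuinely requires care — and hence the main obstacle — is the justification of the interchange of the order of integration; everything else is direct substitution into the identity \eqref{eq:shypPDE_gentranslmu_spectrep_F}. It is worth emphasizing that the uniform bound $|w_\lambda| \leq 1$ for $\lambda \geq 0$ is precisely what renders the double integral absolutely convergent, so the argument ultimately rests on the monotonicity assumption on $x \mapsto p(x)r(x)$ that underlies Lemma \ref{lem:shypPDE_wsolbound}.
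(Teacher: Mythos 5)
Your proposal is correct and follows essentially the same route as the paper: write out $\mathcal{F}(h*g)$ as a double integral, interchange the order of integration by Fubini, and apply the identity \eqref{eq:shypPDE_gentranslmu_spectrep_F} with $\mu = \delta_y$ to reduce the inner integral to $w_\lambda(y)\,(\mathcal{F}h)(\lambda)$. The paper states the Fubini step without elaboration, so your explicit justification via $|w_\lambda|\leq 1$ and the contraction estimate \eqref{eq:shypPDEwl_gentransl_Lpcont} is a welcome but inessential supplement.
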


\begin{proof}
For $h \in \mathrm{C}_{\mathrm{c},0}^4$ and $g \in L_1(r)$ we have
\begin{align*}
\bigl(\mathcal{F}(h * g)\bigr)(\lambda) & = \int_a^b \int_a^b (\mathcal{T}^\xi h)(x) g(\xi) \, r(\xi) d\xi \, w_\lambda(x) r(x) dx \\
& = \int_a^b \bigl(\mathcal{F}(\mathcal{T}^\xi h)\bigr)(\lambda) \, g(\xi) r(\xi) d\xi \\[3pt]
& \smash{= (\mathcal{F}h)(\lambda) \int_a^b g(\xi) w_\lambda(\xi) r(\xi) d\xi \, = \, (\mathcal{F}h)(\lambda) (\mathcal{F}g)(\lambda)} \\[-14pt]
\end{align*}
where we have used Fubini's theorem and the identity \eqref{eq:shypPDE_gentranslmu_spectrep_F}.
\end{proof}

\begin{proposition}[Young convolution inequality]
Let $p_1,p_2 \in [1, \infty]$ such that ${1 \over p_1} + {1 \over p_2} \geq 1$. For $h \in L_{p_1}(r)$ and $g \in L_{p_2}(r)$, the $\mathcal{L}$-convolution $h * g$ is well-defined and, for $s \in [1, \infty]$ defined by ${1 \over s} = {1 \over p_1} + {1 \over p_2} - 1$, it satisfies
\[
\| h * g \|_s \leq \| h \|_{p_1} \| g \|_{p_2}
\]
(in particular, $h * g \in L_s(r)$). Consequently, the $\mathcal{L}$-convolution is a continuous bilinear operator from $L_{p_1}(r) \times L_{p_2}(r)$ into $L_s(r)$.
\end{proposition}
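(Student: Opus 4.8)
The plan is to follow the classical route to Young's inequality: establish the two endpoint estimates and then fill in the intermediate exponents by interpolation. Throughout I would first treat nonnegative $h,g$, for which the defining integral of $h*g$ takes values in $[0,\infty]$ and every manipulation is unambiguous; the general (complex) case then follows from the pointwise bound $|h*g|\le |h|*|g|$ once the nonnegative inequality is in hand. This same bound also settles the well-definedness claim, since $\||h|*|g|\|_s\le\|h\|_{p_1}\|g\|_{p_2}<\infty$ forces $(|h|*|g|)(x)<\infty$ for a.e.\ $x$, so that the integral defining $(h*g)(x)$ converges absolutely a.e. The two endpoints correspond to the extreme admissible pairs: the case $p_1=1$, $s=p_2$, namely $\|h*g\|_{p_2}\le\|h\|_1\|g\|_{p_2}$, and the case $p_1=p_2'$, $s=\infty$, namely $\|h*g\|_\infty\le\|h\|_{p_2'}\|g\|_{p_2}$, where $p_2'$ denotes the conjugate exponent of $p_2$.

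For the first endpoint I would use the commutativity $h*g=g*h$ to write $(h*g)(x)=\int_a^b(\mathcal{T}^y g)(x)\,h(y)\,r(y)\,dy$ and then apply Minkowski's integral inequality together with the contraction estimate $\|\mathcal{T}^y g\|_{p_2}\le\|\delta_y\|\,\|g\|_{p_2}=\|g\|_{p_2}$ furnished by Proposition~\ref{prop:shypPDEwl_gentransl_Lpcont}; this yields $\|h*g\|_{p_2}\le\int_a^b\|\mathcal{T}^y g\|_{p_2}\,|h(y)|\,r(y)\,dy\le\|g\|_{p_2}\|h\|_1$. For the second endpoint I would apply Hölder's inequality in the variable $y$ to $(h*g)(x)=\int_a^b(\mathcal{T}^y h)(x)\,g(y)\,r(y)\,dy$, obtaining $|(h*g)(x)|\le\bigl(\int_a^b|(\mathcal{T}^y h)(x)|^{p_2'}r(y)\,dy\bigr)^{1/p_2'}\|g\|_{p_2}$. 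The key observation is that the symmetry $\bm{\nu}_{x,y}=\bm{\nu}_{y,x}$ (an immediate consequence of commutativity) gives $(\mathcal{T}^y h)(x)=(\mathcal{T}^x h)(y)$, whence the inner integral equals $\|\mathcal{T}^x h\|_{p_2'}^{p_2'}\le\|h\|_{p_2'}^{p_2'}$, again by Proposition~\ref{prop:shypPDEwl_gentransl_Lpcont}. Thus both endpoint bounds reduce entirely to the $L_p$-contractivity of the $\mathcal{L}$-translation.

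With the endpoints secured, I would fix $g\in L_{p_2}(r)$ and regard $h\mapsto h*g$ as a single linear operator; the Riesz--Thorin interpolation theorem applied to the pair of bounds $L_1(r)\to L_{p_2}(r)$ and $L_{p_2'}(r)\to L_\infty(r)$ yields $\|h*g\|_s\le\|h\|_{p_1}\|g\|_{p_2}$ for every intermediate pair. A direct check confirms that as the interpolation parameter $\theta$ runs over $[0,1]$ the relations $\tfrac1{p_1}=1-\tfrac{\theta}{p_2}$ and $\tfrac1s=\tfrac{1-\theta}{p_2}$ trace out exactly the admissible range $\tfrac1{p_1}+\tfrac1{p_2}=1+\tfrac1s$ (equivalently $p_1\in[1,p_2']$). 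I expect the main obstacle to lie not in the analytic estimates, which are immediate from Proposition~\ref{prop:shypPDEwl_gentransl_Lpcont}, but in the bookkeeping needed to make the interpolation rigorous: one must verify the joint Borel measurability of $(x,y)\mapsto(\mathcal{T}^y h)(x)$ so that Minkowski's and Fubini's theorems apply, check that $h\mapsto h*g$ is consistently defined on the dense class of simple functions across both endpoint spaces, and confirm that the operator obtained by density genuinely coincides with the convolution integral. The measurability can be drawn from the same source (Jewett) invoked in the proof of Proposition~\ref{prop:shypPDEwl_gentransl_Lpcont}, and the consistency is handled by the nonnegative-truncation argument described above.
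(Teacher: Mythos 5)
Your argument is correct in substance but follows a genuinely different route from the paper. The paper proves the inequality in one stroke: it splits the integrand as a product of three factors with exponents $t_1,t_2,s$ satisfying ${1\over t_1}+{1\over t_2}+{1\over s}=1$, applies the three-exponent H\"older inequality pointwise in $x$, and then uses the contraction bound \eqref{eq:shypPDEwl_gentransl_Lpcont} twice (once in each variable, via the symmetry $(\mathcal{T}^y h)(x)=(\mathcal{T}^x h)(y)$ and Tonelli). You instead establish the two endpoint estimates and invoke Riesz--Thorin. Both are standard proofs of Young's inequality; the paper's is self-contained and elementary, while yours imports complex interpolation and therefore has to deal with the consistency-on-simple-functions and joint-measurability bookkeeping that you correctly identify. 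One caveat deserves more care than you give it: your first endpoint rests on the identity $h*g=g*h$ for general $L_1\times L_{p_2}$ pairs, and this does \emph{not} follow from the symmetry $\bm{\nu}_{x,y}=\bm{\nu}_{y,x}$ alone --- it requires the self-adjointness of $\mathcal{T}^y$ with respect to $r(x)dx$, i.e.\ $\int_a^b(\mathcal{T}^y h)\,g\,r\,dx=\int_a^b h\,(\mathcal{T}^y g)\,r\,dx$, which at this point in the paper is only available for $h,g\in\mathrm{C}_{\mathrm{c},0}^4$ via \eqref{eq:shypPDE_gentranslmu_spectrep_F} and must be extended by a density/monotone-class argument (commutativity of the function convolution is only recorded later, for $L_1$, as a corollary of the product formula). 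The cleanest repair is to interpolate in the other variable: fix $h\in L_{p_1}(r)$ and use the two bounds $\|h*g\|_{p_1}\le\|h\|_{p_1}\|g\|_1$ (Minkowski plus \eqref{eq:shypPDEwl_gentransl_Lpcont}, no commutativity needed) and $\|h*g\|_\infty\le\|h\|_{p_1}\|g\|_{p_1'}$ (H\"older in $y$ plus the pointwise symmetry), which trace out the same admissible range. With that adjustment, your proof goes through.
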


The proof is given for completeness; it is analogous to that of the Young inequality for the ordinary convolution.

\begin{proof}
Define ${1 \over t_1} = {1 \over p_1} - {1 \over s}$ and ${1 \over t_2} = {1 \over p_2} - {1 \over s}$. Observe that
\[
|(\mathcal{T}^x h)(y)| \, | g(y)| \leq |(\mathcal{T}^x h)(y)|^{p_1/t_1} \, | g(y)|^{p_2/t_2} \bigl[|(\mathcal{T}^x h)(y)|^{p_1} \, | g(y)|^{p_2}\bigr]^{1/s}.
\]
Since ${1 \over s} + {1 \over t_1} + {1 \over t_2} = 1$, we have by H\"older's inequality and \eqref{eq:shypPDEwl_gentransl_Lpcont}
\begin{align*}
& \int_a^b |(\mathcal{T}^x h)(y)| \, | g(y)| r(y) dy \\
& \qquad \leq \biggl( \int_a^b |(\mathcal{T}^x h)(y)|^{p_1} r(y) dy \biggr)^{1/t_1} \biggl( \int_a^b |g(y)|^{p_2} r(y) dy \biggr)^{1/t_2} \biggl( \int_a^b |(\mathcal{T}^x h)(y)|^{p_1} \, | g(y)|^{p_2} r(y)dy \biggr)^{1/s} \\
& \qquad = \|h\|_{p_1}^{p_1/t_1} \|g\|_{p_2}^{p_2/t_2} \biggl( \int_a^b |(\mathcal{T}^x h)(y)|^{p_1} \, | g(y)|^{p_2} r(y)dy \biggr)^{1/s}.
\end{align*}
Using again \eqref{eq:shypPDEwl_gentransl_Lpcont} we conclude that
\[
\| h * g \|_s \leq \|h\|_{p_1}^{p_1/t_1} \, \|g\|_{p_2}^{p_2/t_2} \, \|h\|_{p_1}^{p_1/s} \, \|g\|_{p_2}^{p_2/s} = \|h\|_{p_1} \|g\|_{p_2}. \qedhere
\]
\end{proof}

A consequence of the Young convolution inequality is that the fundamental identity $\bigl(\mathcal{F}(h * g)\bigr)(\lambda) = (\mathcal{F}h)(\lambda) (\mathcal{F}g)(\lambda)$ (Proposition \ref{prop:shypPDE_conv_Ltransfident}) extends, by continuity, to $h \in L_1(r) \cup L_2(r)$ and $g \in L_1(r)$. Another consequence is the Banach algebra property of the space $L_1(r)$:

\begin{corollary}
The Banach space $L_1(r)$, equipped with the convolution multiplication $h \cdot g \equiv h * g$, is a commutative Banach algebra without identity element.
\end{corollary}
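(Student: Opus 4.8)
The plan is to verify the Banach-algebra axioms by transporting the structure of the measure algebra $(\mathcal{M}_{\mathbb{C}}[a,b),*)$, already established in Proposition \ref{prop:shypPDE_conv_Mbanachalg}, to $L_1(r)$ through the embedding $\iota \colon f \mapsto \mu_f$, where $\mu_f(dx) = f(x)\,r(x)dx$. This map is injective (since $r > 0$, the equality $\mu_f = \mu_{f'}$ forces $f = f'$ Lebesgue-a.e.) and, as I will check, multiplicative, so the commutative and associative laws on $L_1(r)$ descend directly from those of the measure convolution.

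First I would record that $L_1(r)$ is a Banach space and that the convolution is an honest binary operation on it: taking $p_1 = p_2 = 1$ (whence $s = 1$) in the Young convolution inequality gives $h * g \in L_1(r)$ together with $\|h*g\|_1 \le \|h\|_1 \|g\|_1$, which is exactly the submultiplicativity required of a Banach-algebra norm. Bilinearity is immediate from the definition of $h*g$ as an integral that is linear in $g$ and, through the linearity of $\mathcal{T}^y$, linear in $h$.

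The key step is then the homomorphism identity $\mu_{h*g} = \mu_h * \mu_g$ for $h,g \in L_1(r)$. To establish it, I would compute $\widehat{\mu_{h*g}}(\lambda) = \mathcal{F}(h*g)(\lambda)$ from the definitions, apply the extension of the fundamental identity $\mathcal{F}(h*g) = (\mathcal{F}h)\,(\mathcal{F}g)$ (valid for $h,g \in L_1(r)$ by the remark following Proposition \ref{prop:shypPDE_conv_Ltransfident}), and observe that $(\mathcal{F}h)(\mathcal{F}g) = \widehat{\mu_h}\,\widehat{\mu_g} = \widehat{\mu_h * \mu_g}$ by Corollary \ref{cor:shypPDEconv_basicprops}(a); the uniqueness of the $\mathcal{L}$-transform on measures (Proposition \ref{prop:shypPDE_Ltransfmeas_props}(b)) then yields the claim. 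With $\iota$ now a homomorphism, commutativity and associativity of $*$ on $L_1(r)$ follow from the corresponding laws in $\mathcal{M}_{\mathbb{C}}[a,b)$ and the injectivity of $\iota$: for instance $\mu_{(h*g)*k} = (\mu_h * \mu_g)*\mu_k = \mu_h*(\mu_g*\mu_k) = \mu_{h*(g*k)}$, and applying $\iota^{-1}$ gives $(h*g)*k = h*(g*k)$; commutativity is analogous.

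The part requiring the most care is the absence of an identity. Suppose $e \in L_1(r)$ satisfied $e * g = g$ for every $g \in L_1(r)$. Applying the $\mathcal{L}$-transform gives $(\mathcal{F}e)(\lambda)\,(\mathcal{F}g)(\lambda) = (\mathcal{F}g)(\lambda)$ for all $\lambda \ge 0$. For each fixed $\lambda \ge 0$ the eigenfunction $w_\lambda$ is real-valued with $w_\lambda(a) = 1$, hence $w_\lambda > \tfrac12$ on a neighbourhood of $a$; choosing a nonnegative $g \in \mathrm{C}_{\mathrm{c},0}^4$ supported there with $g \not\equiv 0$ forces $(\mathcal{F}g)(\lambda) = \int_a^b g\,w_\lambda\,r\,dx > 0$, and therefore $(\mathcal{F}e)(\lambda) = 1$. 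Thus $\widehat{\mu_e} \equiv 1 \equiv \widehat{\delta_a}$, so Proposition \ref{prop:shypPDE_Ltransfmeas_props}(b) gives $\mu_e = \delta_a$ --- which is impossible, since $\mu_e(\{a\}) = 0$ while $\delta_a(\{a\}) = 1$. This contradiction shows that no identity element exists and completes the argument.
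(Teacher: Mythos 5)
Your proof is correct and follows essentially the same route as the paper: the Young inequality with $p_1=p_2=s=1$ for the binary operation and submultiplicativity, the identity $\mathcal{F}(h*g)=(\mathcal{F}h)(\mathcal{F}g)$ together with injectivity of the transform for the algebra laws, and the comparison $\widehat{\mu_e}\equiv 1\equiv\widehat{\delta_a}$ for the nonexistence of an identity. You merely make explicit two points the paper leaves implicit --- the choice of a test function $g$ with $(\mathcal{F}g)(\lambda)>0$ to deduce $(\mathcal{F}e)(\lambda)=1$, and the routing of commutativity and associativity through the embedding $f\mapsto\mu_f$ into the measure algebra --- both of which are fine.
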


\begin{proof}
The Young convolution inequality shows that the $\mathcal{L}$-convolution defines a binary operation on $L_1(r)$ for which the norm is submultiplicative. The commutativity and associativity of the $\mathcal{L}$-convolution are a consequence of the identity $\mathcal{F}(h * g) = (\mathcal{F}h) \ccdot (\mathcal{F}g)$.

Suppose now that there exists $\mathrm{e} \in L_1(r)$ such that $h * \mathrm{e} = h$ for all $h \in L_1(r)$. Then
\[
(\mathcal{F}h)(\lambda) (\mathcal{F}\mathrm{e})(\lambda) = \bigl(\mathcal{F}(h * \mathrm{e})\bigr)(\lambda) = (\mathcal{F}h)(\lambda) \qquad \text{for all } h \in L_1(r) \text{ and } \lambda \geq 0.
\]
Clearly, this implies that $(\mathcal{F}\mathrm{e})(\lambda) = 1$ for all $\lambda \geq 0$. But we know that $\widehat{\delta_a} \equiv 1$, so it follows from Proposition \ref{prop:shypPDE_Ltransfmeas_props}(b) that $\mathrm{e}(x) r(x) dx = \delta_a(dx)$, which is absurd. This shows that the Banach algebra has no identity element.
\end{proof}

\section{Applications to probability theory} \label{sec:probtheory}

\subsection{Infinite divisibility of measures and the Lévy-Khintchine representation}

The set $\mathcal{P} _\mathrm{id}$ of \emph{$\mathcal{L}$-infinitely divisible measures} (or \emph{$\mathcal{L}$-infinitely divisible distributions}) is defined in the obvious way:
\[
\mathcal{P}_\mathrm{id} = \bigl\{ \mu \in \mathcal{P}[a,b) \bigm| \text{for all } n \in \mathbb{N} \text{ there exists } \nu_n \in \mathcal{P}[a,b) \text{ such that } \mu = \nu_n^{*n} \bigr\}
\]
where $\nu_n^{*n}$ denotes the $n$-fold $\mathcal{L}$-convolution of $\nu_n$ with itself.

It is a simple exercise to show that the $\mathcal{L}$-transform of $\mu \in \mathcal{P}_\mathrm{id}$ is of the form
\[
\widehat{\mu}(\lambda) = e^{-\psi_\mu(\lambda)}
\]
where $\psi_\mu$ is continuous, nonnegative and $\psi_\mu(0) = 0$. The function $\psi_\mu$ is called the \emph{$\mathcal{L}$-exponent} of $\mu \in \mathcal{P}_\mathrm{id}$.

As we will see, the exponents of $\mathcal{L}$-infinitely divisible measures admit a representation which is analogous to the well-known Lévy-Khintchine formula for infinitely divisible measures with respect to the ordinary Fourier transform. In the present context, the relevant notions of Poisson and Gaussian measures are defined as follows:

\begin{definition}
Let $\mu \in \mathcal{M}_+[a,b)$. The measure $\mb{e}(\mu) \in \mathcal{P}[a,b)$ defined by
\[
\mb{e}(\mu) = e^{-\|\mu\|} \sum_{k=0}^\infty {\mu^{*k} \over k!}
\]
(the infinite sum converging in the weak topology) is said to be the \emph{$\mathcal{L}$-compound Poisson measure} associated with $\mu$.
\end{definition}

The $\mathcal{L}$-transform of $\mb{e}(\mu)$ can be easily deduced using Corollary \ref{cor:shypPDEconv_basicprops}(a):
\[
\widehat{\mb{e}(\mu)}(\lambda) = e^{-\|\mu\|} \sum_{k=0}^\infty {\widehat{\mu^{*k}}(\lambda) \over k!} = e^{-\|\mu\|} \sum_{k=0}^\infty {\bigl(\widehat{\mu}(\lambda)\bigr)^k \over k!} = \exp\bigl(\widehat{\mu}(\lambda) - \|\mu\|\bigr).
\]
Since $\mb{e}(\mu_1 + \mu_2) = \mb{e}(\mu_1) * \mb{e}(\mu_2)$ ($\mu_1, \mu_2 \in \mathcal{M}_+[a,b)$), every $*$-compound Poisson measure belongs to $\mathcal{P}_\mathrm{id}$.

To motivate the following definition, we observe that it follows from classical results in probability theory (see e.g.\ \cite[Theorem 16.17]{klenke2014} and \cite[\S III.1]{linnikostrovskii1977}) that an infinitely divisible probability measure on $\mathbb{R}^d$ is Gaussian if and only if it has no nontrivial divisors of the form $\mathfrak{e}(\nu)$, where $\nu$ is a finite positive measure on $\mathbb{R}^d$ and $\mathfrak{e}(\nu)$ denotes the (ordinary) compound Poisson measure associated with $\nu$.

\begin{definition}
A measure $\mu \in \mathcal{P}_{\mathrm{id}}$ is called an \emph{$\mathcal{L}$-Gaussian measure} if
\[
\mu = \mb{e}(\nu) * \vartheta \quad \bigl(a > 0 ,\, \nu \in \mathcal{M}_+[a,b),\, \vartheta \in \mathcal{P}_\mathrm{id}\bigr) \qquad \implies \qquad \mb{e}(\nu) = \delta_a.
\]
\end{definition}

We are now ready to state the analogue of the Lévy-Khintchine representation for infinite divisibility with respect to the $\mathcal{L}$-convolution.

\begin{theorem}[Levy-Khintchine type formula] \label{thm:shypPDE_levykhin}
The $\mathcal{L}$-exponent of a measure $\mu \in \mathcal{P}_\mathrm{id}$ can be represented in the form
\begin{equation} \label{eq:shypPDE_levykhin}
\psi_\mu(\lambda) = \psi_\alpha(\lambda) + \int_{(a,b)\!} \bigl( 1 - w_\lambda(x) \bigr) \nu(dx)
\end{equation}
where $\nu$ is a $\sigma$-finite measure on $(a,b)$ which is finite on the complement of any neighbourhood of $a$ and such that
\[
\int_{(a,b)\!} \bigl( 1 - w_\lambda(x) \bigr) \nu(dx) < \infty
\]
and $\alpha$ is an $\mathcal{L}$-Gaussian measure with $\mathcal{L}$-exponent $\psi_\alpha(\lambda)$. Conversely, each function of the form \eqref{eq:shypPDE_levykhin} is an $\mathcal{L}$-exponent of some $\mu \in \mathcal{P}_\mathrm{id}$.
\end{theorem}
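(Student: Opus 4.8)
The plan is to transcribe the classical Lévy--Khintchine argument (as developed for generalized convolutions in \cite{urbanik1964,bloomheyer1994}) into the language of the $\mathcal{L}$-transform, the two indispensable tools being the continuity theorem of Proposition~\ref{prop:shypPDE_Ltransfmeas_props}(c)--(d) with Remark~\ref{rmk:shypPDE_Ltransfmeas_propsrmk}, and the homomorphism property $\widehat{\mu*\nu}=\widehat\mu\,\widehat\nu$ from Corollary~\ref{cor:shypPDEconv_basicprops}(a). I would start with the converse, that every function \eqref{eq:shypPDE_levykhin} is an $\mathcal{L}$-exponent. For a finite Lévy measure $\nu\in\mathcal{M}_+(a,b)$ one has $\int_{(a,b)}(1-w_\lambda)\,d\nu=\|\nu\|-\widehat\nu(\lambda)$, which is precisely the $\mathcal{L}$-exponent of the compound Poisson measure $\mb{e}(\nu)\in\mathcal{P}_\mathrm{id}$. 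For $\sigma$-finite $\nu$ obeying the integrability hypothesis, set $\nu_k:=\nu|_{[c_k,b)}$ with $c_k\downarrow a$; since $1-w_\lambda\ge0$ (Lemma~\ref{lem:shypPDE_wsolbound}), monotone convergence gives $\int(1-w_\lambda)\,d\nu_k\uparrow\int(1-w_\lambda)\,d\nu=:\psi(\lambda)$, whence $\widehat{\mb{e}(\nu_k)}(\lambda)\to e^{-\psi(\lambda)}$ pointwise with $\psi$ continuous and $\psi(0)=0$. Proposition~\ref{prop:shypPDE_Ltransfmeas_props}(d) produces a limit $\mu_0\in\mathcal{P}[a,b)$ with $\widehat{\mu_0}=e^{-\psi}$; applying the same theorem to $\widehat{\mb{e}(\nu_k/n)}(\lambda)=e^{-\frac1n\int(1-w_\lambda)\,d\nu_k}\to e^{-\psi(\lambda)/n}$ furnishes an $n$th $\mathcal{L}$-convolution root of $\mu_0$ for each $n$, so $\mu_0\in\mathcal{P}_\mathrm{id}$, and then $\widehat{\alpha*\mu_0}=e^{-(\psi_\alpha+\psi)}$ exhibits $\mu:=\alpha*\mu_0\in\mathcal{P}_\mathrm{id}$ with the prescribed exponent.

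The substantial direction is necessity. Writing $\mu=\mu_n^{*n}$ gives $\widehat{\mu_n}=e^{-\psi_\mu/n}$ and $n(1-\widehat{\mu_n}(\lambda))\to\psi_\mu(\lambda)$ uniformly on compacts. With the finite measures $\tau_n:=n\,\mu_n$ one has $\int_{[a,b)}(1-w_\lambda)\,d\tau_n=n(1-\widehat{\mu_n}(\lambda))$, so the accompanying laws satisfy $\widehat{\mb{e}(\tau_n)}(\lambda)=e^{-\int(1-w_\lambda)\,d\tau_n}\to e^{-\psi_\mu(\lambda)}$ and hence $\mb{e}(\tau_n)\warrow\mu$. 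The crucial preliminary is to control $\tau_n$ on $(a,b)$: reusing the averaging estimate from the proof of Proposition~\ref{prop:shypPDE_Ltransfmeas_props}(d), namely $\frac1\delta\int_0^{2\delta}(1-w_\lambda(x))\,d\lambda\ge1$ for $x$ in a prescribed compact subinterval of $(a,b)$ or near $b$, one bounds $\tau_n$ uniformly on each $[c,\beta]\subset(a,b)$ and, since $\frac1\delta\int_0^{2\delta}\psi_\mu\to0$ as $\delta\downarrow0$, one obtains uniform tightness of $\{\tau_n\}$ at the natural boundary $b$ (here the hypothesis $\lim_{x\uparrow b}p(x)r(x)=\infty$, i.e.\ $w_\lambda(x)\to0$, is essential; cf.\ Remark~\ref{rmk:shypPDE_Ltransfmeas_propsrmk}.III). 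A Helly--Prokhorov extraction on the locally compact space $(a,b)$, diagonalised over $c_k\downarrow a$, then yields a subsequence along which $\tau_n\varrow\nu$, with $\nu$ $\sigma$-finite and finite on each $[c,b)$; the tightness at $b$ guarantees $\int_{[c,b)}(1-w_\lambda)\,d\tau_n\to\int_{[c,b)}(1-w_\lambda)\,d\nu$ despite $1-w_\lambda$ not vanishing at $b$. Splitting $[a,b)=(a,c)\cup[c,b)$ and letting $n\to\infty$ then $c\downarrow a$ identifies $\int_{(a,b)}(1-w_\lambda)\,d\nu$ (finite, with $\int(1-w_\lambda)\,d\nu\le\psi_\mu$ by Fatou) as the jump part, and leaves the residual exponent $\psi_\alpha:=\psi_\mu-\int(1-w_\lambda)\,d\nu\ge0$.

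It remains to exhibit $\psi_\alpha$ as the exponent of an $\mathcal{L}$-Gaussian measure. Factoring $\mb{e}(\tau_n)=\mb{e}(\tau_n|_{(a,c)})*\mb{e}(\tau_n|_{[c,b)})$ and taking the iterated limit ($n\to\infty$, then $c\downarrow a$), the second factor accounts for the jump part while the first converges to a measure $\alpha\in\mathcal{P}_\mathrm{id}$ with $\widehat\alpha=e^{-\psi_\alpha}$. To see that $\alpha$ is $\mathcal{L}$-Gaussian I would use the expansion $1-w_\lambda(x)=\lambda\int_a^x(\mathfrak{s}(x)-\mathfrak{s}(\xi))w_\lambda(\xi)r(\xi)\,d\xi$, which shows that near $a$ the integrand is $\lambda$ times a factor vanishing as $x\downarrow a$; consequently the residual $\psi_\alpha$ is linear in $\lambda$ (the exponent of a transition measure of the diffusion generated by $\ell$, for which $\widehat{\,\cdot\,}(\lambda)=e^{-\sigma\lambda}$, cf.\ Lemma~\ref{lem:shypPDE_Lb_diffusion_tpdf}), and re-running the extraction above on $\alpha$ yields a Lévy measure supported in arbitrarily small neighbourhoods of $a$, hence the zero measure. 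By the compound-Poisson-factor criterion defining $\mathcal{L}$-Gaussianity, $\alpha$ is then Gaussian, completing the representation.

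I expect the main obstacle to be precisely this last step — proving that the residual $\alpha$ carries no jump component. It hinges on a delicate analysis of the small-distance behaviour of $1-w_\lambda(x)$ as $x\downarrow a$ (the Gaussian regime, where one must show that all contributions nonlinear in $\lambda$ wash out in the limit) and on the uniqueness of the Lévy measure $\nu$, which together force the decomposition $\psi_\mu=\psi_\alpha+\int(1-w_\lambda)\,d\nu$ to be canonical. Controlling the tail at the natural boundary $b$ during the extraction of $\nu$ is a secondary technical point, resolved as indicated by the averaging estimate together with the standing assumption $\lim_{x\uparrow b}p(x)r(x)=\infty$.
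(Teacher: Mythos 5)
Your route is genuinely different from the paper's. The paper does not use accompanying laws at all: following Volkovich, it extracts from $\mu$ a sequence of \emph{maximal} compound Poisson divisors $\mb{e}(\pi_k)$ with $\pi_k$ supported on $J_k=[a_k,b)$ and $\pi_k(J_k)$ maximal among all such divisors (the relative compactness of divisor sets, $\mathrm{D}(\mathfrak{P})$, guarantees that this supremum is finite and attained), writes $\mu=\alpha_n*\mb{e}(\pi_1)*\cdots*\mb{e}(\pi_n)$, and passes to limit points. Your converse direction and your construction of the Lévy measure $\nu$ as a vague limit of $\tau_n=n\mu_n$ (with tightness at $b$ controlled by the averaging estimate) are sound and close in spirit to the machinery of Proposition \ref{prop:shypPDE_Ltransfmeas_props}(d); that part would go through.

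The genuine gap is exactly where you suspect it: proving that the residual $\alpha$ is $\mathcal{L}$-Gaussian. Neither of your two proposed fixes closes it. First, the claim that $\psi_\alpha$ is linear in $\lambda$ does not follow from the first-order expansion $1-w_\lambda(x)=\lambda\int_a^x(\mathfrak{s}(x)-\mathfrak{s}(\xi))w_\lambda(\xi)r(\xi)\,d\xi$: the masses $\tau_n((a,c))$ may diverge as $c\downarrow a$, and controlling the contributions nonlinear in $\lambda$ is precisely the second-moment analysis that the classical proof on $\mathbb{R}$ needs and that is unavailable here (there is no centering in a convolution algebra without subtraction); moreover linearity is neither needed nor known to characterize $\mathcal{L}$-Gaussian measures, which are defined by the absence of Poisson divisors. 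Second, ``re-running the extraction on $\alpha$ and finding zero Lévy measure'' is circular: a vague subsequential limit of $n\alpha_n$ being zero only shows that \emph{one} decomposition of $\psi_\alpha$ has no jump part; to conclude that $\alpha$ admits no nontrivial divisor $\mb{e}(\eta)$ you would need uniqueness of the Lévy measure, which is itself a consequence of (not an input to) the representation theorem. The paper's maximal-divisor construction is engineered to make this step immediate: if $\mb{e}(\eta)$ with $\eta\neq\delta_a$ divided $\alpha$, then $\eta(J_k)>0$ for some $k$, and augmenting $\pi_k$ by $\eta|_{J_k}$ would produce a divisor of $\mu$ contradicting the maximality of $\pi_k(J_k)$. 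To complete your argument you would either have to import that maximality mechanism or independently establish uniqueness of the Lévy measure; as written, the decomposition $\psi_\mu=\psi_\alpha+\int(1-w_\lambda)\,d\nu$ is produced but not shown to be the canonical Gaussian--Poisson one.
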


\begin{proof}
We only give a sketch of the proof, and refer to \cite{volkovich1988} for details.

Let $\mu \in \mathcal{P}_{\mathrm{id}}$, let $b > a_1 > a_2 > \ldots$ with $\lim a_n = a$, and let $I_n = [a,a_n)$, $J_n = [a_n,b)$. Consider the set $\mathcal{Q}$ of all divisors of $\mu$ of the form $\mb{e}(\pi)$ such that $\pi(I_1) = 0$. One can prove that the set $\mathrm{D}(\mathfrak{P})$ of all divisors (with respect to the $\mathcal{L}$-convolution) of measures $\nu \in \mathfrak{P}$ is relatively compact whenever $\mathfrak{P} \subset \mathcal{P}[a,b)$ is relatively compact (see \cite[Corollary 1]{volkovich1992}); using this fact, it can be shown that $\sup_{\mu = \mb{e}(\pi) \in \mathcal{Q}} \bigl[ \int_{[a,b)} \bigl(1-w_\lambda(x)\bigr) \pi(dx) \bigr] < \infty$ and, consequently, there exists a divisor $\mu_1 = \mb{e}(\pi_1) \in \mathcal{Q}$ such that $\pi_1(J_1)$ is maximal among all elements of $\mathcal{Q}$. Write $\mu = \mu_1 * \alpha_1$ ($\alpha_1 \in \mathcal{P}_\mathrm{id}$). Applying the same reasoning to $\alpha_1$ with $I_1$ replaced by $I_2$, we get $\alpha_1 = \mu_2 * \alpha_2 = \mb{e}(\pi_2) * \alpha_2$. If we perform this successively, we get
\[
\mu = \alpha_n * \beta_n, \qquad \text{where } \, \beta_n = \mu_1 * \mu_2 * \ldots \mu_n, \qquad \mu_k = \mb{e}(\pi_k)
\]
with $\pi_k(I_k) = 0$ and $\pi_k(J_k)$ having the specified maximality property. The sequences $\{\alpha_n\}$ and $\{\beta_n\}$ are relatively compact; letting $\alpha$ and $\beta$ be limit points, we have 
\[
\mu = \alpha * \beta \qquad (\alpha, \beta \in \mathcal{P}_\mathrm{id}).
\]
Suppose, by contradiction, that $\alpha$ is not $\mathcal{L}$-Gaussian, and let $\mb{e}(\eta)$, with $\eta \neq \delta_a$, be a divisor of $\alpha$. Clearly $\eta(J_k) > 0$ for some $k$; given that each $\alpha_n$ divides $\alpha_{n-1}$, we have $\alpha_k = \mb{e}(\eta) * \nu$ ($\nu \in \mathcal{P}_\mathrm{id}$). If we let $\widetilde{\eta}$ be the restriction of $\eta$ to the interval $J_k$, then
\[
\alpha_{k-1} = \mb{e}(\pi_k + \widetilde{\eta}) * \mb{e}(\eta - \widetilde{\eta}) * \nu
\]
which is absurd (because $(\pi_k + \widetilde{\eta})(J_k) > \pi_k(J_k)$, contradicting the maximality property which defines $\pi_k$). To determine the $\mathcal{L}$-exponent of $\beta$, note that $\beta_n = \mb{e}(\varPi_n)$ is the $\mathcal{L}$-compound Poisson measure associated with $\varPi_n := \sum_{k=1}^n \pi_k$, thus $\psi_{\beta_n}(\lambda) = \int_{(a,b)} \bigl(1-w_\lambda(x)\bigr) \varPi_n(dx)$. Since $\{\varPi_n\}$ is an increasing sequence of measures and each $\mb{e}(\varPi_n)$ dividing $\mu$, there exists a $\sigma$-finite measure $\nu$ such that
\[
\psi_{\beta}(\lambda) = \lim_n \int_{(a,b)} \bigl(1-w_\lambda(x)\bigr) \varPi_n(dx) = \lim_n \int_{(a,b)} \bigl(1-w_\lambda(x)\bigr) \nu(dx) < \infty
\]
($\mu \in \mathcal{P}_\mathrm{id}$ ensures the finiteness of the integral); from the relative compactness of $\mathrm{D}(\{\mu\})$ it is possible to conclude that $\nu(J_k) < \infty$ for all $k$.

For the converse, let $\nu_n$ be the restriction of $\nu$ to the interval $J_n$ defined as above. It is verified without difficulty that the right-hand side of \eqref{eq:shypPDE_levykhin} is continuous at zero, hence by Proposition \ref{prop:shypPDE_Ltransfmeas_props}(d) $\alpha * \mb{e}(\nu_n) \warrow \mu \in \mathcal{P}[a,b)$, and $\mu \in \mathcal{P}_\mathrm{id}$ because $\mathcal{P}_\mathrm{id}$ is closed under weak convergence of measures.
\end{proof}

\subsection{Convolution semigroups and their contraction properties}

\begin{definition}
A family $\{\mu_t\}_{t\geq 0} \subset \mathcal{P}[a,b)$ is called an \emph{$\mathcal{L}$-convolution semigroup} if it satisfies the conditions
\begin{itemize}[itemsep=-1pt,topsep=3pt]
\item $\mu_s * \mu_t = \mu_{s+t}$ for all $s, t \geq 0$;
\item $\mu_0 = \delta_a$;
\item $\mu_t \warrow \delta_a$ as $t \downarrow 0$.
\end{itemize}
\end{definition}

A direct consequence of this definition is that
\begin{equation} \label{eq:shypPDE_infdivsemigr_corresp}
\{\mu_t\} \longmapsto \mu_1 \in \mathcal{P}_\mathrm{id}
\end{equation}
defines a one-to-one correspondence holds between the set of $\mathcal{L}$-convolution semigroups and the set of $\mathcal{L}$-infinitely divisible measures. Indeed, if $\{\mu_t\}$ is an $\mathcal{L}$-convolution semigroup, then it is clear that each $\mu_t$ is $\mathcal{L}$-infinitely divisible; and if $\mu \in \mathcal{P}_\mathrm{id}$ has exponent $\psi_\mu(\lambda)$, then $\widehat{\mu_t}(\lambda) = \exp(-t \, \psi_\mu(\lambda))$ defines the unique $\mathcal{L}$-convolution semigroup such that $\mu_1 = \mu$ (the proof of this is analogous to that for the classical convolution, cf.\ \cite[Theorem 29.6]{bauer1996}).

\begin{proposition} \label{prop:shypPDE_fellerLpsemigr}
Let $\{\mu_t\}$ be an $\mathcal{L}$-convolution semigroup. Then
\[
(T_t h)(x) := (\mathcal{T}^{\mu_t}h)(x) = \int_{[a,b)} h(\xi) (\delta_x * \mu_t)(d\xi)
\]
defines a strongly continuous Markovian contraction semigroup $\{T_t\}_{t \geq 0}$ on $\mathrm{C}_0[a,b)$ and on the spaces $L_p(r)$ ($1 \leq p < \infty$), i.e., the following properties hold:
\begin{enumerate}[itemsep=0pt,topsep=4pt]
\item[\textbf{(i)}] $T_t T_s = T_{t+s}$ for all $t, s \geq 0$;
\item[\textbf{(ii)}]$T_t \bigl(\mathrm{C}_0[0,\infty)\bigr) \subset \mathrm{C}_0[0,\infty)$ for all $t \geq 0$;
\item[\textbf{(ii')}] $T_t \bigl(L_p(r)\bigr) \subset L_p(r)$ for all $t \geq 0$ \, ($1 \leq p < \infty$);
\item[\textbf{(iii)}] $T_t \mathds{1} = \mathds{1}$ for all $t \geq 0$, and if $f \in \mathrm{C}_\mathrm{b}[0,\infty)$ satisfies $0 \leq h \leq 1$, then $0 \leq T_t h \leq 1$;
\item[\textbf{(iv)}] $\lim_{t \downarrow 0} \|T_t h - h\|_\infty = 0$ for each $h \in \mathrm{C}_0[0,\infty)$;
\item[\textbf{(iv')}] $\lim_{t \downarrow 0} \|T_t h - h\|_p = 0$ for each $h \in L_p(r)$ \, ($1 \leq p < \infty$).
\end{enumerate}
Moreover, $\{T_t\}$ is translation-invariant: $T_t \mathcal{T}^\nu f = \mathcal{T}^\nu T_t f$ for all $t \geq 0$ and $\nu \in \mathcal{M}_\mathbb{C}[a,b)$.
\end{proposition}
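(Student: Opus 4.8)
The plan is to obtain every assertion from the convolution calculus of Sections~\ref{sec:transl_conv}--\ref{sec:Lp_harmonic}, isolating the strong continuity as the only substantial point. The algebraic backbone is the operator identity $\mathcal{T}^\mu \mathcal{T}^\nu = \mathcal{T}^{\mu * \nu}$. I would first prove this for $h \in \mathrm{B}_\mathrm{b}[a,b)$: unfolding the definition of $\mathcal{T}^{(\cdot)}$ and using the associativity $(\delta_x * \mu) * \nu = \delta_x * (\mu * \nu)$ (Proposition~\ref{prop:shypPDE_conv_Mbanachalg}) together with Fubini's theorem yields $\mathcal{T}^\mu(\mathcal{T}^\nu h)(x) = \int h \, d\bigl(\delta_x * (\mu * \nu)\bigr) = \mathcal{T}^{\mu * \nu} h(x)$; since $\mathcal{T}^\mu$ is a bounded operator on $L_p(r)$ (Proposition~\ref{prop:shypPDEwl_gentransl_Lpcont}), the identity extends to $L_p(r)$ by density. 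Property \textbf{(i)} is then immediate from $\mu_s * \mu_t = \mu_{s+t}$, and the translation-invariance $T_t \mathcal{T}^\nu = \mathcal{T}^\nu T_t$ follows from the same identity combined with commutativity $\mu_t * \nu = \nu * \mu_t$ (Proposition~\ref{prop:shypPDE_conv_Mbanachalg}).

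The remaining structural properties are read off directly. Each $\mu_t \in \mathcal{P}[a,b)$ has $\|\mu_t\| = 1$, so the bound $\|\mathcal{T}^\mu h\|_\infty \le \|\mu\| \ccdot \|h\|_\infty$ and Proposition~\ref{prop:shypPDEwl_gentransl_Lpcont} show that $T_t$ is a contraction on $\mathrm{C}_\mathrm{b}[a,b)$ and on each $L_p(r)$, which gives \textbf{(ii$'$)}; property \textbf{(ii)} is exactly Corollary~\ref{cor:shypPDEconv_basicprops}(e). For \textbf{(iii)}, note that $\delta_x * \mu_t$ is a probability measure by Corollary~\ref{cor:shypPDEconv_basicprops}(b): its positivity gives $h \ge 0 \Rightarrow T_t h \ge 0$, its total mass $1$ gives $T_t \mathds{1} = \mathds{1}$, and together these yield $0 \le h \le 1 \Rightarrow 0 \le T_t h \le 1$.

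For the strong continuity on $L_p(r)$ (property \textbf{(iv$'$)}) I would argue by density: since the $T_t$ are contractions, it suffices to prove $\|T_t h - h\|_p \to 0$ for $h$ in the dense subspace $\mathrm{C}_\mathrm{c}[a,b) \subset L_p(r)$. Fix such an $h$. As $h \in \mathrm{C}_\mathrm{b}[a,b)$ and $\delta_x * \mu_t \warrow \delta_x$ when $t \downarrow 0$ (by weak continuity of the convolution, Corollary~\ref{cor:shypPDEconv_basicprops}(c), using $\mu_t \warrow \delta_a$ and that $\delta_a$ is the identity), we get $T_t h(x) = \int h \, d(\delta_x * \mu_t) \to h(x)$ for every $x$. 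Fatou's lemma then gives $\|h\|_p \le \liminf_{t \downarrow 0} \|T_t h\|_p$, while the contraction property gives $\limsup_{t \downarrow 0} \|T_t h\|_p \le \|h\|_p$; hence $\|T_t h\|_p \to \|h\|_p$. Pointwise convergence together with convergence of the norms yields $\|T_t h - h\|_p \to 0$ by the Riesz--Scheffé lemma (valid for $1 \le p < \infty$), completing \textbf{(iv$'$)}.

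The genuinely delicate point — and the one I expect to be the main obstacle — is the uniform strong continuity \textbf{(iv)}, where the Fatou/Riesz--Scheffé device is unavailable. Reducing again by contraction to the dense subclass $\mathrm{C}_{\mathrm{c},0}^4 \subset \mathrm{C}_0[a,b)$, the spectral representation \eqref{eq:shypPDE_gentranslmu_spectrep} together with $\widehat{\mu_t}(\lambda) = e^{-t\psi_{\mu_1}(\lambda)} \in (0,1]$ and $|w_\lambda(x)| \le 1$ (Lemma~\ref{lem:shypPDE_wsolbound}) give the estimate, uniform in $x$,
\[
\|T_t h - h\|_\infty \;\le\; \int_{[0,\infty)} \bigl|(\mathcal{F}h)(\lambda)\bigr| \, \bigl(1 - e^{-t\psi_{\mu_1}(\lambda)}\bigr) \, \bm{\rho}_{\mathcal{L}}(d\lambda),
\]
whose integrand decreases to $0$ pointwise as $t \downarrow 0$. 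The difficulty is the integrability $\mathcal{F}h \in L_1\bigl([0,\infty); \bm{\rho}_{\mathcal{L}}\bigr)$ required for dominated convergence, since Lemma~\ref{lem:shypPDE_Ltransf_D2prop} only supplies $\int |(\mathcal{F}h)(\lambda)| \, |w_\lambda(x)| \, \bm{\rho}_{\mathcal{L}}(d\lambda) < \infty$ for each fixed $x$. I would close this gap by exploiting the smoothness of $h$: for $h \in \mathrm{C}_{\mathrm{c},0}^4$ one has $\lambda^2 (\mathcal{F}h)(\lambda) \in L_2(\bm{\rho}_{\mathcal{L}})$ by iterating \eqref{eq:shypPDE_Ltransfidentity}, so that $\mathcal{F}h \in L_1(\bm{\rho}_{\mathcal{L}})$ follows by Cauchy--Schwarz once $(1+\lambda^2)^{-1} \in L_2(\bm{\rho}_{\mathcal{L}})$, i.e. once a mild polynomial-growth bound on the spectral measure is known. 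Granting this integrability, dominated convergence finishes the dense case, and the density argument then yields \textbf{(iv)} on all of $\mathrm{C}_0[a,b)$.
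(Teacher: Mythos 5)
Most of your argument is sound and close to the paper's: the semigroup law and translation invariance via $\mathcal{T}^\mu\mathcal{T}^\nu=\mathcal{T}^{\mu*\nu}$ (you derive it from Fubini and associativity of $*$, the paper from the spectral identity \eqref{eq:shypPDE_gentranslmu_spectrep_F}; both are fine), and (ii), (ii'), (iii) read off from Corollary \ref{cor:shypPDEconv_basicprops} and Proposition \ref{prop:shypPDEwl_gentransl_Lpcont}. Your proof of (iv') — pointwise convergence from $\delta_x*\mu_t\warrow\delta_x$, Fatou plus the contraction bound to get convergence of norms, then Riesz--Scheff\'e — is a correct and genuinely different route: the paper instead deduces (iv') from (iv) by approximating with compactly supported smooth functions, and your version avoids that reduction altogether.

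The gap is in (iv). Your dominated-convergence argument requires $\mathcal{F}h\in L_1\bigl([0,\infty);\bm{\rho}_\mathcal{L}\bigr)$, which you propose to obtain from $\lambda^2(\mathcal{F}h)(\lambda)\in L_2(\bm{\rho}_\mathcal{L})$ together with the unproved hypothesis $(1+\lambda^2)^{-1}\in L_2(\bm{\rho}_\mathcal{L})$. No such growth bound on the spectral measure is available in the paper, and in the stated generality it is false for the smoothness class you use: for the Bessel operator of Example \ref{exam:hankelkingman} one has $\bm{\rho}_\mathcal{L}(d\lambda)\asymp\lambda^{\alpha}\,d\lambda$, so $(1+\lambda^2)^{-1}\in L_2(\bm{\rho}_\mathcal{L})$ fails for $\alpha\geq 3$, while membership of $\mathrm{C}_{\mathrm{c},0}^4$ only controls two powers of $\lambda$; as $\alpha$ grows you would need ever more derivatives of $h$. (Lemma \ref{lem:shypPDE_Ltransf_D2prop} does not help: it gives $\int|(\mathcal{F}h)(\lambda)\,w_\lambda(x)|\,\bm{\rho}_\mathcal{L}(d\lambda)<\infty$ only locally uniformly on $(a,b)$, and it is precisely the decay of $w_\lambda(x)$ in $\lambda$ for fixed $x>a$ that makes this finite; the estimate does not survive the passage to $x=a$, where $w_\lambda(a)=1$.) The paper sidesteps the spectral measure entirely: it proves $(T_th)(x)\to h(x)$ pointwise for \emph{every} $h\in\mathrm{C}_0[a,b)$ (a fact you already have) and then invokes the standard result that a positivity-preserving contraction semigroup mapping $\mathrm{C}_0$ into $\mathrm{C}_0$ and converging pointwise is automatically strongly continuous, citing \cite[Lemma 1.4]{bottcher2013}. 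Substituting that argument for your spectral estimate closes the gap. A secondary unproved step in your version is the density of $\mathrm{C}_{\mathrm{c},0}^4$ in $(\mathrm{C}_0[a,b),\|\cdot\|_\infty)$, which your reduction for (iv) also relies on but which is established nowhere in the paper.
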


\begin{proof}
Parts (ii), (ii') and (iii) follow at once from Corollary \ref{cor:shypPDEconv_basicprops} and Proposition \ref{prop:shypPDEwl_gentransl_Lpcont}. Concerning part (i) and the translation invariance property, notice that by \eqref{eq:shypPDE_gentranslmu_spectrep_F} we have
\[
\mathcal{F}(\mathcal{T}^\mu (\mathcal{T}^\nu h)) = \widehat{\mu} \cdot \mathcal{F}\mathcal({T}^\nu h) = \widehat{\mu} \cdot \widehat{\nu} \cdot \mathcal{F}h = \widehat{\mu* \nu} \cdot \mathcal{F}h = \mathcal{F}(\mathcal{T}^{\mu*\nu} h) \qquad (h \in \mathrm{C}_{\mathrm{c},0}^4)
\]
so that $\mathcal{T}^\mu (\mathcal{T}^\nu h) = \mathcal{T}^{\mu*\nu} h$ first for $h \in \mathrm{C}_{\mathrm{c},0}^4$ and then, by continuity, for $h \in \mathrm{C}_0[a,b)$ and $h \in L_p(r)$ ($1 \leq p < \infty$).

To prove part (iv) we just need to show that $\lim_{t \downarrow 0} (T_t h)(x) = h(x)$ for all $h \in \mathrm{C}_0[a,b)$ and $x \in [a,b)$, because it is well-known from the theory of Feller semigroups that for a semigroup satisfying (ii) and (iii) this weak continuity property implies the strong continuity of the semigroup (see e.g.\ \cite[Lemma 1.4]{bottcher2013}). But for $h \in \mathrm{C}_0[a,b)$ and $x \in [a,b)$ we clearly have
\[
\lim_{t \downarrow 0} \bigl((T_t h)(x) - h(x)\bigr) = \lim_{t \downarrow 0} \int_{[a,b)\!} \bigl((\mathcal{T}^y h)(x) - h(x)\bigr) \mu_t(dy) = \int_{[a,b)} \bigl((\mathcal{T}^y h)(x) - h(x)\bigr) \delta_a(dy) = 0
\]
showing that (iv) holds.

For part (iv'), let $h \in L_p(r)$, $\eps > 0$ and choose $g \in \mathrm{C}_\mathrm{c}^\infty(a,b)$ such that $\|h - g\|_p \leq \eps$. Then it follows from \eqref{eq:shypPDEwl_gentransl_Lpcont} and part (iv) that
\begin{align*}
\limsup_{t \downarrow 0} \|T_t h - h\|_p & \leq \limsup_{t \downarrow 0} \Bigl(\|T_t h - T_t g\|_p + \|h - g\|_p + \|T_t g - g\|_p\Bigr) \\
& \leq 2\eps + C \ccdot \limsup_{t \downarrow 0} \|T_t g - g\|_\infty \\
& = \, 2\eps
\end{align*}
where $C = [\int_{\supp(g)} r(x) dx]^{1/p}$\, ($C < \infty$ because the support $\supp(g) \subset (a,b)$ is compact). Since $\eps$ is arbitrary, (iv') holds.
\end{proof}

The result for the space $\mathrm{C}_0[a,b)$ means that $\{T_t\}$ is an $\mathcal{L}$-translation-invariant conservative Feller semigroup. This semigroup is also symmetric with to the measure $r(x)dx$, that is, $\int_a^b (T_t h)(x) g(x) r(x) dx = \int_a^b h(x) (T_t g)(x) r(x) dx$ for $h,g \in \mathrm{C}_\mathrm{c}[a,b)$. Any such symmetric Feller semigroup extends to a strongly continuous Markovian contraction semigroup $\{T_t^{(p)}\}_{t \geq 0}$ on $L_p(r)$, $1 \leq p < \infty$ \cite[Lemma 1.45]{bottcher2013}. However, the conclusion of Proposition \ref{prop:shypPDE_fellerLpsemigr} is stronger: it also states that the integral with respect to the Feller transition function is well-defined for all $h \in \cup_{1 \leq p < \infty} L_p(r)$ and, accordingly, the extensions $T_t^{(p)}$ are also given by $h \mapsto (\mathcal{T}^{\mu_t} h)(x) = \int_{[a,b)} h(\xi) (\delta_x * \mu_t)(d\xi)$.

On the Hilbert space $L_2(r)$, we can take advantage of the $\mathcal{L}$-transform to obtain a characterization of the generator of the $L_2$-Markovian semigroup $T_t^{(2)} \equiv T_t: L_2(r) \longrightarrow L_2(r)$:

\begin{proposition} 
Let $\{\mu_t\}$ be an $\mathcal{L}$-convolution semigroup with exponent $\psi$. Then the infinitesimal generator $(\mathcal{A}^{(2)}, \mathcal{D}_{\!\mathcal{A}^{(2)}})$ of the $L_2$-Markovian semigroup $\{T_t^{(2)}\}$ is given by
\[
\mathcal{F}(\mathcal{A}^{(2)} h) = -\psi \ccdot (\mathcal{F}h), \qquad h \in \mathcal{D}_{\!\mathcal{A}^{(2)}}
\]
where
\[
\mathcal{D}_{\!\mathcal{A}^{(2)}} = \biggl\{ h \in L_2(r) \biggm| \int_{[0,\infty)} \bigl|\psi(\lambda)\bigr|^2 \bigl|(\mathcal{F}h)(\lambda)\bigr|^2 \bm{\rho}_{\mathcal{L}}(d\lambda) < \infty \biggr\}.
\]
\end{proposition}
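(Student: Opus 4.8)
The plan is to diagonalize the semigroup through the $\mathcal{L}$-transform, thereby reducing the assertion to the elementary computation of the generator of a multiplication semigroup. Recall from Proposition \ref{prop:shypPDE_Ltransf} that $\mathcal{F}$ is a unitary isomorphism of $L_2(r)$ onto $L_2([0,\infty);\bm{\rho}_{\mathcal{L}})$; hence, setting $\widetilde{T}_t := \mathcal{F}\,T_t^{(2)}\,\mathcal{F}^{-1}$, the family $\{\widetilde{T}_t\}$ is again a strongly continuous contraction semigroup on $L_2([0,\infty);\bm{\rho}_{\mathcal{L}})$ (strong continuity and contractivity being inherited from $\{T_t^{(2)}\}$ via Proposition \ref{prop:shypPDE_fellerLpsemigr}), and its generator $\widetilde{\mathcal{A}}$ satisfies $\mathcal{A}^{(2)} = \mathcal{F}^{-1}\widetilde{\mathcal{A}}\,\mathcal{F}$ with $\mathcal{D}_{\mathcal{A}^{(2)}} = \mathcal{F}^{-1}\bigl(\mathcal{D}(\widetilde{\mathcal{A}})\bigr)$. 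The first step is to identify $\widetilde{T}_t$ as multiplication by the bounded function $\widehat{\mu_t}(\lambda) = e^{-t\psi(\lambda)}$: for $h \in \mathrm{C}_{\mathrm{c},0}^4$ this is exactly \eqref{eq:shypPDE_gentranslmu_spectrep_F} combined with $T_t^{(2)} = \mathcal{T}^{\mu_t}$, and since $\mathrm{C}_{\mathrm{c},0}^4$ is dense in $L_2(r)$ while both $T_t^{(2)}$ (a contraction on $L_2(r)$ by Proposition \ref{prop:shypPDEwl_gentransl_Lpcont}) and multiplication by $e^{-t\psi}$ (a contraction on $L_2(\bm{\rho}_{\mathcal{L}})$, because $0 \leq e^{-t\psi} \leq 1$ as $\psi \geq 0$) are bounded, the identity $\mathcal{F}(T_t^{(2)}h) = e^{-t\psi}\cdot\mathcal{F}h$ extends by continuity to all $h \in L_2(r)$.

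Next I would compute $\widetilde{\mathcal{A}}$ directly. For the inclusion $D := \{\varphi \in L_2(\bm{\rho}_{\mathcal{L}}) : \psi\varphi \in L_2(\bm{\rho}_{\mathcal{L}})\} \subseteq \mathcal{D}(\widetilde{\mathcal{A}})$, I would invoke the elementary bound $|e^{-s}-1| \leq s$ for $s \geq 0$, which gives $\bigl|\tfrac{e^{-t\psi}-1}{t}+\psi\bigr|^2|\varphi|^2 \leq 4\psi^2|\varphi|^2 \in L_1(\bm{\rho}_{\mathcal{L}})$; since the integrand tends to zero pointwise as $t \downarrow 0$, dominated convergence yields $\tfrac{1}{t}(\widetilde{T}_t\varphi - \varphi) \to -\psi\varphi$ in $L_2(\bm{\rho}_{\mathcal{L}})$, so $\varphi \in \mathcal{D}(\widetilde{\mathcal{A}})$ and $\widetilde{\mathcal{A}}\varphi = -\psi\varphi$. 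For the reverse inclusion and the exact determination of the domain, I would pass to the resolvent: since $\int_0^\infty e^{-\eta t}e^{-t\psi}\,dt = (\eta+\psi)^{-1}$ for $\eta > 0$, the resolvent $(\eta - \widetilde{\mathcal{A}})^{-1} = \int_0^\infty e^{-\eta t}\widetilde{T}_t\,dt$ acts as multiplication by the bounded function $(\eta+\psi)^{-1}$, and its range is precisely $\mathcal{D}(\widetilde{\mathcal{A}})$. Writing an arbitrary $\varphi \in \mathcal{D}(\widetilde{\mathcal{A}})$ as $\varphi = (\eta+\psi)^{-1}g$ with $g \in L_2(\bm{\rho}_{\mathcal{L}})$, the boundedness of $\psi(\eta+\psi)^{-1}$ gives $\psi\varphi = \psi(\eta+\psi)^{-1}g \in L_2(\bm{\rho}_{\mathcal{L}})$, proving $\mathcal{D}(\widetilde{\mathcal{A}}) = D$.

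Transferring back through the unitary $\mathcal{F}$, the condition $\mathcal{F}h \in D$ reads $\int_{[0,\infty)}|\psi(\lambda)|^2|(\mathcal{F}h)(\lambda)|^2\,\bm{\rho}_{\mathcal{L}}(d\lambda) < \infty$, which is exactly the stated description of $\mathcal{D}_{\mathcal{A}^{(2)}}$, while the action $\widetilde{\mathcal{A}}(\mathcal{F}h) = -\psi\cdot\mathcal{F}h$ becomes $\mathcal{F}(\mathcal{A}^{(2)}h) = -\psi\cdot(\mathcal{F}h)$, completing the proof. The only genuinely delicate points are the density-plus-continuity argument establishing $\mathcal{F}(T_t^{(2)}h) = e^{-t\psi}\mathcal{F}h$ on all of $L_2(r)$ (extending \eqref{eq:shypPDE_gentranslmu_spectrep_F} beyond $\mathrm{C}_{\mathrm{c},0}^4$) and the resolvent identification of $\mathcal{D}(\widetilde{\mathcal{A}})$; once the unitary conjugation has reduced the semigroup to a multiplication operator, everything else is routine.
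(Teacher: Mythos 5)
Your proof is correct, and its overall strategy --- conjugating $\{T_t^{(2)}\}$ by the unitary $\mathcal{F}$ so that it becomes the multiplication semigroup $e^{-t\psi}$ on $L_2([0,\infty);\bm{\rho}_{\mathcal{L}})$, then computing the generator of that multiplication semigroup --- is the same as the paper's, which follows Berg--Forst. The inclusion $\{h : \psi\cdot\mathcal{F}h \in L_2(\bm{\rho}_{\mathcal{L}})\} \subseteq \mathcal{D}_{\!\mathcal{A}^{(2)}}$ is handled identically in both arguments (dominated convergence via $|e^{-s}-1|\leq s$, then transfer back through the isometry). Where you genuinely diverge is the reverse inclusion: the paper takes $h \in \mathcal{D}_{\!\mathcal{A}^{(2)}}$, notes that $\tfrac{1}{t}(e^{-t\psi}-1)\cdot\mathcal{F}h$ converges in $L_2(\bm{\rho}_{\mathcal{L}})$, and extracts a sequence $t_n \downarrow 0$ along which the convergence holds $\bm{\rho}_{\mathcal{L}}$-a.e., which pins the limit down as $-\psi\cdot\mathcal{F}h$ and simultaneously shows it lies in $L_2(\bm{\rho}_{\mathcal{L}})$; you instead identify the resolvent $(\eta-\widetilde{\mathcal{A}})^{-1}$ as multiplication by $(\eta+\psi)^{-1}$ and read off the domain from its range, using the boundedness of $\psi(\eta+\psi)^{-1}$. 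Both devices are standard and correct; the a.e.-subsequence argument is shorter, while your resolvent route is slightly more robust (it does not require choosing a subsequence and characterizes the domain as the range of a concrete bounded operator). You are also more careful than the paper in one respect: you explicitly justify the extension of the identity $\mathcal{F}(T_t h) = e^{-t\psi}\cdot\mathcal{F}h$ from $\mathrm{C}_{\mathrm{c},0}^4$ to all of $L_2(r)$ by density and boundedness, a step the paper leaves implicit.
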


\begin{proof}
We give a proof which follows closely that of the corresponding result for the ordinary convolution, as given in \cite[Theorem 12.16]{bergforst1975}.

Let $h \in \mathcal{D}_{\!\mathcal{A}^{(2)}}$, so that $L_2\text{-\!}\lim_{t \downarrow 0}{1 \over t} (T_t h - h) = \mathcal{A}^{(2)}h \in L_2(\mathrm{m})$. Recalling that (by \eqref{eq:shypPDE_gentranslmu_spectrep_F}) $\mathcal{F}(T_t h) = \widehat{\mu_t} \ccdot (\mathcal{F}h) = e^{-t\psi} \ccdot (\mathcal{F}h)$ for all $h \in L_2(r)$, we see that  
\[
L_2\text{-\!}\lim_{t \downarrow 0}{1 \over t} \bigl(e^{-t\,\psi} - 1\bigr) \ccdot (\mathcal{F}h) = \mathcal{F}(\mathcal{A}^{(2)}h)
\]
The convergence holds almost everywhere along a sequence $\{t_n\}_{n \in \mathbb{N}}$ such that $t_n \to 0$, so we conclude that $\mathcal{F}(\mathcal{A}^{(2)}h) = -\psi \cdot (\mathcal{F}h) \in L_2(\mathbb{R}; \bm{\rho}_\mathcal{L})$.

Conversely, if we let $h \in L_2(r)$ with $-\psi \ccdot (\mathcal{F}h) \in L_2(\mathbb{R}; \bm{\rho}_\mathcal{L})$, then we have
\[
L_2\text{-\!}\lim_{t \downarrow 0} {1 \over t}\bigl(\mathcal{F}({T_t h}) - \mathcal{F}h \bigr) = -\psi \ccdot (\mathcal{F}h) \in L_2(\mathbb{R}; \bm{\rho}_\mathcal{L})
\]
and the isometry gives that $L_2\text{-\!}\lim_{t \downarrow 0} {1 \over t}\bigl(T_t h - h \bigr) \in L_2(\mathrm{m})$, meaning that $h \in \mathcal{D}_{\!\mathcal{A}^{(2)}}$.
\end{proof}

\subsection{Additive and Lévy processes}

\begin{definition}
An $[a,b)$-valued Markov chain $\{S_n\}_{n \in \mathbb{N}_0}$ is said to be \emph{$\mathcal{L}$-additive} if there exist measures $\mu_n \in \mathcal{P}[a,b)$ such that
\begin{equation} \label{eq:shypPDE_markovadditive_def}
P[S_n \in B | S_{n-1} = x] = (\mu_n * \delta_x)(B), \qquad\;\; n \in \mathbb{N}, \: a \leq x < b, \: B \text{ a Borel subset of } [a,b).
\end{equation}
If $\mu_n = \mu$ for all $n$, then $\{S_n\}$ is said to be an \emph{$\mathcal{L}$-random walk}. 
\end{definition}

An explicit construction can be given for $\mathcal{L}$-additive Markov chains, based on the following lemma:

\begin{lemma}
There exists a Borel measurable $\Phi:[a,b) \times [a,b) \times [0,1] \longrightarrow [a,b)$ such that
\[
(\delta_x * \delta_y)(B) = \mathfrak{m}\{\Phi(x,y,\cdot) \in B\}, \qquad x, y \in [a,b), \; B \text{ a Borel subset of } [a,b)
\]
where $\mathfrak{m}$ denotes Lebesgue measure on $[0,1]$.
\end{lemma}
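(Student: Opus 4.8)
The plan is to realize each measure $\bm{\nu}_{x,y} = \delta_x * \delta_y$ by the classical inverse-transform (quantile) construction, and then to check that the resulting map is jointly measurable. Recall from Corollary \ref{cor:shypPDEconv_basicprops}(b) that $\bm{\nu}_{x,y} \in \mathcal{P}[a,b)$, so its cumulative distribution function
\[
F_{x,y}(t) := \bm{\nu}_{x,y}\bigl([a,t]\bigr), \qquad t \in [a,b),
\]
is nondecreasing and right-continuous in $t$, with $\lim_{t \uparrow b} F_{x,y}(t) = 1$. I would then set
\[
\Phi(x,y,u) := \inf\bigl\{ t \in [a,b) \bigm| F_{x,y}(t) \geq u \bigr\} \quad (u \in [0,1)), \qquad \Phi(x,y,1) := a,
\]
the value at $u=1$ being irrelevant since $\{1\}$ is $\mathfrak{m}$-null.

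First I would establish the joint Borel measurability of the distribution functions, namely that $(x,y) \mapsto F_{x,y}(t)$ is Borel measurable for each fixed $t$. This is where the continuity of the convolution enters: by (the proof of) Corollary \ref{cor:shypPDEconv_basicprops}(c), the map $(x,y) \mapsto \delta_x * \delta_y$ is weakly continuous, so $(x,y) \mapsto \int_{[a,b)} g \, d\bm{\nu}_{x,y}$ is continuous for every $g \in \mathrm{C}_\mathrm{b}[a,b)$. Choosing continuous functions $g_n$ with $\mathds{1}_{[a,t]} \leq g_n \leq 1$ and $g_n \downarrow \mathds{1}_{[a,t]}$ pointwise (e.g.\ $g_n$ equal to $1$ on $[a,t]$, zero on $[t+{1 \over n},b)$ and linear in between), dominated convergence gives $\int g_n \, d\bm{\nu}_{x,y} \to F_{x,y}(t)$ pointwise in $(x,y)$. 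Each $(x,y) \mapsto \int g_n \, d\bm{\nu}_{x,y}$ is continuous, hence $(x,y) \mapsto F_{x,y}(t)$, being a pointwise limit of continuous functions, is Borel measurable.

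With this in hand, the joint measurability of $\Phi$ follows from the standard identity for generalized inverses: since $F_{x,y}(\cdot)$ is right-continuous and nondecreasing, one has $\Phi(x,y,u) \leq s \iff u \leq F_{x,y}(s)$ for $u \in (0,1)$, so
\[
\bigl\{(x,y,u) \bigm| \Phi(x,y,u) \leq s\bigr\} = \bigl\{(x,y,u) \bigm| u \leq F_{x,y}(s)\bigr\}
\]
up to the $\mathfrak{m}$-null sets $\{u=0\}$ and $\{u=1\}$. The right-hand side is Borel because $(x,y,u) \mapsto u - F_{x,y}(s)$ is Borel measurable by the previous step; hence $\Phi$ is Borel. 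Finally, the pushforward identity is the classical inverse-transform sampling fact: for fixed $(x,y)$,
\[
\mathfrak{m}\{u : \Phi(x,y,u) \leq s\} = \mathfrak{m}\{u : u \leq F_{x,y}(s)\} = F_{x,y}(s) = \bm{\nu}_{x,y}\bigl([a,s]\bigr),
\]
and since the sets $[a,s]$ form a generating $\pi$-system for the Borel $\sigma$-algebra of $[a,b)$, the two probability measures $\Phi(x,y,\cdot)_*\mathfrak{m}$ and $\bm{\nu}_{x,y}$ coincide, giving $\mathfrak{m}\{\Phi(x,y,\cdot) \in B\} = (\delta_x * \delta_y)(B)$ for all Borel $B$.

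The only genuine subtlety — and the step I would treat most carefully — is the joint measurability of $(x,y) \mapsto F_{x,y}(t)$; everything else is routine once the weak continuity of $(x,y) \mapsto \delta_x * \delta_y$ is invoked. I would also flag the harmless convention at $u = 1$ (where the infimum may equal $b \notin [a,b)$ when $\bm{\nu}_{x,y}$ places no mass arbitrarily close to $b$), which does not affect the pushforward since $\{1\}$ has Lebesgue measure zero.
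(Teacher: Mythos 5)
Your proof is correct and follows essentially the same route as the paper: the paper also defines $\Phi$ as the generalized inverse of the distribution function, via $\Phi(x,y,\xi) = \max\bigl(a, \sup\{z : (\delta_x*\delta_y)[a,z] < \xi\}\bigr)$, which coincides with your $\inf\{t : F_{x,y}(t)\ge\xi\}$, and likewise uses the weak continuity of $(x,y)\mapsto\delta_x*\delta_y$ for measurability (deferring the details to Bloom--Heyer, Theorem 7.1.3). Your explicit verification of the joint measurability of $(x,y)\mapsto F_{x,y}(t)$ by monotone approximation with continuous functions is a correct filling-in of the step the paper cites.
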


\begin{proof}
Let $\Phi(x,y,\xi) = \max\bigl( a, \sup\{ z \in [a,b): (\delta_x * \delta_y)[a,z] < \xi \} \bigr)$. Using the continuity of the $\mathcal{L}$-convolution, one can show that $\Phi$ is Borel measurable, see \cite[Theorem 7.1.3]{bloomheyer1994}. It is straightforward that $\mathfrak{m}\{\Phi(x,y,\cdot) \in [a,c]\} = \mathfrak{m}\{ (\delta_x * \delta_y)[a,c] \geq \xi \} = (\delta_x * \delta_y)[a,c]$.
\end{proof}

Let $X_1$, $U_1$, $X_2$, $U_2$, $\ldots$ be a sequence of independent random variables (on a given probability space $(\Omega, \mathfrak{A}, \bm{\pi})$) where the $X_n$ have distribution $P_{X_n} = \mu_n \in \mathcal{P}[a,b)$ and each of the (auxiliary) random variables $U_n$ has the uniform distribution on $[0,1]$. Set 
\begin{equation} \label{eq:shypPDE_markovadditive_constr}
S_0 = 0, \qquad S_n = S_{n-1} \oplus_{U_n} X_n
\end{equation}
where $X \oplus_U Y := \Phi(X,Y,U)$. Then we have $P_{S_n} = P_{S_{n-1}} * \mu_n$ ($n \in \mathbb{N}_0$) and, consequently, $\{S_n\}_{n \in \mathbb{N}_0}$ is an $\mathcal{L}$-additive Markov chain satisfying \eqref{eq:shypPDE_markovadditive_def}. The identity $P_{S_n} = P_{S_{n-1}} * \mu_n$ is easily checked:
\begin{align*}
P_{S_n}(B) = P\bigl[ \Phi(S_{n-1},X_n,U_n) \in B \bigr] & = \int_{[a,b)} \! \int_{[a,b)} \mathfrak{m}\{\Phi(x,y,\cdot) \in B\} P_{S_{n-1}}(dx) P_{X_n}(dy) \\
& = \int_{[a,b)} \! \int_{[a,b)} (\delta_x * \delta_y)(B) P_{S_{n-1}}(dx) P_{X_n}(dy) \\
& = (P_{S_{n-1}} * \mu_n)(B).
\end{align*}

We now define the continuous-time analogue of $\mathcal{L}$-random walks:

\begin{definition}
An $[a,b)$-valued Markov process $Y = \{Y_t\}_{t \geq 0}$ is said to be an \emph{$\mathcal{L}$-Lévy process} if there exists an $\mathcal{L}$-convolution semigroup $\{\mu_t\}_{t\geq 0}$ such that the transition probabilities of $Y$ are given by
\[
P\bigl[Y_t \in B | Y_s = x\bigr] = (\mu_{t-s} * \delta_x)(B), \qquad 0 \leq s \leq t,\; a \leq x < b,\; B \text{ a Borel subset of } [a,b).
\]
\end{definition}

The notion of an $\mathcal{L}$-Lévy process coincides with that of a Feller process associated with the Feller semigroup $T_t f = \mathcal{T}^{\mu_{t\!}} f$. Consequently, the general connection between Feller semigroups and Feller processes (see e.g.\ \cite[Section 1.2]{bottcher2013}) ensures that for each (initial) distribution $\nu \in \mathcal{P}[a,b)$ and $\mathcal{L}$-convolution semigroup $\{\mu_t\}_{t\geq 0}$ there exists an $\mathcal{L}$-Lévy process $Y$ associated with $\{\mu_t\}_{t \geq 0}$ and such that $P_{Y_0} = \nu$. Any $\mathcal{L}$-Lévy process has the following properties:
\begin{itemize}[itemsep=-1pt,topsep=3pt] \item It is stochastically continuous: $Y_s \to Y_t$ in probability as $s \to t$, for each $t \geq 0$;
\item It has a càdlàg modification: there exists an $\mathcal{L}$-Lévy process $\{\widetilde{Y}_t\}$ with a.s.\ right-continuous paths and satisfying $P\bigl[Y_t = \widetilde{Y}_t \bigr] = 1$ for all $t \geq 0$.
\end{itemize}
(These properties hold for all Feller processes, cf.\ \cite[Section 1.2]{bottcher2013}.)

An analogue of the well-known theorem on appoximation of Lévy processes by triangular arrays holds for $\mathcal{L}$-Lévy processes (below the notation $\darrow$ stands for convergence in distribution):

\begin{proposition}
Let $X$ be an $[a,b)$-valued random variable. The following assertions are equivalent:
\begin{enumerate}[itemsep=0pt,topsep=4pt]
\item[\textbf{(i)}] $X = Y_1$ for some $\mathcal{L}$-Lévy process $Y = \{Y_t\}_{t \geq 0}$.

\item[\textbf{(ii)}] The distribution of $X$ is $\mathcal{L}$-infinitely divisible;

\item[\textbf{(iii)}] $S_{m_n}^n \darrow X$ for some sequence of $\mathcal{L}$-random walks $S^1, S^2,\, \ldots$ (with $S_0^j = a$) and some integers $m_n \to \infty$.
\end{enumerate}
\end{proposition}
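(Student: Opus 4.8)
The plan is to prove the cycle (i)$\Rightarrow$(ii)$\Rightarrow$(i) and (ii)$\Rightarrow$(iii)$\Rightarrow$(ii), writing $\mu := P_X$ for the law of $X$ and reading (i) as referring to a process started at $a$ (that is, $Y_0 = a$ almost surely), which is the natural normalization here because $\delta_a$ is the identity of the $\mathcal{L}$-convolution. Three of these four implications are immediate. For (i)$\Rightarrow$(ii): if $P_X = P_{Y_1}$ for an $\mathcal{L}$-Lévy process with $Y_0 = a$ and associated semigroup $\{\mu_t\}$, then $\mu = \mu_1 * \delta_a = \mu_1$, and $\mu_1 = \mu_{1/n}^{*n}$ for every $n$, so $\mu \in \mathcal{P}_\mathrm{id}$ by the correspondence \eqref{eq:shypPDE_infdivsemigr_corresp}. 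For (ii)$\Rightarrow$(i): given $\mu \in \mathcal{P}_\mathrm{id}$, the correspondence \eqref{eq:shypPDE_infdivsemigr_corresp} provides the semigroup $\{\mu_t\}$ with $\mu_1 = \mu$, and the existence statement for $\mathcal{L}$-Lévy processes recorded after their definition (with initial law $\delta_a$) yields a process $Y$ with $Y_0 = a$ and $P_{Y_1} = \mu$. For (ii)$\Rightarrow$(iii): with the same semigroup, let $S^n$ be the $\mathcal{L}$-random walk with step law $\mu_{1/n}$ and $S^n_0 = a$; then $P_{S^n_m} = \mu_{1/n}^{*m} = \mu_{m/n}$, so $m_n = n$ gives $P_{S^n_n} = \mu$ and $S^n_{m_n} \darrow X$ trivially.

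The substance of the proposition lies in (iii)$\Rightarrow$(ii). Denote by $\nu_n \in \mathcal{P}[a,b)$ the step law of $S^n$ and put $k_n := m_n \to \infty$, so that the hypothesis reads $\nu_n^{*k_n} \warrow \mu$; writing $\phi_n := \widehat{\nu_n}$ and $\phi := \widehat{\mu}$, Proposition \ref{prop:shypPDE_Ltransfmeas_props}(c) gives $\phi_n^{k_n} \to \phi$ uniformly on compacts. As $\phi$ is continuous with $\phi(0) = 1$, fix $\Lambda > 0$ with $\phi > 0$ on $[0,\Lambda]$. On $[0,\Lambda]$ a root-extraction argument (using $\phi_n \in [-1,1]$ and $k_n \to \infty$) forces $\phi_n \to 1$ and, after taking logarithms, $k_n \log\phi_n \to \log\phi$. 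I would next show $\nu_n \warrow \delta_a$: the tightness estimate from the proof of Proposition \ref{prop:shypPDE_Ltransfmeas_props}(d), namely $\nu_n[\beta,b) \le \tfrac1\delta \int_0^{2\delta}(1-\phi_n)\,d\lambda$, combined with $\phi_n \to 1$ on $[0,\Lambda]$ yields tightness, while any subsequential weak limit $\nu$ satisfies $\widehat{\nu} \equiv 1$ on $[0,\Lambda]$; since $\lambda \mapsto w_\lambda(x)$ is entire and, for $x > a$, nonconstant (its $\lambda$-derivative at $0$ equals $-\eta_1(x) < 0$, with $\eta_1$ as in \eqref{eq:shypPDE_wsol_powseries_eta}), the relation $\int_{[a,b)}(1 - w_\lambda)\,d\nu = 0$ for $\lambda \in (0,\Lambda]$ forces $\nu = \delta_a$. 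Hence $\nu_n \warrow \delta_a$ and, by Proposition \ref{prop:shypPDE_Ltransfmeas_props}(c), $\phi_n \to 1$ uniformly on compacts.

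To produce convolution roots, fix $p \in \mathbb{N}$ and write $k_n = p\,q_n + r_n$ with $0 \le r_n < p$ and $q_n \to \infty$. The bounded factor obeys $\nu_n^{*r_n} \warrow \delta_a$, since $\phi_n^{r_n} \to 1$ uniformly on compacts (Proposition \ref{prop:shypPDE_Ltransfmeas_props}(d)). For the main factor, $\phi_n^{q_n} = \exp(\tfrac{q_n}{k_n}\, k_n \log\phi_n) \to \phi^{1/p}$ on $[0,\Lambda]$, and inserting this into the same tightness estimate gives $\limsup_n \nu_n^{*q_n}[\beta,b) \le \tfrac1\delta\int_0^{2\delta}(1 - \phi^{1/p})\,d\lambda$, which tends to $0$ as $\delta \downarrow 0$; thus $\{\nu_n^{*q_n}\}$ is tight. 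By Prokhorov's theorem I pass to a subsequence with $\nu_{n'}^{*q_{n'}} \warrow \rho \in \mathcal{P}[a,b)$, and the weak continuity of the $\mathcal{L}$-convolution (Corollary \ref{cor:shypPDEconv_basicprops}(c)) gives $\nu_{n'}^{*k_{n'}} = (\nu_{n'}^{*q_{n'}})^{*p} * \nu_{n'}^{*r_{n'}} \warrow \rho^{*p} * \delta_a = \rho^{*p}$. Comparing with $\nu_{n'}^{*k_{n'}} \warrow \mu$ forces $\mu = \rho^{*p}$, and since $p$ is arbitrary, $\mu \in \mathcal{P}_\mathrm{id}$.

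The main obstacle is this last direction, and specifically the tightness of $\{\nu_n^{*q_n}\}$. One cannot argue globally in $\lambda$, because the positivity of $\phi = \widehat\mu$ is a \emph{consequence} of infinite divisibility rather than an available hypothesis; all quantitative control must therefore be confined to the neighbourhood $[0,\Lambda]$ of the origin where $\phi > 0$. The decisive point is that the $\mathcal{L}$-transform tightness criterion of Proposition \ref{prop:shypPDE_Ltransfmeas_props}(d) converts this purely local positivity into the global tightness required to extract the roots $\rho$; establishing $\nu_n \warrow \delta_a$ (so that the negligible factors $\nu_n^{*r_n}$ disappear in the limit) is the other place where the behaviour of $w_\lambda$ near the origin, via \eqref{eq:shypPDE_wsol_powseries_eta}, enters in an essential way.
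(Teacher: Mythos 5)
Your proposal is correct, and for the easy implications ((i)$\Leftrightarrow$(ii) via the correspondence \eqref{eq:shypPDE_infdivsemigr_corresp}, and (ii)$\Rightarrow$(iii) by taking step law $\mu_{1/n}$) it coincides with the paper. The substantive difference is in (iii)$\Rightarrow$(ii). Both arguments begin identically: uniform convergence of $\widehat{\pi_n}^{\,m_n}$ on compacts (Proposition \ref{prop:shypPDE_Ltransfmeas_props}(c)), positivity of $\widehat{\mu}$ near $0$, hence $\widehat{\pi_n}\to 1$ near $0$ and $\pi_n \warrow \delta_a$. But to extract the $p$-th convolution roots the paper assumes without loss of generality that $p \mid m_n$ and then invokes the relative compactness of the divisor set $\mathrm{D}(\{\pi_n^{*m_n}\})$, an external result of Volkovich quoted in the proof of Theorem \ref{thm:shypPDE_levykhin}; you instead keep the remainder $r_n$ explicit (disposing of $\nu_n^{*r_n}$ via $\widehat{\nu_n}^{\,r_n}\to 1$ and Proposition \ref{prop:shypPDE_Ltransfmeas_props}(d)) and obtain tightness of $\{\nu_n^{*q_n}\}$ directly from the quantitative estimate $\nu_n^{*q_n}[\beta,b) \leq \tfrac{1}{\delta}\int_0^{2\delta}\bigl(1-\widehat{\nu_n}(\lambda)^{q_n}\bigr)d\lambda$ in the proof of Proposition \ref{prop:shypPDE_Ltransfmeas_props}(d), together with $\widehat{\nu_n}^{\,q_n}\to \widehat{\mu}^{\,1/p}$ near $0$. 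This buys a self-contained argument using only the paper's own toolkit, at the cost of a somewhat longer computation; the paper's route is shorter but leans on the hypergroup-divisor compactness theorem. Two spots in your write-up deserve one more line each, though neither is a gap beyond the level of detail the paper itself allows: (1) deducing $\phi_n\to 1$ from $\phi_n^{k_n}\to\phi>0$ on $[0,\Lambda]$ should use the \emph{uniform} convergence to guarantee $\phi_n$ has no zero (hence no sign change) on $[0,\Lambda]$ for large $n$ before extracting roots; (2) in identifying the subsequential limit $\nu$ of $\{\nu_n\}$ as $\delta_a$ from $\widehat{\nu}\equiv 1$ on $[0,\Lambda]$ only, the cleanest phrasing is that $\lambda\mapsto w_\lambda(x)$ is a nonconstant entire function for each $x>a$ (its derivative at $0$ being $-\eta_1(x)<0$ by \eqref{eq:shypPDE_wsol_powseries_eta}), so for each such $x$ there is $\lambda\in(0,\Lambda]$ with $1-w_\lambda(x)>0$; a countable union over rational $\lambda$, using that $\{x: w_\lambda(x)<1\}$ varies continuously, then gives $\nu\bigl((a,b)\bigr)=0$.
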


\begin{proof}
The equivalence between (i) and (ii) is a restatement of the one-to-one correspondence \eqref{eq:shypPDE_infdivsemigr_corresp} between $\mathcal{L}$-infinitely divisible measures and $\mathcal{L}$-convolution semigroups. It is obvious that (i) implies (iii): simply let $m_n = n$ and $S^n$ the random walk whose step distribution is the law of $Y_{1/n}$.

Suppose that (iii) holds and let $\pi_n$, $\mu$ be the distributions of $S_j^n$, $X$ respectively. Choose $\eps > 0$ small enough so that $\widehat{\mu}(\lambda) > C_\eps > 0$ for $\lambda \in [0,\eps]$, where $C_\eps > 0$ is a constant. By (iii) and Proposition \ref{prop:shypPDE_Ltransfmeas_props}(c), $\widehat{\pi_n}(\lambda)^{m_n} \to \widehat{\mu}(\lambda)$ uniformly on compacts, which implies that $\widehat{\pi_n}(\lambda) \to 1$ for all $\lambda \in [0,\eps]$ and, therefore, by Proposition \ref{prop:shypPDE_Ltransfmeas_props}(d) $\pi_n \warrow \delta_a$. Now let $k \in \mathbb{N}$ be arbitrary. Since $\pi_n \warrow \delta_a$, we can assume that each $m_n$ is a multiple of $k$. Write $\nu_n = \pi_n^{*(m_n/k)}$, so that $\nu_n^{*k} \warrow \mu$. By relative compactness of  $\mathrm{D}(\{\pi_n^{*m_n}\})$ (see the proof of Theorem \ref{thm:shypPDE_levykhin}), the sequence $\{\nu_n\}_{n \in \mathbb{N}}$ has a weakly convergent subsequence, say $\nu_{n_j} \warrow \mu_k$ as $j \to \infty$, and from this it clearly follows that $\mu_k^{*k} = \mu$. Consequently, (ii) holds.
\end{proof}

As one would expect, the diffusion process generated by the Sturm-Liouville operator \eqref{eq:shypPDE_elldiffexpr} (cf.\ Lemma \ref{lem:shypPDE_Lb_fellergen}) is an $\mathcal{L}$-Lévy process:

\begin{proposition}
The irreducible diffusion process $X$ generated by $(\mathcal{L}^{(\mathrm{b})}, \mathcal{D}_\mathcal{L}^{(\mathrm{b})})$ is an $\mathcal{L}$-Lévy process.
\end{proposition}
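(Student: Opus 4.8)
The plan is to exhibit the $\mathcal{L}$-convolution semigroup that governs $X$ and then to identify its transition kernel with $\mu_t * \delta_x$. Define $\mu_t(dy) := p(t,a,y)\,r(y)\,dy$ for $t > 0$ and $\mu_0 := \delta_a$; since the semigroup of Lemma~\ref{lem:shypPDE_Lb_fellergen} is conservative, each $\mu_t$ is a probability measure on $[a,b)$. The single computation on which everything rests is the eigenrelation
\[
(\mathcal{P}_t w_\lambda)(x) = e^{-t\lambda}\, w_\lambda(x), \qquad x \in [a,b),\; \lambda \geq 0.
\]
To obtain it, I would first check that $w_\lambda \in \mathcal{D}_\mathcal{L}^{(\mathrm{b})}$: indeed $w_\lambda$ is real-valued and, by Lemma~\ref{lem:shypPDE_wsolbound} (whose hypothesis holds under Assumption~\ref{asmp:shypPDE_SLhyperg}), satisfies $|w_\lambda| \leq 1$, so $w_\lambda \in \mathrm{C}_\mathrm{b}([a,b),\mathbb{R})$; moreover $\ell(w_\lambda) = \lambda w_\lambda \in \mathrm{C}_\mathrm{b}([a,b),\mathbb{R})$ and $\lim_{x \downarrow a} w_\lambda^{[1]}(x) = 0$. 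Hence $\mathcal{L}^{(\mathrm{b})} w_\lambda = -\ell(w_\lambda) = -\lambda w_\lambda$, and the standard calculus of the $\mathrm{C}_\mathrm{b}$-generator (equivalently, $\mathcal{R}_\eta w_\lambda = (\eta + \lambda)^{-1} w_\lambda$ together with uniqueness of the Laplace transform of $t \mapsto \mathcal{P}_t w_\lambda$) yields the displayed eigenrelation.

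Granting the eigenrelation, evaluating at $x = a$ (where $w_\lambda(a) = 1$) gives $\widehat{\mu_t}(\lambda) = (\mathcal{P}_t w_\lambda)(a) = e^{-t\lambda}$ for every $\lambda \geq 0$. From here the semigroup axioms are immediate: by Corollary~\ref{cor:shypPDEconv_basicprops}(a) one has $\widehat{\mu_s * \mu_t}(\lambda) = \widehat{\mu_s}(\lambda)\,\widehat{\mu_t}(\lambda) = e^{-(s+t)\lambda} = \widehat{\mu_{s+t}}(\lambda)$, so the uniqueness of the $\mathcal{L}$-transform (Proposition~\ref{prop:shypPDE_Ltransfmeas_props}(b)) forces $\mu_s * \mu_t = \mu_{s+t}$. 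Since $\widehat{\mu_t}(\lambda) = e^{-t\lambda} \to 1 = \widehat{\delta_a}(\lambda)$ pointwise as $t \downarrow 0$, with the limit continuous at the origin, the continuity theorem Proposition~\ref{prop:shypPDE_Ltransfmeas_props}(d) (applicable because $\lim_{x \uparrow b} p(x)r(x) = \infty$ is in force) gives $\mu_t \warrow \delta_a$. Together with $\mu_0 = \delta_a$ this shows that $\{\mu_t\}_{t \geq 0}$ is an $\mathcal{L}$-convolution semigroup.

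It remains to verify that the transition probabilities of $X$ are $P[X_t \in B \mid X_s = x] = (\mu_{t-s} * \delta_x)(B)$. By time-homogeneity it suffices to identify the kernel $P_t(x,\cdot) = p(t,x,\cdot)\,r(\cdot)\,d\cdot$ with $\mu_t * \delta_x$, and again I would compare $\mathcal{L}$-transforms: the eigenrelation gives
\[
\widehat{P_t(x,\cdot)}(\lambda) = \int_a^b w_\lambda(y)\, p(t,x,y)\, r(y)\,dy = (\mathcal{P}_t w_\lambda)(x) = e^{-t\lambda}\, w_\lambda(x),
\]
whereas $\widehat{\mu_t * \delta_x}(\lambda) = \widehat{\mu_t}(\lambda)\, w_\lambda(x) = e^{-t\lambda}\, w_\lambda(x)$ by Corollary~\ref{cor:shypPDEconv_basicprops}(a). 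Both are finite measures, so Proposition~\ref{prop:shypPDE_Ltransfmeas_props}(b) yields $P_t(x,\cdot) = \mu_t * \delta_x$, which is exactly the defining property of an $\mathcal{L}$-Lévy process associated with $\{\mu_t\}$.

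The only genuinely delicate point is the eigenrelation $\mathcal{P}_t w_\lambda = e^{-t\lambda} w_\lambda$: one must confirm that $w_\lambda$ indeed lies in the $\mathrm{C}_\mathrm{b}$-generator domain $\mathcal{D}_\mathcal{L}^{(\mathrm{b})}$ (this is precisely where the boundedness from Lemma~\ref{lem:shypPDE_wsolbound} and the Neumann condition at $a$ enter) and that the semigroup-derivative identity applies to it. Should one prefer to avoid the generator altogether, an equivalent transform-only route is to apply Fubini to the spectral representation of $p(t,x,y)$ in Lemma~\ref{lem:shypPDE_Lb_diffusion_tpdf} and compare the result with \eqref{eq:shypPDE_gentranslmu_spectrep} on the dense class $\mathrm{C}_{\mathrm{c},0}^4$, which reproduces the same identities $(\mathcal{P}_t h)(x) = (\mathcal{T}^{\mu_t} h)(x)$ without invoking the derivative of the semigroup.
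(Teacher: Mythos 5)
Your argument is correct and reaches the paper's conclusion by the same overall strategy (identify the transition kernel through its $\mathcal{L}$-transform and read off the convolution semigroup), but the mechanism you use for the central identity $\widehat{P_t(x,\cdot)}(\lambda)=e^{-t\lambda}w_\lambda(x)$ is genuinely different. The paper gets it in one line from the spectral representation of the transition density in Lemma \ref{lem:shypPDE_Lb_diffusion_tpdf} together with the Parseval identity of Proposition \ref{prop:shypPDE_Ltransf}, and then extends to $t=0$ and $x=a$ by weak continuity. You instead verify that $w_\lambda$ lies in the $\mathrm{C}_\mathrm{b}$-generator domain $\mathcal{D}_\mathcal{L}^{(\mathrm{b})}$ (boundedness from Lemma \ref{lem:shypPDE_wsolbound}, whose hypothesis is indeed implied by Assumption \ref{asmp:shypPDE_SLhyperg} since $\bm{\phi}_\eta\geq 0$ and $\eta\geq 0$ force $A'\geq 0$, plus the Neumann condition at $a$) and deduce the eigenrelation $\mathcal{P}_t w_\lambda=e^{-t\lambda}w_\lambda$ from $\mathcal{R}_\eta w_\lambda=(\eta+\lambda)^{-1}w_\lambda$ and Laplace-transform uniqueness. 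The resolvent route is self-contained and has the advantage of handling the boundary point $x=a$ directly (since $\mathcal{P}_t$ acts on $\mathrm{C}_\mathrm{b}([a,b))$), at the cost of needing the right-continuity of $t\mapsto(\mathcal{P}_t w_\lambda)(x)$ to invert the Laplace transform pointwise --- which follows from the càdlàg paths of the Feller process and bounded convergence, and which you correctly flag as the delicate step. The remainder of your argument (the semigroup law via Corollary \ref{cor:shypPDEconv_basicprops}(a) and Proposition \ref{prop:shypPDE_Ltransfmeas_props}(b), weak continuity at $t=0$ via the continuity theorem, and the identification $P_t(x,\cdot)=\mu_t*\delta_x$ by uniqueness of the $\mathcal{L}$-transform) is exactly what the paper compresses into its final two sentences, and your closing remark about the Fubini alternative is precisely the paper's actual route.
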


\begin{proof}
For $t \geq 0$, $a \leq x < b$ let us write $p_{t,x}(dy) \equiv P_x[X_t \in dy]$. Recall from Lemma \ref{lem:shypPDE_Lb_diffusion_tpdf} that
\[
p_{t,x}(dy) \equiv p(t,x,y) r(y) dy = \int_{[0,\infty)} e^{-t\lambda} w_\lambda(x) \, w_\lambda(y) \, \bm{\rho}_\mathcal{L}(d\lambda) \, r(y) dy, \qquad t > 0,\: \, a < x < b
\]
where the integral converges absolutely. Consequently, by Proposition \ref{prop:shypPDE_Ltransf},
\[
\widehat{p_{t,x}}(\lambda) = e^{-t\lambda} w_\lambda
(x), \qquad t \geq 0,\: a \leq x < b
\]
(the weak continuity of $p_{t,x}$ justifies that the equality also holds for $t = 0$ and for $x = a$). This shows that $p_{t,x} = p_{t,a} * \delta_x$ where $\widehat{p_{t,a}}(\lambda) = e^{-t\lambda}$. It is clear from the properties of the $\mathcal{L}$-transform that $\{p_{t,a}\}_{t \geq 0}$ is an $\mathcal{L}$-convolution semigroup; therefore, $X$ is an $\mathcal{L}$-Lévy process.
\end{proof}

An $\mathcal{L}$-convolution semigroup $\{\mu_t\}_{t \geq 0}$ such that $\mu_1$ is an $\mathcal{L}$-Gaussian measure is said to be an \emph{$\mathcal{L}$-Gaussian convolution semigroup}, and an $\mathcal{L}$-Lévy process associated with an $\mathcal{L}$-Gaussian convolution semigroup is called an \emph{$\mathcal{L}$-Gaussian process}.

It actually turns out that the diffusion $X$ generated by $(\mathcal{L}^{(\mathrm{b})}, \mathcal{D}_\mathcal{L}^{(\mathrm{b})})$ is an $\mathcal{L}$-Gaussian process. This is a consequence of the following characterization of $\mathcal{L}$-Gaussian measures:

\begin{proposition}
Let $Y = \{Y_t\}_{t \geq 0}$ be an $\mathcal{L}$-Lévy process, let $\{\mu_t\}_{t \geq 0}$ be the associated $\mathcal{L}$-convolution semigroup and let $(\mathcal{G},\mathcal{D}(\mathcal{G}))$ be the $\mathrm{C}_\mathrm{b}$-generator of the process $Y$. The following conditions are equivalent:
\begin{enumerate}[itemsep=0pt,topsep=4pt]
\item[\textbf{(i)}] $\mu_1$ is a Gaussian measure;
\item[\textbf{(ii)}] $\lim_{t \downarrow 0} {1 \over t} \mu_t\bigl([a,b) \setminus \mathcal{V}_a\bigr) = 0$ for every neighbourhood $\mathcal{V}_a$ of the point $a$;
\item[\textbf{(iii)}] $\lim_{t \downarrow 0} {1 \over t} (\mu_t*\delta_x)\bigl([a,b) \setminus \mathcal{V}_x\bigr) = 0 \;$ for every $x \in [a,b)$ and every neighbourhood $\mathcal{V}_x$ of the point $x$;
\item[\textbf{(iv)}] $Y$ has a modification whose paths are a.s.\ continuous.
\end{enumerate}
If any of these conditions hold then the $\mathrm{C}_\mathrm{b}$-generator of $Y$ is a local operator, i.e., $(\mathcal{G}h)(x) = (\mathcal{G}g)(x)$ whenever $h, g \in \mathcal{D}(\mathcal{G})$ and $h=g$ on some neighbourhood of $x \in [a,b)$.
\end{proposition}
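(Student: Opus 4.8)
The plan is to route all four conditions through the single intermediate statement $\nu = 0$, where $\nu$ is the Lévy measure supplied by the Lévy–Khintchine representation of Theorem~\ref{thm:shypPDE_levykhin}, and then to read off locality of $\mathcal{G}$ as an immediate corollary of (iii). The first step is the \emph{dichotomy} that $\mu_1$ is $\mathcal{L}$-Gaussian if and only if $\nu = 0$. Indeed, if $\nu \neq 0$ then its restriction $\nu|_{[c,b)}$ to some $[c,b) \subset (a,b)$ is a nonzero finite measure, and since $\mb{e}(\nu_1 + \nu_2) = \mb{e}(\nu_1) * \mb{e}(\nu_2)$ the measure $\mb{e}(\nu|_{[c,b)})$ is a nontrivial $\mathcal{L}$-compound Poisson divisor of $\mu_1$, contradicting the defining property of an $\mathcal{L}$-Gaussian measure; conversely $\nu = 0$ gives $\psi_{\mu_1} = \psi_\alpha$ with $\alpha$ Gaussian. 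Thus it suffices to identify each of (ii), (iii), (iv) with the vanishing of $\nu$.

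The analytic core is a \textbf{vague-convergence lemma}: writing $\widehat{\mu_t}(\lambda) = e^{-t\psi(\lambda)}$ with $\psi = \psi_{\mu_1} = \psi_\alpha(\lambda) + \int_{(a,b)}(1 - w_\lambda(x))\,\nu(dx)$, one has $\tfrac{1}{t}\mu_t \varrow \nu$ vaguely on the open interval $(a,b)$ as $t \downarrow 0$. I would prove this by the methods of probabilistic harmonic analysis on commutative convolution structures in \cite{bloomheyer1994}, the elementary input being $\tfrac{1}{t}\bigl(1 - \widehat{\mu_t}(\lambda)\bigr) = \tfrac{1}{t}\int_{(a,b)}(1 - w_\lambda)\,d\mu_t \to \psi(\lambda)$ for every $\lambda \geq 0$, together with the smallness of $1 - w_\lambda(x)$ as $x \downarrow a$ afforded by the power-series bound in the proof of Lemma~\ref{lem:shypPDE_ode_wsol}. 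Granting the lemma, (i)$\Leftrightarrow$(ii) follows: for (ii)$\Rightarrow \nu = 0$, any $g \in \mathrm{C}_\mathrm{c}((a,b))$ with $0 \leq g \leq 1$ and $\supp g \subset [c,b)$ satisfies $\int_{(a,b)} g\,d\nu = \lim_{t \downarrow 0}\tfrac{1}{t}\int g\,d\mu_t \leq \limsup_{t \downarrow 0}\tfrac{1}{t}\mu_t([c,b)) = 0$, forcing $\nu \equiv 0$; conversely, if $\nu = 0$ the lemma gives $\tfrac{1}{t}\mu_t \varrow 0$, and the escape mass $\tfrac{1}{t}\mu_t([c,b))$ is driven to $0$ using the tightness near $b$ provided by $w_\lambda(x) \to 0$ as $x \uparrow b$ (Remark~\ref{rmk:shypPDE_Ltransfmeas_propsrmk}.III). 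Combined with the dichotomy this is exactly (i)$\Leftrightarrow$(ii).

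For (ii)$\Leftrightarrow$(iii), the implication (iii)$\Rightarrow$(ii) is immediate on taking $x = a$, since $\delta_a$ is the convolution identity and $\mu_t * \delta_a = \mu_t$. For (ii)$\Rightarrow$(iii) I would write $(\mu_t * \delta_x)\bigl([a,b) \setminus \mathcal{V}_x\bigr) = \int_{[a,b)}(\delta_y * \delta_x)\bigl([a,b) \setminus \mathcal{V}_x\bigr)\,\mu_t(dy)$ and use the weak continuity of $y \mapsto \delta_y * \delta_x$ (Corollary~\ref{cor:shypPDEconv_basicprops}(c)) together with $\delta_a * \delta_x = \delta_x$; when $\gamma(a) > -\infty$ the finite domain of dependence of the hyperbolic equation even yields $\supp(\delta_y * \delta_x) \subset \mathcal{V}_x$ for $y$ in a neighbourhood $\mathcal{V}_a$ of $a$, so the integrand is supported in $[a,b) \setminus \mathcal{V}_a$ and (ii) closes the estimate after dividing by $t$. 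The equivalence (iii)$\Leftrightarrow$(iv) is the Dynkin–Kinney sample-path criterion: (iii) states precisely that the normalised escape probabilities of the transition function of the Feller process $Y$ vanish, which by the Feller-process theory in \cite[Section~1.2]{bottcher2013} is equivalent to the existence of an almost surely continuous modification (spatial homogeneity upgrading the pointwise condition to the local uniformity the criterion needs). Finally, locality of $\mathcal{G}$ is a one-line consequence of (iii): if $h, g \in \mathcal{D}(\mathcal{G})$ agree on a neighbourhood $\mathcal{V}_x$ of $x$, then $\bigl\lvert(\mathcal{G}h)(x) - (\mathcal{G}g)(x)\bigr\rvert = \lim_{t \downarrow 0}\tfrac{1}{t}\bigl\lvert \mathbb{E}_x[(h - g)(Y_t)]\bigr\rvert \leq \|h - g\|_\infty \,\lim_{t \downarrow 0}\tfrac{1}{t}(\mu_t * \delta_x)\bigl([a,b) \setminus \mathcal{V}_x\bigr) = 0$.

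The main obstacle is the vague-convergence lemma in the genuinely degenerate regime $\gamma(a) = -\infty$ (the index Whittaker situation), where $\supp(\delta_x * \delta_y) = [a,b)$ so that the clean support-localisation used for (ii)$\Rightarrow$(iii) is no longer available; there one must replace the support argument by a quantitative control of the mass of $\delta_y * \delta_x$ far from $x$ as $y \downarrow a$, and, in the lemma itself, carefully disentangle the mass the Gaussian part concentrates at $a$ from the small-jump mass of $\nu$ accumulating near $a$. I expect this to be the only delicate point, the remaining implications being either formal manipulations within the $\mathcal{L}$-transform calculus already developed or direct appeals to the standard theory of Feller semigroups and processes.
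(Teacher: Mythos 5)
Your architecture (route (i) and (ii) through the vanishing of the L\'evy measure $\nu$, then pass to (iii) and (iv)) is coherent and several pieces are sound: the dichotomy ``$\mu_1$ Gaussian $\iff \nu=0$'', the trivial direction (iii)$\Rightarrow$(ii), and the locality corollary. But the proof as written has a genuine gap exactly where you flag one, and it is not a peripheral technicality: it is the case the paper is built to handle. In (ii)$\Rightarrow$(iii) you decompose $(\mu_t*\delta_x)(E_x)=\int(\delta_y*\delta_x)(E_x)\,\mu_t(dy)$ and, near $a$, you need the integrand to be beaten by the $O(1/t)$ mass that $\mu_t$ places near $a$. Weak continuity of $y\mapsto\delta_y*\delta_x$ only gives $(\delta_y*\delta_x)(E_x)\to 0$ qualitatively, which is useless against a factor $1/t$; the support-localisation rescue works only when $\gamma(a)>-\infty$, and in the degenerate regime (index Whittaker, where $\supp(\delta_x*\delta_y)=[a,b)$) you explicitly leave the required quantitative control unproved. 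The paper supplies precisely this missing estimate: taking $h\in\mathrm{C}_{\mathrm{c},0}^4$ with $h=1$ near $x$ and $h=0$ on $E_x$, it shows via L'H\^opital (using $\ell_y(\mathcal{T}^xh)(y)=(\mathcal{T}^x\ell(h))(y)\to\ell(h)(x)=0$) that $1-(\mathcal{T}^xh)(y)=o\bigl(1-w_\lambda(y)\bigr)$ as $y\downarrow a$, whence $\tfrac1t(\mu_t*\delta_x)(E_x)\leq\tfrac1t\bigl(1-\widehat{\mu_t}(\lambda)\bigr)+\tfrac1t\mu_t[a_\lambda,b)\to-\log\widehat{\mu_1}(\lambda)$, and then $\lambda\downarrow0$ finishes. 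Without an argument of this kind your proof covers only the uniformly hyperbolic case already treated by Zeuner.

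Two further points are asserted rather than proved. First, the vague-convergence lemma $\tfrac1t\mu_t\varrow\nu$ is deferred to the hypergroup literature, but the present setting deliberately drops the hypergroup compactness axiom, so that citation does not apply as stated (the paper instead leans on Volkovich's divisor-compactness results for the corresponding step in (i)$\Leftrightarrow$(ii), which is a legitimately different but also externally sourced route). Second, for (iii)$\Leftrightarrow$(iv) the Dynkin--Kinney criterion gives only (iii)$\Rightarrow$(iv); the converse is not part of the general Feller package and the paper invokes Ray's theorem for one-dimensional Markov processes for it. Your locality argument also tacitly uses the pointwise derivative formula $(\mathcal{G}u)(x)=\lim_{t\downarrow0}\tfrac1t\bigl((T_tu)(x)-u(x)\bigr)$ for the resolvent-defined $\mathrm{C}_\mathrm{b}$-generator, which should be justified (the paper sidesteps this by citing the locality of generators of continuous-path Markov processes directly).
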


\begin{proof}
\textbf{\emph{(i)$\!\implies\!$(ii):\;}} Let $\{t_n\}_{n \in \mathbb{N}}$ be a sequence such that $t_n \to 0$ as $n \to \infty$, and let $\nu_n = \mb{e}\bigl(\tfrac{1}{t_n} \mu_{t_{n\!}}\bigr)$. We have 
\begin{equation} \label{eq:shypPDE_gauss_equivdef_pf1}
\lim_{n \to \infty} \widehat{\nu_n}(\lambda) = \lim_{n \to \infty} \exp\biggl[ {1 \over t_n}\bigl(\widehat{\mu_{1}}(\lambda)^{t_n} - 1\bigr) \biggr] = \widehat{\mu_1}(\lambda), \qquad \lambda > 0
\end{equation}
and therefore, by  Proposition \ref{prop:shypPDE_Ltransfmeas_props}(d), $\nu_n \warrow \mu_1$ as $n \to \infty$. From this it follows, cf.\ \cite{volkovich1988}, that if $\pi_n$ denotes the restriction of ${1 \over t_n} \mu_{t_n}$ to $[a,b) \setminus \mathcal{V}_a$, then $\{\pi_n\}$ is relatively compact; if $\pi$ is a limit point, then $\mb{e}(\pi)$ is a divisor of $\mu_1$. Since $\mu_1$ is Gaussian, $\mb{e}(\pi) = \delta_a$, hence $\pi$ must be the zero measure, showing that (ii) holds. \\[-10pt]

\textbf{\emph{(ii)$\!\implies\!$(i):\;}} As in \eqref{eq:shypPDE_gauss_equivdef_pf1},
\[
\widehat{\mu_1}(\lambda) = \lim_{n \to \infty} \exp\biggl[ {1 \over t_n} \int_{[a,b)} \bigl(w_\lambda(x) - 1\bigr) \mu_{t_n\!}(dx) \biggr] = \lim_{n \to \infty} \exp\biggl[ {1 \over t_n} \int_{\mathcal{V}_a} \bigl(w_\lambda(x) - 1\bigr) \mu_{t_n\!}(dx) \biggr], \qquad \lambda > 0
\]
where the second equality is due to (ii), noting that ${1 \over t_n} \int_{[a,b) \setminus \mathcal{V}_a} (w_\lambda(x) - 1) \mu_{t_n\!}(dx) \leq {2 \over t} \mu_{t_n}\bigl([a,b) \setminus \mathcal{V}_a\bigr)$. Given that $\nu_n = \mb{e}\bigl(\tfrac{1}{t_n} \mu_{t_{n\!}}\bigr) \warrow \mu_1$, we have (again, see \cite{volkovich1988})
\[
\widehat{\mu}_1(\lambda) = \exp\biggl[ \int_{(a,b)} \bigl(w_\lambda(x) - 1\bigr) \, \eta(dx) \biggr], \qquad \lambda > 0
\]
for some $\sigma$-finite measure $\eta$ on $(a,b)$ which, by the above, vanishes on the complement of any neighbourhood of the point $a$. Therefore, $\mu_1$ is Gaussian. \\[-10pt]

\textbf{\emph{(ii)$\!\iff\!$(iii):\;}} To prove the nontrivial direction, assume that (ii) holds, and fix $x \in (a,b)$. Let $\mathcal{V}_x$ be a neighbourhood of the point $x$ and write $E_x = [a,b) \setminus \mathcal{V}_x$. Pick a function $h \in \mathrm{C}_{\mathrm{c},0}^4$ such that $0 \leq h \leq 1$, $h = 0$ on $E_x$ and $h = 1$ on some smaller neighbourhood  $\mathcal{U}_x \subset \mathcal{V}_x$ of the point $x$.

We begin by showing that
\begin{equation} \label{eq:shypPDE_gauss_equivdef_pf2}
\lim_{y \downarrow a} {1 - (\mathcal{T}^x h)(y) \over 1-w_\lambda(y)} = 0 \qquad \text{for each } \lambda > 0.
\end{equation}
Indeed, it follows from Theorem \ref{thm:shypPDE_Lexistence} that $\lim_{y \downarrow a} (\mathcal{T}^x h)(y) = 1$, $\lim_{y \downarrow a} \partial_y^{[1]} (\mathcal{T}^x h)(y) = 0$ and
\[
\ell_y (\mathcal{T}^x h)(y) = \int_{[0,\infty)\!} \lambda\, (\mathcal{F} h)(\lambda) \, w_\lambda(x) \, w_\lambda(y) \, \bm{\rho}_{\mathcal{L}}(d\lambda) = \bigl(\mathcal{T}^x \ell(h)\bigr)(y) \xrightarrow[\,y \downarrow a\,]{} \ell(h)(x) = 0,
\]
hence using L'Hôpital's rule twice we find that $\lim_{y \downarrow a} {1 - (\mathcal{T}^x h)(y) \over 1-w_\lambda(y)} = \lim_{y \downarrow a} {\ell_y(\mathcal{T}^x h)(y) \over \lambda w_\lambda(y)} = 0$ ($\lambda > 0$).

By \eqref{eq:shypPDE_gauss_equivdef_pf2}, for each $\lambda > 0$ there exists $a_\lambda > a$ such that $(\mathcal{T}^x \mathds{1}_{E_x})(y) \leq \bigl(\mathcal{T}^x (\mathds{1} - h)\bigr)(y) \leq 1-w_\lambda(x)$ for all $y \in [a,a_\lambda)$ (here $\mathds{1}_{E_x}$ denotes the indicator function of $E_x$). We then estimate
\begin{align*}
{1 \over t} (\mu_t*\delta_x)(E_x) & = {1 \over t} \int_{[a,b)\!} (\mathcal{T}^x \mathds{1}_{E_x})(y) \mu_t(dy) \\
& \leq {1 \over t} \int_{[a,a_\lambda)\!} \bigl( 1 - w_\lambda(y) \bigr) \mu_t(dy) + {1 \over t} \mu_t [a_\lambda,b) \\
& \leq {1 \over t} \int_{[a,b)\!} \bigl( 1 - w_\lambda(y) \bigr) \mu_t(dy) + {1 \over t} \mu_t [a_\lambda,b) \\
& = {1 \over t} \bigl( 1-\widehat{\mu_t}(\lambda) \bigr) + {1 \over t} \mu_t [a_\lambda,b).
\end{align*}
Given that we are assuming that (ii) holds and, by the $\mathcal{L}$-semigroup property, $\lim_{t \downarrow 0} {1 \over t} \bigl( 1-\widehat{\mu_t}(\lambda) \bigr) = \lim_{t \downarrow 0} {1 \over t} \bigl( 1-\widehat{\mu_1}(\lambda)^t \bigr) = -\log\widehat{\mu_1}(\lambda)$, the above inequality gives
\[
\limsup_{t \downarrow 0} {1 \over t} (\mu_t*\delta_x)(E_x) \leq -\log\widehat{\mu_1}(\lambda).
\]
This holds for arbitrary $\lambda > 0$. Since the right-hand side is continuous and vanishes for $\lambda = 0$, we conclude that $\lim_{t \downarrow 0} {1 \over t} (\mu_t*\delta_x)(E_x) = 0$, as desired. \\[-10pt]

\textbf{\emph{(iii)$\!\implies\!$(iv):\;}} This follows from a general result in the theory of Feller processes \cite[Chapter 4, Proposition 2.9]{ethierkurtz1986} according to which $\lim_{t \downarrow 0} {1 \over t} P_x[Y_t \in [a,b) \setminus \mathcal{V}_x] = 0$ is a sufficient condition for a given $[a,b)$-valued Feller process $Y$ to have continuous paths. \\[-10pt]

\textbf{\emph{(iv)$\!\implies\!$(iii):\;}} This is a consequence of Ray's theorem for one-dimensional Markov processes, which is stated and proved in \cite[Theorem 5.2.1]{ito2006}. \\[-10pt]

Finally, it is well-known that Markov processes with continuous paths have local generators (see e.g.\ \cite[Theorem 5.1.1]{ito2006}), thus the last assertion holds.
\end{proof}

To finish this section, it is worth mentioning that analogues of the classical limit theorems --- such as laws of large numbers or central limit theorems --- can be established for the $\mathcal{L}$-convolution measure algebra. As in the setting of hypergroup convolution structures (cf.\ Example \ref{exam:SLhypergr}), solutions $\{\varphi_k\}_{k \in \mathbb{N}}$ of the functional equation
\[
(\mathcal{T}^y \varphi_k)(x) = \sum_{j=0}^k \binom{k}{j} \varphi_j(x) \varphi_{k-j}(y) \quad\; \bigl(x,y \in [a,b)\bigr), \qquad\; \varphi_0 = 0,
\]
which are called \emph{$\mathcal{L}$-moment functions}, play a role similar to that of the monomials under the ordinary convolution. 

For the sake of illustration, let us state some strong laws of large numbers which hold true for the $\mathcal{L}$-convolution: let $\{S_n\}$ be an $\mathcal{L}$-additive Markov chain constructed as in \eqref{eq:shypPDE_markovadditive_constr}, and define the $\mathcal{L}$-moment functions of first and second order by $\varphi_1(x) = \kappa \eta_1(x)$, $\varphi_2(x) = 2[\kappa \eta_2(x) + \eta_1(x)]$ respectively, where $\kappa := \lim_{\xi \to \infty} {A'(\xi) \over A(\xi)} = \lim_{x \uparrow b} {[(pr)^{1/2}]'\!(x) \over 2r(x)}$ and the $\eta_j$ are given by \eqref{eq:shypPDE_wsol_powseries_eta}. Then: \\[-8pt]
\begin{enumerate}[itemsep=0pt,topsep=0pt,leftmargin=1.75cm]
\item[\textbf{7.13.I.}] \emph{If $\{r_n\}_{n \in \mathbb{N}}$ is a sequence of positive numbers such that $\lim_n r_n = \infty$ and $\sum_{n=1}^\infty {1 \over r_n} \bigl(\mathbb{E}[\varphi_2(X_n)] - \mathbb{E}[\varphi_1(X_n)]^2\bigr) < \infty$, then
\[
\lim_n {1 \over \sqrt{r_n}} \bigl( \varphi_1(S_n) - \mathbb{E}[\varphi_1(S_n)] \bigr) = 0 \qquad\;\; \bm{\pi}\text{-a.s.}
\]}
\item[\textbf{7.13.II.}] \emph{If $\{S_n\}$ is an $\mathcal{L}$-random walk such that $\mathbb{E}[\varphi_2(X_1)^{\theta/2}] < \infty$ for some $1 \leq \theta < 2$, then $\mathbb{E}[\varphi_1(X_1)] < \infty$ and
\[
\lim_n {1 \over n^{1 / \theta}} \bigl(\varphi_1(S_n) - n\mathbb{E}[\varphi_1(X_1)]\bigr) = 0 \qquad\;\; \bm{\pi}\text{-a.s.} 
\]}
\item[\textbf{7.13.III.}] \emph{Suppose that $\varphi_1 \equiv 0$. If $\{r_n\}_{n \in \mathbb{N}}$ is a sequence of positive numbers such that $\lim_n r_n = \infty$ and $\sum_{n=1}^\infty {1 \over r_n} \mathbb{E}[\varphi_2(X_n)] < \infty$, then
\[
\lim_n {1 \over r_n} \varphi_2(S_n) = 0 \qquad\;\; \bm{\pi}\text{-a.s.}
\]}
\item[\textbf{7.13.IV.}] \emph{Suppose that $\varphi_1 \equiv 0$. If $\{S_n\}$ is an $\mathcal{L}$-random walk such that $\mathbb{E}[\varphi_2(X_1)^{\theta}] < \infty$ for some $0 < \theta < 1$, then
\[
\lim_n {1 \over n^{1 / \theta}} \varphi_2(S_n) = 0 \qquad\;\; \bm{\pi}\text{-a.s.}
\]}
\end{enumerate}
The above assertions can be proved exactly as in the hypergroup framework: the reader is referred to \cite[Section 7]{zeuner1992}.

\section{Examples} \label{sec:examples}

We begin with two simple examples where the Sturm-Liouville operator is regular and nondegenerate, and the kernel of the $\mathcal{L}$-transform can be written in terms of elementary functions.

\begin{example}[Cosine Fourier transform]
Consider the Sturm-Liouville operator
\[
\ell = - {d^2 \over dx^2}, \qquad 0 < x < \infty
\]
which is obtained by setting $p = r = \mathds{1}$ and $(a,b) = (0,\infty)$. This operator trivially satisfies assumption \ref{asmp:shypPDE_SLhyperg}. Since the solution of the Sturm-Liouville boundary value problem \eqref{eq:shypPDE_ode_wsol} is $w_\lambda(x) = \cos(\tau x)$ (where $\lambda = \tau^2$), the $\mathcal{L}$-transform is simply the cosine Fourier transform $(\mathcal{F} h)(\tau) = \int_0^\infty h(x) \cos(\tau x) dx$. By elementary trigonometric identities, $w_\tau(x) w_\tau(y) = {1 \over 2} [w_\tau(|x-y|) + w_\tau(x+y)]$, hence the $\mathcal{L}$-convolution is given by
\[
\delta_x * \delta_y = {1 \over 2}(\delta_{|x-y|} + \delta_{x+y}), \qquad x,y \geq 0.
\]
In other words, $*$ is (up to identification) the ordinary convolution of symmetric measures.
\end{example}

\begin{example}
If we let $p(x) = r(x) = (1+x)^2$ and $(a,b) = (0,\infty)$, we obtain the differential operator
\[
\ell = - {d^2 \over dx^2} - {2 \over 1+x} {d \over dx}, \qquad 0 < x < \infty,
\]
which satisfies Assumption \ref{asmp:shypPDE_SLhyperg} with $\eta(x) = {2 \over 1+x}$. The function 
\[
w_\lambda(x) = \begin{cases}
{1 \over 1+x} [\cos(\tau x) + {1 \over \tau} \sin(\tau x)], & \tau > 0 \\
1, & \tau = 0
\end{cases} \qquad (\lambda = \tau^2)
\]
is the solution of the boundary value problem \eqref{eq:shypPDE_ode_wsol}, thus the $\mathcal{L}$-transform can be expressed as a sum of cosine and sine Fourier transforms. A straightforward computation \cite[Example 4.10]{zeuner1992} shows that the product formula $w_\lambda(x) \, w_\lambda(y) = \int_{[a,b)} w_\lambda\, d(\delta_x * \delta_y)$ holds for $\delta_x * \delta_y$ defined by
\begin{equation} \label{eq:exampl_squarehypgr}
(\delta_x * \delta_y)(d\xi) = {1 \over 2(1+x)(1+y)}\bigl[(1+|x-y|)\delta_{x-y}(d\xi) + (1+x+y)\delta_{x+y}(d\xi) + (1+\xi)\mathds{1}_{[|x-y|,x+y]}(\xi) d\xi \bigr]
\end{equation}
and therefore (by the uniqueness property, Proposition \ref{prop:shypPDE_Ltransfmeas_props}(b)) the $\mathcal{L}$-convolution is given by \eqref{eq:exampl_squarehypgr}. This example, which was introduced in \cite[Example 4.10]{zeuner1992}, illustrates that, in general, convolutions associated with regular Sturm-Liouville operators have both a discrete and an absolutely continuous component.
\end{example}

Next we present the chief example of a convolution associated with a singular Sturm-Liouville operator:

\begin{example}[Hankel transform] \label{exam:hankelkingman}
Let $\alpha \geq -{1 \over 2}$. The Bessel operator
\[
\ell = - {d \over dx^2} - {2\alpha + 1 \over x} {d \over dx}, \qquad 0 < x < \infty
\]
has coefficients $p(x) = r(x) = x^{2\alpha+1}$. Clearly, Assumption \ref{asmp:shypPDE_SLhyperg} holds with $\eta = 0$. Here the kernel of the $\mathcal{L}$-transform is
\[
w_\lambda(x) = \bm{J}_\alpha(\tau x) := 2^\alpha \Gamma(\alpha+1) (\tau x)^{-\alpha} J_\alpha(\tau x) \qquad (\lambda = \tau^2)
\]
where $J_\alpha$ is the Bessel function of the first kind (this is easily checked using the basic properties of the Bessel function, cf.\ \cite[Chapter 10]{dlmf}). The Sturm-Liouville type transform associated with the Bessel operator is the \emph{Hankel transform}, $(\mathcal{F}h)(\tau) = \int_0^\infty h(x) \, \bm{J}_\alpha(\tau x) \, x^{2\alpha + 1} dx$. It follows from classical integration formulae for the Bessel function \cite[p.\ 411]{watson1944} that $\bm{J}_\alpha(\tau x) \, \bm{J}_\alpha(\tau y) = \int_0^\infty \bm{J}_\alpha(\tau \xi) \, (\delta_x *_\alpha \delta_y)(d\xi)$, where
\[
(\delta_x *_\alpha \delta_y)(d\xi) = {2^{1-2\alpha} \Gamma(\alpha+1) \over \sqrt{\pi} \, \Gamma(\alpha + {1 \over 2})} (xy\xi)^{-2\alpha} \bigl[ (\xi^2 - (x-y)^2) ((x+y)^2 - \xi^2) \bigr]^{\alpha - 1/2} \mathds{1}_{[|x-y|,x+y]}(\xi) \, r(\xi) d\xi
\]
for $x, y > 0$; this convolution is known as the \emph{Hankel convolution} \cite{hirschman1960,cholewinski1965} or \emph{Kingman convolution} \cite{kingman1963,urbanik1988}.

This example has motivated the development of the theory of generalized translation and convolution operators back since the pioneering work of Delsarte \cite{delsarte1938}. It plays a special role in the context of the Sturm-Liouville hypergroups in Example \ref{exam:SLhypergr} below; in particular, it appears as the limit distribution in central limit theorems on hypergroups \cite[Section 7.5]{bloomheyer1994}. Moreover, since the diffusion ($\mathcal{L}$-Lévy) process generated by $\ell$ is the Bessel process --- a fundamental continuous-time stochastic process \cite{borodinsalminen2002}, which in the case $\alpha = {d \over 2} - 1$ ($d \in \mathbb{N}$) can be defined as the radial part of a $d$-dimensional Brownian motion --- the Hankel convolution is a useful tool for the study of the Bessel process, cf.\ e.g.\ \cite{rentzschvoit2000,vanthu2007}.
\end{example}

The Jacobi operator provides another example of a singular Sturm-Liouville operator whose the product formula and convolution can be written in terms of standard special functions.

\begin{example}[Jacobi transform] \label{exam:fourjacobi}
The coefficients $p(x) = r(x) = (\sinh x)^{2\alpha + 1} (\cosh x)^{2\beta + 1}$ ($\alpha \geq \beta \geq -{1 \over 2}$, \, $\alpha \neq {1 \over 2}$) give rise to the Jacobi operator
\[
\ell = - {d \over dx^2} - [(2\alpha + 1) \coth x + (2\beta + 1) \tanh x] {d \over dx}, \qquad 0 < x < \infty.
\]
As in the previous example, Assumption \ref{asmp:shypPDE_SLhyperg} holds with $\eta = 0$. The so-called Jacobi function
\[
w_\lambda(x) = \phi_\tau^{(\alpha,\beta)\!}(x) := {}_2F_1\Bigl(\tfrac{1}{2}(\sigma - i\tau), \tfrac{1}{2}(\sigma + i\tau); \alpha + 1; -(\sinh x)^2\Bigr) \qquad (\sigma = \alpha + \beta + 1,\; \lambda = \tau^2 + \sigma^2)
\]
where ${}_2F_1$ denotes the hypergeometric function \cite[Chapter 15]{dlmf}, can be shown to be the unique solution of the Sturm-Liouville problem \eqref{eq:shypPDE_ode_wsol}. The associated integral transform is the \emph{(Fourier-)Jacobi transform}, $(\mathcal{F}h)(\tau) = \int_0^\infty h(x) \, \phi_\tau^{(\alpha,\beta)\!}(x) \, (\sinh x)^{2\alpha + 1} (\cosh x)^{2\beta + 1} dx$ (this transformation is also known as Olevskii transform, index hypergeometric transform or, in the case $\alpha = \beta$, generalized Mehler-Fock transform \cite{yakubovich2006}). By a deep result of Koornwinder \cite{flenstedjensen1973,koornwinder1984}, the product formula $\phi_\tau^{(\alpha,\beta)\!}(x) \, \phi_\tau^{(\alpha,\beta)\!}(y) = \int_0^\infty \phi_\tau^{(\alpha,\beta)} d(\delta_x *_{\alpha,\beta} \delta_y)$ holds for the \emph{Jacobi convolution}, defined by
\begin{align*}
(\delta_x *_{\alpha,\beta} \delta_y)(d\xi) = \, &  {2^{-2\sigma} \Gamma(\alpha+1) (\cosh x \, \cosh y \, \cosh \xi)^{\alpha - \beta - 1} \over \sqrt{\pi}\, \Gamma(\alpha + {1 \over 2}) (\sinh x \, \sinh y \, \sinh \xi)^{2\alpha}} \times \\
& \times (1-Z^2)^{\alpha - 1/2} {\,}_2F_1\Bigl( \alpha + \beta, \alpha - \beta; \alpha + \tfrac{1}{2}; \tfrac{1}{2}(1-Z) \Bigr) \mathds{1}_{[|x-y|,x+y]}(\xi) r(\xi) d\xi
\end{align*}
where $Z := {(\cosh x)^2 + (\cosh y)^2 + (\cosh \xi)^2 - 1 \over 2\cosh x \, \cosh y \, \cosh \xi}$.

For half-integer values of the parameters $\alpha, \beta$, the Jacobi transform and convolution have various group theoretic interpretations; in particular, they are related with harmonic analysis on rank one Riemannian symmetric spaces \cite{koornwinder1984}. Moreover, a remarkable property of the Jacobi transform is that it admits a positive dual convolution structure, that is, there exists a family $\{\theta_{\tau_1,\tau_2}\}$ of finite positive measures such that the dual product formula $\phi_{\tau_1}^{(\alpha,\beta)\!}(x) \, \phi_{\tau_2}^{(\alpha,\beta)\!}(x) = \int_0^\infty \phi_{\tau_3}^{(\alpha,\beta)\!}(x) \, \theta_{\tau_1,\tau_2}(d\tau_3)$ holds, and this permits the construction of a generalized convolution which trivializes the inverse Jacobi transform \cite{bensalem1994}.
\end{example}

All the examples presented so far belong to the class of Sturm-Liouville hypergroup convolutions which was introduced by Zeuner \cite{zeuner1992} as follows:

\begin{example}[Sturm-Liouville hypergroups] \label{exam:SLhypergr}
Consider a Sturm-Liouville operator on the positive half-line with coefficients $p = r = A$,
\[
\ell = - {d^2 \over dx^2} - {A'(x) \over A(x)} {d \over dx}, \qquad 0 < x < \infty,
\]
where the function $A$ satisfies the following conditions:
\begin{enumerate}[itemsep=0pt,topsep=4pt]
\item[\textbf{SL0}] $A \in \mathrm{C}[0,\infty) \cap \mathrm{C}^1(0,\infty)$ and $A(x) > 0$ for $x > 0$.
\item[\textbf{SL1}] One of the following assertions holds: \vspace{-0.8ex}
\begin{enumerate}[itemsep=0pt]
\item[\textbf{SL1.1}] $A(0) = 0$ and ${A'(x) \over A(x)} = {\alpha_0 \over x} + \alpha_1(x)$ for $x$ in a neighbourhood of $0$, where $\alpha_0 > 0$ and $\alpha_1 \in \mathrm{C}^\infty(\mathbb{R})$ is an odd function;
\item[\textbf{SL1.2}] $A(0) > 0$ and $A \in \mathrm{C}^1[0,\infty)$. \vspace{-0.8ex}
\end{enumerate}
\item[\textbf{SL2}] There exists $\eta \in \mathrm{C}^1[0,\infty)$ such that $\eta \geq 0$, $\bm{\phi}_\eta \geq 0$ and the functions $\bm{\phi}_\eta$, $\bm{\psi}_\eta$ are both decreasing on $(0,\infty)$ ($\bm{\phi}_\eta$, $\bm{\psi}_\eta$ are defined as in Assumption \ref{asmp:shypPDE_SLhyperg}).
\end{enumerate}
The last condition ensures that $A$ satisfies Assumption \ref{asmp:shypPDE_SLhyperg}, hence this is a particular case of the general family of Sturm-Liouville operators considered in the previous sections. It was proved by Zeuner \cite{zeuner1992} that the convolution measure algebra $(\mathcal{M}_\mathbb{C}[0,\infty),*)$ is a commutative hypergroup with identity involution; this means that the Banach algebra property of Proposition \ref{prop:shypPDE_conv_Mbanachalg} and properties (b)--(c) of Corollary \ref{cor:shypPDEconv_basicprops} hold, as well as the following axioms:
\begin{itemize}[itemsep=-1pt,topsep=4pt]
\item $(x,y) \mapsto \supp(\delta_x * \delta_y)$ is continuous from $[0,\infty) \times [0,\infty)$ into the space of compact subsets of $[0,\infty)$ (endowed with the Michael topology, see \cite{jewett1975});
\item $0 \in \supp(\delta_x * \delta_y)$ if and only if $x = y$.
\end{itemize}
Observe that the Sturm-Liouville operator $\ell = -{d^2 \over dx^2} - {A' \over A} {d \over dx}$ is either singular or regular, depending on whether the function $A$ satisfies condition SL1.1 or SL1.2. In any event, the associated hyperbolic equation $\ell_x f = \ell_y f$ is uniformly hyperbolic on $[0,\infty)^2$. The construction of the product formula and convolution presented in the previous sections generalizes that of Zeuner because it is also applicable to parabolically degenerate operators.
\end{example}

The next example shows that the two hypergroup axioms on the (compact) support of $\delta_x * \delta_y$ are generally false for operators associated with degenerate hyperbolic equations:

\begin{example}[Index Whittaker transform] \label{exam:whittaker}
The choice $p(x) = x^{2-2\alpha} e^{-1/x}$ and $r(x) = x^{-2\alpha} e^{-1/x}$, with $\alpha < {1 \over 2}$, leads to the normalized Whittaker operator
\[
\ell = - x^2 {d^2 \over dx^2} - (1+2(1-\alpha)x) {d \over dx}, \qquad 0 < x < \infty.
\]
The standard form of this differential operator (Remark \ref{rmk:shypPDE_tildeell}) is $\widetilde{\ell} = - {d^2 \over dz^2} - (e^{-z} + 1 - 2\alpha){d \over dz}$, where $z = \log x \in \mathbb{R}$, and it is apparent that Assumption \ref{asmp:shypPDE_SLhyperg} holds with $\eta = 0$. As pointed out in Section \ref{sec:hypPDE}, the fact that the operator $\widetilde{\ell}$ is defined on the whole real line means that the hyperbolic partial differential equation associated with the normalized Whittaker operator has a non-removable parabolic degeneracy at the initial line. The unique solution of the boundary value problem \eqref{eq:shypPDE_ode_wsol} turns out to be given by
\[
w_\lambda(x) = \bm{W}_{\!\!\alpha,i\tau}(x) := x^\alpha e^{1 \over 2x} W_{\alpha, i\tau}(\tfrac{1}{x}) \qquad \bigl( \lambda = \tau^2 + (\tfrac{1}{2} - \alpha)^2 \bigr)
\]
where $W_{\alpha,i\tau}$ is the Whittaker function of the second kind of parameters $\alpha$ and $i\tau$ \cite[Chapter 13]{dlmf}. The eigenfunction expansion of the normalized Whittaker operator yields the \emph{index Whittaker transform} \cite{srivastava1998,sousayakubovich2018} $(\mathcal{F}h)(\tau) = \int_0^\infty  h(x) \bm{W}_{\!\!\alpha,i\tau}(x) x^{-2\alpha} e^{-1/x} dx$. The product formula for the kernel $\bm{W}_{\!\!\alpha,i\tau}$ has recently been established by the authors \cite{sousaetal2018a,sousaetal2018b} using techniques from classical analysis and known facts in the theory of special functions; it is given by $\bm{W}_{\!\!\alpha,i\tau}(x) \bm{W}_{\!\!\alpha,i\tau}(y) = \int_0^\infty \bm{W}_{\!\!\alpha,i\tau} \, d(\delta_x *_\alpha \delta_y)$, where $*_\alpha$ is the \emph{Whittaker convolution}, defined by
\[
(\delta_x *_\alpha \delta_y)(d\xi) = {2^{-1-\alpha} \over \sqrt{\pi}} (xy\xi)^{-{1\over 2}+\alpha} \exp\Bigl( {1 \over x} + {1 \over y} + {1 \over \xi} - {(x+y+\xi)^2 \over 8xy\xi} \Bigr) D_{2\alpha}\Bigl( {x+y+\xi \over \smash{\sqrt{2xy\xi}}} \Bigr) r(\xi) d\xi
\]
for $x,y > 0$, with $D_\mu$ denoting the parabolic cylinder function \cite[Chapter VIII]{erdelyiII1953}. Notice in particular that $\supp(\delta_x *_\alpha \delta_y) = [0,\infty)$ for every $x, y > 0$. 

The particular case $\alpha = 0$ is worthy of special mention, because in this case the index Whittaker transform reduces to $(\mathcal{F}h)(\tau) = \pi^{-1/2} \int_0^\infty h(x) K_{i\tau}({1 \over 2x}) x^{-1/2} e^{-{1 \over 2x}} dx$, which is (a normalized form of) the Kontorovich-Lebedev transform; here $K_{i\tau}$ is the modified Bessel function of the second kind with parameter $i\tau$ \cite[Chapter 10]{dlmf}. The Kontorovich-Lebedev transform plays a central role in the theory of index type integral transforms \cite{yakubovich1996}. The Whittaker convolution of parameter $\alpha = 0$, which can be written in the simplified form
\[
(\delta_x *_0 \delta_y)(d\xi) = {1 \over 2\sqrt{\pi x y \xi}} \exp\Bigl( {1 \over x} + {1 \over y} - {(x+y+\xi)^2 \over 4xy\xi} \Bigr) d\xi,
\]
is identical (up to an elementary change of variables) to the Kontorovich-Lebedev convolution, which was introduced by Kakichev in \cite{kakichev1967} and has been extensively studied, cf.\ \cite{yakubovich1996} and references therein.
\end{example}

Our final example illustrates that the (degenerate) hyperbolic equation approach allows us to generalize the results on the Whittaker product formula and convolution to a much larger class of degenerate operators:

\begin{example}
Let $\bm{\zeta} \in \mathrm{C}^1(0,\infty)$ be a nonnegative decreasing function such that $\int_1^\infty \bm{\zeta}(y) {dy \over y} = \infty$, and let $\kappa > 0$. The differential expression
\[
\ell = - x^2 {d^2 \over dx^2} - \bigl[\kappa + x \bigl(1 + \bm{\zeta}(x)\bigr)\bigr] {d \over dx}, \qquad 0 < x < \infty
\]
is a particular case of \eqref{eq:shypPDE_elldiffexpr}, obtained by considering $p(x) = x e^{-\kappa/x + I_{\bm{\zeta}}(x)}$ and $r(x) = {1 \over x} e^{-\kappa/x + I_{\bm{\zeta}}(x)}$, where $I_{\bm{\zeta}}(x) = \int_1^x \bm{\zeta}(y) {dy \over y}$. (If $\kappa = 1$ and $\bm{\zeta}(x) = 1-2\alpha > 0$, we recover the normalized Whittaker operator from Example \ref{exam:whittaker}.) The change of variable $z = \log x \in \mathbb{R}$ transforms $\ell$ into the standard form $\widetilde{\ell} = - {d^2 \over dz^2} - {A'(z) \over A(z)}{d \over dz}$, where ${A'(z) \over A(z)} = \kappa e^{-\kappa z} + \bm{\zeta}(e^z)$. It is clear that $\ell$ satisfies Assumption \ref{asmp:shypPDE_SLhyperg} with $\eta = 0$, and the additional assumption $\lim_{x \uparrow b} p(x)r(x) = \infty$ holds because $I_{\bm{\zeta}}(\infty) = \infty$. Therefore, all the results in the previous sections hold for the  Sturm-Liouville operator $\ell$. This shows that the class of Sturm-Liouville operators for which one can construct a positivity-preserving convolution structure includes irregular operators which are simultaneously degenerate (in the sense that the associated hyperbolic equation is parabolic at the initial line) and singular (in the sense that the first order coefficient is unbounded near the left endpoint).
\end{example}

\section*{Acknowledgements}

The first and third authors were partly supported by CMUP (UID/MAT/00144/2019), which is funded by Fundação para a Ciência e a Tecnologia (FCT) (Portugal) with national (MCTES) and European structural funds through the programs FEDER, under the partnership agreement PT2020, and Project STRIDE -- NORTE-01-0145-FEDER-000033, funded by ERDF -- NORTE 2020. The first author was also supported by the grant PD/BD/135281/2017, under the FCT PhD Programme UC|UP MATH PhD Program. The second author was partly supported by the project CEMAPRE -- UID/MULTI/00491/2013 financed by FCT/MCTES through national funds.

\renewcommand{\bibname}{References} 
\begin{small}

\end{small}

\end{document}